\newcommand{\st}{\text{ : }}
\newcommand{\CP}{\mathbb{C}\mathbb{P}}
\newcommand{\Z}{{\mathbb  Z}}
\newcommand{\R}{{\mathbb R}}
\newcommand{\F}{{\mathbb F}}
\newcommand{\Boxover}[1]{\underset{#1}{\Box}}
\DeclareMathOperator{\Hom}{Hom}
\DeclareMathOperator{\Ext}{Ext}
\DeclareMathOperator{\Spec}{Spec}
\newcommand{\m}[1]{{\protect\underline{#1}}}
\newcommand{\Sp}{\mathrm {Sp}}
\mathchardef\mhyphen=45
\numberwithin{equation}{section}
\newtheorem{theorem}{Theorem}[section]
\newtheorem{lemma}[theorem]{Lemma}
\newtheorem{corollary}[theorem]{Corollary}
\newtheorem{proposition}[theorem]{Proposition}
\newtheorem{conjecture}[theorem]{Conjecture}
\newtheorem{question}[theorem]{Question}
\newtheorem*{theorem*}{Theorem}
\newtheorem*{proposition*}{Proposition}
\theoremstyle{remark}
\newtheorem{remark}[theorem]{Remark}
\newtheorem{example}[theorem]{Example}
\newtheorem{notation}[theorem]{Notation}
\theoremstyle{definition}
\newtheorem{definition}[theorem]{Definition}
\newtheorem{construction}[theorem]{Construction}
\thanks{This material is based upon work supported by the National Science Foundation under Grant
No. DMS-1811189}
\begin{document}
\title{Chromatic defect, Wood's theorem, and higher Real $K$-theories}
\author{Christian Carrick}
\address{Mathematisches Institut, Endenicher Allee 60, Bonn, Germany}
\email{carrick@math.uni-bonn.de}
\begin{abstract}
Using Ravenel's Thom spectrum $\mathrm{X(n)}$, we introduce the concept of chromatic defect, which measures how far a spectrum is from being complex-orientable. We compute the chromatic defect of various examples of interest, such as finite spectra, the Real Johnson--Wilson spectra $\mathrm{ER(n)}$, fixed points  of Morava $\mathrm{E}$-theories (with respect to finite subgroups of the Morava stabilizer group), and the connective image of $\mathrm{J}$ spectrum. Moreover, an obstruction theory is developed for determining chromatic defect. Having finite chromatic defect is closely related to the existence of analogues of the classical Wood equivalence. We show that such equivalences exist in a wide generality and use them to construct $\Z$-indexed Adams--Novikov towers.
\end{abstract}
\maketitle
\tableofcontents
\section{Introduction}


The chromatic approach to stable homotopy uses the complex cobordism spectrum $\mathrm{MU}$ to detect phenomena in stable homotopy. The $\mathrm{MU}$-homology $\mathrm{MU}_*E$ of a spectrum $E$ is often much more computable than its stable homotopy $\pi_*E$. However, the former is the input to the Adams--Novikov spectral sequence, which can be used to recover information about the latter. This is an extremely powerful tool that detects nilpotence and periodicity phenomena, as conjectured by Ravenel \cite{ravorange} and proven by Devinatz--Hopkins--Smith \cite{dhs}. When $E$ is complex-orientable, however, there is essentially no difference between $\pi_*E$ and $\mathrm{MU}_*E$; the former is recovered as the comodule primitives in the latter, and thus $E$ has no complexity from the point of view of chromatic homotopy theory. 

There are many useful spectra $E$ that are not complex-orientable, but are in some sense only finitely many steps away from being complex-orientable. Their complexity is thus small from the point of view of chromatic homotopy, which is reflected in the computability of their Adams--Novikov spectral sequence. The clearest example of this is $\mathrm{ko}$, connective real $K$-theory. The classical Wood's theorem \cite{wood} states that there is an equivalence
\[\mathrm{ko}\otimes C(\eta)\simeq\mathrm{ku}\]
where $C(\eta)$ is the cofiber of the Hopf map $\eta:\mathbb{S}^1\to\mathbb{S}^0$, and $\mathrm{ku}$ is connective complex $K$-theory. The spectrum $\mathrm{ku}$ is complex-orientable and $\mathrm{ko}$ is not. However, $C(\eta)$ is a 2-cell complex with chromatic type zero, in the sense of the thick subcategory theorem \cite{ravorange}.

Wood's theorem is useful for calculations involving $\mathrm{ko}$, and it also implies that $\mathrm{ko}$ is in the thick tensor ideal generated by $\mathrm{MU}$, so that for example the Adams--Novikov spectral sequence of $\mathrm{ko}$ has a horizontal vanishing line on a finite page and $\mathrm{ko}$ satisfies the conditions of the telescope conjecture at all heights. We study analogues of Wood's theorem and say that a spectrum $E$ is \emph{Wood-type} if there is a finite $\mathrm{BP}$-projective $F$ such that $E\otimes F$ is complex-orientable (\ref{def:wood}). 

\subsubsection*{Chromatic Defect} Using Bott periodicity, Ravenel introduced a  filtration
\[\mathbb S=\mathrm{X(1)}\to \cdots\to \mathrm{X(n)}\to\cdots\to\mathrm{X(\infty)}=\mathrm{MU}\]
of $\mathrm{MU}$, where $\mathrm{X(n)}$ is a certain Thom spectrum over $\Omega\mathrm{SU(n)}$ 
\cite{ravgreen}. The $\mathrm{X(n)}$'s provide a convenient interpolation between stable homotopy and $\mathrm{MU}$-homology, and quite a lot is known about this filtration. In fact, the Devinatz--Hopkins--Smith proof of the nilpotence theorem proceeds by induction downward along this filtration: they show that each of the maps $\mathrm{X(n)}\to \mathrm{X(n+1)}$ detects nilpotence, then by compactness conclude that $\mathbb S\to \mathrm{MU}$ detects nilpotence \cite{dhs}.

Recovering stable homotopy from $\mathrm{MU}$-homology entails infinite descent along the Ravenel filtration. In general, calculating $\mathrm{X(n)}$-homology is much more difficult than calculating $\mathrm{MU}$-homology. Typically, calculating $\mathrm{X(n)}$-homology is as difficult as calculating stable homotopy as, in some sense, the $\mathrm{X(n)}$'s are too close to the sphere spectrum. 

We study theories $E$ with the property that $E\otimes\mathrm{X(n)}$ is complex-orientable for some $n<\infty$, and we say in this case that $E$ has \emph{chromatic defect} $\le n$ (\ref{def:defect}). When $E$ has chromatic defect $\le n$, the role of $\mathrm{MU}$-homology may be replaced by that of $\mathrm{X(n)}$-homology, and thus the stable homotopy of $E$ may be recovered by a \emph{finite} descent along Ravenel's filtration. As $E$ has chromatic defect 1 if and only if $E$ is itself complex-orientable, chromatic defect measures the failure of $E$ to be complex-orientable in a precise sense. The condition of $E$ having finite chromatic defect is closely related to that of $E$ being Wood-type; for example $\mathrm{ko}$ has chromatic defect 2. In fact, finite chromatic defect is a necessary condition for being Wood-type (\ref{cor:woodimpliesfinitedefect}).

Ravenel has studied extensively the process of recovering stable homotopy by descent along his filtration from a computational point of view \cite[Section 7.1]{ravgreen}. In fact, in an unpublished work \cite{hhreo}, Hill--Hopkins--Ravenel use this filtration to compute the homotopy groups of the fixed points of Morava $E$-theories with respect to a natural $C_p$-action at chromatic height $n=2(p-1)$. We compute chromatic defect in several cases of interest, which determines precisely the stage of Ravenel's filtration at which these descent computations must begin, and we focus on more general fixed point spectra of Morava $E$-theories with respect to finite subgroups of the Morava stabilizer group.

\subsubsection*{$\mathrm{K(n)}$-local homotopy and the Morava stabilizer group} Let $\mathrm{K(n)}$ be the $n$-th Morava $K$-theory. By a theorem of Devinatz--Hopkins, the $\mathrm{K(n)}$-local sphere can be described as the homotopy fixed-point spectrum \cite{devhop}
\[L_{\mathrm{K(n)}}\mathbb S\simeq \mathrm{E}(k,\Gamma)^{h\mathbb G_n},\]
where $\mathrm{E}(k,\Gamma)$ is a height $n$ Morava $E$-theory, for $\Gamma$ a height $n$ formal group over a perfect field $k$ of characteristic $p$, and $\mathbb G_n=\mathrm{Aut}(\Gamma)\rtimes\mathrm{Gal}(k/\F_p)$ the corresponding Morava stabilizer group. It was an observation of Ravenel \cite{ravarf} and Hopkins--Miller \cite{hopmiller} that $L_{\mathrm{K(n)}}\mathbb S$ is  approximated by the fixed points of $\mathrm{E}(k,\Gamma)$ at \textit{finite} subgroups $G$ of $\mathbb G_n$, and that these theories are more computable. These theories
\begin{equation}\label{eq:eon}
\mathrm{EO}_n(G):=\mathrm{E}(k,\Gamma)^{hG}
\end{equation}
are known as the \emph{Hopkins--Miller higher real $K$-theories}.

We compute the chromatic defect of higher real $K$-theories using a construction of Hopkins, which associates a stack $\mathcal{M}_E$ to any homotopy commutative ring spectrum $E$. The stack $\mathcal{M}_E$ is the one corresponding to the Hopf algebroid
    \[(\mathrm{MU}_{2*}E,\mathrm{MU}_{2*}(\mathrm{MU}\otimes E)).\]
Via the unit map $\mathbb S\to E$, the stack $\mathcal{M}_E$ comes equipped with a canonical affine morphism $p_E:\mathcal{M}_E\to\mathcal{M}_{FG}(1)$ to the moduli stack of formal groups and strict automorphisms. This stack allows one to describe the $E_2$-page of the Adams--Novikov spectral sequence of $E$ from an algebro-geometric point of view, and it is the most essential tool in our analysis of chromatic defect. Indeed, applying Hopkins' stack construction to Ravenel's filtration produces the tower
\[\Spec(L)\to\cdots\to\mathcal{M}_{FG}(n)\to\cdots\to\mathcal{M}_{FG}(1)\]
where $L$ is the Lazard ring, and $\mathcal{M}_{FG}(n)$ is the moduli stack of formal group laws equipped with a coordinate through degree $n$. This often allows one to turn the question of whether $E$ has finite chromatic defect into a rigidity question about $\mathcal{M}_E$: namely, whether the automorphism groups of objects in $\mathcal{M}_E$ become trivial after fixing coordinates on the associated formal groups through some finite degree.

\subsection{Main results and outline of the paper}

\subsubsection*{Section 2} In the classical Wood equivalence $\mathrm{ko}\otimes C(\eta)\simeq\mathrm{ku}$, a priori the left hand side has no ring structure, as $C(\eta)$ does not admit a unital multiplication. Our definition of Wood-types thus requires a more flexible definition of complex-orientability for a spectrum $E$ that does not require that $E$ be a ring spectrum. Therefore, we make the following definition.

\begin{definition}[\ref{def:comporient}]\label{def:introdefcomporient}
    A spectrum $E$ is complex-orientable if the map
    \[E\otimes\sigma_k:E\otimes\mathbb S^{2k+1}\to E\otimes\CP^k\]
    is nullhomotopic for all $k\ge1$, where $\sigma_k$ is the attaching map for the top cell in $\CP^{k+1}$.
\end{definition}

\begin{remark}
    In the literature, complex orientations are defined only for ring spectra. We say a ring spectrum $E$ is complex-orientable if the tautological line bundle over $\CP^\infty$ is orientable with respect to $E$. In Section \ref{sec:2}, we verify that this definition of complex-orientability is equivalent to Definition \ref{def:introdefcomporient} for any homotopy associative ring spectrum.
\end{remark}

Quillen showed that a homotopy commutative ring spectrum $E$ is complex-orientable if and only if there exists a homotopy ring map $\mathrm{MU}\to E$ \cite{quillen}. We extend this result to complex-orientable spectra (in the sense of Definition \ref{def:introdefcomporient}) in the following way.

\begin{theorem}[\ref{thm:weakMU}]
A spectrum $E$ is complex-orientable if and only if it is a weak $\mathrm{MU}$-module; that is, the unit map $E\to \mathrm{MU}\otimes E$ admits a retraction.   
\end{theorem}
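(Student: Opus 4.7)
The plan is to prove each implication separately. For the backward implication, suppose $r : \MU\otimes E\to E$ retracts the unit $\eta: E\to \MU\otimes E$. Since $\MU$ is a complex-oriented ring spectrum in the classical sense, it is also complex-orientable in the sense of the definition above — the orientation $x\in \MU^2\CP^\infty$ provides splittings of the cofiber sequences $\MU\otimes \mathbb{S}^{2k+1}\to \MU\otimes\CP^k\to \MU\otimes\CP^{k+1}$, making each $\MU\otimes\sigma_k$ null, so that $\MU\otimes E\otimes\sigma_k$ is null as well. By the naturality of both $\eta$ and $r$ with respect to $\sigma_k$ (each being a map of spectra), the map $E\otimes\sigma_k$ factors as the composite
\[
(r\otimes\CP^k)\circ(\MU\otimes E\otimes\sigma_k)\circ(\eta\otimes\mathbb{S}^{2k+1}),
\]
which is null because the middle arrow is.

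For the forward implication, assuming $E\otimes\sigma_k\simeq 0$ for all $k\ge 1$, I would construct the retraction $r: \MU\otimes E\to E$ as follows. Inducting on $k$ using the cofiber sequences $\Sigma^{2k+1}E\to E\otimes\CP^k\to E\otimes\CP^{k+1}$, the null-homotopies first produce compatible splittings $E\otimes\CP^k\simeq \bigvee_{i=1}^k \Sigma^{2i}E$, yielding an equivalence $E\otimes\Sigma^\infty_+\CP^\infty\simeq \bigvee_{i\ge 0}\Sigma^{2i}E$ that exhibits $E$ as the ``unit summand'' at $i=0$. Using the Thom filtration $\MU = \mathrm{colim}_n\,MU(n)$ by the Thom spectra of universal $n$-plane bundles (with $MU(1)\simeq \Sigma^{-2}\Sigma^\infty\CP^\infty$), I would next extend this line-bundle retraction to a compatible family of retractions $MU(n)\otimes E\to E$ for all $n$, and pass to the colimit to obtain $\MU\otimes E\to E$ retracting $\eta$.

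The main obstacle is the inductive extension from $MU(n)\otimes E\to E$ to $MU(n+1)\otimes E\to E$. In the classical ring-spectrum proof one invokes the Whitney-sum formula for Thom classes, which requires multiplicativity and is unavailable here. I expect the correct replacement to use the cofiber sequences $MU(n)\to MU(n+1)\to MU(n+1)/MU(n)$ combined with the observation that $BU(n+1)/BU(n)$ admits a cell decomposition controlled by $\CP^\infty$, so that the obstructions to extending the retraction lie in groups killed by the $\sigma_k$ null-homotopies after smashing with $E$. Verifying this requires a careful cellular analysis but should not require any structure on $E$ beyond what is provided by complex-orientability.
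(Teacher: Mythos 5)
Your backward implication is correct and is exactly the paper's argument. The forward implication, however, has a genuine gap at precisely the step you flag as the "main obstacle," and the proposed resolution does not work. The null-homotopies of the $E\otimes\sigma_k$ only control the cells of $\mathrm{MU}$ coming from $\CP^\infty$, i.e.\ the polynomial generators $b_i$ of $H_*(\mathrm{MU};\Z)\cong\Z[b_1,b_2,\ldots]$. The cells corresponding to monomials $b_{i_1}\cdots b_{i_k}$ of length at least two have attaching maps that are produced from the $\sigma_k$'s only through the multiplication on $\mathrm{MU}$ (equivalently the $H$-space structure on $\mathrm{BU}$), and converting null-homotopies of $E\otimes\sigma_k$ into null-homotopies of these higher attaching maps smashed with $E$ requires a pairing on $E$: in the ring-spectrum case (Proposition \ref{prop:orientableringisweakmodule}) this is exactly where the paper uses multiplicativity of the $E$-based Atiyah--Hirzebruch spectral sequence for $\mathrm{MU}$ and the Leibniz rule to force collapse. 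For a bare spectrum $E$ there is no Leibniz rule available, so your claim that the obstructions to extending a retraction from $\mathrm{MU}(n)\otimes E$ to $\mathrm{MU}(n+1)\otimes E$ ``lie in groups killed by the $\sigma_k$ null-homotopies'' is unsubstantiated; the obstruction is a specific composite built from these length-$\ge 2$ attaching maps, and nothing in the hypotheses makes it vanish. (If such a direct cellular induction worked, the multiplicative hypothesis in the classical statement would be dispensable by the same argument, which is exactly the difficulty the theorem is designed to get around.)

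The paper avoids this by a different device, which you may want to adopt: if $E$ is complex-orientable then so is $\mathrm{End}(E)=F(E,E)$, because $\mathbb{S}^{2k+1}$ and $\CP^k$ are finite, so $\mathrm{End}(E)\otimes\sigma_k\simeq F(E,E\otimes\sigma_k)$ is null. Since $\mathrm{End}(E)$ is a homotopy associative ring spectrum, the ring case applies and exhibits it as a weak $\mathrm{MU}$-module; the action map $\mathrm{End}(E)\otimes E\to E$ then transfers the weak module structure to $E$, using the transitivity observation that a weak module over a weak module is a weak module. As written, your forward direction does not go through without either this reduction or a genuinely new argument for the inductive extension over the Thom filtration.
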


This flexible definition of complex-orientability results in a straightforward definition of Wood-types and chromatic defect that makes sense in complete generality. In what follows, we work $p$-locally at a fixed prime $p$. We say a finite spectrum $F$ is a finite $\mathrm{BP}$-projective if $\mathrm{BP}_*F$ is a projective $\mathrm{BP}_*$-module.

\begin{definition}[\ref{def:wood}, \ref{def:defect}]\label{def:introwood}
We say a spectrum $E$ is \emph{Wood-type} if there exists a finite $\mathrm{BP}$-projective $F$ such that $E\otimes F$ is complex-orientable. 
\end{definition}

\begin{definition}[\ref{def:defect}]
    We say a spectrum $E$ has \emph{chromatic defect} $\le n$ if $E\otimes \mathrm{X(n)}$ is complex-orientable. In this case, we use the notation $\Phi(E)\le n$.
\end{definition}

We tie the notion of Wood-types to that of chromatic defect with the following result.

\begin{theorem}[\ref{cor:woodimpliesfinitedefect}]\label{thm:introwoodimpliesdefect}
    If $E$ is Wood-type, then it has finite chromatic defect.
\end{theorem}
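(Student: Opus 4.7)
The plan is to use Theorem~\ref{thm:weakMU} to reduce complex-orientability of $E\otimes F$ to the statement that the unit $E\otimes F\to\mathrm{MU}\otimes E\otimes F$ admits a retraction, and then to exhibit $\mathrm{X(n)}$ as a wedge summand of $\mathrm{X(n)}\otimes F\otimes DF$ for $n$ sufficiently large, where $DF$ denotes Spanier--Whitehead dual. Granting such a splitting, $E\otimes\mathrm{X(n)}$ is a retract of $E\otimes\mathrm{X(n)}\otimes F\otimes DF\simeq(E\otimes F)\otimes(\mathrm{X(n)}\otimes DF)$, which is a weak $\mathrm{MU}$-module since weak $\mathrm{MU}$-module structures are preserved under smashing with arbitrary spectra. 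As these structures also pass to retracts (a retraction of the unit on the whole spectrum restricts to one on any summand), $E\otimes\mathrm{X(n)}$ is a weak $\mathrm{MU}$-module, hence complex-orientable by Theorem~\ref{thm:weakMU}, yielding $\Phi(E)\le n$.

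To construct the splitting, I first work in $\mathrm{BP}$-homology. Since $\mathrm{BP}_*F$ is a finitely generated free $\mathrm{BP}_*$-module, duality provides an identification $\mathrm{BP}_*(F\otimes DF)\cong\mathrm{End}_{\mathrm{BP}_*}(\mathrm{BP}_*F)$, and the coevaluation $\mathbb S\to F\otimes DF$ represents the identity endomorphism. The $(1,1)$-entry projection of matrices is a $\mathrm{BP}_*$-linear functional sending the identity to $1\in\mathrm{BP}_0$, and since $F\otimes DF$ has free $\mathrm{BP}$-homology this functional is realized by an honest map of spectra $F\otimes DF\to\mathrm{BP}$ whose composite with the coevaluation is the unit of $\mathrm{BP}$. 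To descend to a finite stage of Ravenel's filtration, I use that $F\otimes DF$ is compact and that $\mathrm{BP}$ is a retract of $\mathrm{MU}_{(p)}\simeq\operatorname{colim}_n\mathrm{X(n)}_{(p)}$; hence for $n$ sufficiently large the map lifts through some $\beta\colon F\otimes DF\to\mathrm{X(n)}_{(p)}$. Because $\pi_0\mathrm{X(n)}\to\pi_0\mathrm{BP}$ is the identity on $\mathbb Z_{(p)}$, the composite $\beta\circ\mathrm{coev}\in\pi_0\mathrm{X(n)}$ necessarily equals $1$. Extending $\beta$ to an $\mathrm{X(n)}$-module map $\tilde\beta\colon\mathrm{X(n)}\otimes F\otimes DF\to\mathrm{X(n)}$ via the ring structure on $\mathrm{X(n)}$, the composite $\mathrm{X(n)}\to\mathrm{X(n)}\otimes F\otimes DF\to\mathrm{X(n)}$ becomes the identity by unitality, giving the required retraction.

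The main obstacle lies in this last descent step: the splitting data was constructed at the $\mathrm{BP}$-level, but $\mathrm{BP}$ appears only as a retract of the filtered colimit $\operatorname{colim}_n\mathrm{X(n)}_{(p)}$, so a priori there is no reason for a splitting present at the colimit level to be realized at a finite stage. This is resolved by combining the compactness of $F\otimes DF$, which allows maps out of it into $\mathrm{MU}_{(p)}$ to factor through some $\mathrm{X(n)}_{(p)}$, with the observation that the unit condition defining the retraction may be checked on $\pi_0$, where the map $\mathrm{X(n)}\to\mathrm{BP}$ is already an isomorphism.
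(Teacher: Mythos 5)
Your argument is correct, but it takes a genuinely different route from the paper. The paper first proves that every finite $\mathrm{BP}$-projective is $\mathrm{BP}$-free (Lemma \ref{lemma:BPfree}) and then that every finite $\mathrm{BP}$-free is $\mathrm{T(n)}$-free for some finite $n$: the finitely many $\mathrm{BP}$-module generators of $\mathrm{BP}\otimes F$ lift along the highly connected map $\mathrm{T(n)}\to\mathrm{BP}$, and the resulting map $\mathrm{T(n)}\{T\}\to\mathrm{T(n)}\otimes F$ is detected to be an equivalence after base change to $\mathrm{BP}$, using that $\mathrm{BP}$ is a free $\mathrm{T(n)}$-module (Proposition \ref{prop:T(n)toBPredundancy}). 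The corollary then follows because $E\otimes\mathrm{T(n)}$ is (up to shift) a retract of the complex-orientable spectrum $E\otimes F\otimes\mathrm{T(n)}$. You instead never show $F$ is $\mathrm{T(n)}$-free: you exhibit $\mathrm{X(n)}$ itself as a retract of $\mathrm{X(n)}\otimes F\otimes DF$, by realizing a dual-basis functional $\mathrm{End}_{\mathrm{BP}_*}(\mathrm{BP}_*F)\to\mathrm{BP}_*$ as a spectrum map $F\otimes DF\to\mathrm{BP}$ (legitimate, since $\mathrm{BP}\otimes F\otimes DF$ is finite free, so $[F\otimes DF,\mathrm{BP}]\cong\Hom_{\mathrm{BP}_*}(\mathrm{BP}_*(F\otimes DF),\mathrm{BP}_*)$), descending to a finite stage by compactness of $F\otimes DF$ against $\mathrm{MU}_{(p)}\simeq\operatorname{colim}_n\mathrm{X(n)}_{(p)}$, and checking unitality on $\pi_0$. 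Both proofs ultimately descend from $\mathrm{BP}$ to a finite filtration stage — the paper via the connectivity of $\mathrm{T(n)}\to\mathrm{BP}$ applied to finitely many generators, you via compactness against the colimit — but yours trades the paper's structural lemma (which pins down $F$ as a finite $\mathrm{T(n)}$-free, a fact relevant elsewhere, e.g.\ to the quantity (4) in the numerology discussion) for the coevaluation/trace trick, and avoids invoking faithfulness of $\mathrm{BP}$ over $\mathrm{T(n)}$. Two small points to tighten: you assume $\mathrm{BP}_*F$ is free rather than projective (this is exactly Lemma \ref{lemma:BPfree}, or graded Nakayama over $\mathrm{BP}_*$, and should be cited), and the unit check is really performed along $\pi_0\mathrm{X(n)}_{(p)}\to\pi_0\mathrm{MU}_{(p)}$ (through which your factorization was taken) rather than literally along $\mathrm{X(n)}\to\mathrm{BP}$; since all these maps are isomorphisms on $\pi_0\cong\Z_{(p)}$, the conclusion stands.
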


It can be difficult to say whether a certain spectrum $E$ is Wood-type since this requires producing a complex $F$ such that $E\otimes F$ is complex-orientable. We will see, however, that computing chromatic defect is tractable in many cases, and when it is infinite for example, this rules out the possibility of being Wood-type, see \ref{cor:finitespectradefect} and \ref{cor:jnotwoodtype} for example.

\subsubsection*{Section 3} The mod $p$ homology $H_*(\mathrm{X(n)};\F_p)$ is determined as an $\mathcal{A}_*$-comodule algebra via the Thom isomorphism. We use this to describe the Adams $E_2$-page of $\mathrm{X(n)}$ and give a May spectral sequence converging to it. This recovers known results about the homotopy groups of $\mathrm{X(n)}$, such as that the bottom odd-dimensional homotopy group of $\mathrm{X(n)}_{(p)}$ is
\[\pi_{2p^{m+1}-3}\mathrm{X(n)}_{(p)}=\Z/p\{\chi_{m+1}\}\]
where $m=\lfloor \log_p(n)\rfloor$ (see also \cite{beardsley} for this result). The classes $\chi_{m+1}$ are shown to give a sequence of obstructions for a homotopy associative ring spectrum $E$ to have chromatic defect $\le n$.

\begin{theorem}[\ref{prop:chi}]\label{prop:introchi}
    Let $E$ be a $p$-local homotopy associative ring spectrum. Then $\Phi(E)\le n$ if and only if 
    \[\chi_{m+1}=0\in\pi_{2p^{m+1}-3}E\otimes \mathrm{X(p^m)}\]
    for all $m\ge \lfloor \log_p(n)\rfloor$.
\end{theorem}

We calculate the relative dual Steenrod algebra $\mathcal{A}^{\mathrm{X(n)}_*}:=\pi_*(\F_p\otimes_{\mathrm{X(n)}}\F_p)$, and this allows us to show that finite spectra are too close to the sphere to have finite chromatic defect.

\begin{corollary}[\ref{cor:finitespectradefect}]
    If $F$ is a finite spectrum, then $\Phi(F)=\infty$.
\end{corollary}

Set $m:=\lfloor \log_p(n)\rfloor$. We show that, when the $\mathcal{A}_*$-comodule $H_*(\mathrm{X(n)};\F_p)$ is restricted to $\mathcal{A}(m)_*$, it splits as a sum of finite even comodules, all of which are isomorphic to $\mathcal{P}(m-1)_*$ (see Section \ref{sec:3} for these Hopf algebras). With an eye toward Wood equivalences, we thus construct finite complexes $F$ that are free over $\mathcal{P}(m-1)$. This uses the idempotent construction of Jeff Smith \cite{ravorange}, used to construct finite $\mathcal{A}(m)$-free type $m+1$ complexes, and our proof is essentially the same.

\subsubsection*{Section 4} The spectra $\mathrm{ko}$ and $\mathrm{tmf}$ are our prototypical examples of Wood-types, the former via $C(\eta)$ in the classical Wood equivalence, and the latter (at the prime 2) via the complex $D\mathcal{A}(1)$ (see \cite{akhiltmf}). These are also prototypical examples of fp spectra in the sense of Mahowald--Rezk \cite{mr}, that is, a $p$-complete bounded below spectrum $E$ that admits an isomorphism
\[H_*(E;\F_p)\cong\mathcal{A}_*\Boxover{\mathcal{A}(n)_*}M\]
for some $n<\infty$ and $M$ a finite $\mathcal{A}(n)_*$-comodule.

We show that the Wood equivalences for $\mathrm{ko}$ and $\mathrm{tmf}$ actually follow from certain evenness conditions that hold for a much broader class of fp spectra. This is obtained by using the Adams spectral sequence to sharpen the obstruction statement of Proposition \ref{prop:introchi} to an algebraic context. We use the $\mathcal{P}(n)$-free complexes of Section \ref{sec:3} to obtain the following. Here, we use the following standard notation for certain exterior quotient Hopf algebras of the dual Steenrod algebra.
\[\mathcal{E}(n)_*:=\begin{cases}
    E(\xi_1,\ldots,\xi_{n+1})&p=2\\E(\tau_0,\ldots,\tau_n)&p>2
\end{cases}\]

\begin{theorem}[\ref{cor:fpeven}]\label{thm:intothmobstructions}
    Let $E$ be a homotopy associative ring spectrum that is an fp spectrum, so that $H_*E\cong\mathcal{A}_*\Boxover{\mathcal{A}(n)_*}M$, and suppose that
    \[\Ext^{s,t}_{\mathcal{E}(n)_*}(\F_p,M)\]
    is concentrated in even stems $t-s$. Then $\Phi(E)\le p^n$ and $E$ is Wood-type.
\end{theorem}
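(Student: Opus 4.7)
The plan is to use the mod-$p$ Adams spectral sequence, with the evenness of $\Ext_{\mathcal{E}(n)_*}(\F_p, M)$ as the essential input, to force the obstructions to complex-orientability of both $E\otimes \mathrm{X(p^m)}$ and $E\otimes F$ to vanish simultaneously.

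For the bound $\Phi(E)\le p^n$, I would apply Proposition~\ref{prop:introchi} to reduce to showing that for every $m\ge n$, the class $\chi_{m+1}\in\pi_{2p^{m+1}-3}(E\otimes \mathrm{X(p^m)})$ vanishes. I would then run the mod-$p$ Adams spectral sequence for $E\otimes \mathrm{X(p^m)}$; the Thom isomorphism identifies
\[H_*(E\otimes \mathrm{X(p^m)};\F_p)\cong H_*E\otimes H_*\mathrm{X(p^m)}\]
as $\mathcal{A}_*$-comodule algebras. Combining the Section~\ref{sec:3} description of $H_*\mathrm{X(p^m)}\!\!\downarrow_{\mathcal{A}(m)_*}$ as a sum of copies of $\mathcal{P}(m-1)_*$, the fp decomposition $H_*E\cong\mathcal{A}_*\Boxover{\mathcal{A}(n)_*}M$, and standard Milnor change-of-rings for comodule Ext, one should obtain, for $m\ge n$, an identification
\[\Ext_{\mathcal{A}_*}^{*,*}(\F_p, H_*E\otimes H_*\mathrm{X(p^m)})\cong \Ext_{\mathcal{E}(n)_*}^{*,*}(\F_p, M)\otimes P,\]
with $P$ a polynomial algebra on generators in even internal degree arising from the $\mathcal{P}(m-1)_*$-free summands. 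The hypothesis then makes this $E_2$-page concentrated in even stems $t-s$, so all Adams differentials vanish and $E_\infty$ is even; in particular the odd-stem class $\chi_{m+1}$ is killed.

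For the Wood-type conclusion, I would use the finite $\mathcal{P}(n-1)_*$-free complex $F$ constructed in Section~\ref{sec:3} via the Jeff Smith idempotent argument. Freeness of $H_*F$ over $\mathcal{P}(n-1)_*$ translates, via the change-of-rings between $H\F_p$ and $\BP$, into $\BP_*F$ being a free $\BP_*$-module, so $F$ is $\BP$-projective in the sense of Definition~\ref{def:wood}. Running the same Adams spectral sequence argument for $E\otimes F$ identifies its $E_2$-page with $\Ext_{\mathcal{E}(n)_*}^{*,*}(\F_p, M)$ tensored with a finite even module, which is even by hypothesis. The evenness of $E_\infty$ then allows one to lift the unit $E\otimes F\to \MU\otimes E\otimes F$ to a retraction by the usual Adams-SS obstruction argument (all obstructions live in odd stems, which are now empty). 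Hence $E\otimes F$ is a weak $\MU$-module and therefore complex-orientable by Theorem~\ref{thm:weakMU}.

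The main obstacle is the change-of-rings identification of the Adams $E_2$-page with $\Ext_{\mathcal{E}(n)_*}(\F_p,M)$ tensored with an even complement. This hinges on the $\mathcal{A}(n)_*$-comodule splitting of $H_*\mathrm{X(p^m)}$ (resp.\ $H_*F$) from Section~\ref{sec:3} interacting cleanly with the fp decomposition of $H_*E$ and with the Hopf-algebra quotient $\mathcal{A}(n)_*\to\mathcal{E}(n)_*$; once this algebra is in place, the evenness hypothesis delivers both conclusions in parallel.
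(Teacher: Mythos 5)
Your first half (the bound $\Phi_p(E)\le n$) is essentially the paper's own argument: reduce via Proposition \ref{prop:defectringspectrum} to killing the odd-stem classes $\chi_{m+1}$, and use the change-of-rings identification of the Adams $E_2$-page of $E\otimes\mathrm{T(m)}$ (Proposition \ref{prop:CORwithfp}, Corollary \ref{cor:fpobstructions}); under the full evenness hypothesis this goes through, and the paper's extra refinement about Adams filtration $\ge 2$ is not needed. A small misstatement along the way: $\mathcal{P}(n-1)$-freeness of $H^*F$ does not by itself "translate into $\BP_*F$ free" via any change of rings; what makes $F$ a finite $\BP$-projective is that the complex of Theorem \ref{thm:P(n)freecomplexes} is a retract of the even complex $(\CP^{p^n})^{\otimes N}$, so $\BP\otimes F$ is a retract of a free $\BP$-module (and then free by Lemma \ref{lemma:BPfree}).

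The genuine gap is in the last step of your Wood-type argument. Evenness of the Adams $E_2$- (hence $E_\infty$-) page for $E\otimes F$ gives you that $\pi_*(E\otimes F)$ is concentrated in even degrees, but $E\otimes F$ carries no unital multiplication ($F$ is not a ring --- compare $C(\eta)$), so this does not imply complex-orientability. Definition \ref{def:comporient} requires the maps $1_{E\otimes F}\otimes\sigma_k$ to be null; for a homotopy ring spectrum these are controlled by homotopy elements (this is how Proposition \ref{prop:defectringspectrum} works), but for the non-ring $E\otimes F$ vanishing of odd homotopy of $E\otimes F$, or even of $E\otimes F\otimes\CP^k$, does not force $1\otimes\sigma_k\simeq 0$. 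Likewise there is no "usual Adams-SS obstruction argument" whose obstructions to splitting the unit $E\otimes F\to\MU\otimes E\otimes F$ are odd-stem classes in $\mathrm{ASS}(E\otimes F)$; such a splitting lives in $[\MU\otimes E\otimes F,E\otimes F]$, which is not computed by that spectral sequence. This is exactly the point the paper's proof of Theorem \ref{thm:fpwoodsplittings} is designed to handle: it uses the ring structure on $E\otimes\mathrm{T(n)}$ together with collapse of its Adams spectral sequence to lift the polynomial generators $t_1^{2^n},\ldots,t_n^2$, builds a map $\iota:F\to\mathrm{T(n)}\{b_1,\ldots,b_k\}$ from the collapsing Atiyah--Hirzebruch spectral sequence, and produces an equivalence of $(E\otimes F)[t_1^{2^n},\ldots,t_n^2]$ with $E\otimes\mathrm{T(n)}\{b_1,\ldots,b_k\}$; then $E\otimes F$ is a retract of a complex-orientable spectrum (using $\Phi_p(E)\le n$) and complex-orientability passes to retracts. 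Your proposal needs this (or some substitute mechanism exploiting a module structure over a ring spectrum) to close the argument; as written, the final inference from evenness to a weak $\MU$-module structure on $E\otimes F$ does not follow.
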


Theorem \ref{thm:intothmobstructions} is useful because computing $\Ext$ over the exterior Hopf algebra $\mathcal{E}(n)_*$ is much more tractable than over $\mathcal{A}(n)_*$. We actually prove more specific statements by tracking the degrees of the obstructions $\chi_{m+1}$. When $E$ as above is assumed to have finite chromatic defect, we can prove a partial converse to Theorem \ref{thm:introwoodimpliesdefect} that does not necessarily require evenness conditions.

\begin{notation}
    For a spectrum $X$, we let $\mathrm{ASS}(X)$ and $\mathrm{ANSS}(X)$ denote the Adams and Adams--Novikov spectral sequences of $X$, respectively.
\end{notation}

\begin{theorem}[\ref{thm:fpwoodsplittings}]
    Let $E$ be a homotopy associative ring spectrum that is an fp spectrum, and suppose $E$ has finite chromatic defect. If $\mathrm{ASS}(E\otimes \mathrm{BP})$ collapses on the $E_2$-page, then $E$ is Wood-type.
\end{theorem}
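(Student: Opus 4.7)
Since $E$ has finite chromatic defect and is an fp ring spectrum, pick $n$ large enough that $\Phi(E)\le p^n$ and $H_*E\cong\mathcal{A}_*\Boxover{\mathcal{A}(n)_*}M$. The idea is to mimic the proof of Theorem \ref{cor:fpeven}, producing a finite $\mathrm{BP}$-projective $F$ from the Smith-idempotent construction of Section \ref{sec:3} that is $\mathcal{P}(n-1)$-free, and then arguing that $E\otimes F$ is complex-orientable. The collapse of $\mathrm{ASS}(E\otimes\mathrm{BP})$ takes the place of the evenness hypothesis used in Theorem \ref{cor:fpeven}, and the finite chromatic defect of $E$ provides the necessary vanishing of the obstructions coming from Proposition \ref{prop:introchi}.

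First, a change-of-rings calculation should identify the Adams $E_2$-page of $E\otimes F$ with $\Ext_{\mathcal{E}(n)_*}(\F_p, M\otimes N)$ for some finite $\mathcal{E}(n)_*$-comodule $N$ built from $H_*F$, using the Hopf algebra extension $\mathcal{P}(n-1)_*\to\mathcal{A}(n)_*\to\mathcal{E}(n)_*$ together with the $\mathcal{P}(n-1)_*$-freeness of $H_*F$. The role of $F$ is to convert the $\mathcal{A}(n)_*$-coinvariants in $H_*E$ into $\mathcal{E}(n)_*$-coinvariants, in perfect analogy to the way $C(\eta)$ converts $\mathrm{ko}$'s $\mathcal{A}(1)_*$-structure into $\mathrm{ku}$'s $\mathcal{E}(1)_*$-structure.

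Next, since $F$ is $\mathrm{BP}$-projective, $F\otimes\mathrm{BP}$ splits as a finite wedge of suspensions of $\mathrm{BP}$, so $\mathrm{ASS}(E\otimes F\otimes\mathrm{BP})$ inherits collapse from $\mathrm{ASS}(E\otimes\mathrm{BP})$. Via the unit map $E\otimes F\to E\otimes F\otimes\mathrm{BP}$, combined with the vanishing of $\chi_{k+1}$ in $\pi_*(E\otimes\mathrm{X(p^k)})$ for $k\ge n$ provided by Proposition \ref{prop:introchi}, the plan is to show that all the composites $E\otimes F\otimes\sigma_k$ are nullhomotopic, so that $E\otimes F$ is complex-orientable in the sense of Definition \ref{def:comporient}, whence $E$ is Wood-type. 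The main obstacle will be this last step: the finite chromatic defect of $E$ gives vanishing of $\chi_{k+1}$ only at the ring-spectrum level $E\otimes\mathrm{X(p^k)}$ for $k\ge n$, so the $\mathcal{P}(n-1)$-freeness of $F$, the change-of-rings identification of the $E_2$-page, and the collapse of $\mathrm{ASS}(E\otimes\mathrm{BP})$ must combine to translate this ring-level vanishing into the vanishing of the (possibly non-ring) obstruction classes for $E\otimes F$ itself across all $k$.
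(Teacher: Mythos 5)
Your setup (choosing $n$ with $\Phi_p(E)\le n$ and $H_*E\cong\mathcal{A}_*\Boxover{\mathcal{A}(n)_*}M$, taking the Smith-type $\mathcal{P}(n-1)$-free complex $F$ of Theorem \ref{thm:P(n)freecomplexes}, identifying the Adams $E_2$-page of $E\otimes F$ over $\mathcal{E}(n)_*$, and noting that $\mathrm{ASS}(E\otimes F\otimes\mathrm{BP})$ inherits collapse) matches the paper's, but the step you yourself flag as ``the main obstacle'' is exactly the content of the proof, and you leave it unresolved. The strategy of translating the vanishing of $\chi_{k+1}\in\pi_*(E\otimes\mathrm{X(p^k)})$ into nullhomotopies of $E\otimes F\otimes\sigma_k$ runs into precisely the problem you name: $E\otimes F$ is not a ring spectrum, so the obstruction criterion of Proposition \ref{prop:defectringspectrum} does not apply to it, and neither the $\mathcal{P}(n-1)$-freeness of $H^*F$ nor collapse of $\mathrm{ASS}(E\otimes F\otimes\mathrm{BP})$ by itself forces the attaching maps to die (after all, $\mathrm{ASS}(E\otimes\mathrm{BP})$ collapses by hypothesis and yet $E$ itself need not be complex-orientable). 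No mechanism for this translation is supplied, so as written the argument has a genuine gap at its central step.

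The paper never tries to kill the obstructions for $E\otimes F$ directly; instead it exhibits $E\otimes F$ as a retract of $E\otimes\mathrm{T(n)}\{b_1,\ldots,b_k\}$, which is complex-orientable because $\Phi_p(E)\le n$ --- this is the only place finite chromatic defect enters. Concretely: since $F$ is a retract of $(\CP^{p^n})^{\otimes N}$, the AHSS computing $[F,\mathrm{T(n)}]$ collapses, which produces a map $\iota:F\to\mathrm{T(n)}\{b_1,\ldots,b_k\}$ realizing a $\mathcal{P}(n-1)$-basis of $H^*F$; by Proposition \ref{prop:homologyT(n)overA(n)}, $H_*\mathrm{T(n)}$ is $\mathcal{P}(n-1)_*$-cofree on $\F_p[t_1^{2^n},\ldots,t_n^2]$, and collapse of $\mathrm{ASS}(E\otimes\mathrm{T(n)})$ (equivalent to your hypothesis via Proposition \ref{prop:T(n)toBPredundancy}) lets one lift the classes $t_1^{2^n},\ldots,t_n^2$ to $\pi_*(E\otimes\mathrm{T(n)})$. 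Multiplying $\iota$ against these lifts in the ring $E\otimes\mathrm{T(n)}$ yields a map $(E\otimes F)[t_1^{2^n},\ldots,t_n^2]\to E\otimes\mathrm{T(n)}\{b_1,\ldots,b_k\}$ that is a homology isomorphism, hence an equivalence, so $E\otimes F$ is a retract of a complex-orientable spectrum and $E$ is Wood-type. In other words, the collapse hypothesis is used to produce splitting classes for a retract argument, not to kill obstruction classes; some such construction is what your outline is missing.
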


Giving ourselves the mod 2 homology of $\mathrm{ko}$ and $\mathrm{tmf}$, this gives a new proof of the following result of Hopkins \cite{tmfbook}.  

\begin{corollary}[\ref{example:kotmfwoodanddefect}]
    The spectra $\mathrm{ko}$ and $\mathrm{tmf}$ are Wood-type. The chromatic defect of $\mathrm{ko}$ is 2, and the chromatic defect of $\mathrm{tmf}$ is 4.
\end{corollary}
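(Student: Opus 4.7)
The plan is to apply Theorem \ref{cor:fpeven} to $\mathrm{ko}$ and $\mathrm{tmf}$ for the upper bounds and Wood-type conclusions, and then to establish the matching lower bounds via the obstruction criterion of Proposition \ref{prop:chi}. Working $2$-locally throughout is natural: at odd primes both spectra are complex-orientable (hence chromatic defect $1$), so the interesting behavior is concentrated at $p=2$.

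Recall the classical $\mathcal{A}_*$-comodule identifications
\[H_*(\mathrm{ko};\F_2) \cong \mathcal{A}_* \Box_{\mathcal{A}(1)_*} \F_2, \qquad H_*(\mathrm{tmf};\F_2) \cong \mathcal{A}_* \Box_{\mathcal{A}(2)_*} \F_2,\]
which exhibit $\mathrm{ko}$ and $\mathrm{tmf}$ as fp spectra with $M = \F_2$ and with $n = 1$ and $n = 2$ respectively. To apply Theorem \ref{cor:fpeven}, one must check that $\mathrm{Ext}^{*,*}_{\mathcal{E}(n)_*}(\F_2, \F_2)$ is concentrated in even stems. Since $\mathcal{E}(n)_* = E(\xi_1,\ldots,\xi_{n+1})$ is an exterior Hopf algebra at $p=2$ on primitives $\xi_i$ of degree $2^i - 1$, a standard computation yields
\[\mathrm{Ext}^{*,*}_{\mathcal{E}(n)_*}(\F_2, \F_2) \cong \F_2[h_1, \ldots, h_{n+1}],\]
with each $h_i$ in bidegree $(s,t) = (1, 2^i - 1)$ and hence in the stem $t - s = 2^i - 2$, which is even. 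The evenness hypothesis therefore holds for $n = 1, 2$, and Theorem \ref{cor:fpeven} immediately gives $\Phi(\mathrm{ko}) \le 2$, $\Phi(\mathrm{tmf}) \le 4$, together with the Wood-type property for both.

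For the matching lower bounds, the case of $\mathrm{ko}$ is straightforward: the obstruction class $\chi_1 = \eta \in \pi_1 \mathrm{ko}$ is nonzero, so Proposition \ref{prop:chi} applied with $m = 0$ gives $\Phi(\mathrm{ko}) > 1$, and therefore $\Phi(\mathrm{ko}) = 2$. For $\mathrm{tmf}$, the relevant obstruction to reducing $\Phi$ below $4$ is the class $\chi_2 \in \pi_5(\mathrm{tmf}\otimes\mathrm{X(2)})$: showing $\chi_2 \neq 0$ here will give $\Phi(\mathrm{tmf}) > 3$ by Proposition \ref{prop:chi} (with $m = 1$), forcing $\Phi(\mathrm{tmf}) = 4$.

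The main obstacle is thus the nonvanishing of $\chi_2$ in $\pi_5(\mathrm{tmf}\otimes\mathrm{X(2)})$. The plan is to compute via the Adams spectral sequence: by the Thom isomorphism $H_*(\mathrm{X(2)};\F_2) \cong \F_2[b_1]$ with $|b_1| = 2$ (using the $\mathcal{A}_*$-coaction computed in Section \ref{sec:3}), change of rings rewrites the $E_2$-page of $\mathrm{ASS}(\mathrm{tmf}\otimes\mathrm{X(2)})$ as a tractable Ext group over $\mathcal{A}(2)_*$ with coefficients in $\F_2[b_1]$. The class $\chi_2$ is detected by a specific polynomial in the $h_i$ generators on this $E_2$-page, and the sparsity of $\pi_*\mathrm{tmf}$ in low degrees---combined with the cell structure of $\mathrm{X(2)}$---rules out any Adams differential targeting this detecting class, so it survives to a nonzero element of $\pi_5(\mathrm{tmf}\otimes\mathrm{X(2)})$.
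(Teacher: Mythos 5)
Your upper bounds and the Wood-type conclusion follow exactly the paper's route: the paper's proof of this statement is precisely the observation that $M=\F_2$ is concentrated in even degrees, hence $\Ext_{\mathcal{E}(n)_*}(\F_2,\F_2)$ lies in even stems (your explicit identification $\F_2[h_1,\dots,h_{n+1}]$ with $h_i$ in stem $2^i-2$ is the same point), after which Corollary \ref{cor:fpeven} applies with $n=1,2$. For the precise values the paper does not argue further --- it states that this ``recovers'' Hopkins' computation quoted in Example \ref{example:kotmfdefect} --- so your attempt to supply the lower bounds goes beyond what the paper writes down; your $\mathrm{ko}$ lower bound via $\chi_1=\eta\neq 0$ is correct and complete.

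The genuine gap is in the $\mathrm{tmf}$ lower bound. The entire content of $\Phi(\mathrm{tmf})>3$ is the nonvanishing of $\chi_2$ in $\pi_5(\mathrm{tmf}\otimes \mathrm{X(2)})$, and you assert this rather than prove it: you never identify the detecting class, never check it survives the change of rings, and the mechanism you invoke (``sparsity of $\pi_*\mathrm{tmf}$ rules out differentials targeting it'') is not the relevant one --- a class in Adams filtration one can never be the target of a differential, and the image of a permanent cycle is a permanent cycle, so survival is automatic once nonvanishing on $E_2$ is known. What actually needs checking is that the filtration-one class detecting $\chi_2$ has nonzero image in $\Ext^{1,6}_{\mathcal{A}(2)_*}(\F_2,\F_2[b_1])$. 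This is a short cobar computation: a representative is $[\xi_2^2]\otimes 1+[\xi_1^4]\otimes b_1$ (this is a cocycle since $\bar\Delta(\xi_2^2)=\xi_1^4\otimes\xi_1^2$ and $\bar\lambda(b_1)=\xi_1^2\otimes 1$), and the only coboundary in that bidegree is $d(b_1^3)=[\xi_1^2]\otimes b_1^2+[\xi_1^4]\otimes b_1+[\xi_1^6]\otimes 1$, which does not involve the monomial $\xi_2^2$; since $\xi_2^2\neq 0$ in $\mathcal{A}(2)_*$, the class is nonzero, so $\chi_2\neq 0$ and $\Phi(\mathrm{tmf})\ge 4$ follows from Proposition \ref{prop:defectringspectrum}. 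So your plan is completable, but as written the key step is missing; alternatively the lower bound can be extracted from the paper's later results, e.g.\ a ring map from $\mathrm{tmf}$ to an $\mathrm{EO}_2(G)$ together with Remark \ref{rmk:defectupward} and Theorem \ref{thm:defecteon}. One smaller slip: the reduction to $p=2$ via ``at odd primes both spectra are complex-orientable'' is false for $\mathrm{tmf}$ at $p=3$ (the image of $\alpha_1$ in $\pi_3\mathrm{tmf}_{(3)}$ is nonzero, so $\mathrm{tmf}_{(3)}$ is not a weak $\mathrm{MU}$-module); to get the integral value $4$ you should instead apply Corollary \ref{cor:fpeven} to the mod $3$ homology of $\mathrm{tmf}$, or invoke the splitting $\mathrm{tmf}\otimes X_2\simeq\mathrm{tmf}_1(2)$ of Example \ref{example:woodtmf}, which gives $\Phi_3(\mathrm{tmf})\le 1$.
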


\begin{remark}
    The spectra $\mathrm{ko}$ and $\mathrm{tmf}$ are good connective models of certain Hopkins--Miller higher real $K$-theories (see \cite[Section 1.1]{chr} for a discussion of this). In \cite{BHSZ}, Beaudry--Hill--Shi--Zeng construct good connective models of the spectra $\mathrm{EO}_{2^{n-1}m}(C_{2^n})$ known as the $\mathrm{BP}^{((G))}\langle m\rangle$'s. In joint work with Mike Hill, we showed that these are fp spectra \cite{eohpaper}. The determination of their chromatic defect can thus be approached via the methods of Section \ref{sec:4}, and we intend to return to this in future work.
\end{remark}
 
\subsubsection*{Section 5} When $E$ has finite chromatic defect, the $E_2$-page of its Adams--Novikov spectral sequence simplifies substantially. For example, the $E_2$-page in general is determined by cohomology over the group scheme $\Spec(\Z[b_1,b_2,\ldots])$ corepresenting power series $x+b_1x^2+\cdots$. However, when $E$ has chromatic defect $\le n$, the $E_2$-page is determined by cohomology over the subgroup scheme $\Spec(\Z[b_1,b_2,\ldots,b_{n-1}])$, which has finite Krull dimension. Hopkins' stacks framework allows for these sorts of change-of-rings isomorphisms to take a more conceptual form. We use this to verify our ``finite subgroups'' philosophy on chromatic defect in the following way.

\begin{proposition}[\ref{prop:defectimpliesfiniteaut}]
    Let $E$ be a homotopy commutative ring spectrum with finite chromatic defect. Then, for any algebraically closed field $k$ and any $x\in\mathcal{M}_E(k)$ such that $p_E(x)\in \mathcal{M}_{FG}(1)(k)$ has finite height, the image of the homomorphism
    \[\psi:\mathrm{Aut}_{\mathcal{M}_E(k)}(x)\to \mathrm{Aut}_{\mathcal{M}_{FG}(1)(k)}(p_E(x))\]
    is a finite subgroup.
\end{proposition}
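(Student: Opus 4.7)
The plan is to exploit that, by the finite chromatic defect hypothesis, $E\otimes\mathrm{X(n)}$ is complex-orientable for some $n$, making $\mathcal{M}_{E\otimes\mathrm{X(n)}}\simeq\Spec\pi_{2*}(E\otimes\mathrm{X(n)})$ an affine scheme with trivial automorphism groups. The key input to verify first is a 2-pullback square of stacks
\[\begin{tikzcd}
\mathcal{M}_{E\otimes\mathrm{X(n)}}\arrow[r]\arrow[d] & \mathcal{M}_E\arrow[d,"p_E"]\\
\mathcal{M}_{FG}(n)\arrow[r,"q"'] & \mathcal{M}_{FG}(1),
\end{tikzcd}\]
where $q$ is the canonical projection. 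This follows from the K\"unneth isomorphism $\mathrm{MU}_{2*}(E\otimes\mathrm{X(n)})\cong\mathrm{MU}_{2*}E\otimes_{\mathrm{MU}_{2*}}\mathrm{MU}_{2*}\mathrm{X(n)}$ (and the analogous formula at the level of cooperations), using that $\mathrm{MU}_*\mathrm{X(n)}\cong\mathrm{MU}_*[b_1,\ldots,b_{n-1}]$ is free over $\mathrm{MU}_*$.

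Given $x\in\mathcal{M}_E(k)$, I would lift it to some $\tilde{x}\in\mathcal{M}_{E\otimes\mathrm{X(n)}}(k)$: via the pullback, this amounts to choosing any coordinate through degree $n$ on the formal group $p_E(x)$, which exists over any field. Writing $\tilde{y}$ for its image in $\mathcal{M}_{FG}(n)(k)$, unpacking the pullback shows that automorphisms of $\tilde{x}$ correspond to pairs $(f,g)\in\mathrm{Aut}(x)\times\mathrm{Aut}(\tilde{y})$ satisfying $\psi(f)=q(g)$ in $\mathrm{Aut}(p_E(x))$. Triviality of $\mathrm{Aut}(\tilde{x})$ then forces
\[\mathrm{image}(\psi)\,\cap\,\mathrm{image}(q)=\{1\}.\]

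To finish, I would identify $\mathrm{image}(q)\subseteq\mathrm{Aut}(p_E(x))$ explicitly: an automorphism of $\tilde{y}$ is an automorphism of $p_E(x)$ preserving the chosen coordinate through degree $n$, which in power series amounts to the congruence $f(t)\equiv t\pmod{t^{n+1}}$. Since $p_E(x)$ has finite height, $\mathrm{Aut}(p_E(x))$ is a compact $p$-adic analytic group (the Morava stabilizer), and the subgroup cut out by this congruence is open, hence of finite index. The trivial intersection above then exhibits $\mathrm{image}(\psi)$ as a subset of the finite coset space $\mathrm{Aut}(p_E(x))/\mathrm{image}(q)$, so it is finite. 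The main technical obstacle is the first step — verifying the 2-pullback square of stacks — a Hopf algebroid bookkeeping exercise that ultimately reduces to the flatness of $\mathrm{MU}_*\mathrm{X(n)}$ over $\mathrm{MU}_*$.
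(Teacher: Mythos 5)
Your proposal is correct, and its backbone is the same as the paper's: both arguments rest on the identification $\mathcal{M}_{E\otimes \mathrm{X(n)}}\simeq\mathcal{M}_E\times_{\mathcal{M}_{FG}(1)}\mathcal{M}_{FG}(n)$ (which in the paper is Proposition \ref{prop:stackpullbackX(n)}, so the K\"unneth/flatness bookkeeping you flag as the main obstacle is already done there), on affineness of this pullback when $\Phi(E)\le n$ (Corollary \ref{cor:finitephiaffine}), and on the resulting discreteness, which in your language says exactly that $\mathrm{image}(\psi)$ meets the congruence subgroup $U_n=\{f: f(t)\equiv t \bmod t^{n+1}\}$ trivially. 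Where you diverge is the concluding finiteness step. The paper argues by contradiction and pigeonhole: over $k=\overline{k}$ of characteristic $p$, Lazard's theorem identifies $p_E(x)$ with a Honda formal group whose automorphisms are defined over a finite field, so an infinite image would contain two distinct automorphisms agreeing modulo $x^{N+1}$, whose ratio violates discreteness. You instead invoke the structure of $\mathrm{Aut}(p_E(x))$ as a compact group in which $U_n$ is open of finite index, and use the injection $\mathrm{image}(\psi)\hookrightarrow \mathrm{Aut}(p_E(x))/U_n$ coming from the trivial intersection; this is a clean repackaging, and the inputs are essentially equivalent (finiteness of the set of truncations versus finite index of the congruence subgroup, both resting on the same classification over $\overline{\F}_p$). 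Two small points to tidy: your phrase ``the Morava stabilizer'' only applies in characteristic $p$ --- in characteristic zero the strict automorphism group of $p_E(x)$ is already trivial, so that case should be noted separately as the paper does; and the lift $\tilde{x}$ requires no choice, since objects of $\mathcal{M}_{FG}(n)(k)$ and $\mathcal{M}_{FG}(1)(k)$ are the same formal group laws, so one may take $\tilde{y}=p_E(x)$ with the identity $2$-cell.
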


In other words, at finite height geometric points, the morphism $p_E:\mathcal{M}_E\to\mathcal{M}_{FG}(1)$ lands in finite subgroups. We provide various converses to this when $\mathcal{M}_E$ has certain strong finiteness conditions. These conditions hold for example for the $\mathrm{EO}_n(G)$'s and also for $\mathrm{ko}$ and $\mathrm{tmf}$. In the following, we suppose $E\otimes \mathrm{X(n)}$ is $\mathrm{MU}$ nilpotent-complete for all $n$ sufficiently large, for example if $E$ is connective or $\mathrm{MU}$-nilpotent.

\begin{proposition}[\ref{prop:finiteautimpliesdefect}]
    Let $E$ be a homotopy commutative ring spectrum, and suppose there is a faithfully flat, finite morphism $\Spec(R)\to \mathcal{M}_E$, for some Noetherian commutative ring $R$. Then $E$ has finite chromatic defect. 
\end{proposition}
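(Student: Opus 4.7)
The strategy is to use the finiteness of the cover $q$ to show that $\mathcal{M}_{E\otimes \mathrm{X(n)}}\cong \mathcal{M}_E\times_{\mathcal{M}_{FG}(1)}\mathcal{M}_{FG}(n)$ becomes affine for some $n$, and then to conclude via Theorem \ref{thm:weakMU} together with the standing $\mathrm{MU}$-nilpotent completeness hypothesis. Once affineness is known, the $E_2$-page of the Adams--Novikov spectral sequence of $E\otimes \mathrm{X(n)}$---which is the coherent cohomology of the base-changed stack---is concentrated in filtration zero, so the ANSS collapses; convergence under the completeness hypothesis then yields a retraction of the unit $E\otimes \mathrm{X(n)}\to \mathrm{MU}\otimes E\otimes \mathrm{X(n)}$. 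By Theorem \ref{thm:weakMU}, $E\otimes \mathrm{X(n)}$ is complex-orientable, establishing $\Phi(E)\le n$.

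To produce the value of $n$, I would pull $q$ back and run a Noetherian descending chain argument. The Hopf algebroid presenting $\mathcal{M}_E$ on the cover has morphism scheme $\Spec \Gamma$ with $\Gamma := R\otimes_{\mathcal{M}_E} R$; finiteness of $q$ makes $\Gamma$ a finite flat $R$-algebra on both sides, hence a Noetherian ring. The base change along $\mathcal{M}_{FG}(n)\to \mathcal{M}_{FG}(1)$ corresponds to forming the quotient Hopf algebroid $(R_n,\Gamma_n)$ where $\Gamma_n = \Gamma/(b_1,\dots,b_{n-1})$, with the $b_i$ recording the coefficients of the strict isomorphism of formal groups induced by an arrow. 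The chain $\Gamma\twoheadrightarrow \Gamma_1\twoheadrightarrow \Gamma_2\twoheadrightarrow \cdots$ is a descending chain of Noetherian quotients, so it stabilizes at some finite stage $N$. An arrow in $\Gamma_N$ encodes an isomorphism in $\mathcal{M}_E$ whose induced strict isomorphism of formal groups is the identity in every degree, i.e.\ the identity strict isomorphism; since $q$ is representable, such arrows must themselves be identities in $\mathcal{M}_E$, so $\Gamma_N = R$ and the base-changed stack is affine.

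The main technical obstacle lies in the final identification $\Gamma_N = R$, which requires the relative inertia of $\mathcal{M}_E\to\mathcal{M}_{FG}(1)$ to be trivial---equivalently, automorphisms in $\mathcal{M}_E$ must be faithfully detected by their induced formal group automorphisms. For the motivating examples ($\mathrm{ko}$, $\mathrm{tmf}$, and the higher real $K$-theories $\mathrm{EO}_n(G)$) this is automatic from the (quasi-)\'etale nature of their stack presentations over $\mathcal{M}_{FG}(1)$; in the general setting one must extract it from the finite faithful flatness of $q$ together with the polynomial presentation $\mathrm{MU}_*\mathrm{MU} = \mathrm{MU}_*[b_1,b_2,\dots]$, essentially checking that finiteness of $q$ precludes hidden stacky structure in $\mathcal{M}_E$ beyond what is visible on the formal group side.
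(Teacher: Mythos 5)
Your core mechanism is the one the paper uses: form $\Spec(T)=\Spec(R)\times_{\mathcal{M}_E}\Spec(R)$ (your $\Gamma$), observe it is finite over $R$ and hence Noetherian, and let the chain $(b_1)\subset(b_1,b_2)\subset\cdots$ stabilize. The gap is in how you pass from this to affineness of $\mathcal{P}(n):=\mathcal{M}_E\times_{\mathcal{M}_{FG}(1)}\mathcal{M}_{FG}(n)$. Base change along $\mathcal{M}_{FG}(n)\to\mathcal{M}_{FG}(1)$ is \emph{not} the ``quotient Hopf algebroid'' $(R_n,\Gamma/(b_1,\ldots,b_{n-1}))$: that morphism is affine and adjoins jet data to the objects, so an atlas for $\mathcal{P}(n)$ is (roughly) $\Spec(R[b_1,\ldots,b_{n-1}])=\Spec(R)\times_{\mathcal{M}_{FG}(1)}\mathcal{M}_{FG}(n)$ rather than $\Spec(R)$, and ``$\Gamma_N=R$, hence the base-changed stack is affine'' is not a valid deduction. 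What the stabilization honestly gives is that any automorphism in $\mathcal{P}(N)$ kills $(b_1,\ldots,b_{N-1})$ and hence every $b_i$, so its underlying strict isomorphism is the identity; granting the faithfulness point below, $\mathcal{P}(N)$ therefore has trivial inertia and is an algebraic space. Affineness is a further, non-formal step, and it is the second place the paper uses finiteness of $q$: pulling the finite faithfully flat cover back gives a finite surjection from the affine scheme $\Spec(R)\times_{\mathcal{M}_{FG}(1)}\mathcal{M}_{FG}(N)$ onto $\mathcal{P}(N)$, and one invokes the Chevalley-type result \cite[Lemma 69.17.3]{stacks} to conclude $\mathcal{P}(N)$ is affine. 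Your argument uses finiteness only for Noetherianity of $\Gamma$ and never supplies this step.

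The ``main technical obstacle'' you flag --- that automorphisms in $\mathcal{M}_E$ be faithfully detected by their induced formal-group automorphisms --- is indeed needed, but you look for it in the wrong place: it cannot be extracted from finiteness or flatness of $q$, since the choice of cover says nothing about the relative inertia of $p_E$. It is automatic from Hopkins' construction: because $\mathrm{MU}_*\mathrm{MU}$ is free over $\mathrm{MU}_*$, one has $\mathrm{MU}_{2*}(\mathrm{MU}\otimes E)\cong\mathrm{MU}_{2*}\mathrm{MU}\otimes_{\mathrm{MU}_{2*}}\mathrm{MU}_{2*}E$ for any homotopy commutative ring spectrum $E$, so the defining Hopf algebroid is induced from $(\mathrm{MU}_{2*},\mathrm{MU}_{2*}\mathrm{MU})$ and $p_E:\mathcal{M}_E\to\mathcal{M}_{FG}(1)$ is an affine, in particular representable, morphism; representable morphisms are injective on automorphism groups, which is exactly the faithfulness you need (and is what the paper uses tacitly here and in Proposition \ref{prop:quotientstackhasfinitechromaticdefect}); note that representability of $q$ itself, which you invoke, is vacuous and gives nothing. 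Finally, a smaller point: collapse of $\mathrm{ANSS}(E\otimes\mathrm{X(n)})$ on the zero line plus convergence does not by itself produce a retraction of the unit map; the paper's route (Corollary \ref{cor:finitephiaffine}) uses affineness to get collapse of $\mathrm{ANSS}(\CP^k\otimes E\otimes\mathrm{X(n)})$ for every $k$, forcing each $\sigma_k$ to act by zero, and then uses the ring structure together with Theorem \ref{thm:weakMU}; you should simply route this step through Corollary \ref{cor:finitephiaffine}.
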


Finally, we move our stacks analysis to the $\mathrm{K(n)}$-local category to compute the chromatic defect of the $K(n)$-local sphere.

\begin{theorem}[\ref{thm:K(n)localsphere}]
    The spectrum $L_{K(n)}\mathbb{S}$ does not have finite chromatic defect.
\end{theorem}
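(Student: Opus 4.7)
The plan is to argue by contrapositive via $K(1)$-localization, exploiting the identification $L_{K(1)}\mathrm{j}\simeq L_{K(1)}\mathbb{S}$ to reduce to a moduli-stack question about the $K(1)$-local sphere, and then invoking Proposition \ref{prop:defectimpliesfiniteaut} to obtain a contradiction from the infiniteness of the height-1 Morava stabilizer group.

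Suppose for contradiction that $\Phi(\mathrm{j})\le N<\infty$, so that $\mathrm{j}\otimes \mathrm{X}(N)\otimes\sigma_k$ is nullhomotopic for all $k\ge 1$. The first step is to show $\Phi(L_{K(1)}\mathbb{S})\le N$. Since $\mathrm{X}(N)$, $\mathbb{S}^{2k+1}$, and $\CP^k$ are all finite spectra, $K(1)$-localization commutes with smashing against them; applying $L_{K(1)}$ to the null map and using $L_{K(1)}\mathrm{j}\simeq L_{K(1)}\mathbb{S}$ yields a nullhomotopy of $(L_{K(1)}\mathbb{S}\otimes\mathrm{X}(N))\otimes\sigma_k$. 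Hence $L_{K(1)}\mathbb{S}$ has chromatic defect at most $N$.

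The second step applies Proposition \ref{prop:defectimpliesfiniteaut} to $E=L_{K(1)}\mathbb{S}$. By Devinatz--Hopkins, $L_{K(1)}\mathbb{S}\simeq E_1^{h\mathbb{G}_1}$ where $E_1$ is a height-1 Morava $E$-theory and $\mathbb{G}_1\cong\Z_p^\times$ (or its variant at $p=2$) is the height-1 Morava stabilizer group. The unit $L_{K(1)}\mathbb{S}\to E_1$ produces a geometric point $x\in \mathcal{M}_{L_{K(1)}\mathbb{S}}(\bar{\F}_p)$ whose image $p_E(x)\in\mathcal{M}_{FG}(1)(\bar{\F}_p)$ is the formal multiplicative group $\hat{\mathbb{G}}_m$, which has height 1. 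The $\mathbb{G}_1$-action on $E_1$ yields automorphisms of $x$, and the composite
\[\mathbb{G}_1\longrightarrow \Aut_{\mathcal{M}_{L_{K(1)}\mathbb{S}}(\bar{\F}_p)}(x)\xrightarrow{\;\psi\;}\Aut_{\mathcal{M}_{FG}(1)(\bar{\F}_p)}(\hat{\mathbb{G}}_m)\cong\Z_p^\times\]
recovers the defining isomorphism $\mathbb{G}_1\cong\Z_p^\times$. In particular the image of $\psi$ contains all of $\Z_p^\times$, an infinite profinite group, contradicting Proposition \ref{prop:defectimpliesfiniteaut}.

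The main obstacle is the stack-theoretic step: justifying the description of $\mathcal{M}_{L_{K(1)}\mathbb{S}}$ at the geometric point corresponding to $\hat{\mathbb{G}}_m$, since $\mathrm{MU}_*L_{K(1)}\mathbb{S}$ is not directly accessible. This is exactly the reason Section 5 develops its stacks analysis within the $K(1)$-local category: one works with the $K(1)$-local Adams--Novikov data $(\pi_*E_1,\pi_*(E_1\otimes E_1))$, in which the cosimplicial structure manifestly encodes the $\mathbb{G}_1$-action and the map $\psi$ restricted to $\mathbb{G}_1$ is visibly the identity on $\Z_p^\times$. The preservation of finite chromatic defect under $L_{K(1)}$ (step one) and the compatibility of the $K(1)$-local stack with the pullback to $\mathcal{M}_{FG}(1)$ at the height-1 point are the two technical points that must be verified carefully.
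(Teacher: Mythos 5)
Your overall strategy (pass to the $K(1)$-local category, use $L_{K(1)}\mathrm{j}\simeq L_{K(1)}\mathbb{S}$, and contradict the infinitude of $\mathbb{G}_1$ via the Section 5 stack machinery) is the paper's, but both of your steps have problems as written. First, $\mathrm{X}(N)$ is \emph{not} a finite spectrum: $H_*(\mathrm{X}(N);\Z)\cong\Z[b_1,\ldots,b_{N-1}]$, so it has cells in infinitely many dimensions, and since $L_{K(1)}$ is not smashing you cannot commute $L_{K(1)}$ past $-\otimes\mathrm{X}(N)$. What localizing your nullhomotopies actually gives is that $L_{K(1)}(\mathrm{j}\otimes\mathrm{X}(N))\simeq L_{K(1)}\mathrm{X}(N)$ is complex-orientable (this is exactly the paper's first move), not that $L_{K(1)}\mathbb{S}\otimes\mathrm{X}(N)$ is. The conclusion $\Phi(L_{K(1)}\mathbb{S})\le\Phi(\mathrm{j})$ is in fact true, but for a different reason: the localization map $\mathrm{j}\to L_{K(1)}\mathrm{j}\simeq L_{K(1)}\mathbb{S}$ is a map of ring spectra, so Remark \ref{rmk:defectupward} applies; your stated justification does not.

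The more serious gap is your second step. Proposition \ref{prop:defectimpliesfiniteaut} concerns the stack $\mathcal{M}_{L_{K(1)}\mathbb{S}}$ of Definition \ref{def:stack}, built from the Hopf algebroid $(\mathrm{MU}_{2*}L_{K(1)}\mathbb{S},\mathrm{MU}_{2*}(\mathrm{MU}\otimes L_{K(1)}\mathbb{S}))$. To contradict it you must exhibit a geometric point of \emph{this} stack, lying over a finite-height formal group, whose automorphism group has infinite image in $\Aut_{\mathcal{M}_{FG}(1)}$; the unit $L_{K(1)}\mathbb{S}\to E_1$ and the $\mathbb{G}_1$-action do not by themselves produce such automorphisms, and the $K(1)$-local $E_1$-based data $(\pi_*E_1,\pi_*L_{K(1)}(E_1\otimes E_1))$ you invoke is a \emph{different} Hopf algebroid from the one defining $\mathcal{M}_{L_{K(1)}\mathbb{S}}$, for which Proposition \ref{prop:defectimpliesfiniteaut} has not been proved. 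This identification problem is exactly what Proposition \ref{prop:behrenspullback} is for, and the paper sidesteps your difficulty by applying it not to $\mathbb{S}$ or $\mathrm{j}$ but to $E=\mathrm{X}(n)$, whose $\mathrm{MU}$-homology is free (hence flat): complex-orientability of $L_{K(1)}\mathrm{X}(n)$ then forces the pullback $\mathcal{M}_{FG}(n)\times_{\mathcal{M}_{FG}(1)}(\mathcal{M}_{FG})^{[1]}_{(p)}(1)$ to be affine, hence discrete, and the rigidity argument from the proof of Proposition \ref{prop:defectimpliesfiniteaut} contradicts the infinitude of $\mathbb{G}_1$. To salvage your route you would have to prove a Behrens-type identification of $\mathcal{M}_{L_{K(1)}\mathbb{S}}$ (say with the formal neighborhood $(\mathcal{M}_{FG})^{[1]}_{(p)}(1)$), or simply avoid forming the stack of $L_{K(1)}\mathbb{S}$ at all, as the paper does. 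A minor additional point: morphisms in $\mathcal{M}_{FG}(1)$ are \emph{strict} isomorphisms, so the relevant target is the strict automorphism group of $\widehat{\mathbb{G}}_m$, not all of $\Z_p^\times$; it is still infinite, which is all the argument needs.
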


\begin{remark}
   Let $\mathrm{j}$ be the connective image of $\mathrm{J}$ spectrum. Since
   \[L_{\mathrm{K(1)}}\mathrm{j}\simeq L_{\mathrm{K(1)}}\mathbb S,\] it follows also that $\mathrm{j}$ has infinite chromatic defect (see Theorem \ref{thm:defectj}). This is an important example because $\mathrm{j}$ is an fp spectrum, and in fact $\mathrm{MU}$-nilpotent, but it does not have finite chromatic defect and thus is not Wood-type.
\end{remark}

\subsubsection*{Section 6} We study chromatic defect in two examples where a ring spectrum $E$ admits an equivalence $\mathcal{M}_E\simeq\Spec(R)/G$ to a quotient stack by a finite group. Working at the prime 2, let $\mathrm{E(n)}$ denote the height $n$ Johnson--Wilson theory. The spectrum $\mathrm{E(n)}$ admits a natural lift to a $C_2$-spectrum called $\mathrm{E}_{\mathbb R}(n)$, constructed by Hu--Kriz \cite{HK}, where $C_2$ acts by complex-conjugation. The fixed point spectra $\mathrm{ER(n)}:=\mathrm{E}_{\mathbb R}(n)^{hC_2}$ have been studied extensively by Kitchloo--Wilson; they used these theories to prove new nonimmersion results for real projective spaces \cite{KW1} \cite{KW2}.

\begin{theorem}[\ref{thm:defectern}]
    The chromatic defect of $\mathrm{ER(n)}$ is $2^n$.
\end{theorem}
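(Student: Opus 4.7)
The plan is to combine the stack-theoretic framework behind Propositions~\ref{prop:defectimpliesfiniteaut} and~\ref{prop:finiteautimpliesdefect} with the obstruction-theoretic criterion of Proposition~\ref{prop:chi}, together with a descent calculation along the $C_2$-homotopy fixed point spectral sequence for $\mathrm{ER(n)} = \mathrm{E}_{\R}(n)^{hC_2}$. First I would identify $\mathcal{M}_{\mathrm{ER(n)}}$ with the quotient stack $[\Spec(\mathrm{E(n)}_*)/C_2]$, where $C_2$ acts by the Hu--Kriz complex-conjugation involution: on coefficients this acts as $v_i \mapsto -v_i$ on $\mathrm{E(n)}_* = \Z_{(2)}[v_1,\ldots,v_{n-1},v_n^{\pm 1}]$, and on the associated formal group as $[-1]_F$. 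Since $\Spec(\mathrm{E(n)}_*) \to \mathcal{M}_{\mathrm{ER(n)}}$ is a faithfully flat, finite cover, Proposition~\ref{prop:finiteautimpliesdefect} immediately yields that $\Phi(\mathrm{ER(n)})$ is finite; the theorem's content is to pin down the exact value.

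For the upper bound $\Phi(\mathrm{ER(n)}) \leq 2^n$, I would show that the $C_2$-action on $\mathrm{E(n)} \otimes \mathrm{X(2^n)}$ admits an equivariant trivialization by a strict coordinate change, allowing complex orientability to descend to $\mathrm{ER(n)} \otimes \mathrm{X(2^n)}$. The coefficients $b_1,\ldots,b_{2^n-1}$ provided by the unit map $\mathrm{X(2^n)} \to \mathrm{MU}$ give enough freedom to absorb the discrepancy between the canonical coordinate $x$ and its $C_2$-conjugate $[-1]_F(x)$: the height-$n$ structure of the formal group bounds the support (modulo strict change of coordinates) of $[-1]_F(x) + x$ to degrees strictly less than $2^n$.

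For the lower bound $\Phi(\mathrm{ER(n)}) > 2^n - 1$, by Proposition~\ref{prop:chi} it suffices to produce a nonzero obstruction class $\chi_n \in \pi_{2^{n+1}-3}(\mathrm{ER(n)} \otimes \mathrm{X(2^{n-1})})$. I would detect $\chi_n$ in the homotopy fixed point spectral sequence
\[
H^s(C_2;\, \pi_t(\mathrm{E(n)} \otimes \mathrm{X(2^{n-1})})) \;\Longrightarrow\; \pi_{t-s}(\mathrm{ER(n)} \otimes \mathrm{X(2^{n-1})})
\]
by identifying it with the leading $H^1$-cocycle obstructing the trivialization of the $C_2$-action on coordinates modulo degree $2^n$. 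Sparsity and degree considerations on the spectral sequence — the target of a potential differential out of $\chi_n$ and the source of a potential differential into it both vanish in the relevant range — force this class to survive to $E_\infty$.

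The main obstacle is the formal-group input shared by both bounds: precisely determining, for the height-$n$ formal group of $\mathrm{E(n)}$, in which degrees the coefficients of $[-1]_F(x) + x$ can be absorbed into a strict coordinate change. This is a Morava stabilizer group calculation matching the numerical bound $2^n$ to the height; I would handle it by reducing to the residue field $\F_2[v_n^{\pm 1}]$, where the $2$-series of the Honda formal group is $x^{2^n}$, so that the first genuine obstruction is forced into degree exactly $2^n$, confirming both bounds simultaneously.
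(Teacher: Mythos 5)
Your closing paragraph contains the correct formal-group input --- for a height-$n$ formal group with $v_n$ invertible, the first unit coefficient of the $2$-series sits in degree exactly $2^n$ --- and this is indeed what pins the answer in the paper: if $[-1]_{F}(x)\equiv x \bmod x^{2^n+1}$ over a nonzero ring then $2v_n=0$, so the ring has characteristic $2$ and $[2]_F(x)\equiv 0 \bmod x^{2^n+1}$, contradicting height $\le n$; while the point over $\F_2[v_n^{\pm}]$, where $[-1]_F(x)\equiv x \bmod (2,v_1,\ldots,v_{n-1},v_nx^{2^n})$, supplies the nontrivial automorphism for the lower bound. The scaffolding you hang this on, however, has a genuine gap: Proposition \ref{prop:finiteautimpliesdefect}, Proposition \ref{prop:chi} (via Proposition \ref{prop:defectringspectrum}), and even the definition of $\mathcal{M}_{\mathrm{ER(n)}}$ in Definition \ref{def:stack} all require a homotopy commutative/associative ring structure, and $\mathrm{ER(n)}$ is not known to admit one for $n>1$. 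So ``Proposition \ref{prop:finiteautimpliesdefect} immediately yields finiteness'' and ``by Proposition \ref{prop:chi} it suffices to produce a nonzero $\chi_n\in\pi_{2^{n+1}-3}(\mathrm{ER(n)}\otimes\mathrm{X(2^{n-1})})$'' are both unjustified as stated: the obstruction-class formulation needs a multiplication on $\mathrm{ER(n)}\otimes\mathrm{X(2^{n-1})}$ to turn the relevant component of $\sigma_{2^n-1}$ into a homotopy element, and no multiplication is available below $2^n$. The paper's proof is organized precisely to dodge this: it first proves (Proposition \ref{prop:ERnmultiplication}, using Lemma \ref{lemma:cofree} to identify $\mathrm{ER(n)}\otimes\mathrm{X(m)}$ with $(\mathrm{E(n)}\otimes\mathrm{X(m)})^{hC_2}$, and affineness of the pullback stack to collapse the HFPSS) that $\mathrm{ER(n)}\otimes\mathrm{X(m)}$ is a Landweber exact homotopy commutative ring spectrum for $m\ge 2^n$; this gives the upper bound and, via Proposition \ref{prop:stacker(n)}, a well-defined $\mathcal{M}_{\mathrm{ER(n)}}$ with which to run the lower-bound contradiction stack-theoretically rather than through a HFPSS detection of $\chi_n$.

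Your upper-bound mechanism also does not work as described. Complex orientability does not simply ``descend'' along $\mathrm{ER(n)}\otimes\mathrm{X(2^n)}\to\mathrm{E(n)}\otimes\mathrm{X(2^n)}$ (there is no retraction to quote), and the claim that $b_1,\ldots,b_{2^n-1}$ ``absorb'' the discrepancy because it is supported in degrees $<2^n$ is not right: the conjugate coordinate $-[-1]_F(x)$ differs from $x$ in degrees beyond $2^n$ as well, and the action on the coefficients ($v_i\mapsto -v_i$) cannot be trivialized by any coordinate change --- one passes to the quotient stack instead. What one actually proves is that $\Spec(\mathrm{E(n)}_*)/C_2\times_{\mathcal{M}_{FG}(1)}\mathcal{M}_{FG}(2^n)$ has no nontrivial automorphisms (the characteristic-$2$/$2$-series argument above), hence is affine, and then converts this into an orientation by building the Landweber exact ring structure whose homotopy realizes the collapsed fixed-point spectral sequence. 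If you reroute your sketch through that construction --- discreteness of the jet-stack, Lemma \ref{lemma:cofree} for the identification of the HFPSS target, Landweber exactness for the ring structure --- your formal-group computation then does carry both bounds, and you recover essentially the paper's argument.
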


This generalizes the result of Hopkins at height 1 \cite{tmfbook}, which states that $\mathrm{KO}\otimes \mathrm{X(2)}$ is complex-orientable, and it is a result of Atiyah \cite{atiyah} that $\mathrm{ER(1)}\simeq \mathrm{KO}$. At higher heights, some care is needed since $\mathrm{ER(n)}$ is not known to admit a ring structure (see \cite{kitchloo}). To compute the chromatic defect of $\mathrm{ER(n)}$, we thus prove the following, which may be of independent interest. 

\begin{theorem}[\ref{thm:defectern}]
    For all $m\ge 2^n$, the spectrum $\mathrm{ER(n)}\otimes \mathrm{X(m)}$ has a (Landweber exact) homotopy commutative ring structure with respect to which the restriction map $\mathrm{ER(n)}\otimes \mathrm{X(m)}\to \mathrm{E(n)}\otimes \mathrm{X(m)}$ is one of ring spectra.
\end{theorem}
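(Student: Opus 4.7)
The plan is to upgrade the complex orientation coming from the first half of the theorem to a homotopy commutative ring structure via the Landweber exact functor theorem. For $m\ge 2^n$, the inequality $\Phi(\mathrm{ER(n)})\le 2^n$ supplies an orientation $\mathrm{MU}\to \mathrm{ER(n)}\otimes \mathrm{X(m)}$, so $\pi_*(\mathrm{ER(n)}\otimes \mathrm{X(m)})$ acquires the structure of an $\mathrm{MU}_*$-module. It then suffices to show that this module is Landweber flat and admits a commutative $\mathrm{MU}_*$-algebra structure compatible with the orientation; the Landweber exact functor theorem, in the form of Hopkins' stacks framework, then produces a unique homotopy commutative ring structure on $\mathrm{ER(n)}\otimes \mathrm{X(m)}$ for which the orientation is a map of ring spectra.

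I would first dispatch the easier side of the comparison, $\mathrm{E(n)}\otimes \mathrm{X(m)}$: since $\mathrm{E(n)}$ is Landweber exact and $\mathrm{MU}_*\mathrm{X(m)}\cong \mathrm{MU}_*[b_1,\ldots,b_{m-1}]$ is free over $\mathrm{MU}_*$, a K\"unneth argument identifies
\[\pi_*(\mathrm{E(n)}\otimes \mathrm{X(m)}) \cong \mathrm{E(n)}_*[b_1,\ldots,b_{m-1}]\]
as a Landweber exact commutative $\mathrm{MU}_*$-algebra. For $\mathrm{ER(n)}\otimes \mathrm{X(m)}$ I would then use the Kitchloo--Wilson cofiber sequence
\[\Sigma^{\lambda(n)}\mathrm{ER(n)}\to \mathrm{ER(n)}\to \mathrm{E(n)}\]
smashed with $\mathrm{X(m)}$. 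Iterating this sequence displays $\pi_*(\mathrm{ER(n)}\otimes \mathrm{X(m)})$ as a tower of extensions of shifts of $\pi_*(\mathrm{E(n)}\otimes \mathrm{X(m)})$, and since regularity of the sequence $(p,v_1,v_2,\ldots)$ is preserved under extensions of $\mathrm{MU}_*$-modules, this yields the required Landweber flatness.

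The main obstacle I expect is rigorously controlling the Kitchloo--Wilson connecting map and its interaction with the orientation and the $\mathrm{MU}_*$-multiplication after smashing with $\mathrm{X(m)}$, so as to verify that the extensions assemble into a genuinely commutative $\mathrm{MU}_*$-algebra rather than a twisted one; here the large magnitude of $\lambda(n)$ should prevent any degree conflicts. Once this is in place, the restriction map $\mathrm{ER(n)}\otimes \mathrm{X(m)}\to \mathrm{E(n)}\otimes \mathrm{X(m)}$ is automatically one of ring spectra: both source and target carry their unique Landweber exact ring structures extending their $\mathrm{MU}_*$-module structures, the restriction map is $\mathrm{MU}_*$-linear on homotopy by construction, and so by the uniqueness clause of the Landweber exact functor theorem it must intertwine these ring structures.
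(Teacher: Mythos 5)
Your proposal has a structural circularity and two substantive gaps. First, the starting input, ``$\Phi(\mathrm{ER(n)})\le 2^n$ supplies an orientation of $\mathrm{ER(n)}\otimes\mathrm{X(m)}$,'' is not available at this point: in the paper the inequality $\Phi(\mathrm{ER(n)})\le 2^n$ is deduced \emph{from} the ring-structure statement you are asked to prove (it is the ``in particular'' of Proposition \ref{prop:ERnmultiplication}). Since $\mathrm{ER(n)}$ is not known to be a homotopy ring spectrum, the obstruction criterion of Proposition \ref{prop:defectringspectrum} cannot be applied to it directly, and that is precisely why the paper builds the ring structure first. An independent proof of the defect bound is exactly the content you would need to supply, and in the paper it is the stack computation: the pullback $\Spec(\mathrm{E(n)}_*)/C_2\times_{\mathcal{M}_{FG}(1)}\mathcal{M}_{FG}(2^n)$ is shown to be affine by the formal-inversion argument (the conjugation acts by $-[-1]_F$, so a nontrivial automorphism congruent to $x$ mod $x^{2^n+1}$ forces $2v_n=0$ and then $[2]_F(x)\equiv 0$ mod $x^{2^n+1}$, contradicting height $\le n$). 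Nothing in your outline replaces this step.

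Second, the Kitchloo--Wilson mechanism does not do what you want. Smashing $\Sigma^{\lambda(n)}\mathrm{ER(n)}\xrightarrow{x}\mathrm{ER(n)}\to\mathrm{E(n)}$ with $\mathrm{X(m)}$ yields a long exact sequence (an $x$-Bockstein spectral sequence), not a filtration of $\pi_*(\mathrm{ER(n)}\otimes\mathrm{X(m)})$ by shifts of $\pi_*(\mathrm{E(n)}\otimes\mathrm{X(m)})$; the connecting maps are exactly the unknown, and in fact the answer is the subring of $C_2$-invariants $\bigl((\mathrm{E(n)}\otimes\mathrm{X(m)})_*\bigr)^{C_2}$, not an iterated extension of shifted copies, so the ``Landweber flatness is preserved under extensions'' step has no sequence of extensions to run on. Moreover, additive extension data could at best give an $\mathrm{MU}_*$-module; it cannot produce the commutative $\mathrm{MU}_*$-algebra structure that the Landweber functor needs, and ``the large magnitude of $\lambda(n)$ prevents degree conflicts'' is not an argument. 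Finally, even granted a Landweber exact algebra $R_*$, you must still identify the spectrum the exact functor theorem produces with $\mathrm{ER(n)}\otimes\mathrm{X(m)}$ itself, i.e.\ show the natural map $\mathrm{MU}_*(Z)\otimes_{\mathrm{MU}_*}R_*\to(\mathrm{ER(n)}\otimes\mathrm{X(m)})_*(Z)$ is an isomorphism of homology theories. This identification is the heart of the paper's proof of Proposition \ref{prop:ERnmultiplication}: affineness of the pullback makes the homotopy fixed point spectral sequence $H^*(C_2;(\mathrm{E(n)}\otimes\mathrm{X(m)})_*Z)$ collapse onto the zero line (with Lemma \ref{lemma:cofree} identifying its abutment), and the $C_2$-torsor $\Spec((\mathrm{E(n)}\otimes\mathrm{X(m)})_*)\to\Spec(R_*)$ then gives the comparison; your proposal omits this entirely. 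The closing uniqueness argument for the restriction map being a ring map would be fine once the preceding identifications are in place, but as written the proof does not go through.
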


We turn to the Hopkins--Miller theories. Fixing as before a height $n$ formal group $\Gamma$ over a perfect field $k$ of characteristic $p$, we let $G$ be a finite subgroup of $\mathbb G_n=\mathrm{Gal}(k/\F_p)\rtimes\mathrm{Aut}(\Gamma)$ the corresponding Morava stabilizer group. By the Goerss--Hopkins--Miller theorem \cite{gh04}, $G$ acts on $\mathrm{E}(k,\Gamma)$ by $\mathbb E_\infty$-ring maps. The chromatic defect of the fixed points $\mathrm{EO}_n(G)$ will be expressed in terms of the standard valuation on the endomorphism ring $\mathrm{End}(\Gamma)$, which we recall here. Every nonzero endomorphism $f(x)$ of $\Gamma$ may be expressed uniquely as $f(x)=g(x^{p^k})$, where $g(x)\in \mathrm{End}(\Gamma)^\times$ is invertible. The valuation $\nu$ is then defined by $\nu(f)=k/n$,
normalized by the height $n$, so that $p\in\mathrm{End}(\Gamma)$ is a uniformizer. In the statement below, we let $\pi:\mathbb G_n\to \mathrm{Aut}(\Gamma)$ denote the projection map, where we identify the underlying set of the semidirect product as a cartesian product. 

\begin{theorem}[\ref{thm:defecteon}]
    Let $N(G):=n\cdot\max\{\nu(\pi(g)-1):e\neq \pi(g)\}_{g\in G}$, where $e$ is the identity element of $G$. The chromatic defect of $\mathrm{EO}_n(G)$ is $p^{N(G)}$.
\end{theorem}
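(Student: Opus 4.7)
The plan is to leverage the stacks framework of Section~5, identifying $\mathcal{M}_{\mathrm{EO}_n(G)}$ with a quotient stack. By the Goerss--Hopkins--Miller theorem, $G$ acts on $\mathrm{E}(k,\Gamma)$ by $\mathbb E_\infty$-ring maps, and $\mathrm{EO}_n(G)\to \mathrm{E}(k,\Gamma)$ is a $G$-Galois extension. Together with the complex orientability of $\mathrm{E}(k,\Gamma)$ (which gives $\mathcal{M}_{\mathrm{E}(k,\Gamma)}\simeq \Spec A$ for $A=\pi_0\mathrm{E}(k,\Gamma)$), this yields $\mathcal{M}_{\mathrm{EO}_n(G)}\simeq [\Spec A/G]$, with $G$ acting through its Goerss--Hopkins--Miller action. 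The resulting cover $\Spec A\to \mathcal{M}_{\mathrm{EO}_n(G)}$ is finite \'etale and faithfully flat, so Proposition~\ref{prop:finiteautimpliesdefect} ensures $\Phi(\mathrm{EO}_n(G))<\infty$ once the $\mathrm{MU}$ nilpotent-completeness hypothesis is verified.

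A preliminary reduction to the case where $\pi|_G$ is injective is helpful. Setting $H:=G\cap\ker\pi$ (the Galois part of $G$) and $k'$ for its fixed subfield, Galois descent gives $\mathrm{E}(k,\Gamma)^{hH}\simeq \mathrm{E}(k',\Gamma)$, and hence $\mathrm{EO}_n(G)\simeq \mathrm{E}(k',\Gamma)^{h(G/H)}$. Since the definition of $N(G)$ only involves $g$ with $\pi(g)\ne e$, $N(G)=N(G/H)$, so we may assume $\pi|_G$ is injective.

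The main computation uses the fiber-product description $\mathcal{M}_{\mathrm{EO}_n(G)\otimes \mathrm{X(m)}}\simeq \mathcal{M}_{\mathrm{EO}_n(G)}\times_{\mathcal{M}_{FG}(1)}\mathcal{M}_{FG}(m)$. At the closed point of $\Spec A$, which is $G$-fixed because $G$ preserves the augmentation ideal $(p,u_1,\ldots,u_{n-1})$, the pullback has automorphism group
\[K_m:=\{g\in G\,:\,\pi(g)(x)\equiv x\bmod x^{m+1}\},\]
where $x$ is a coordinate on $\Gamma$. The pullback stack is affine (equivalently, $\mathrm{EO}_n(G)\otimes \mathrm{X(m)}$ is complex orientable) iff $K_m=\{e\}$. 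The key input is the valuation formula: for $g\in G$ with $\pi(g)\ne e$, the power series $\pi(g)(x)-x$ has leading term in degree $p^{n\nu(\pi(g)-1)}$, so $g\in K_m$ iff $p^{n\nu(\pi(g)-1)}\ge m+1$. Thus $K_m=\{e\}$ iff $m\ge p^{n\nu(\pi(g)-1)}$ for every non-identity $g\in G$, iff $m\ge p^{N(G)}$. The smallest such $m$ is $p^{N(G)}$, giving $\Phi(\mathrm{EO}_n(G))=p^{N(G)}$.

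The main obstacle will be making the translation between chromatic defect and stack-affineness fully rigorous. In particular, the $\mathrm{MU}$ nilpotent-completeness hypothesis of \ref{prop:finiteautimpliesdefect} must be verified for $\mathrm{EO}_n(G)\otimes \mathrm{X(m)}$ for $m$ large; this should follow from the $K(n)$-local nature of $\mathrm{EO}_n(G)$ and the fact that $\mathrm{X(m)}\to \mathrm{MU}$ becomes increasingly connected as $m\to\infty$. A useful consistency check: \ref{prop:introchi} detects chromatic defect only at powers of $p$, and our answer $p^{N(G)}$ is such a power.
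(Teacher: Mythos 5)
Your overall route is the same as the paper's: identify $\mathcal{M}_{\mathrm{EO}_n(G)}\simeq\Spec((\mathrm{E}_n)_*)/G$ (this is exactly Proposition \ref{prop:stackeo_n}, via Mathew--Meier), translate $\Phi(\mathrm{EO}_n(G))\le m$ into affineness of the pullback with $\mathcal{M}_{FG}(m)$, and locate the obstruction in the leading term of $[\pi(g)](x)-x$, whose degree is $p^{n\nu(\pi(g)-1)}$. The lower bound is fine as you state it: the closed point exhibits a nontrivial automorphism of the pullback with $\mathcal{M}_{FG}(p^{N(G)}-1)$, which contradicts affineness if $\mathrm{EO}_n(G)\otimes\mathrm{X}(p^{N(G)}-1)$ were complex orientable.

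The gap is in your central assertion that the pullback is affine \emph{iff} $K_m=\{e\}$ at the closed point. Affineness requires that no point over \emph{any} nonzero ring $R$ acquires a nontrivial automorphism, not just the closed point of $\Spec((\mathrm{E}_n)_*)$; as stated, your valuation computation only controls the power series $\pi(g)(x)$ over the residue field $k$. The paper's upper-bound argument supplies exactly the missing step: writing $[\pi(g)](x)=x+_{g^*\tilde\Gamma}\sum_i t_i(g)x^{p^i}$ over the universal deformation ring, the condition $t_i(g)\not\equiv 0$ mod $\mathfrak{m}$ for some $i\le N(G)$ is equivalent, since $(\mathrm{E}_n)_*$ is complete local, to $t_i(g)$ being a \emph{unit}, and a unit remains a unit under base change to any nonzero $R$; hence no nontrivial congruence $[\pi(g)](x)\equiv x\bmod x^{p^{N(G)}+1}$ can occur anywhere, the pullback is discrete, and discreteness implies affineness for a quotient stack by the Katz--Mazur argument quoted in Proposition \ref{prop:quotientstackhasfinitechromaticdefect}. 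You should also firm up the passage from affineness back to complex orientability: your proposed justification of $\mathrm{MU}$-nilpotent completeness via connectivity of $\mathrm{X(m)}\to\mathrm{MU}$ does not obviously work; the clean fix is that $\mathrm{EO}_n(G)$ is Wood-type by Meier--Naumann--Noel (Example \ref{example:mnneon}), hence $\mathrm{BP}$-nilpotent, so $\mathrm{EO}_n(G)\otimes\mathrm{X(m)}$ is $\mathrm{MU}$-nilpotent complete and Corollary \ref{cor:finitephiaffine} applies (alternatively, one can run the Landweber-exactness argument of Proposition \ref{prop:ERnmultiplication} as in Theorem \ref{thm:defectern}). Your preliminary reduction to $\pi|_G$ injective is harmless but not needed once faithfulness of stabilizers (Proposition \ref{prop:stackeo_n}) is in place.
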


When the height $h$ is of the form $p^{n-1}(p-1)m$, there is a $C_{p^n}$-subgroup of $\mathbb G_h$ (see Section \ref{sec:6} for more details), and we have the following in this case.

\begin{example}[\ref{cor:eonexample1}, \ref{cor:eonexample2}]
    The chromatic defect of $\mathrm{EO}_{p^{n-1}(p-1)m}(C_{p^n})$ is $p^{p^{n-1}m}$.
\end{example}

To apply Ravenel's method of descent to compute $\pi_*\mathrm{EO}_n(G)$ as in \cite{hhreo}, for example, the above theorem implies that the computation begins by tensoring with $\mathrm{X(p^{N(G)})}$.

\begin{remark}
    In fact, our methods also imply that the $\mathrm{MU}$-homology of the $\mathrm{ER(n)}$'s and the $\mathrm{EO}_n(G)$'s is even and torsion-free. This implies these theories are \emph{quasi-syntomic} in the sense of Hahn--Raksit--Wilson \cite{hahnraksitwilson}.
\end{remark}

\subsubsection*{Section 7} One can conceive of variations on our definition of Wood-types $E$, for example by asking the complex $F$ to be just a type zero instead of a $\mathrm{BP}$-projective, or simply the condition of $E$ being $\mathrm{BP}$-nilpotent. Neither of these imply finite chromatic defect, but they do guarantee strong conditions on the Adams--Novikov spectral sequence of $E$, such as a horizontal vanishing line on a finite page. The Adams--Novikov spectral sequence of a Wood-type, however, is further restricted by way of a spectral sequence we introduce called the \emph{$\Z$-indexed Adams--Novikov spectral sequence} of a Wood-type. The relationship between the ANSS and the $\Z\mathrm{-ANSS}$ of a Wood-type is very similar to that of the HFPSS and the Tate SS for a $G$-spectrum $E$ with vanishing Tate spectrum $E^{tG}$.

Mahowald--Rezk introduced a $\Z$-indexed Adams spectral sequence for an fp spectrum $E$, which extends its Adams spectral sequence to a full plane spectral sequence converging to $\pi_*L_n^fE$ when $E$ is fp type $n$. Our definition of Wood-types is chosen in part to make their construction work instead with the Adams--Novikov spectral sequence. Rather than $L_n^fE$, the $\Z\mathrm{-ANSS}$ of a Wood-type always converges to zero.

\begin{theorem}[\ref{thm:ZANSS}]
    The $\Z\mathrm{-ANSS}$ of a Wood-type $E$ has the following properties:
    \begin{enumerate}
        \item The $\Z\mathrm{-ANSS}$ is independent of the choice of finite $\mathrm{BP}$-projective $F$ from the $E_2$-page on.
        \item The natural map
    \[\mathrm{ANSS}(E)\to\Z\mathrm{-ANSS}(E)\]
    is an isomorphism on $E_2$-pages in positive filtrations and an epi in filtration zero. 
    \item There is a one-to-one correspondence along the map
    \[\mathrm{ANSS}(E)\to\Z\mathrm{-ANSS}(E)\]
    of differentials whose source is in nonnegative filtrations.
    \item The $\Z\mathrm{-ANSS}$ converges to zero.
    \end{enumerate} 
\end{theorem}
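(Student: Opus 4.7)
The plan is to imitate the Mahowald--Rezk construction of the $\Z$-indexed Adams spectral sequence for fp spectra, using $\mathrm{MU}$ in place of $H\F_p$ and the finite $\mathrm{BP}$-projective $F$ in place of a type-$n$ complex. The key input is Theorem 2.4: since $E \otimes F$ is complex-orientable, the unit $E \otimes F \to \mathrm{MU} \otimes E \otimes F$ admits a retraction, so the canonical Adams--Novikov tower of $E \otimes F$ is contractible above filtration zero. Combined with the Spanier--Whitehead duality pairing $DF \otimes F \to \mathbb{S}$, this retraction provides exactly the data needed to splice a dual ``negative'' tower onto the ordinary ANSS tower of $E$ at filtration zero.

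Concretely, write $\overline{\mathrm{MU}} := \mathrm{fib}(\mathbb{S} \to \mathrm{MU})$ and $W_s = \overline{\mathrm{MU}}^{\otimes s} \otimes E$ for $s \geq 0$, the standard ANSS filtration. For $s < 0$, one sets $W_s$ to be a cofiber tower built from $E \otimes DF$ using the weak $\mathrm{MU}$-module retraction on $E \otimes F$, arranged so that the associated graded in negative degrees is a ``dualized'' cobar complex over $\mathrm{BP}_*\mathrm{BP}$. Parts (2) and (3) then follow essentially formally: the inclusion of the nonnegative sub-tower identifies $\mathrm{ANSS}(E)$ as a sub-spectral sequence of $\Z\mathrm{-ANSS}(E)$, so the $E_1$-terms literally agree in positive filtrations; at filtration zero the $\Z$-indexed $E_2$-term picks up one extra boundary coming from the gluing, giving the epimorphism in (2), and the correspondence of differentials in (3) is immediate from the exact-couple comparison. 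For independence of the choice (1), given two $\mathrm{BP}$-projectives $F, F'$, both $\Z$-indexed towers are dominated by one built from $F \otimes F'$; projectivity of $\mathrm{BP}_* F$ permits a change-of-rings argument identifying the three $E_2$-pages canonically from the $E_2$-page on.

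For part (4), convergence to zero reflects the Wood splitting: by construction, the full $\Z$-indexed tower fits into a long fiber sequence whose total term is essentially the cofiber of the weak $\mathrm{MU}$-module retraction on $E \otimes F$, which is contractible. This is the spectrum-level analogue of the statement that the Tate spectral sequence of a $G$-spectrum converges to zero when $E^{tG} \simeq 0$. The main obstacle is Step 1, the construction itself: assembling the negative-filtration tower coherently enough to produce a well-defined spectral sequence, and proving independence of $F$ at the $E_2$-level. In Mahowald--Rezk, the analogous negative tower is built as a mapping telescope along a periodicity self-map, but no such self-map is available here; instead it has to be assembled from the duality pairing $DF \otimes F \to \mathbb{S}$ together with the retraction data, and the delicate point is verifying compatibility of these choices as $F$ varies. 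Once the construction is in place, the comparison of $E_2$-pages, the differential correspondence, and convergence to zero all follow cleanly.
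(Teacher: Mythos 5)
There is a genuine gap here, and you have in fact flagged it yourself: the construction of the negative half of the tower is never carried out, and your proposed design for it is not the one that works. You keep the $\mathrm{MU}$-based ANSS tower $W_s=\overline{\mathrm{MU}}^{\otimes s}\otimes E$ in nonnegative filtrations and try to splice a dual tower built from $E\otimes \mathbb{D}F$ below filtration zero; but $\mathrm{MU}$ is not dualizable, so nothing forces the two halves to interact correctly, and the claim that the negative associated graded is a ``dualized cobar complex over $\mathrm{BP}_*\mathrm{BP}$'' is exactly what cannot be arranged by hand. The paper's key move is different: it abandons the $\mathrm{MU}$-based tower entirely and takes the $F$-based Adams tower of $E$ in \emph{all} filtrations, which extends to a $\Z$-indexed tower for free because $F$ is finite, hence dualizable (cofibers $E\otimes\overline{F}^{\otimes s}\otimes F$ for $s\ge 0$, dual terms $E\otimes\mathbb{D}\overline{F}^{\otimes(-s)}$ for $s<0$). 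The identification with the ANSS in nonnegative filtrations is then not literal at $E_1$ (contrary to your claim); it holds only from $E_2$ on, via Miller's theory of $R$-injective resolutions (Proposition \ref{prop:orientableringisweakmodule}-style weak module structures feeding into Corollary \ref{cor:woodtypeanss}), using precisely that $F$ is $\mathrm{BP}$-projective and $E\otimes F$ is complex-orientable. Independence of $F$ is proved by noting that the positive-direction towers are $\mathrm{BP}$-injective (Adams) resolutions, hence chain homotopy equivalent, and that these chain homotopies can be dualized to compare the negative halves; your proposed change-of-rings comparison through $F\otimes F'$ is not developed and would still have to confront the same coherence issue you name as the main obstacle.

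Your argument for item (4) also does not hold up as stated. The cofiber of the retraction $\mathrm{MU}\otimes E\otimes F\to E\otimes F$ is not contractible, and the $\Z$-indexed tower does not assemble into a fiber sequence with such a total term. The paper's argument is: the colimit of the tower (in the negative-filtration direction) becomes zero after smashing with $F$, because $E\otimes F$ complex-orientable plus $\mathrm{BP}$-projectivity of $F$ force every tower map to admit a section, hence to be null, after tensoring with $F$; since $F$ has type $0$, the thick subcategory theorem then gives vanishing of the colimit itself, which is what convergence to zero requires. Your Tate-vanishing analogy is the right heuristic, but the concrete step is missing. Items (2) and (3), by contrast, really are formal once the correct tower is in place, as both you and the paper observe.
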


This can be very useful for example for determining differentials and vanishing lines in the ANSS of a Wood-type (cf. \cite{dannyvanishing}). We run the ANSS and $\Z\mathrm{-ANSS}$ in detail for $\mathrm{ko}$ and use this to deduce the famous $d_3$ therein. We also give various descriptions of the $E_2$-page of the $\Z\mathrm{-ANSS}$ of a Wood-type in terms of Tate cohomology in $\mathrm{BP}_*\mathrm{BP}$-comodules.

\subsection{Some numerology and questions} For a $p$-local homotopy commutative ring spectrum $E$, there are  many numerical measures of $E$ coming from chromatic homotopy that are often related. One has, for instance,

\begin{enumerate}
    \item The chromatic height of $E$:
    \[\mathrm{ht}(E)=\min\{n\ge0\st E\simeq L_nE\}\]
    \item The $\mathrm{BP}$-nilpotence exponent of $E$ (see \cite{mnn}).
    \item The chromatic defect $\Phi(E)$ of $E$.
    \item For a Wood-type $E$, the minimum $n\ge1$ such that there exists a finite $\mathrm{X(n)}$-projective $F$ such that $E\otimes F$ is complex-orientable.
    \item The orientation order of Bhattacharya--Chatham \cite{hoodprasiteo}:
    \[\Theta(E)=\min\{n\ge1\st \xi^{\oplus n} \text{ is }E\text{-orientable}\} \]
    where $\xi$ is the tautological line bundle on $\CP^\infty$.
\end{enumerate}

At first glance, chromatic height does not fit so well in this list: if $E$ is complex-orientable, then all the quantities (2)-(5) are equal to 1 and thus are not sensitive to the chromatic height of $E$. However, when $E$ is the fixed points of a complex--oriented theory $R$, the quantities (2)-(5) will often be functions of the chromatic height of $R$, as in our Theorems \ref{thm:defecteon} and \ref{thm:defectern}
 for example. In fact, our results suggest a heuristic that, in many cases, the chromatic defect of $R^{hG}$ should be $\le p^{\mathrm{ht}(R)/(p-1)}$.

For $\mathrm{ko}$ and $\mathrm{tmf}$ at the prime 2, it can be shown that all the quantities (3)-(5) coincide and they are a lower bound for (2). In general, when $E$ is a Wood-type, (4) is an upper bound for (3) by the proof of \ref{cor:woodimpliesfinitedefect}.

\begin{question}
    Suppose $E$ has finite chromatic defect. Is $\Phi(E)$ a lower bound for the $\mathrm{BP}$-nilpotence exponent of $E$?
\end{question}

\begin{question}
    Does there exist a spectrum with finite chromatic defect that is not Wood type?
\end{question}

When $E$ does not have finite chromatic defect, and is therefore not Wood-type, the quantities above don't seem to be closely related. For instance, $\Phi(\mathrm{j})=\infty$, but $\mathrm{j}$ is $\mathrm{BP}$-nilpotent and therefore has finite $\mathrm{BP}$-nilpotence exponent.\\

There are of course many other numerical quantities one may assign to $E$. One may speculate that various chromatic height-shifting phenomena also result in shifts of chromatic defect in certain cases.

\begin{question}
    How does chromatic defect interact with chromatic redshift?
\end{question}

For instance, the computations of Angelini-Knoll--Ausoni--Rognes in \cite{kko} suggest that while $\Phi(\mathrm{ko})=2$, it may be the case that $\Phi(\mathrm{K}(\mathrm{ko}))=4$. Strong forms of the Ausoni--Rognes chromatic redshift conjecture state that the algebraic $\mathrm{K}$-theory of an $\mathbb E_1$-ring spectrum of fp type $n$ is fp of type $n+1$; the methods in Section \ref{sec:4} may then be useful for this question.\\

We finish by remarking that the quantities (3) and (5) coincide for $\mathrm{ER(n)}$, by our Theorem \ref{thm:defectern} and work of Kitchloo--Wilson \cite{KW1}. Tying these quantities together for $\mathrm{EO}_n$-theories would allow for our Theorem \ref{thm:defecteon} to shed light on conjectures of Bhattacharya--Chatham on the orientation orders of $\mathrm{EO}_n$-theories (see for example \cite[Conjecture 1.13]{hoodprasiteo}). The author investigates this in work in progress with Prasit Bhattacharya and Yang Hu.

\subsection{Acknowledgments}
We would like to thank the anonymous referee for their thorough reading and many comments and suggestions, which substantially improved this paper.

We would also like to acknowledge helpful conversations with and comments from Mike Hill, Lennart Meier, Jack Davies, Shaul Barkan, Ishan Levy, and Robert Burklund. We thank Guy Boyde for suggesting the terminology ``chromatic defect''.

\section{Orientability and Ravenel's \texorpdfstring{$\mathrm{X(n)}$}{X(n)}}\label{sec:2}
In this section, we revisit the notion of a complex-orientable spectrum and extend the usual definition for ring spectra to all spectra. The resulting class of complex-orientable spectra is in particular closed under tensoring with an arbitrary spectrum and under taking retracts. This gives us flexible notions of Wood-types and chromatic defect. 

\subsection{Complex-orientability}\label{sec:complexorientations}

\begin{definition}
    Let $\sigma_k$ be the map
    \[S^{2k+1}\to S^{2k+1}/S^1\cong\CP^k\]
    where $S^{2k+1}\subset\mathbb C^{k+1}$ is given its usual $\mathbb{S}^1\subset\mathbb C^\times$ action. 
\end{definition}

\begin{remark}
    The cofiber of $\sigma_k$ is $\CP^{k+1}$. In other words, $\sigma_k$ is the attaching map for the top cell in $\CP^{k+1}$. For simplicity, since we work stably, we will not distinguish between $\sigma_k$ and its double desuspension $\Sigma^{-2}\sigma_k:\mathbb S^{2k-1}\to\Sigma^{-2}\CP^k$.
\end{remark}

\begin{definition}\label{def:comporient}
    We say a spectrum $E$ is complex-orientable if the map \[E\otimes\sigma_k:E\otimes \mathbb S^{2k+1}\to E\otimes\CP^k\]
    is nullhomotopic for all $k\ge 1$.
\end{definition}

As we will see, when $E$ is a homotopy associative ring spectrum, this is equivalent to the classical definition of complex orientability, as in \cite{ravgreen}. The above definition is much more flexible, however, as it does not require $E$ to be a ring spectrum. Moreover, it is closed under taking retracts and tensoring with an arbitrary spectrum. To be in line with this flexibility, we will work with weak forms of modules in the homotopy category, so we take care now to define our terms.

\begin{definition}
    We say $E$ is a \textit{homotopy associative/commutative ring spectrum} if it is an associative/commutative monoid in the symmetric monoidal 1-category $\mathrm{Ho}(\Sp)$. A (left/right) \textit{homotopy $E$-module} is a (left/right) $E$-module in $\mathrm{Ho}(\Sp)$.
\end{definition}

\begin{definition}\label{def:weakmodule}
    An \textit{$\mathbb{E}_0$-algebra} in $\Sp$ is the data of a map of spectra $\eta_E:\mathbb{S}\to E$. For an $\mathbb{E}_0$-algebra $E$, a \textit{weak $E$-module} is a spectrum $M$ such that there exists $m:E\otimes M\to M$ making the following diagram commute
    \[
    \begin{tikzcd}
        M\arrow[r,"\eta_E"]\arrow[dr,equal]&E\otimes M\arrow[d,"m"]\\
        &M
    \end{tikzcd}
    \]
    up to homotopy.
\end{definition}

\begin{remark}
    $M$ is a weak left $E$-module if and only if it is a weak right $E$-module, hence we will just say weak $E$-module.
\end{remark}

\begin{proposition}
    If $E$ is an $\mathbb{E}_0$-algebra that is complex-orientable in the sense of Definition \ref{def:comporient}, then the unit map $\eta_E:\mathbb{S}\to E$ extends to a map $\Sigma^{-2}\CP^\infty\to E$. Conversely, if $E$ is a homotopy associative ring spectrum, and $\eta_E$ extends over $\Sigma^{-2}\CP^\infty$, then $E$ is complex-orientable.
\end{proposition}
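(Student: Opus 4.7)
The strategy is to perform obstruction theory along the cellular filtration $\Sigma^{-2}\CP^1\subset\Sigma^{-2}\CP^2\subset\cdots\subset\Sigma^{-2}\CP^\infty$, whose $(k+1)$st stage is the cofiber of $\Sigma^{-2}\sigma_k\colon S^{2k-1}\to\Sigma^{-2}\CP^k$, using the hypothesis appropriate to each direction.

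For the forward direction, suppose $E$ is an $\mathbb{E}_0$-algebra with each $E\otimes\sigma_k$ null. Then each cofiber sequence
\[
E\otimes S^{2k-1}\xrightarrow{\mathrm{id}_E\otimes\Sigma^{-2}\sigma_k} E\otimes\Sigma^{-2}\CP^k\to E\otimes\Sigma^{-2}\CP^{k+1}
\]
splits. Inductively choose strictly compatible retractions $r_k\colon E\otimes\Sigma^{-2}\CP^k\to E$ of the bottom cell inclusion (starting from $r_1=\mathrm{id}_E$), using the split cofiber sequence at each stage to extend $r_k$ to $r_{k+1}$. Assembling the tower via the Milnor $\lim^1$ short exact sequence yields $r\colon E\otimes\Sigma^{-2}\CP^\infty\to E$. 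The composition
\[
f\colon\Sigma^{-2}\CP^\infty\xrightarrow{\eta_E\otimes\mathrm{id}}E\otimes\Sigma^{-2}\CP^\infty\xrightarrow{r}E
\]
is the required extension: by naturality of the unit, $f|_{\mathbb{S}}=r_1\circ\eta_E=\eta_E$.

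For the converse, assume $E$ is a homotopy associative ring spectrum and $f\colon\Sigma^{-2}\CP^\infty\to E$ extends $\eta_E$; equivalently, the class $x:=f\in E^2\CP^\infty$ restricts to the generator of $E^2\CP^1$. The multiplication $\mu$ and the diagonal of $\CP^\infty$ yield cup-product powers $x^k\in E^{2k}\CP^\infty$ for all $k\ge 0$. The standard argument from complex-orientation theory (cf.\ \cite{ravgreen}), exploiting also the $H$-space structure of $\CP^\infty$, uses these powers to construct an $E$-module equivalence
\[
E\otimes\Sigma^{-2}\CP^n\simeq\bigoplus_{i=0}^{n-1}\Sigma^{2i}E
\]
in which skeletal inclusions correspond to inclusions of summands. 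The nullity of each $E\otimes\sigma_k$ then follows because the cofiber sequence of skeleta is split.

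The main obstacle is the converse direction: it rests on the classical $E$-module splitting of $E\otimes\CP^\infty_+$ for an oriented ring spectrum, and one must verify that this argument goes through in the flexible setting of merely homotopy associative ring spectra — commutativity should never be required, only the ring structure of $E$ together with the diagonal and $H$-space structures of $\CP^\infty$.
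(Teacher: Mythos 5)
Your proposal is correct and follows essentially the same route as the paper's proof: for the forward direction, split $E\otimes\Sigma^{-2}\CP^\infty$ cell by cell (the paper states the wedge decomposition indexed by $b_0,b_1,\ldots$ and projects to $b_0$; your inductively chosen retractions plus the Milnor sequence amount to the same thing), and for the converse, use powers of the orientation class to split off the skeleta. The one step you flag as unverified --- whether the classical splitting argument survives mere homotopy associativity --- is precisely what the paper checks, and your expectation is right: it constructs the map $\bigoplus_{0\le n\le k}\Sigma^{-2n}E\to F(\CP^{k}_+,E)$ whose $n$-th component is $\Sigma^{-2n}E\xrightarrow{x^n}E\otimes F(\CP^{k}_+,E)\to F(\CP^{k}_+,E)$, using only multiplication by powers of $x$ and the homotopy $E$-module structure of the function spectrum, and then verifies it is an equivalence on the associated graded of the cellular filtration via $H^*(\CP^{k};\Z)\cong\Z[x]/x^{k+1}$; compatibility with the filtration splits $E\otimes\CP^{k-1}\to E\otimes\CP^{k}$, so $E\otimes\sigma_{k-1}$ is null. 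No commutativity enters, and in fact only the diagonal of $\CP^\infty$ is used --- its $H$-space structure, which you invoke, plays no role in this splitting.
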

\begin{proof}
    If $E\otimes \sigma_k=0$ for all $k\ge1$, then there is a splitting 
    \[E\otimes\Sigma^{-2}\CP^\infty\simeq\bigoplus\limits_{k\ge0}\Sigma^{2k}E\]
     indexed by the cells $\{b_0,b_1,\ldots\}$ of $\Sigma^{-2}\CP^\infty$. The composite
     \[\Sigma^{-2}\CP^\infty\xrightarrow{\eta_E}E\otimes\Sigma^{-2}\CP^\infty\simeq \bigoplus\limits_{k\ge0}\Sigma^{2k}E\to E\]
     extends $\eta_E$, where the last map is projection onto $b_0$.

    Conversely, if the unit map of $E$ extends over $\Sigma^{-2}\CP^\infty$, and $E$ is a homotopy associative ring spectrum, then the corresponding class $x\in\pi_{-2}F(\CP^{k}_+,E)$ defines a map
    \[\bigoplus\limits_{0\le n\le {k}}\Sigma^{-2n}E\to F(\CP^{k}_+,E)\]
    which, on the $n$-th component is given by
    \[\Sigma^{-2n}E\xrightarrow{x^n}E\otimes F(\CP^{k}_+,E)\to F(\CP^{k}_+,E)\]
    using the multiplication on $E$ to define both maps in the sequence. Filtering the lefthand side in $n$ and the righthand side via the cellular filtration of $\CP^{k}$, it suffices to show the given map induces an equivalence upon taking associated graded. This follows from the fact that $H^*(\CP^{k};\Z)\cong\Z[x]/x^{k+1}$, see for example \cite[Lecture 4, Proposition 7]{lurielecture}. This implies in particular that $E\otimes \CP^{k-1}\to E\otimes\CP^{k}$ admits a splitting, so that $E\otimes\sigma_{k-1}$ is null.
\end{proof}

It is a classical result of Quillen that a homotopy commutative ring spectrum $E$ is complex-orientable if and only if there is a homotopy ring map $\mathrm{MU}\to E$. The proof breaks down if $E$ is not homotopy commutative as the rings $E^*(\mathrm{BU}(n))$ will not necessarily be commutative. This prevents one from constructing a system of Thom classes $u_n\in E^{2n}(\mathrm{MU}(n))$ for the universal bundles that is compatible with the tensor product of bundles. We have the following replacement in the associative case.

\begin{proposition}\label{prop:orientableringisweakmodule}
    Let $E$ be a complex-orientable homotopy associative ring spectrum. Then $E$ is a weak $\mathrm{MU}$-module.
\end{proposition}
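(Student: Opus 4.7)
The plan is to reduce the claim to constructing a map of spectra $f\colon \mathrm{MU}\to E$ extending the unit, and then to build such an $f$ using the complex orientation. Given any $f$ with $f\circ\eta_{\mathrm{MU}}\simeq\eta_E$, the composite
\[m\colon\mathrm{MU}\otimes E\xrightarrow{f\otimes\mathrm{id}_E}E\otimes E\xrightarrow{\mu_E}E\]
satisfies $m\circ(\eta_{\mathrm{MU}}\otimes\mathrm{id}_E)\simeq\mu_E\circ(\eta_E\otimes\mathrm{id}_E)\simeq\mathrm{id}_E$ by the unit axiom for the associative multiplication on $E$, exhibiting $E$ as a weak $\mathrm{MU}$-module in the sense of Definition \ref{def:weakmodule}.

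To produce $f$, I invoke the preceding proposition, which gives an extension $x\colon\Sigma^{-2}\mathbb{C}P^\infty\to E$ of the unit, i.e. a Thom class $u_1\in\tilde E^2(\mathrm{MU}(1))$ for the tautological line. Writing $\mathrm{MU}=\mathrm{colim}_n\,\Sigma^{-2n}\mathrm{MU}(n)$, it suffices to produce a compatible system of Thom classes $u_n\in\tilde E^{2n}(\mathrm{MU}(n))$ along the stabilization maps $\Sigma^2\mathrm{MU}(n)\to\mathrm{MU}(n+1)$ (corresponding to direct sum with a trivial line). For $n\ge 2$ I form the $n$-fold external product
\[\Sigma^{-2n}(\mathbb{C}P^\infty)^{\wedge n}\xrightarrow{x^{\wedge n}}E^{\otimes n}\xrightarrow{\mu_E^{(n)}}E,\]
where $\mu_E^{(n)}$ is any iterated multiplication; associativity ensures that the choice does not matter up to homotopy. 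This defines a Thom class for $L_1\oplus\cdots\oplus L_n$ on $BT(n)=(\mathbb{C}P^\infty)^n$. Because the factors $x_i$ come from disjoint smash factors and lie in even degree, they pairwise commute in $E^*(BT(n))$ — the usual Koszul sign vanishes — so the class is $\Sigma_n$-invariant. Descending along the splitting principle produces $u_n\in\tilde E^{2n}(\mathrm{MU}(n))$, and the stabilization compatibility is immediate from the construction, since restricting $u_{n+1}$ along $\mathrm{MU}(n)\wedge S^2\to\mathrm{MU}(n+1)$ recovers $u_n$ multiplied by the restriction of $x$ to the bottom cell.

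The main obstacle is verifying the splitting principle $E^*(BU(n))\hookrightarrow E^*(BT(n))^{\Sigma_n}$ in the homotopy associative (rather than commutative) setting. The standard Leray--Hirsch argument for the flag fibration $U(n)/T(n)\to BT(n)\to BU(n)$ uses that $E^*(BT(n))$ is a polynomial algebra on even-degree Euler classes of line bundles; those Euler classes pairwise commute by the argument above, and the Serre spectral sequence then collapses onto a free module generated by permanent cycles in even degrees, so no further commutativity of $E$ is needed. This is the crux of the argument, and once it is in hand the rest of the plan is formal. Compared to the classical proof for commutative $E$, the difference is exactly the inability to make the $u_n$'s multiplicative under external tensor product of bundles, as flagged in the paragraph preceding the proposition — but this is not required for a mere spectrum map, and hence not required for the weak module structure.
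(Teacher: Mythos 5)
Your reduction step is fine: a map of spectra $f\colon\mathrm{MU}\to E$ with $f\circ\eta_{\mathrm{MU}}\simeq\eta_E$ does yield a weak $\mathrm{MU}$-module structure via $\mu_E\circ(f\otimes\mathrm{id}_E)$, so the whole content lies in producing $f$. The gap is at the step where you assert that the class $\mu_E^{(n)}\circ x^{\wedge n}$ on $(\CP^\infty)^{\wedge n}$ is $\Sigma_n$-invariant because ``the Koszul sign vanishes.'' The Koszul sign is not the issue. Pulling the external product back along the transposition of two smash factors replaces the multiplication by its opposite: one gets $\mu_E\circ\tau\circ(x\wedge x)$, where $\tau$ swaps the two copies of $\Sigma^2E$, and for a merely homotopy associative $E$ there is no reason that this agrees with $\mu_E\circ(x\wedge x)$; evenness of the degrees only removes the sphere-coordinate sign, not the swap of the two $E$-factors. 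External products of classes ``from disjoint smash factors'' commute only when $E$ is homotopy commutative (or the classes are central in some sense), which is exactly the failure the paper flags when it says $E^*(\mathrm{BU}(n))$ need not be commutative — it already obstructs your splitting-principle step, not merely the compatibility with tensor products of bundles. Moreover, even granting invariance, descending from $\tilde E^{2n}((\CP^\infty)^{\wedge n})$ to $\tilde E^{2n}(\mathrm{MU}(n))$ requires knowing that the invariant class lies in the image of the restriction map (classically this is the identification of $E^*(\mathrm{BU}(n))$ with symmetric power series), and your Leray--Hirsch argument for $U(n)/T(n)\to BT(n)\to BU(n)$ again quietly assumes the Euler classes generate a commutative subring and that the relevant Atiyah--Hirzebruch spectral sequences collapse, neither of which is supplied by the orientability hypothesis alone (which directly controls only the $\CP^k$'s).

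For contrast, the paper's proof avoids $BU(n)$ and Thom spaces entirely: it runs the $E$-based Atiyah--Hirzebruch spectral sequence for the homology $E_*(\mathrm{MU})$, where homotopy associativity alone makes the spectral sequence multiplicative, and the Leibniz rule together with the even permanent cycles $b_i$ forces collapse; one then builds a filtered map of homotopy $E$-modules $E\{M\}\to E\otimes\mathrm{MU}$ that is an equivalence on associated graded, and the weak module structure is the retraction given by projection onto the summand indexed by $1\in M$. To salvage your route you would need either a proof that the external powers of the orientation class are $\Sigma_n$-invariant for homotopy associative $E$ (which is unclear and, in any natural argument, uses commutativity) or a construction of the stable Thom classes $u_n$ that never passes through $\Sigma_n$-descent.
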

\begin{proof}
    The differentials in the Atiyah-Hirzebruch spectral sequence (AHSS)
    \[E_2=H_*(\Sigma^{-2}\CP^\infty;E_*)\implies E_*(\Sigma^{-2}\CP^\infty)\]
    come from the boundary maps in the cellular filtration for $\CP^\infty$. Definition \ref{def:comporient} assumes that these become zero after tensoring with $E$, so this AHSS collapses on $E_2$, and the universal coefficients theorem computes the $E_2$-page
    \[H_*(\Sigma^{-2}\CP^\infty;E_*)\cong E_*\{b_0,b_1,\ldots\}\]
    as a left $E_*$-module.

    The map $\Sigma^{-2}\CP^\infty\to \mathrm{MU}$ induces a map of AHSS's, which takes the form
    \[E_*\otimes H_*(\Sigma^{-2}\CP^\infty;\Z)\to E_*\otimes H_*(\mathrm{MU};\Z)\]
    on the $E_2$ page, as maps of left $E_*$-modules. The map $H_*(\Sigma^{-2}\CP^\infty;\Z)\to H_*(\mathrm{MU};\Z)$ exhibits the latter as 
    \[H_*(\mathrm{MU};\Z)=\mathrm{Sym}(H_*(\Sigma^{-2}\CP^\infty;\Z))/(b_0-1)=\Z[b_1,b_2,\ldots]\]
    Since $E$ is a homotopy associative ring spectrum, the $E$-based AHSS for $\mathrm{MU}$ is a multiplicative spectral sequence. It is also a spectral sequence of $E_*$-modules, so by use of the Leibniz rule, we see it must collapse on $E_2$ as the $b_i$'s are permanent cycles.
    
    Let $M$ be a monomial basis for $\Z[b_1,b_2,\ldots]$ and consider the map of homotopy left $E$-modules
    \[E\{M\}\to E\otimes \mathrm{MU}\]
    given by lifting the $b_i$'s and using the multiplication in $E\otimes \mathrm{MU}$. Giving elements of $M$ their corresponding AHSS filtration, this map becomes a map of filtered left homotopy $E$-modules. Since the AHSS for $E\otimes \mathrm{MU}$ collapses, the map on associated graded is an equivalence, so the map is an equivalence since the filtrations are bounded below and exhaustive. Now the map
    \[m:E\otimes \mathrm{MU}\simeq E\{M\}\to E\]
    given by projection onto the summand indexed by $1\in M$ exhibits $E$ as a weak $\mathrm{MU}$-module.
\end{proof}

    
    

\begin{theorem}\label{thm:weakMU}
    A spectrum $E$ is complex-orientable in the sense of Definition \ref{def:comporient} if and only if $E$ is a weak $\mathrm{MU}$-module.
\end{theorem}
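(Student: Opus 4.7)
The $(\Leftarrow)$ direction is immediate: if $E$ is a retract of $\mathrm{MU}\otimes E$ via the unit, then since $\mathrm{MU}$ is a complex-orientable commutative ring spectrum and hence complex-orientable in the sense of Definition \ref{def:comporient}, one has $\mathrm{MU}\otimes\sigma_k\simeq 0$ for all $k\ge 1$. Smashing with $E$ preserves nullhomotopy, so $\mathrm{MU}\otimes E\otimes\sigma_k\simeq 0$, and thus $E\otimes\sigma_k\simeq 0$ as a retract.

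For $(\Rightarrow)$, the plan is to adapt the argument of Proposition \ref{prop:orientableringisweakmodule} to this more general setting. The key observation is that $\mathrm{MU}\otimes E$ is always a left $\mathrm{MU}$-module in the homotopy category, even when $E$ itself carries no ring structure; this is our substitute for the multiplicativity used in Proposition \ref{prop:orientableringisweakmodule}. Concretely, I will construct a splitting
\[\phi:\bigvee_m \Sigma^{|m|}E\xrightarrow{\sim}\mathrm{MU}\otimes E\]
of homotopy left $E$-modules, indexed by monomials $m\in\Z[b_1,b_2,\ldots]$. The classes $b_k\in\pi_{2k}\mathrm{MU}$ arising from the orientation $\Sigma^{-2}\CP^\infty\to\mathrm{MU}$, combined with the multiplication on $\mathrm{MU}$, yield classes $\tilde m\in\pi_{|m|}\mathrm{MU}$ for each monomial $m$, and hence maps $\psi_m:\Sigma^{|m|}E\to\mathrm{MU}\otimes E$ by smashing with $E$. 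Take $\phi=\bigvee_m\psi_m$; projection onto the $m=1$ summand then furnishes the retraction $\mathrm{MU}\otimes E\to E$ certifying $E$ as a weak $\mathrm{MU}$-module.

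To verify $\phi$ is an equivalence, I would filter the source by monomial degree and the target by the standard cellular filtration of $\mathrm{MU}$; then $\phi$ is filtration-preserving and, by construction, induces the canonical identification on associated graded pieces. The content therefore reduces to showing that the $E$-based Atiyah--Hirzebruch spectral sequence for $\mathrm{MU}$, with $E_2$-page $\pi_*E\otimes\Z[b_1,b_2,\ldots]$, collapses under the complex-orientability hypothesis on $E$. The $d_1$-differentials on the generating cells $b_k$ come from the $\sigma_k$-attaching maps inherited from $\Sigma^{-2}\CP^\infty\to\mathrm{MU}$, and these vanish after smashing with $E$ by Definition \ref{def:comporient}.

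The main obstacle is the treatment of higher differentials and of monomials $b_1^{i_1}b_2^{i_2}\cdots$: without a ring structure on $E$, we cannot directly invoke the Leibniz argument used in Proposition \ref{prop:orientableringisweakmodule}. Instead, I would exploit the left $\mathrm{MU}$-module structure on $\mathrm{MU}\otimes E$ to promote its AHSS to a module over the multiplicative $\mathrm{MU}$-based AHSS for $\mathrm{MU}$ (which collapses trivially, as its $E_\infty$ is $\mathrm{MU}_*\mathrm{MU}=\mathrm{MU}_*[b_1,b_2,\ldots]$). Permanence of the $b_k$'s in the $E$-AHSS then propagates to all monomials via this module structure, producing the desired collapse and completing the splitting.
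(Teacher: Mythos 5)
Your $(\Leftarrow)$ direction is exactly the paper's argument and is fine. The forward direction, however, has a genuine gap, and it appears at the very first step of your construction: the classes $\tilde m\in\pi_{|m|}\mathrm{MU}$ you invoke do not exist. The orientation $\Sigma^{-2}\CP^\infty\to\mathrm{MU}$ produces the classes $b_k$ in $H_{2k}(\mathrm{MU};\Z)$ (or, after smashing with a second copy of $\mathrm{MU}$, in $\mathrm{MU}_{2k}\mathrm{MU}$), not in $\pi_{2k}\mathrm{MU}$: already for $k=1$ the Hurewicz map $\pi_2\mathrm{MU}\to H_2\mathrm{MU}$ sends the generator $[\CP^1]$ to twice a generator, so no homotopy class of $\mathrm{MU}$ can play the role of $\tilde b_1$, and the maps $\psi_m:\Sigma^{|m|}E\to\mathrm{MU}\otimes E$ cannot be produced by smashing homotopy classes of $\mathrm{MU}$ with $E$ so as to hit the monomial basis on associated graded. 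The second problem is your mechanism for propagating collapse from the generating cells to the decomposable monomial cells. A pairing making the $E$-based AHSS for $\mathrm{MU}$ a module over the $\mathrm{MU}$-based AHSS for $\mathrm{MU}$ would require filtered maps $\mathrm{MU}\otimes(E\otimes\mathrm{MU}^{(j)})\to E\otimes\mathrm{MU}^{(i+j)}$, i.e., precisely an action $\mathrm{MU}\otimes E\to E$, which is the statement being proved; the pairing that exists unconditionally lands in the spectral sequence converging to $\pi_*(\mathrm{MU}\otimes E\otimes\mathrm{MU})$, and its collapse does not feed back to $E\otimes\mathrm{MU}$ or produce a retraction of the unit. (Relatedly, without a unit in $\pi_0E$ the elements ``$b_k$'' are not distinguished classes of the $E_2$-page $E_*\otimes H_*\mathrm{MU}$; the correct formulation of collapse is that $E\otimes(\text{attaching map})$ is null for every cell, and it is exactly for the decomposable cells that your hypothesis gives no information unless a multiplication is available somewhere.)

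The paper sidesteps both issues with a short reduction worth comparing: if $E$ is complex-orientable then so is $\mathrm{End}(E)=F(E,E)$, because $\sigma_k$ has dualizable source and target, so $F(E,E)\otimes\sigma_k\simeq F(E,E\otimes\sigma_k)\simeq 0$; the ring-spectrum case (Proposition \ref{prop:orientableringisweakmodule}) then makes $\mathrm{End}(E)$ a weak $\mathrm{MU}$-module, and since $E$ is a weak $\mathrm{End}(E)$-module and weak module structures compose, $E$ is a weak $\mathrm{MU}$-module. If you want to salvage a direct splitting argument instead, form the monomials using only the multiplication of $\mathrm{MU}$: complex-orientability gives splitting maps $\Sigma^{2k}E\to E\otimes\Sigma^{-2}\CP^\infty$, and the composites $E\otimes(\Sigma^{-2}\CP^\infty)^{\otimes r}\to E\otimes\mathrm{MU}^{\otimes r}\to E\otimes\mathrm{MU}$ define the maps $\psi_m$ without any classes in $\pi_*\mathrm{MU}$ and without any structure on $E$; the filtration comparison then runs as in the paper's ring-case proof.
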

\begin{proof}
    If $E$ is complex-orientable, then so is $\mathrm{End}(E)=F(E,E)$ since
    \[F(E,E)\otimes \sigma_k\simeq F(E,E\otimes\sigma_k)\]
    using that the domain and codomain of $\sigma_k$ are dualizable. By Proposition \ref{prop:orientableringisweakmodule}, $\mathrm{End}(E)$ is a weak homotopy $\mathrm{MU}$-module. $E$ is naturally an $\mathrm{End}(E)$-module, and in particular a weak $\mathrm{End}(E)$-module.

    The only if direction of the claim then follows from the following observation. If $\eta_R:\mathbb{S}\to R$ and $\eta_T:\mathbb{S}\to T$ are $\mathbb{E}_0$-algebras, and $M$ is a weak $R$-module and $R$ is a weak $T$-module, then $M$ is a weak $T$-module. In the following diagram
    \[
    \begin{tikzcd}
        M\arrow[r,"\eta_T\otimes 1"]\arrow[d,"\eta_R\otimes 1"]&T\otimes M\arrow[d,"1\otimes\eta_R\otimes 1"]\\
        R\otimes M\arrow[r,"\eta_T\otimes1\otimes1"]\arrow[dr,equal]&T\otimes R\otimes M\arrow[d,"m_R\otimes 1"]\\
        &R\otimes M\arrow[d,"m_M"]\\
        &M
    \end{tikzcd}
    \]
    the clockwise composite exhibits $M$ as a weak $T$-module, where $m_R$ is a weak $T$-module structure on $R$ and $m_M$ is a weak $R$-module structure on $M$.

    Conversely, if $E$ is a weak $\mathrm{MU}$-module, then the map $E\otimes\sigma_k$ is a retract of the map $MU\otimes E\otimes\sigma_k$, which is null as $\mathrm{MU}$ is complex-orientable.
\end{proof}

\begin{corollary}\label{cor:complexorientansscollapse}
    If $E$ is complex-orientable, the Adams-Novikov spectral sequence of $E$ collapses on the zero-line, i.e. it is concentrated in filtration zero from the $E_2$-page on.
\end{corollary}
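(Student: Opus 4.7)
The plan is to reduce the collapse statement to the standard homological algebra of the Hopf algebroid $(\mathrm{MU}_*, \mathrm{MU}_*\mathrm{MU})$, using Theorem \ref{thm:weakMU} as the key input. First, by Theorem \ref{thm:weakMU}, complex-orientability of $E$ gives a map $m : \mathrm{MU} \otimes E \to E$ in $\mathrm{Ho}(\Sp)$ that retracts the unit $\eta_{\mathrm{MU}} \otimes 1_E : E \to \mathrm{MU} \otimes E$. Applying $\mathrm{MU}_*(-) = \pi_*(\mathrm{MU} \otimes -)$ and using the K\"unneth isomorphism $\mathrm{MU}_*(\mathrm{MU} \otimes E) \cong \mathrm{MU}_*\mathrm{MU} \otimes_{\mathrm{MU}_*} \mathrm{MU}_* E$, I would conclude that $\mathrm{MU}_* E$ is a retract of the extended $\mathrm{MU}_*\mathrm{MU}$-comodule $\mathrm{MU}_*\mathrm{MU} \otimes_{\mathrm{MU}_*} \mathrm{MU}_* E$.

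Next, I would invoke the standard fact that for any flat Hopf algebroid $(A, \Gamma)$, extended comodules $\Gamma \otimes_A N$ are relatively injective in the category of $\Gamma$-comodules. This follows from the adjunction
\[\Hom_{\Gamma}(M, \Gamma \otimes_A N) \cong \Hom_A(M, N),\]
whose right-hand side is exact in $M$ by flatness of $\Gamma$ over $A$. Since relative injectivity is preserved under retracts, $\mathrm{MU}_* E$ is itself relatively injective as an $\mathrm{MU}_*\mathrm{MU}$-comodule.

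Finally, the $E_2$-page of the Adams--Novikov spectral sequence of $E$ is $\Ext^{s,t}_{\mathrm{MU}_*\mathrm{MU}}(\mathrm{MU}_*, \mathrm{MU}_* E)$, computed as derived functors of $\Hom_{\mathrm{MU}_*\mathrm{MU}}(\mathrm{MU}_*, -)$ in the category of $\mathrm{MU}_*\mathrm{MU}$-comodules. As $\mathrm{MU}_* E$ is relatively injective, this $\Ext$ vanishes for $s > 0$, so the $E_2$-page is concentrated on the zero-line and there is no room for differentials on any page.

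There is no serious obstacle once Theorem \ref{thm:weakMU} is in hand. The only subtle point is that the retraction $m$ is only a homotopy-level map, but this is enough: $\mathrm{MU}_*(-)$ factors through $\mathrm{Ho}(\Sp)$, so the retraction descends to an honest retraction of comodules. One could equivalently phrase the argument at the level of cosimplicial resolutions --- the $\mathrm{MU}$-based cobar object for $E$ acquires extra codegeneracies from $m$, exhibiting it as split --- but the comodule formulation seems cleaner.
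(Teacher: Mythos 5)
Your proof is correct, and it takes a different (though closely related) route from the paper's. The paper argues entirely at the level of the $\mathrm{MU}$-resolution: by Theorem \ref{thm:weakMU} a complex-orientable $E$ is a weak $\mathrm{MU}$-module, and the retraction supplies a $(-1)$-st codegeneracy of the coaugmented cosimplicial spectrum $E\to \mathrm{MU}\otimes E\to \mathrm{MU}\otimes\mathrm{MU}\otimes E\to\cdots$, exhibiting it as equivalent to the constant cosimplicial object at $E$; collapse is immediate. You instead descend to the $E_2$-page: applying $\mathrm{MU}_*$ to the retraction and using flatness of $\mathrm{MU}_*\mathrm{MU}$ shows $\mathrm{MU}_*E$ is a retract of the extended comodule $\mathrm{MU}_*\mathrm{MU}\otimes_{\mathrm{MU}_*}\mathrm{MU}_*E$, hence relatively injective, so $\Ext^{s}_{\mathrm{MU}_*\mathrm{MU}}(\mathrm{MU}_*,\mathrm{MU}_*E)=0$ for $s>0$. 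Both hinge on Theorem \ref{thm:weakMU}; the paper's version is shorter and stays topological (and is the same mechanism reused later for Proposition \ref{prop:affinenessandpullback}(1)), while yours isolates the reusable algebraic fact that $\mathrm{MU}_*E$ is a relatively injective comodule. Your closing remark about extra codegeneracies splitting the cobar object is, verbatim, the paper's proof. One cosmetic point: relative injectivity of extended comodules uses that $\Hom_{A}(-,N)$ is exact on $A$-split sequences (automatic for any additive functor), not flatness of $\Gamma$ over $A$; flatness is what makes the comodule category abelian and identifies the ANSS $E_2$-page with this $\Ext$, so your appeal to it is misplaced by a line but harmless.
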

\begin{proof}
    The ANSS of $E$ comes from the coaugmented cosimplicial spectrum 
\[E\to \mathrm{MU}\otimes E\implies \mathrm{MU}\otimes \mathrm{MU}\otimes E\Rrightarrow \mathrm{MU}\otimes\mathrm{MU}\otimes\mathrm{MU}\otimes E\cdots\]
    If $E$ is a weak $\mathrm{MU}$-module, this admits a $(-1)$-st codegeneracy, so that it is equivalent to the constant cosimplicial object at $E$.
\end{proof}

\begin{remark}
  If $E$ is a weak $\mathrm{MU}$-module, it is in particular $\mathrm{MU}$-nilpotent, i.e. $E$ is in the thick tensor ideal of $\Sp$ generated by $\mathrm{MU}$. In fact, $E$ is a weak $\mathrm{MU}$-module if and only if $E$ is $\mathrm{MU}$-nilpotent of exponent 1, in the sense of \cite[Part 1, Section 4]{mnn}. As we will discuss in Section \ref{sec:7}, this is also equivalent to asking $E$ to be an $\mathrm{MU}$-injective in the sense of $\mathrm{MU}$-based Adams resolutions, which gives another proof of the preceding corollary. 
\end{remark}


\subsection{\texorpdfstring{$\mathrm{X(n)}$}{X(n)}-orientations and chromatic defect} Theorem \ref{thm:weakMU} tells us that the condition of being a weak homotopy $\mathrm{MU}$-module is determined by the attaching maps for $\CP^\infty$. We may ask if there is a spectrum playing a similar role for $\CP^n$, i.e. for only the first $n-1$ attaching maps of $\CP^\infty$. This is exactly Ravenel's Thom spectrum $\mathrm{X(n)}$, which we now introduce. For a more thorough introduction to the $\mathrm{X(n)}$'s, we strongly recommend Hopkins' thesis \cite{hopthesis}.

\begin{definition}
    For $n\ge 1$, the spectrum $\mathrm{X(n)}$ is the Thom spectrum
    \[\mathrm{Thom}(\Omega \mathrm{SU(n)}\to \Omega \mathrm{SU}\simeq \mathrm{BU})\]
    where the equivalence $\Omega \mathrm{SU}\simeq \mathrm{BU}$ is Bott periodicity.
\end{definition}

\begin{remark}
    The equivalence $\Omega \mathrm{SU}\simeq \mathrm{BU}$ may be chosen to be a double loop map, so that $\mathrm{X(n)}$ acquires the structure of an $\mathbb{E}_2$-algebra in $\Sp$. It is known that $\mathrm{X(n)}$ is not $\mathbb{E}_3$ for $n>1$ \cite[Example 5.31]{lawsonen}.
\end{remark}

The space $\Omega \mathrm{SU(n)}$ admits a cell structure with even cells and $\CP^{n-1}$ as a subcomplex. Moreover, a Serre spectral sequence computation shows that
\[H_*(\Omega \mathrm{SU(n)};\Z)\cong\mathrm{Sym}(\tilde{H}_*(\CP^{n-1};\Z))=\Z[b_1,\ldots,b_{n-1}]\]
as a ring, with its product from the double loop space structure on $\Omega \mathrm{SU(n)}$. The composite $\CP^{n-1}\to\Omega \mathrm{SU(n)}\to \mathrm{BU}$ classifies $L-1$, where $L$ is the tautological line bundle on $\CP^{n-1}$, and the Thom spectrum of $L-1$ is $\Sigma^{-2}\CP^n$. Applying the Thom isomorphism, one finds that 
\[H_*(\mathrm{X(n)};\Z)\cong \mathrm{Sym}(H_*(\Sigma^{-2}\CP^{n};\Z))/(b_0-1)=\Z[b_1,\ldots,b_{n-1}]\]
as a ring. With these facts, the proofs of the previous subsection now may be repeated to show the following.

\begin{proposition}\label{prop:weakX(n)}
    A spectrum $E$ is a weak $\mathrm{X(n)}$-module if and only if $E\otimes\sigma_k=0$ for $1\le k\le n-1$.
\end{proposition}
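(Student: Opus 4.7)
The plan is to reprise, essentially verbatim, the arguments of Proposition~\ref{prop:orientableringisweakmodule} and Theorem~\ref{thm:weakMU}, with $\CP^\infty$ replaced by $\CP^n$ and $\mathrm{MU}$ replaced by $\mathrm{X(n)}$. The three inputs that made those proofs go through all persist in this truncated setting: (a) the unit $\mathbb{S}\to \mathrm{X(n)}$ lifts through $\Sigma^{-2}\CP^n\to \mathrm{X(n)}$ by construction of the Thom spectrum from $\CP^{n-1}\hookrightarrow \Omega \mathrm{SU}(n)$; (b) $H_*(\mathrm{X(n)};\Z)\cong\Z[b_1,\ldots,b_{n-1}]$ is the polynomial ring just computed; and (c) only the attaching maps $\sigma_0,\ldots,\sigma_{n-1}$ appear in the cellular filtrations of $\CP^n$ and $\mathrm{X(n)}$, so the collapse arguments need only finitely many hypotheses.

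For the ``only if'' direction, I would first observe that $\mathrm{X(n)}$ itself satisfies $\mathrm{X(n)}\otimes\sigma_k\simeq 0$ for $1\le k\le n-1$. Since $\mathrm{X(n)}$ is $\mathbb{E}_2$ (and in particular homotopy associative) and its unit extends over $\Sigma^{-2}\CP^n$, running the converse direction of the proposition immediately preceding Proposition~\ref{prop:orientableringisweakmodule} for each $k\le n-1$ produces a splitting of $\mathrm{X(n)}\otimes\CP^{k-1}\to \mathrm{X(n)}\otimes\CP^k$, using $H^*(\CP^k;\Z)\cong\Z[x]/x^{k+1}$ together with the multiplication on $\mathrm{X(n)}$; this forces $\mathrm{X(n)}\otimes\sigma_k\simeq 0$. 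Given this, if $E$ is a weak $\mathrm{X(n)}$-module then $E\otimes\sigma_k$ is a retract of $\mathrm{X(n)}\otimes E\otimes\sigma_k$, which is null in the required range.

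For the ``if'' direction, I would reproduce the two-step strategy of Theorem~\ref{thm:weakMU}. \emph{First}, assume $E$ is a homotopy associative ring spectrum with $E\otimes\sigma_k\simeq 0$ for $1\le k\le n-1$. As in Proposition~\ref{prop:orientableringisweakmodule}, the AHSS for $E_*(\Sigma^{-2}\CP^n)$ collapses, giving an orientation $x\colon\Sigma^{-2}\CP^n\to E$; multiplicativity of the $E$-based AHSS for $\mathrm{X(n)}$ together with the fact that the $b_i$'s for $1\le i\le n-1$ become permanent cycles then yields an equivalence of weak homotopy left $E$-modules $E\{M\}\xrightarrow{\sim}E\otimes \mathrm{X(n)}$, where $M$ is a monomial basis for $\Z[b_1,\ldots,b_{n-1}]$. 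Projection onto the summand indexed by $1\in M$ supplies the retraction defining a weak $\mathrm{X(n)}$-module structure. \emph{Second}, for general $E$, use that both $\mathbb{S}^{2k+1}$ and $\CP^k$ are dualizable to see that $E\otimes\sigma_k\simeq 0$ iff $\mathrm{End}(E)\otimes\sigma_k\simeq 0$, apply the ring case to $\mathrm{End}(E)$, and conclude by transitivity of weak modules (exactly as in the diagram chase in the proof of Theorem~\ref{thm:weakMU}).

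The only point of actual care is to verify that the filtration on $E\otimes \mathrm{X(n)}$ in the first step of the ``if'' direction is both bounded below and exhaustive, so that an equivalence on associated graded lifts to an equivalence of filtered spectra; this is automatic because $\Omega \mathrm{SU}(n)$ admits a cell structure with finitely many cells in each dimension, all of which are even.
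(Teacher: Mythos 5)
Your proposal is correct and follows exactly the route the paper takes: the paper proves Proposition~\ref{prop:weakX(n)} by noting that, given $H_*(\mathrm{X(n)};\Z)\cong\Z[b_1,\ldots,b_{n-1}]$ and the Thomified inclusion $\Sigma^{-2}\CP^n\to \mathrm{X(n)}$, the proofs of Proposition~\ref{prop:orientableringisweakmodule} and Theorem~\ref{thm:weakMU} may be repeated verbatim with $\CP^\infty$ and $\mathrm{MU}$ replaced by $\CP^n$ and $\mathrm{X(n)}$, which is precisely your argument (including the reduction to $\mathrm{End}(E)$ and the transitivity of weak modules).
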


The primary interest of this paper is to investigate spectra that are -- in some sense -- only finitely many steps away from being complex-orientable. Phrased as in Definition \ref{def:comporient}, we are interested in spectra $E$ that become complex-orientable after forcing finitely many of the maps $E\otimes\sigma_k$ to be null, say for $k<n$. Proposition \ref{prop:weakX(n)} tells us in some sense that, up to taking retracts, this passes through forming the tensor product $E\otimes\mathrm{X(n)}$. We thus make the following definition.

\begin{definition}\label{def:defect}
    A spectrum $E$ has \textit{chromatic defect} $\Phi(X)\le n$ if $E\otimes\mathrm{X(n)}$ is complex-orientable. We say $\Phi(X)=n$ if $n$ is the smallest positive integer with this property.
\end{definition}

\begin{remark}\label{rmk:defectupward}
    The inequality appearing in the above definition is justified in the following sense. If $E\otimes\mathrm{X(n)}$ is complex-orientable, then so is $E\otimes \mathrm{X(m)}$ for any $m\ge n$. This follows from the definition of complex-orientability as $\mathrm{X(m)}$ is an $\mathrm{X(n)}$-module. In fact, the same argument shows that if $R\to S$ is a map of homotopy associative ring spectra, then $\Phi(R)\ge \Phi(S)$
\end{remark}

\begin{example}\label{example:kotmfdefect}
    We have $\mathrm{X(1)}=\mathbb{S}$, so $E$ has chromatic defect $\Phi(E)=1$ if and only if $E$ is complex-orientable. We give two nontrivial examples due to Hopkins \cite[Chapter 9]{tmfbook}
    \begin{itemize}
        \item $\Phi(\mathrm{ko})=\Phi(\mathrm{KO})=2$
        \item $\Phi(\mathrm{tmf})=\Phi(\mathrm{Tmf})=\Phi(\mathrm{TMF})=4$
    \end{itemize}
    We will return frequently to these examples. The spectra $\mathrm{ko}$ and $\mathrm{tmf}$ are prototypical examples of fp spectra, in the sense of Mahowald--Rezk \cite{mr}, and we will investigate in Section \ref{sec:4} to what extent fp spectra admit finite chromatic defect.
\end{example}

If one localizes at a prime $p$, the spectrum $\mathrm{MU}$ splits
\[\mathrm{MU}_{(p)}\simeq \mathrm{BP}[x_i:i\neq p^k-1]\]
as a sum of shifts of the $p$-primary Brown-Peterson spectrum $\mathrm{BP}$, with $|x_i|=2i$. Just as with $\mathrm{MU}$, maps of homotopy ring spectra $\mathrm{X(m)}\to \mathrm{X(m)}$ are in bijection with the set of polynomials 
\[x+b_1x^2+\cdots+b_{n-1}x^m\]
where $b_i\in\pi_{2i}\mathrm{X(m)}$. By the Hurewicz theorem, the map $\mathrm{X(m)}\to \mathrm{MU}$ induces an isomorphism in $\pi_i$ for $i\le 2m-2$. It follows that the Quillen idempotent $\epsilon$ on $\mathrm{MU}_{(p)}$ that defines $\mathrm{BP}$ restricts to an idempotent homotopy ring map $\epsilon:\mathrm{X(m)}_{(p)}\to \mathrm{X(m)}_{(p)}$.

\begin{definition}\label{def:T(n)}
    We let $\mathrm{T(n)}$ denote the spectrum $\mathrm{X(p^n)}_{(p)}[\epsilon^{-1}]$.
\end{definition}


\begin{remark}
    $\mathrm{T(n)}$ inherits the structure of a homotopy commutative ring spectrum from $\mathrm{X(p^n)}$. Beardsley--Lawson showed in fact that $\mathrm{T(n)}$ is  an $\mathbb{E}_1$-summand of $\mathrm{X(p^n)}$ \cite{beardsleylawson}, but it is not known if it admits more structure (see \cite{quigleyknoll}).
\end{remark}

Just as with $\mathrm{BP}$, $\mathrm{X(m)}$ splits $p$-locally. To describe this splitting, we use the following notation.

\begin{definition}
    Let $E$ be a homotopy associative ring spectrum, and let $\{x_1,x_2,\ldots\}$ be a graded set. We let $E[x_1,x_2,\ldots]$ denote the free $E$-module on the graded set given by the standard monomial basis of the polynomial ring $\Z[x_1,x_2,\ldots]$.
\end{definition}

\begin{proposition}\label{prop:T(n)splitting}
    For $p^n\le m<p^{n+1}$, there is a splitting of $\mathrm{T(n)}$-modules
    \[\mathrm{X(m)}_{(p)}\simeq \mathrm{T(n)}[x_i:i\neq p^k-1,i<m]\]
\end{proposition}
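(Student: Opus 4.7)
The plan is to construct an explicit $\mathrm{T(n)}$-module comparison map and verify it is an equivalence via a homology calculation. First, since the natural map $\mathrm{X(m)}_{(p)} \to \mathrm{MU}_{(p)}$ is an isomorphism on $\pi_i$ for $i \le 2m-2$ (as cited just before Definition \ref{def:T(n)}), I lift the standard polynomial generators $x_i \in \pi_{2i}\mathrm{MU}_{(p)}$ (for $i \neq p^k - 1$, $i < m$) to classes $\tilde{x}_i \in \pi_{2i}\mathrm{X(m)}_{(p)}$. The Quillen-idempotent summand inclusion $\mathrm{T(n)} \hookrightarrow \mathrm{X(p^n)}_{(p)}$ composed with the canonical map $\mathrm{X(p^n)}_{(p)} \to \mathrm{X(m)}_{(p)}$ gives a homotopy ring map $\mathrm{T(n)} \to \mathrm{X(m)}_{(p)}$. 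Using this map, the classes $\tilde{x}_i$, and the homotopy associative multiplication on $\mathrm{X(m)}_{(p)}$, define a $\mathrm{T(n)}$-module map
\[f: \mathrm{T(n)}[x_i : i \neq p^k - 1, i < m] \to \mathrm{X(m)}_{(p)}\]
sending each monomial basis element $\prod x_i^{e_i}$ to the product $\prod \tilde{x}_i^{e_i}$ via the ring structure on $\mathrm{X(m)}_{(p)}$.

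To show $f$ is an equivalence, since both source and target are bounded-below $p$-local spectra, it suffices to check that $f$ induces an isomorphism on $H_*(-;\Z_{(p)})$. The target's homology is $\Z_{(p)}[b_1,\ldots,b_{m-1}]$ with $|b_i| = 2i$, by the Thom isomorphism calculation preceding Proposition \ref{prop:weakX(n)}. For the source, the analogous Thom argument applied to $\mathrm{X(p^n)}_{(p)}$, combined with tracking the action of the Quillen idempotent $\epsilon$ on $H_*(\mathrm{X(p^n)}_{(p)};\Z_{(p)})$, identifies $H_*(\mathrm{T(n)};\Z_{(p)})$ as a polynomial algebra on generators congruent to $b_{p^k - 1}$ modulo decomposables for $1 \le k \le n$. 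Smashing with the polynomial algebra on the $x_i$'s and using that the Hurewicz image of each $\tilde{x}_i$ is $b_i$ modulo decomposables (a standard property of the Milnor--Quillen polynomial generators of $\mathrm{MU}_{(p),*}$), the induced map becomes an identification of polynomial algebras on $\{b_1,\ldots,b_{m-1}\}$.

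The main obstacle is the homology calculation for the source, specifically the identification of $H_*(\mathrm{T(n)};\Z_{(p)})$ from the action of the Quillen idempotent. The key structural observation that makes the statement uniform across the range $p^n \le m < p^{n+1}$ is that the next Quillen generator $v_{n+1}$ lives in degree $2(p^{n+1}-1) \ge 2m$; so on $\mathrm{X(m)}_{(p)}$ in degrees $\le 2m-2$, the idempotent $\epsilon$ agrees with that on $\mathrm{X(p^n)}_{(p)}$, and increasing $m$ beyond $p^n$ within this range contributes new classes only on the $\tilde{x}_i$-side of the splitting, not on the $\mathrm{T(n)}$-side. An alternative to the homology calculation is a filtration argument: filter the source by monomial degree in the $x_i$'s and the target by the cellular filtration on $\Omega\mathrm{SU}(m)$ coming from $\CP^{m-1}$ as a subcomplex, show $f$ respects these filtrations, and reduce to the classical Brown--Peterson splitting of $\mathrm{MU}_{(p)}$ on associated graded.
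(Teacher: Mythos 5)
The paper does not actually prove Proposition \ref{prop:T(n)splitting}: it is stated without proof, introduced by ``just as with $\mathrm{BP}$'' as the evident analogue of the classical splitting $\mathrm{MU}_{(p)}\simeq\mathrm{BP}[x_i:i\neq p^k-1]$, with the standard Quillen/Ravenel-style argument left implicit. Your proposal supplies exactly that standard argument, and it is correct. A few points you should make explicit when writing it up, all routine: (i) the section $\mathrm{T(n)}\to\mathrm{X(p^n)}_{(p)}$ of the idempotent splitting is itself a homotopy ring map (if $r,s$ are the retraction and section with $r\circ s=\mathrm{id}$ and $s\circ r=\epsilon$, then $s\circ\mu_{\mathrm{T(n)}}\simeq\mu\circ(\epsilon\otimes\epsilon)\circ(s\otimes s)\simeq\mu\circ(s\otimes s)$), so your composite $\mathrm{T(n)}\to\mathrm{X(p^n)}_{(p)}\to\mathrm{X(m)}_{(p)}$ does induce the natural $\mathrm{T(n)}$-module structure; (ii) the lifts $\tilde{x}_i$ exist (and are unique) because $2i\le 2m-2$ lies in the Hurewicz range quoted before Definition \ref{def:T(n)}; (iii) on $H_*(-;\Z_{(p)})$ your comparison map is multiplicative (using homotopy commutativity of the $\mathbb{E}_2$-ring $\mathrm{X(m)}$), it sends the polynomial generators of $H_*(\mathrm{T(n)};\Z_{(p)})$ to $b_{p^k-1}$ and the Hurewicz images of the $\tilde{x}_i$ to $b_i$, in each case up to decomposables and a unit of $\Z_{(p)}$, and since source and target are degreewise finitely generated free with the same Poincar\'e series, surjectivity modulo decomposables upgrades to an isomorphism. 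The identification of $H_*(\mathrm{T(n)};\Z_{(p)})$ that you flag as the main obstacle is indeed the crux, but it is the same computation of the Quillen idempotent in homology that gives the $\mathrm{BP}$ splitting, and it is consistent with the mod $p$ statement the paper records later in Proposition \ref{prop:HT(n)}. The closing remarks about $v_{n+1}$ and the alternative cellular-filtration argument are not needed for the proof and can be dropped.
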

\begin{proof}
    See \cite[6.5.1]{ravgreen}.
\end{proof}

For the same reasons that $\mathrm{BP}$ is often easier to work with than $\mathrm{MU}$, it is often more convenient to work one prime at a time and phrase chromatic defect in terms of the $\mathrm{T(n)}$'s.

\begin{definition}
    For $E$ a $p$-local spectrum, we let
    \[\Phi_p(E)=\min\{n\ge0\st E\otimes \mathrm{T(n)}\text{ is complex-orientable}\}.\]
\end{definition}

In particular, \[\Phi_p(E)=\lfloor\log_p\Phi(E)\rfloor\]
    which follows from the previous proposition.

The filtration of $\mathrm{BP}$ by the $\mathrm{T(n)}$'s is especially nice for making inductive arguments due to the existence of a nice cell structure on $\mathrm{T(n+1)}$ as a module over the $\mathbb{E}_1$-ring $\mathrm{T(n)}$. This was an essential ingredient in the Devinatz--Hopkins--Smith proof of the nilpotence theorem (see \cite[Proposition 1.5]{dhs}). We will also see a more structured description of this filtration due to Beardsley in Theorem \ref{thm:beardsley}.

\begin{construction}\label{const:T(n)cellstructure} One may use the Thom isomorphism to compute $H_*(\mathrm{X(n)};\Z)$, using that $\mathrm{X(n)}$ is the Thom spectrum of a complex bundle. Applying the Quillen idempotent, one may compute that $H_*(\mathrm{T(n)};\Z)=\Z[t_1,\ldots,t_n]$ with $|t_i|=2(p^i-1)$ (see \cite[Corollary 1.3.8]{hopthesis}). 

This allows one to form the following diagram of $\mathrm{T(n)}$-modules
\[ 
\begin{tikzcd}
T(n)\arrow[d,"t_{n+1}^0"]&\Sigma^{|t_{n+1}|}\mathrm{T(n)}\arrow[d,"t_{n+1}^1"]&\Sigma^{2|t_{n+1}|}\mathrm{T(n)}\arrow[d,"t_{n+1}^2"]&\\
T(n+1)\arrow[r]&X_{(1)}\arrow[r]&X_{(2)}\arrow[r]&\cdots
\end{tikzcd}
\]
where $X_{(k)}$ is the cofiber of $t_{n+1}^{k-1}$
, and $t_{n+1}^k$ is the map of $\mathrm{T(n)}$-modules adjoint to the map $\mathbb S^{k|t_{n+1}|}\to X_{(k)}$ provided by the Hurewicz theorem, using that 
\[H_*(X_{(k)};\Z_{(p)})=H_*(\mathrm{T(n)};\Z_{(p)})\{t_{n+1}^m\st m\ge k\}\]
Using the Hurewicz theorem, we see that $X_{(\infty)}=0$, hence defining 
\[X^{(k)}=\mathrm{fib}(\mathrm{T(n+1)}\to X_{(k+1)})\]
we have an exhaustive cell structure
\[X^{(0)}\to X^{(1)}\to\cdots\to X^{(\infty)}=\mathrm{T(n+1)}\]
on the $\mathrm{T(n)}$-module $\mathrm{T(n+1)}$ whose associated graded is the free module $\mathrm{T(n)}[t_{n+1}]$.
\end{construction}

\begin{proposition}\label{prop:T(n)toBPredundancy}
    Let $E$ be any spectrum with $\Phi_p(E)\le n\le m\le\infty$. There is an equivalence of $\mathrm{T(n)}$-modules
    \[E\otimes \mathrm{T(m)}\simeq E\otimes \mathrm{T(n)}[t_{n+1},t_{n+2},\ldots,t_m]\]
\end{proposition}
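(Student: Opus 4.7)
The plan is to induct on $m - n$, reducing to the case $m = n+1$; the statement at $m = \infty$ follows by passing to the filtered colimit along the evident tower. By Construction \ref{const:T(n)cellstructure}, $\mathrm{T(n+1)}$ carries a $\mathrm{T(n)}$-module cell structure with associated graded $\mathrm{T(n)}[t_{n+1}]$ and attaching maps $t_{n+1}^k$. Tensoring with $E$ yields a filtration on $E \otimes \mathrm{T(n+1)}$ with the correct associated graded $E \otimes \mathrm{T(n)}[t_{n+1}]$, so it suffices to show every tensored attaching map $E \otimes t_{n+1}^k$ becomes null.

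To extract the needed vanishing from the hypothesis, I would first invoke Theorem \ref{thm:weakMU}: $\Phi_p(E) \le n$ makes $E \otimes \mathrm{T(n)}$ a weak $\mathrm{MU}$-module. Since $E \otimes \mathrm{T(n)}$ is $p$-local, the Quillen idempotent promotes this to a weak $\mathrm{BP}$-module structure, and restriction along the ring map $\mathrm{T(n+1)} \to \mathrm{BP}$ then yields a weak $\mathrm{T(n+1)}$-module structure $\mu : \mathrm{T(n+1)} \otimes E \otimes \mathrm{T(n)} \to E \otimes \mathrm{T(n)}$. Inserting $\eta_{\mathrm{T(n)}}$ and swapping factors produces a map $E \otimes \mathrm{T(n+1)} \to \mathrm{T(n+1)} \otimes E \otimes \mathrm{T(n)}$ whose composite with $\mu$ retracts the canonical inclusion $E \otimes \mathrm{T(n)} \hookrightarrow E \otimes \mathrm{T(n+1)}$, splitting off the bottom cell. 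Iterating this construction on each $E \otimes X_{(k)}$---which inherits the requisite weak $\mathrm{T(n+1)}$-module structure as a retract of $E\otimes\mathrm{T(n+1)}$---kills the remaining attaching maps inductively and delivers the splitting $E \otimes \mathrm{T(n+1)} \simeq E \otimes \mathrm{T(n)}[t_{n+1}]$.

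The main obstacle is ensuring the retraction is genuinely $\mathrm{T(n)}$-linear, i.e., that $\mu$ restricts along the ring map $\mathrm{T(n)} \to \mathrm{T(n+1)}$ to the tautological $\mathrm{T(n)}$-action on $E \otimes \mathrm{T(n)}$. Without this compatibility, the map $\rho = \mu \circ (\mathrm{swap}) \circ (\eta_{\mathrm{T(n)}} \otimes \mathrm{id})$ only witnesses $E \otimes \mathrm{T(n)}$ as a retract of spectra, not as $\mathrm{T(n)}$-modules, and will not in general split the cellular filtration as required. I expect this refinement to follow from the naturality of the weak module structure built in Theorem \ref{thm:weakMU} via $\mathrm{End}(E \otimes \mathrm{T(n)})$, since the endomorphism ring acts $\mathrm{T(n)}$-linearly on $E \otimes \mathrm{T(n)}$ by construction; carrying this coherence through the reduction steps is the delicate point that the full proof must address.
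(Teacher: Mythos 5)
Your outline (reduce to $m=n+1$, tensor the cell structure of Construction \ref{const:T(n)cellstructure} with $E$, split it stage by stage, pass to the colimit for $m=\infty$) is the same skeleton as the paper's argument, but the mechanism you propose for producing the splitting has a genuine gap. A weak $\mathrm{T(n+1)}$-module structure $\mu$ on $M=E\otimes\mathrm{T(n)}$ is only the datum of a single homotopy $\mu\circ(\eta_{\mathrm{T(n+1)}}\otimes 1_M)\simeq \mathrm{id}_M$; it carries no compatibility with the $\mathrm{T(n)}$-factor sitting inside $M$, nor any multiplicativity. Your candidate $\rho=\mu\circ(\mathrm{swap})\circ(1\otimes\eta_{\mathrm{T(n)}}\otimes 1)$ composed with $j=1_E\otimes\iota\colon E\otimes\mathrm{T(n)}\to E\otimes\mathrm{T(n+1)}$ is, informally, $e\otimes t\mapsto \mu\bigl(\iota(t)\otimes(e\otimes 1)\bigr)$, and nothing in the weak-module axiom identifies this with $e\otimes t$: the axiom constrains $\mu$ only along $\eta_{\mathrm{T(n+1)}}\otimes M$, not on classes coming from $\mathrm{T(n)}$. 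So $\rho\circ j\simeq\mathrm{id}$ already fails (or at least is unjustified) at the level of spectra; this is more basic than the $\mathrm{T(n)}$-linearity issue you flag at the end, and ``naturality of the weak module structure via $\mathrm{End}(E\otimes\mathrm{T(n)})$'' will not supply the missing multiplicative compatibility. A smaller slip: the maps $t_{n+1}^k$ of Construction \ref{const:T(n)cellstructure} are even-degree bottom-cell inclusions, which you need to become \emph{split monomorphisms} after tensoring with $E$ (for $k=0$ in particular they must not be null); the maps that must die are the odd-degree connecting maps of the skeletal filtration $X^{(k)}$.

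The paper kills exactly those connecting maps, in a way that makes $\mathrm{T(n)}$-linearity automatic. Each connecting map $\Sigma^{k|t_{n+1}|}\mathrm{T(n)}\to\Sigma X^{(k-1)}$ is a $\mathrm{T(n)}$-module map out of a free module, so after tensoring with $E$ it is an induced $\mathrm{T(n)}$-module map and, by the free--forget adjunction, it is null $\mathrm{T(n)}$-linearly if and only if the underlying spherical class $\Sigma^{k|t_{n+1}|}E\to\Sigma E\otimes X^{(k-1)}$ is null. That class sits in odd degree; since $\mathrm{BP}\otimes\mathrm{T(n)}$ is even, each $\mathrm{BP}\otimes X^{(k-1)}$ is even and the class dies after tensoring with $\mathrm{BP}$ (this is the paper's ``the cofiber sequences split after tensoring with $\mathrm{BP}$ by evenness''). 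Finally, $E\otimes X^{(k-1)}$ is a retract of $E\otimes\mathrm{T(n)}\otimes X^{(k-1)}$ and hence a weak $\mathrm{BP}$-module by Theorem \ref{thm:weakMU} --- this is where $\Phi_p(E)\le n$ enters --- so the nullhomotopy over $\mathrm{BP}$ retracts down to a nullhomotopy after tensoring with $E$. With that replacement for your second paragraph, your induction on $m$ and the colimit argument for $m=\infty$ go through as stated, and the ``delicate point'' you worry about dissolves.
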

\begin{proof}
    The decomposition of the $\mathrm{T(n)}$-module follows inductively from the cell structure of Construction \ref{const:T(n)cellstructure} along with the fact that each of the cofiber sequences therein split after tensoring with $\mathrm{BP}$. Indeed, this latter fact follows from the evenness of $\mathrm{BP}\otimes\mathrm{T(n)}$.
\end{proof}

We finish this section with the following conjecture. 

\begin{conjecture}\label{conj}
    Any spectrum with finite chromatic defect satisfies the condition of the telescope conjecture at all heights and primes.
\end{conjecture}

\begin{remark}
    In unpublished work, Robert Burklund proves that for any $k$ and $n$, there is a compact $\mathrm{Tel}(k)$-local spectrum $F_{n,k}$ that is a module over $L_{\mathrm{Tel}(k)}\mathrm{X(n)}$, where $\mathrm{Tel}(k)$ is the $v_k$-telescope on a finite type $k$ complex. This result would give a proof for the above conjecture.

Indeed, if $E$ has finite chromatic defect, then $E$ satisfies the telescope conjecture at height $k$ if and only if  $L_{\mathrm{Tel}(k)}(E\otimes F_{n,k})$ satisfies the telescope conjecture at height $k$, by use of the thick subcategory theorem. Since $F_{n,k}$ is a $L_{\mathrm{Tel}(k)}\mathrm{X(n)}$-module, it follows that $L_{\mathrm{Tel}(k)}(E\otimes F_{n,k})$ is a retract of $L_{\mathrm{Tel}(k)}(E\otimes X(n)\otimes F_{n,k})$. The property of satisfying the telescope conjecture at height $k$ is closed under retracts, and $L_{\mathrm{Tel}(k)}(E\otimes X(n)\otimes F_{n,k})$ satisfies this property since it is complex-orientable by assumption.

Note that having infinite chromatic defect does not prevent a spectrum satisfying the condition of the telescope conjecture at all heights. For example, the connective image of $J$ spectrum $\mathrm{j}$ satisfies the condition of the telescope conjecture at all heights since it is $\mathrm{MU}$-nilpotent, but it has infinite chromatic defect by Theorem \ref{thm:defectj}.
\end{remark}

\subsection{Wood-types} In this section, we work $p$-locally. First we recall the following standard definition.

\begin{definition}
    A \textit{finite $\mathrm{BP}$-projective} is a finite spectrum $F$ such that $\mathrm{BP}
_*F$ is a projective $\mathrm{BP}_*$-module.
\end{definition}

In fact, we may as well work just with finite $\mathrm{BP}$-frees, due to the following lemma.

\begin{lemma}\label{lemma:BPfree}
    Every finite $\mathrm{BP}$-projective $F$ is finite $\mathrm{BP}$-free. That is, $\mathrm{BP}\otimes F$ is a finite free $\mathrm{BP}$-module.
\end{lemma}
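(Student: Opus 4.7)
The plan is a two-step argument: first show that $\mathrm{BP}_*F$ is a finite free $\mathrm{BP}_*$-module, and then lift this algebraic freeness to an equivalence of $\mathrm{BP}$-modules.

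For the first step, since $F$ is a finite spectrum, an induction on the number of cells (using that $\mathrm{BP}_*S^n \cong \Sigma^n\mathrm{BP}_*$ and the long exact sequence from each cell attachment) shows that $\mathrm{BP}_*F$ is a finitely generated $\mathrm{BP}_*$-module. Next, I would observe that $\mathrm{BP}_* = \Z_{(p)}[v_1,v_2,\ldots]$ is a graded-local ring: the ideal $\mathfrak{m} = (p,v_1,v_2,\ldots)$ is the unique maximal homogeneous ideal, as every homogeneous element not in $\mathfrak{m}$ is either a unit in $\Z_{(p)}$ sitting in degree $0$, or has positive degree and lies in $\mathfrak{m}$. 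The graded version of Nakayama's lemma then gives that every finitely generated graded projective module over a graded-local ring is free (choose a minimal homogeneous lift of a basis of $M/\mathfrak{m}M$; projectivity and Nakayama force this to be an isomorphism). Thus $\mathrm{BP}_*F$ is a finite free $\mathrm{BP}_*$-module on some homogeneous basis $x_1,\ldots,x_r$ with $|x_i|=n_i$.

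For the second step, I lift each basis element $x_i \in \pi_{n_i}(\mathrm{BP} \otimes F)$ to a map $\mathbb{S}^{n_i} \to \mathrm{BP} \otimes F$, and assemble these into a single map
\[\varphi : \bigoplus_{i=1}^{r} \Sigma^{n_i}\mathbb{S} \longrightarrow \mathrm{BP}\otimes F.\]
Smashing with $\mathrm{BP}$ and using the unit-then-multiplication $\mathrm{BP}\otimes\mathrm{BP}\to\mathrm{BP}$, one obtains a $\mathrm{BP}$-linear map
\[\tilde\varphi : \bigoplus_{i=1}^{r} \Sigma^{n_i}\mathrm{BP} \longrightarrow \mathrm{BP}\otimes F.\]
On $\pi_*$, $\tilde\varphi$ is precisely the $\mathrm{BP}_*$-linear map sending the $i$-th generator to $x_i$, hence is an isomorphism by construction. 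Therefore $\tilde\varphi$ is an equivalence of $\mathrm{BP}$-modules, exhibiting $\mathrm{BP}\otimes F$ as a finite free $\mathrm{BP}$-module.

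There is no serious obstacle here: the only subtlety is being careful that ``projective'' in $\mathrm{BP}_*\mathrm{BP}$-comodules or graded modules both reduce to freeness in this graded-local setting, and that the lifted map of spectra is automatically an equivalence once it is so on homotopy groups (this is the standard fact that a map between finite direct sums of shifted free modules over a ring spectrum which is an iso on $\pi_*$ is an equivalence).
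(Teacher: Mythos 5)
Your proof is correct, but it takes a genuinely different route from the paper's. The paper first reduces to the $p$-complete setting, uses Brown representability to realize the algebraic splitting and exhibit $\mathrm{BP}\otimes F$ as a retract of a finite wedge $\mathrm{BP}\{T\}$, and then identifies that retract as a wedge of suspensions of $\mathrm{BP}$ by combining the collapse of the mod $p$ Adams spectral sequence with the indecomposability of $H_*(\mathrm{BP};\F_p)$ as an $\mathcal{A}_*$-comodule. You instead argue purely algebraically that $\mathrm{BP}_*$ is graded-local, so a finitely generated graded projective $\mathrm{BP}_*$-module is free, and then realize a homogeneous basis by a map $\bigoplus_i\Sigma^{n_i}\mathrm{BP}\to \mathrm{BP}\otimes F$ which is a $\pi_*$-isomorphism and hence an equivalence by Whitehead; this avoids Brown representability and all homology/comodule input, at the cost of quoting the standard graded commutative algebra fact. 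One small point worth making explicit: since $p$ sits in degree $0$, the graded Nakayama lemma you invoke is not the naive determinant-trick statement (e.g.\ $1+v_1$ is not a unit); rather one first kills $M/\mathrm{BP}_{>0}M$ using that the degree-zero subring $\Z_{(p)}$ is local, and then uses that a finitely generated graded module over the non-negatively graded ring $\mathrm{BP}_*$ is bounded below. With that standard justification supplied (and noting that a graded module that is projective as an ungraded module is graded projective, since an ungraded splitting can be replaced by its degree-zero component), your argument is complete and arguably more elementary than the paper's.
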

\begin{proof}
    By finite typeness, it suffices to prove the claim after $p$-completion. Since $\mathrm{BP}_*F$ is projective, one uses Brown representability to exhibit $\mathrm{BP}\otimes F$ as a retract of $\mathrm{BP}\{T\}$, for some finite graded indexing set $T$.
    
    Since the $\F_p$-based ASS of $\mathrm{BP}$ collapses on $E_2$, it follows that that of $\mathrm{BP}\otimes F$ does as well. It follows from the indecomposability of the $\mathcal{A}_*$-comodule $H_*(\mathrm{BP};\F_p)$ that the inclusion $H_*(\mathrm{BP}\otimes F;\F_p)\to H_*(\mathrm{BP}\{T\};\F_p)$ is of the form $H_*(\mathrm{BP};\F_p)\{T'\}\to H_*(\mathrm{BP};\F_p)\{T\}$ for some $T'\subset T$. The corresponding statement thus holds on $E_\infty$-pages and on homotopy groups.
\end{proof}

 \begin{remark}
    Lemma \ref{lemma:BPfree} may also be proven inductively using a cell structure on $F$. In fact, all of the examples we will discuss concern finite complexes $F$ with only even-dimensional cells, which are automatically $\mathrm{BP}$-free since $\mathrm{BP}_*$ is concentrated in even degrees.
\end{remark}

\begin{definition}\label{def:wood}
    A $p$-local spectrum $E$ is said to be \emph{Wood-type} if there exists a finite $\mathrm{BP}$-projective $F$ such that $E\otimes F$ is complex-orientable.
\end{definition}

\begin{example}\label{example:mnneon}
    Fix a height $n$ formal group $\Gamma$ over a perfect field $k$ of characteristic $p$ and a finite subgroup $G\subset\mathbb G_n$ of the corresponding Morava stabilizer group, and we let $\mathrm{EO}_n(G)$ denote the fixed points of $E(k,\Gamma)$ with respect to $G$. It is a theorem of Meier--Naumann--Noel that $\mathrm{EO}_n(G)$ is Wood-type (see \cite[Appendix B]{meiernaumannnoel}).
\end{example}

\begin{example}\label{example:woodtmf}
    The above example gives higher height generalizations of Wood's theorem for the periodic theory $\mathrm{KO}$, which becomes $\mathrm{EO}_1(C_2)$ after 2-completion. Wood equivalences for connective theories like $\mathrm{ko}$ are harder to come by, and we revisit this in Section \ref{sec:4} in the context of fp spectra.

    Some known examples beyond $\mathrm{ko}$ include the equivalence
    \[\mathrm{tmf}\otimes D\mathcal{A}(1)\simeq \mathrm{tmf}_1(3)\]
    at the prime 2 and the equivalence 
    \[\mathrm{tmf}\otimes X_2\simeq \mathrm{tmf}_1(2)\]
     at the prime 3, where $X_2$ is the 8-skeleton of $\mathrm{T(1)}$ (see \cite{akhiltmf}). The spectra $\mathrm{tmf}_1(3)$ and $\mathrm{tmf}_1(2)$ are complex-orientable, so these equivalences imply that $\mathrm{tmf}$ is Wood-type.
\end{example}

As before, the thick subcategory theorem implies the following.

\begin{proposition}
    If $E$ is Wood-type, then $E$ is $\mathrm{BP}$-nilpotent.
\end{proposition}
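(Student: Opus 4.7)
The plan is to combine Theorem \ref{thm:weakMU} with the thick subcategory theorem. Fix a nonzero finite $\mathrm{BP}$-projective $F$ such that $E\otimes F$ is complex-orientable. The first step is to show that $E\otimes F$ is $\mathrm{BP}$-nilpotent: by Theorem \ref{thm:weakMU}, $E\otimes F$ is a weak $\mathrm{MU}$-module, hence $\mathrm{MU}$-nilpotent by the remark following Corollary \ref{cor:complexorientansscollapse}. Working $p$-locally, the splitting $\mathrm{MU}_{(p)}\simeq\mathrm{BP}[x_i:i\neq p^k-1]$ exhibits $\mathrm{MU}_{(p)}$ itself as $\mathrm{BP}$-nilpotent, so every $\mathrm{MU}$-nilpotent $p$-local spectrum is $\mathrm{BP}$-nilpotent.

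The second step is to invoke the thick subcategory theorem on $F$. By Lemma \ref{lemma:BPfree}, $\mathrm{BP}\otimes F$ is a nonzero finite free $\mathrm{BP}$-module, so $F$ is nonzero after rationalization and therefore has chromatic type $0$. The thick subcategory of $\Sp^{\omega}_{(p)}$ generated by $F$ is thus all of $\Sp^{\omega}_{(p)}$; in particular it contains $\mathbb{S}_{(p)}$.

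To conclude, tensoring with $E$ shows that $E\simeq\mathbb{S}_{(p)}\otimes E$ lies in the thick subcategory generated by $F\otimes E$, since the desuspensions, cofibers, and retracts used to build $\mathbb{S}_{(p)}$ out of $F$ are preserved by $-\otimes E$. Since $F\otimes E$ is $\mathrm{BP}$-nilpotent and the class of $\mathrm{BP}$-nilpotent spectra forms a thick subcategory (indeed the thick tensor ideal generated by $\mathrm{BP}$), it follows that $E$ is $\mathrm{BP}$-nilpotent. There is no serious obstacle: the argument is a direct combination of the definitions together with Theorem \ref{thm:weakMU} and the lowest-height case of the thick subcategory theorem.
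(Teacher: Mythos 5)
Your argument is correct and is exactly the one the paper intends: the paper offers no written proof beyond ``the thick subcategory theorem implies the following,'' and your chain (Theorem \ref{thm:weakMU} makes $E\otimes F$ a weak $\mathrm{MU}$-module, hence $\mathrm{BP}$-nilpotent $p$-locally; $F$ is type $0$ by Lemma \ref{lemma:BPfree}, so $\mathbb{S}_{(p)}\in\mathrm{thick}(F)$ and tensoring with $E$ places $E$ in the thick subcategory generated by $E\otimes F$) is just that sketch spelled out. No gaps; the only implicit point, that the finite $\mathrm{BP}$-projective $F$ in the definition of Wood-type is nontrivial, is the same convention the paper uses.
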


The notion of Wood-types is closely tied to that of chromatic defect via the following useful fact.

\begin{proposition}
    Every finite $\mathrm{BP}$-free $F$ is a finite $\mathrm{T(n)}$-free for some $0\le n<\infty$.
\end{proposition}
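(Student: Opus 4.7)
The plan is to lift a chosen $\mathrm{BP}_*$-basis of $\mathrm{BP}_*F$ to homotopy classes in $\mathrm{T}(n)\otimes F$ for $n$ sufficiently large, build an explicit candidate map from a free $\mathrm{T}(n)$-module to $\mathrm{T}(n)\otimes F$, and then conclude it is an equivalence by showing its cofiber is $\mathrm{BP}$-acyclic and invoking the Hurewicz theorem.

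First, choose a $\mathrm{BP}_*$-basis $x_1,\ldots,x_k\in\mathrm{BP}_*F$ with $x_i\in\mathrm{BP}_{d_i}F$, represented by maps $g_i\colon \Sigma^{d_i}\mathbb{S}\to \mathrm{BP}\otimes F$. The key input is that $\mathrm{BP}\simeq \mathrm{colim}_m \mathrm{T}(m)$ in the $p$-local stable category, inherited from $\mathrm{MU}\simeq \mathrm{colim}_m \mathrm{X}(m)$ together with the definition of $\mathrm{T}(m)$ as the Quillen idempotent summand of $\mathrm{X}(p^m)_{(p)}$. Since spheres are compact and there are only finitely many $g_i$, for some sufficiently large common $n$ each $g_i$ factors as $(\iota\otimes 1_F)\circ\tilde{g}_i$ for a lift $\tilde{g}_i\colon \Sigma^{d_i}\mathbb{S}\to \mathrm{T}(n)\otimes F$, where $\iota\colon \mathrm{T}(n)\to\mathrm{BP}$. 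Assembling these $\mathrm{T}(n)$-linearly yields a map $\phi\colon \bigoplus_{i=1}^k \Sigma^{d_i}\mathrm{T}(n)\to \mathrm{T}(n)\otimes F$ of $\mathrm{T}(n)$-modules.

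Next, I would show that $\phi$ is a $\mathrm{BP}$-homology isomorphism. Both sides of the induced map on $\mathrm{BP}$-homology are free of rank $k$ over $(\mathrm{BP}\otimes \mathrm{T}(n))_*$: the source tautologically, and the target via the Kunneth isomorphism $\mathrm{BP}_*(\mathrm{T}(n)\otimes F)\cong (\mathrm{BP}\otimes \mathrm{T}(n))_*\otimes_{\mathrm{BP}_*}\mathrm{BP}_*F$, valid by the $\mathrm{BP}_*$-freeness of $F$. Using the resulting identification of source and target with $\bigoplus_i \Sigma^{d_i}(\mathrm{BP}\otimes \mathrm{T}(n))$, and tracking the lifts $\tilde{g}_i$ through this Kunneth identification, one sees that $\mathrm{BP}_*\phi$ carries the source basis to the target basis and is therefore an isomorphism.

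Finally, let $C$ denote the cofiber of $\phi$. Then $C$ is $p$-local (since $\mathrm{T}(n)$ is), bounded below (since $\mathrm{T}(n)$ is connective and $F$ is a finite spectrum), and satisfies $\mathrm{BP}\otimes C\simeq 0$ by the previous step. Since $H\Z_{(p)}$ is a $\mathrm{BP}$-module spectrum via the composite $\mathrm{BP}\hookrightarrow \mathrm{MU}_{(p)}\to H\Z_{(p)}$, one gets $H\Z_{(p)}\otimes C\simeq H\Z_{(p)}\otimes_{\mathrm{BP}}(\mathrm{BP}\otimes C)\simeq 0$, so $H_*(C;\Z_{(p)})=0$; combined with $C$ being $p$-local and bounded below, the Hurewicz theorem then forces $C\simeq 0$. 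Hence $\phi$ is an equivalence, exhibiting $\mathrm{T}(n)\otimes F$ as a finite free $\mathrm{T}(n)$-module. The main obstacle I expect is the middle step: the bookkeeping required to verify that $\mathrm{BP}_*\phi$ genuinely carries source basis to target basis must carefully reconcile the Kunneth identification with the precise construction of the $\tilde{g}_i$ as lifts of a fixed $\mathrm{BP}_*$-basis.
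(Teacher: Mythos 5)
Your proposal is correct, and its first half is essentially the paper's construction: choose a $\mathrm{BP}_*$-basis of $\mathrm{BP}_*F$, lift the representing classes to $\pi_*(\mathrm{T(n)}\otimes F)$ for $n$ large, and assemble a $\mathrm{T(n)}$-module map $\phi\colon\bigoplus_i\Sigma^{d_i}\mathrm{T(n)}\to\mathrm{T(n)}\otimes F$ (the paper lifts using that $\mathrm{T(n)}\to\mathrm{BP}$ is a $(2p^{n+1}-4)$-equivalence rather than compactness against $\mathrm{BP}\simeq\operatorname{colim}_m\mathrm{T(m)}$; both work). Where you genuinely diverge is in showing $\phi$ is an equivalence. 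The paper applies $\mathrm{BP}\otimes_{\mathrm{T(n)}}(-)$, observes that the resulting map is by construction exactly the $\mathrm{BP}$-module map extending the chosen basis, hence an equivalence, and then descends using that $\mathrm{BP}$ is a free $\mathrm{T(n)}$-module (Proposition \ref{prop:T(n)toBPredundancy}), so that $\phi$ is a retract of an equivalence. You instead smash with $\mathrm{BP}$ over the sphere, identify both sides as finite free $(\mathrm{BP}\otimes\mathrm{T(n)})_*$-modules via K\"unneth, and dispose of the cofiber by $\mathrm{BP}$-acyclicity plus a bounded-below, $p$-local Hurewicz argument; that all goes through. The one imprecision is the step you yourself flagged: $\mathrm{BP}_*\phi$ does not literally carry your source basis to the basis $1\otimes x_i$, only to elements congruent to it modulo the augmentation ideal of $(\mathrm{BP}\otimes\mathrm{T(n)})_*\cong\mathrm{BP}_*[t_1,\ldots,t_n]$; to conclude invertibility you should order the generators by degree and note that, since the coefficient ring is concentrated in nonnegative degrees, the resulting matrix is triangular with units (in fact $1$'s) on the diagonal. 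The paper's base-change route avoids this bookkeeping entirely because after $\mathrm{BP}\otimes_{\mathrm{T(n)}}(-)$ the map is on the nose the chosen equivalence $\mathrm{BP}\{T\}\simeq\mathrm{BP}\otimes F$; in exchange, your route does not need the freeness of $\mathrm{BP}$ over $\mathrm{T(n)}$, only the elementary Hurewicz-type vanishing criterion for the cofiber.
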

\begin{proof}
If $F$ is a $\mathrm{BP}$-free, there is an equivalence of $\mathrm{BP}$-modules $\mathrm{BP}\otimes F\simeq \mathrm{BP}\{T\}$ for some finite graded indexing set $T$. For each $\alpha\in T$, the corresponding map $\mathbb S^{|\alpha|}\to \mathrm{BP}\otimes F$ factors through $\mathrm{T(n)}\otimes F$ for some $n$, since $\mathrm{T(n)}\to \mathrm{BP}$ is a $(2p^{n+1}-4)$-equivalence. Since $T$ is finite, we may choose $n$ such that such a factorization exists for all $\alpha\in T$, and this defines a map
\[\mathrm{T(n)}\{T\}\to \mathrm{T(n)}\otimes F\]
Applying $\mathrm{BP}\otimes_{\mathrm{T(n)}}-$, this map becomes an equivalence, thus it is an equivalence as $\mathrm{BP}$ is a free $\mathrm{T(n)}$-module by Proposition \ref{prop:T(n)toBPredundancy}.
\end{proof}

This gives a necessary condition for $E$ to be Wood-type.

\begin{corollary}\label{cor:woodimpliesfinitedefect}
    If $E$ is Wood-type, then $E$ has finite chromatic defect.
\end{corollary}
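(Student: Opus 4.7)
The plan is to chain together the structural results developed just before the statement, reducing Wood-type to a statement about $\mathrm{T(n)}$-modules and then to $\mathrm{X(p^n)}$-modules. First I would apply Lemma \ref{lemma:BPfree} to promote the given finite $\mathrm{BP}$-projective $F$ (for which $E\otimes F$ is complex-orientable) to a finite $\mathrm{BP}$-free, and then invoke the proposition immediately preceding the corollary to produce an integer $n\ge 0$ and a finite graded indexing set $T$ together with a $\mathrm{T(n)}$-module equivalence $\mathrm{T(n)}\otimes F\simeq \bigvee_{\alpha\in T}\Sigma^{|\alpha|}\mathrm{T(n)}$. Without loss of generality $F$, hence $T$, is nonzero (otherwise replace $F$ by the sphere, which trivially gives a Wood-type with finite defect $1$, or simply observe the corollary is vacuous).

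Second, I would tensor $E\otimes F$ with $\mathrm{T(n)}$. By Theorem \ref{thm:weakMU}, complex-orientability is the same as being a weak $\mathrm{MU}$-module, and this latter property is closed under tensoring with an arbitrary spectrum (directly from Definition \ref{def:weakmodule}). Thus $E\otimes F\otimes \mathrm{T(n)}$ is complex-orientable. Rewriting via the splitting above yields $E\otimes F\otimes \mathrm{T(n)}\simeq \bigvee_{\alpha\in T}\Sigma^{|\alpha|}(E\otimes \mathrm{T(n)})$, so each wedge summand is a retract of a complex-orientable spectrum. Since complex-orientability is evidently closed under retracts (again via Theorem \ref{thm:weakMU}) and under desuspension, $E\otimes \mathrm{T(n)}$ is itself complex-orientable.

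Third, I would leverage Proposition \ref{prop:T(n)splitting}, which exhibits $\mathrm{X(p^n)}_{(p)}$ as a free $\mathrm{T(n)}$-module on a finite monomial basis. Tensoring with $E$ (which is $p$-local, so $E\otimes \mathrm{X(p^n)}\simeq E\otimes \mathrm{X(p^n)}_{(p)}$) exhibits $E\otimes \mathrm{X(p^n)}$ as a wedge of suspensions of $E\otimes \mathrm{T(n)}$, hence complex-orientable. This gives $\Phi(E)\le p^n<\infty$, as desired.

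The proof is essentially a short assembly argument, so there is no genuine obstacle in the sense of a hard computation; the only point demanding care is making sure our flexible Definition \ref{def:comporient} of complex-orientability genuinely descends through retracts and base changes. This is exactly what Theorem \ref{thm:weakMU} is for: by reinterpreting the condition as admission of a weak $\mathrm{MU}$-module structure, both closure properties are immediate, and the corollary drops out.
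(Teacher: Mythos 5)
Your argument is correct and follows the paper's proof essentially verbatim: the proposition preceding the corollary makes $F$ a finite $\mathrm{T(n)}$-free, so $E\otimes \mathrm{T(n)}$ is a retract of the complex-orientable spectrum $E\otimes F\otimes \mathrm{T(n)}$, and your final step via Proposition \ref{prop:T(n)splitting} just spells out what the paper leaves implicit through the relation $\Phi_p(E)=\lfloor\log_p\Phi(E)\rfloor$. The only slip is your parenthetical on the degenerate case $F\simeq 0$ --- replacing $F$ by the sphere would require $E$ itself to be complex-orientable, and the case is anyway excluded by the paper's convention that a finite $\mathrm{BP}$-projective is a nontrivial (type $0$) complex --- so it does not affect the argument.
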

\begin{proof}
     By the proposition, there is some finite $\mathrm{T(n)}$-free $F$ such that $E\otimes F$ is complex-orientable. It follows that $E\otimes \mathrm{T(n)}$ is a retract of the complex-orientable spectrum $E\otimes F\otimes \mathrm{T(n)}$.
\end{proof}

\begin{corollary}\label{cor:EOnfinitedefect}
    For any $\mathrm{E}(k,\Gamma)$ and $G\subset\mathbb{G}_n$ as in Example \ref{example:mnneon}, the spectrum $\mathrm{EO}_n(G)$ has finite chromatic defect.
\end{corollary}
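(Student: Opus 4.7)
The plan is to combine two results that have just been established in this section. By Example \ref{example:mnneon}, the theorem of Meier--Naumann--Noel asserts that $\mathrm{EO}_n(G)$ is Wood-type, meaning there exists a finite $\mathrm{BP}$-projective $F$ such that $\mathrm{EO}_n(G)\otimes F$ is complex-orientable. The preceding Corollary \ref{cor:woodimpliesfinitedefect} then directly yields the conclusion, so the proof is essentially a one-line invocation of these two inputs.

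More explicitly, unwinding Corollary \ref{cor:woodimpliesfinitedefect}, I would first apply Lemma \ref{lemma:BPfree} to upgrade the finite $\mathrm{BP}$-projective $F$ coming from \cite{meiernaumannnoel} to a finite $\mathrm{BP}$-free. Then the proposition preceding \ref{cor:woodimpliesfinitedefect} provides an integer $n \ge 0$ for which $F$ is already a finite $\mathrm{T(n)}$-free. This means $\mathrm{T(n)}\otimes F \simeq \mathrm{T(n)}\{T\}$ for some finite graded set $T$, so $\mathrm{EO}_n(G)\otimes\mathrm{T(n)}$ appears as a retract of $\mathrm{EO}_n(G)\otimes F\otimes\mathrm{T(n)}$. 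Since the class of complex-orientable spectra is closed under tensoring with an arbitrary spectrum and under retracts (as observed in Section~\ref{sec:complexorientations}), the retract $\mathrm{EO}_n(G)\otimes\mathrm{T(n)}$ is complex-orientable, witnessing $\Phi_p(\mathrm{EO}_n(G)) \le n$.

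There is no real obstacle here: all of the work is contained in the cited input from \cite{meiernaumannnoel}. The corollary is essentially a formal consequence of the Wood-type property plus the structural observations about Ravenel's filtration made earlier in the section.
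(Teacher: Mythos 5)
Your proof is correct and is exactly the paper's intended argument: the corollary follows immediately by combining the Meier--Naumann--Noel result of Example \ref{example:mnneon} with Corollary \ref{cor:woodimpliesfinitedefect}, whose proof you unwind faithfully (only beware the minor clash of reusing $n$ both for the height in $\mathrm{EO}_n(G)$ and for the index of $\mathrm{T(n)}$).
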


In Section \ref{sec:6} we will return to this example and compute the chromatic defect $\Phi(\mathrm{EO}_n(G))$ precisely in terms of the valuation on the  ring $\mathrm{End}(\Gamma)$.

\section{The \texorpdfstring{$\mathrm{T(n)}$}{T(n)}'s and \texorpdfstring{$\F_p$}{Fp}-homology}\label{sec:3}
In this section, we use the Thom isomorphism  to analyze the $\mathrm{T(n)}$'s and chromatic defect from the point of view of the Adams spectral sequence.

\subsection{The Adams spectral sequence of \texorpdfstring{$\mathrm{T(n)}$}{T(n)}} We fix a prime $p$ and let $\mathrm{T(n)}$ be the summand of $X(p^n)_{(p)}$ as in Definition \ref{def:T(n)}. The Thom isomorphism implies that the map $\mathrm{T(n)}\to \F_p$ induces an injection of $\mathcal{A}_*$-comodule algebras
\[H_*(\mathrm{T(n)};\F_p)\to H_*(\F_p;\F_p)=\mathcal{A}_*\]
with image $\F_2[\zeta_1^2,\ldots,\zeta_n^2]$ for $p=2$ and $\F_p[\zeta_1,\ldots,\zeta_n]$ for $p$ odd. This can be described as a coinduced comodule.

\begin{proposition}\label{prop:HT(n)}
    There is an isomorphism of $\mathcal{A}_*$-comodule algebras
    \[
    H_*(\mathrm{T(n)};\F_p)\cong
    \begin{cases}
    \mathcal{A_*}\Boxover{\mathcal{A}_*/(\xi_1^2,\ldots,\xi_n^2)}\F_2&p=2\\
    \mathcal{A_*}\Boxover{\mathcal{A}_*/(\xi_1,\ldots,\xi_n)}\F_p&p>2
    \end{cases}
    \]
\end{proposition}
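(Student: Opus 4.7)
The plan is to reduce the claim to the classical Milnor--Moore identification: for a sub-Hopf algebra $B$ of a connected graded Hopf algebra $A$ over a field, there is a canonical isomorphism $B \cong A\,\Box_{A//B}\,\F_p$ of left $A$-comodules, where $A//B := A\otimes_B \F_p$ carries its induced quotient Hopf algebra structure and $A$ is regarded as a right $A//B$-comodule via the projection. Taking this identification for granted, the proposition reduces to recognizing $H_*(\mathrm{T(n)};\F_p)$ as a sub-Hopf algebra $B_n \subset \mathcal{A}_*$ and then computing the quotient Hopf algebra $\mathcal{A}_*//B_n$.

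For the first step, I would invoke the image description already recalled in the paragraph immediately preceding the proposition: the image is $\F_p[\zeta_1,\ldots,\zeta_n]$ for $p$ odd and $\F_2[\zeta_1^2,\ldots,\zeta_n^2]$ for $p = 2$. Since the antipode $\chi$ of $\mathcal{A}_*$ satisfies $\zeta_i = \chi(\xi_i) \equiv -\xi_i$ modulo decomposables, these coincide with $B_n := \F_p[\xi_1,\ldots,\xi_n]$ and $\F_2[\xi_1^2,\ldots,\xi_n^2]$ respectively. Closure under comultiplication would then be checked directly from Milnor's formula $\Delta(\xi_k) = \sum_{i+j=k} \xi_i^{p^j}\otimes \xi_j$: for $k \le n$ every index appearing in a summand is at most $n$ and no $\tau$-terms appear; at $p = 2$ the squaring $\xi_i^{2^{j+1}} = (\xi_i^2)^{2^j}$ keeps the comultiplication inside $B_n \otimes B_n$.

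For the second step, I would unpack $\mathcal{A}_*//B_n = \mathcal{A}_*/(\mathcal{A}_*\cdot \bar{B}_n^+)$ directly. For $p$ odd, the augmentation ideal $\bar{B}_n^+$ is generated by $\xi_1,\ldots,\xi_n$, yielding $\mathcal{A}_*/(\xi_1,\ldots,\xi_n)$ as in the statement; for $p = 2$ it is generated by $\xi_1^2,\ldots,\xi_n^2$, giving the other case. Combining this with the Milnor--Moore identification then concludes the proof.

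The main obstacle is really hidden in the identification of the image, which the excerpt takes as given: making the Thom isomorphism $\mathcal{A}_*$-comodule equivariant requires a little care, and one typically argues by factoring $\mathrm{T(n)} \to H\F_p$ through $\mathrm{T(n)}\to \mathrm{BP}\to H\F_p$ and transporting the well-understood comodule structure on $H_*(\mathrm{BP};\F_p)$ along the inclusion of polynomial generators $b_i$ for $i < p^n$. Once this input is in hand, the remainder of the argument is routine Hopf algebra bookkeeping.
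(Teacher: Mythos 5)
Your proposal is correct and follows essentially the same route as the paper, which offers no separate proof beyond the preceding identification of the image of $H_*(\mathrm{T(n)};\F_p)\to\mathcal{A}_*$ under the Thom isomorphism together with the (implicit) Milnor--Moore identification of a sub-Hopf algebra $B\subset\mathcal{A}_*$ with the cotensor product $\mathcal{A}_*\Box_{\mathcal{A}_*//B}\F_p$. Your added checks (that the $\zeta$- and $\xi$-generated subalgebras agree, closure under $\Delta$, and the computation of $\mathcal{A}_*//B_n$ as the stated quotient) are exactly the routine verifications the paper leaves to the reader.
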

The quotient Hopf algebra $\mathcal{A}_*/(\xi_1^2,\ldots,\xi_n^2)$ at $p=2$ is given by
\[E(\xi_1,\ldots,\xi_n)\otimes P(\xi_{n+1},\xi_{n+2},\ldots)\]
and, at odd primes, $\mathcal{A}_*/(\xi_1,\ldots,\xi_n)$ is given by
\[E(\tau_1,\tau_2,\ldots)\otimes P(\xi_{n+1},\xi_{n+2},\ldots)\]
A change-of-rings isomorphism then gives the following.

\begin{corollary}
    The $E_2$-page of the Adams spectral sequence for $\mathrm{T(n)}$ is given by
    \[\Ext_{\mathcal{A}_*}(\F_p,H_*\mathrm{T(n)})\cong
    \begin{cases}
        \Ext_{E(\xi_1,\ldots,\xi_n)\otimes P(\xi_{n+1},\xi_{n+2},\ldots)}(\F_2,\F_2)&p=2\\
        \Ext_{E(\tau_1,\tau_2,\ldots)\otimes P(\xi_{n+1},\xi_{n+2},\ldots)}(\F_p,\F_p)&p>2
    \end{cases}\]
\end{corollary}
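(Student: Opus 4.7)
The proof is essentially an immediate consequence of the cotensor product description of $H_*(\mathrm{T(n)};\F_p)$ in Proposition \ref{prop:HT(n)}, together with the standard change-of-rings isomorphism for quotient Hopf algebras. The plan is to feed the comodule algebra identification into this change-of-rings, and then unwind the explicit description of the quotient Hopf algebra $\mathcal{A}_*/(\xi_1^2,\ldots,\xi_n^2)$ at $p=2$ and $\mathcal{A}_*/(\xi_1,\ldots,\xi_n)$ at odd primes.

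First, recall the change-of-rings isomorphism: if $B$ is a quotient Hopf algebra of $A$ such that $A$ is faithfully coflat over $B$, and $N$ is a $B$-comodule, then one has a natural isomorphism
\[\Ext^{*,*}_A(\F_p, A \Boxover{B} N)\cong \Ext^{*,*}_B(\F_p, N).\]
In our setting $A = \mathcal{A}_*$ and $B$ is the relevant quotient Hopf algebra. Faithful coflatness over a quotient Hopf algebra of the dual Steenrod algebra holds by Milnor--Moore, so the hypotheses are satisfied.

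The argument then proceeds in three lines. Apply Proposition \ref{prop:HT(n)} to rewrite the $E_2$-page as
\[\Ext_{\mathcal{A}_*}(\F_p, H_*\mathrm{T(n)})\cong \Ext_{\mathcal{A}_*}\bigl(\F_p, \mathcal{A}_*\Boxover{B}\F_p\bigr),\]
where $B = \mathcal{A}_*/(\xi_1^2,\ldots,\xi_n^2)$ at $p=2$ and $B=\mathcal{A}_*/(\xi_1,\ldots,\xi_n)$ at odd primes. Apply change-of-rings with $N = \F_p$ to reduce to $\Ext_B(\F_p, \F_p)$. Then identify $B$ explicitly: at $p=2$, killing $\xi_i^2$ for $i\le n$ leaves $\xi_1,\ldots,\xi_n$ as exterior generators while $\xi_{n+1},\xi_{n+2},\ldots$ remain polynomial, giving $E(\xi_1,\ldots,\xi_n)\otimes P(\xi_{n+1},\xi_{n+2},\ldots)$; at odd primes, killing the $\xi_i$'s for $i\le n$ leaves the $\tau_j$'s exterior and the $\xi_j$'s polynomial for $j>n$, giving $E(\tau_1,\tau_2,\ldots)\otimes P(\xi_{n+1},\xi_{n+2},\ldots)$.

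There is no real obstacle here; the only subtle point is verifying that the quotient described in Proposition \ref{prop:HT(n)} is indeed a Hopf algebra quotient, so that change-of-rings applies cleanly. This is immediate from the fact that $(\xi_1^2,\ldots,\xi_n^2)$ (resp.\ $(\xi_1,\ldots,\xi_n)$) is a Hopf ideal in $\mathcal{A}_*$, which can be checked directly from the Milnor coproduct formula. Once this is observed, the corollary is a formal consequence.
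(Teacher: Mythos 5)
Your proposal is correct and follows essentially the same route as the paper: the paper likewise feeds the coinduced description of $H_*(\mathrm{T(n)};\F_p)$ from Proposition \ref{prop:HT(n)} into the standard change-of-rings isomorphism for the quotient Hopf algebra, whose explicit form it records immediately before the corollary. The only cosmetic remark is that at odd primes the quotient Hopf algebra should arguably carry all the exterior generators starting from $\tau_0$; your indexing simply mirrors the statement as written, so this is not a gap in your argument.
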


We now define an increasing filtration on these quotient Hopf algebras to obtain a corresponding May spectral sequence. This is the usual May filtration on $\mathcal{A}_*$ (see \cite[Section 3.2]{ravgreen}) projected to the quotient. For $p=2$, this is obtained by giving $\xi_i^{2^j}$ filtration $2i-1$, and for $p>2$ by giving $\xi_i^{2^j}$ and $\tau_{i-1}$ filtration $2i-1$, and extending multiplicatively. At $p=2$, the associated graded of this filtration is a tensor product of primitively generated exterior algebras. At odd primes, it is a tensor product of primitively generated exterior algebras and primitively generated truncated polynomial algebras of height $p$. As in the case for the sphere, we deduce the following.

\begin{proposition}
    There is a May spectral sequence converging to the $E_2$-page of the Adams spectral sequence for $\mathrm{T(n)}$.
    \begin{itemize}
        \item When $p=2$, the May SS has signature
        \[E_1^{s,t,w}=\F_2[h_{i,j}\st j=0\text{ if }i\le n]\implies \Ext^{s,t}_{\mathcal{A}_*}(\F_2,H_*\mathrm{T(n)})\]
        where $h_{i,j}=[\xi_i^{2^j}]$. In $(s,t,w)$ tridegrees, we have $|h_{i,j}|=(1,2^j(2^i-1),2i-1)$.
        \item When $p>2$, the May SS has signature
        \[E_1^{s,t,w}=E(h_{i,j}\st i> n)\otimes\F_p[b_{i,j}\st i>n]\otimes\F_p[a_i]\implies \Ext^{s,t}_{\mathcal{A}_*}(\F_p,H_*\mathrm{T(n)})\]
        where $h_{i,j}=[\xi_i^{p^j}]$, $a_i=[\tau_{i}]$, and $b_{i,j}$ is the $p$-fold Massey power of $h_{i,j}$, which is represented explicitly by the cocycle
        \[\sum\limits_{k=1}^p\frac{1}{p}\binom{p}{k}[\xi_i^{p^{jk}}|\xi_i^{p^{j(p-k)}}]\]
        The $(s,t,w)$ tridegrees of the generators are $|h_{i,j}|=(1,2p^j(p^i-1),2i-1)$, $|a_i|=(1,2p^i-1,2i-1)$, and $|b_{i,j}|=(2,2p^{pj}(p^i-1),4i-2)$.
    \end{itemize}
\end{proposition}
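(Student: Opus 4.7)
The plan is to follow the classical construction of the May spectral sequence for $\mathcal{A}_*$ as in Ravenel's green book, but carried out on the quotient Hopf algebras identified in the preceding corollary. By that change-of-rings isomorphism, computing $\Ext_{\mathcal{A}_*}(\F_p, H_*\mathrm{T(n)})$ is equivalent to computing $\Ext_{B_n}(\F_p,\F_p)$, where $B_n=E(\xi_1,\ldots,\xi_n)\otimes P(\xi_{n+1},\xi_{n+2},\ldots)$ at $p=2$ and the analogous quotient at odd primes.

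First I would check that the May filtration restricts to an ascending filtration of $B_n$ by sub-Hopf-algebras, which it does since the generators of the ideals $(\xi_1^2,\ldots,\xi_n^2)$ and $(\xi_1,\ldots,\xi_n)$ are homogeneous in the filtration. Then I would compute the associated graded $E^0 B_n$. The key local observation, identical to the classical one, is that in $E^0 B_n$ the relation $(\xi_i^{p^j})^p=\xi_i^{p^{j+1}}$ becomes zero: the left side has May filtration $p(2i-1)$ while the right has filtration $2i-1 < p(2i-1)$, so the product vanishes in the associated graded. At $p=2$ this turns each polynomial generator $\xi_i$ with $i>n$ into a tensor of exterior generators $h_{i,j}=[\xi_i^{2^j}]$ while the exterior generators for $i\le n$ survive only as $h_{i,0}$, giving the claimed exterior algebra on $\{h_{i,j}\st j=0\text{ if }i\le n\}$. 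At odd primes the same calculation produces a tensor product of the exterior algebra on $a_i=[\tau_i]$ with truncated polynomial algebras $\F_p[h_{i,j}]/h_{i,j}^p$ for $i>n$.

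Next I would apply the standard filtered-cobar construction: the filtration on $B_n$ induces one on its normalized cobar complex, yielding a multiplicative spectral sequence with $E_1 = \Ext_{E^0 B_n}(\F_p,\F_p)$ converging to $\Ext_{B_n}(\F_p,\F_p)$, with convergence ensured by the filtration being exhaustive and bounded below in each internal degree. The $E_1$-page is then computed factor-by-factor via Künneth using the standard Koszul-style facts: a primitively generated exterior algebra on $x$ contributes a polynomial generator in bidegree $(1,|x|)$ (the source of $h_{i,j}$ at $p=2$ and of $a_i$ at odd primes), and a primitively generated truncated polynomial $\F_p[x]/x^p$ at odd $p$ with $|x|$ even contributes $E(h)\otimes\F_p[b]$ with $|h|=(1,|x|)$ and $|b|=(2,p|x|)$ (the source of $h_{i,j}$ and $b_{i,j}$ at odd primes). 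This gives the stated $E_1$-pages.

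The remaining step is to identify $b_{i,j}$ with the displayed cocycle. The plan is to invoke the classical description of the polynomial generator in the cohomology of $\F_p[x]/x^p$ as the $p$-fold Massey power $\langle h,\ldots,h\rangle$, represented in the cobar complex by $\sum_{k=1}^{p-1}\tfrac{1}{p}\binom{p}{k}[x^k|x^{p-k}]$; specializing to $x=\xi_i^{p^j}$ in $B_n$ and verifying that this cochain is closed produces the formula. The main (modest) obstacle is verifying that the coproduct on $\xi_i^{p^j}$ modulo the defining ideal of $B_n$ interacts with the May filtration correctly, so that the classical Massey power cocycle representative survives intact in the cobar complex of $B_n$; once this is checked, the argument is a direct transcription of the classical May spectral sequence calculation, restricted to the appropriate subset of generators.
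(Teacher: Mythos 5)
Your proposal is correct and follows essentially the same route as the paper: project the usual May filtration of $\mathcal{A}_*$ to the quotient Hopf algebra computing the Adams $E_2$-page of $\mathrm{T(n)}$, identify the associated graded as a tensor product of primitively generated exterior (and, at odd primes, height-$p$ truncated polynomial) algebras, and run the standard filtered cobar argument exactly as for the sphere. The only quibbles are cosmetic: the May filtration stages are compatible with the product and coproduct but are not themselves sub-Hopf-algebras, and your Massey-power representative $\sum_{k=1}^{p-1}\tfrac{1}{p}\binom{p}{k}\left[\xi_i^{kp^j}\,\middle|\,\xi_i^{(p-k)p^j}\right]$ is the standard (correct) form of the cocycle.
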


This spectral sequence may be used to compute the $E_2$-page of the Adams spectral sequence for $\mathrm{T(n)}$ through a range. We plot in Figure \ref{May} the $E_1$-page for $\mathrm{T(1)}$ at $p=2$. When working with the May spectral sequence, it can be convenient to ignore the May filtration degree, and plot the spectral sequence in Adams bigrading $(t-s,s)$ for a class in $\Ext^{s,t}$, where $s$ is the cohomological degree, and $t$ is the internal degree. In this grading, all May differentials have the signature of an Adams $d_1$. By direct use of the coproduct formula and Nakamura's lemma \cite{nakamura}, one can compute some May differentials for $\mathrm{T(1)}$ in low stems.

\begin{sseqdata}[ name = MayX(2), Adams grading, classes = {fill, show name=below},
grid = go, xrange ={0}{9},yrange={0}{8},xscale=0.7,yscale=0.7,x tick step =2, y tick step =2,run off differentials = {->},struct lines = black ]
\class(0,0)
\class[name=h_{2,0}](2,1)
\structline[red](0,0)(2,1)
\DoUntilOutOfBoundsThenNMore{2}{
    \class(\lastx+2,\lasty+1)
    \structline[red]
}
\class(0,1)
\DoUntilOutOfBoundsThenNMore{2}{
    \class(\lastx+2,\lasty+1)
    \structline[red]
}
\class(0,2)
\DoUntilOutOfBoundsThenNMore{2}{
    \class(\lastx+2,\lasty+1)
    \structline[red]
}
\class(0,3)
\DoUntilOutOfBoundsThenNMore{2}{
    \class(\lastx+2,\lasty+1)
    \structline[red]
}
\class(0,4)
\DoUntilOutOfBoundsThenNMore{2}{
    \class(\lastx+2,\lasty+1)
    \structline[red]
}
\class(0,5)
\DoUntilOutOfBoundsThenNMore{2}{
    \class(\lastx+2,\lasty+1)
    \structline[red]
}
\class(0,6)
\DoUntilOutOfBoundsThenNMore{2}{
    \class(\lastx+2,\lasty+1)
    \structline[red]
}
\class(0,7)
\DoUntilOutOfBoundsThenNMore{2}{
    \class(\lastx+2,\lasty+1)
    \structline[red]
}
\class(0,8)
\DoUntilOutOfBoundsThenNMore{2}{
    \class(\lastx+2,\lasty+1)
    \structline[red]
}
\class(0,10)
\class(2,10)
\class(4,10)
\class(6,10)
\class(8,10)
\structline(0,0)(0,8)
\structline(2,1)(2,8)
\structline(4,2)(4,8)
\structline(6,3)(6,8)
\structline(8,4)(8,8)
\structline(10,5)(10,8)
\structline(12,6)(12,8)
\structline(14,7)(14,8)
\structline(0,8)(0,10)
\structline(2,8)(2,10)
\structline(4,8)(4,10)
\structline(6,8)(6,10)
\structline(8,8)(8,10)

\class[name=h_{2,1}](5,1)
\DoUntilOutOfBoundsThenNMore{2}{
    \class(\lastx+2,\lasty+1)
    \structline[red]
}
\class(5,2)
\DoUntilOutOfBoundsThenNMore{2}{
    \class(\lastx+2,\lasty+1)
    \structline[red]
}
\class(5,3)
\DoUntilOutOfBoundsThenNMore{2}{
    \class(\lastx+2,\lasty+1)
    \structline[red]
}
\class(5,4)
\DoUntilOutOfBoundsThenNMore{2}{
    \class(\lastx+2,\lasty+1)
    \structline[red]
}
\class(5,5)
\DoUntilOutOfBoundsThenNMore{2}{
    \class(\lastx+2,\lasty+1)
    \structline[red]
}
\class(5,6)
\DoUntilOutOfBoundsThenNMore{2}{
    \class(\lastx+2,\lasty+1)
    \structline[red]
}
\class(5,7)
\DoUntilOutOfBoundsThenNMore{2}{
    \class(\lastx+2,\lasty+1)
    \structline[red]
}
\class(5,8)
\DoUntilOutOfBoundsThenNMore{2}{
    \class(\lastx+2,\lasty+1)
    \structline[red]
}
\class(5,10)
\class(7,10)
\class(9,10)
\structline(5,1)(5,8)
\structline(5,8)(5,10)
\structline(7,2)(7,8)
\structline(7,8)(7,10)
\structline(9,3)(9,8)
\structline(9,8)(9,10)
\class(5,9)
\class(5,11)

\class[name=h_{3,0}](6,1)
\DoUntilOutOfBoundsThenNMore{2}{
    \d1
    \class(\lastx+2,\lasty+1)
    \structline[red]
}
\class(6,2)
\DoUntilOutOfBoundsThenNMore{2}{
    \d1
    \class(\lastx+2,\lasty+1)
    \structline[red]
}
\class(6,3)
\DoUntilOutOfBoundsThenNMore{2}{
    \d1
    \class(\lastx+2,\lasty+1)
    \structline[red]
}
\class(6,4)
\DoUntilOutOfBoundsThenNMore{2}{
    \d1
    \class(\lastx+2,\lasty+1)
    \structline[red]
}
\class(6,5)
\DoUntilOutOfBoundsThenNMore{2}{
    \d1
    \class(\lastx+2,\lasty+1)
    \structline[red]
}
\class(6,6)
\DoUntilOutOfBoundsThenNMore{2}{
    \d1
    \class(\lastx+2,\lasty+1)
    \structline[red]
}
\class(6,7)
\DoUntilOutOfBoundsThenNMore{2}{
    \d1
    \class(\lastx+2,\lasty+1)
    \structline[red]
}
\class(6,8)
\DoUntilOutOfBoundsThenNMore{1}{
    \d1
    \class(\lastx+2,\lasty+1)
    \structline[red]
}
\structline(6,1)(6,2)
\structline(6,2)(6,3,2)
\structline(6,3,2)(6,4,2)
\structline(6,4,2)(6,5,2)
\structline(6,5,2)(6,6,2)
\structline(6,6,2)(6,7,2)
\structline(6,7,2)(6,8,2)
\class(6,9)
\structline(6,8,2)(6,9,2)

\structline(8,2)(8,3)
\structline(8,3)(8,4,2)
\structline(8,4,2)(8,5,2)
\structline(8,5,2)(8,6,2)
\structline(8,6,2)(8,7,2)
\structline(8,7,2)(8,8,2)
\class(8,9)
\structline(8,8,2)(8,9,2)
\end{sseqdata}

\begin{figure}[!htbp]
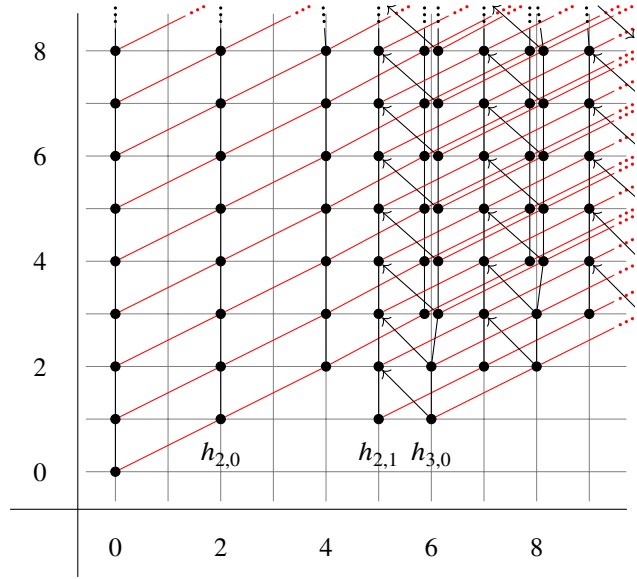

\centering
\printpage[name = MayX(2)]
\caption{The $E_1$-page of the May SS for $\mathrm{T(1)}$ at $p=2$}
\label{May}
\end{figure}

\begin{proposition}
    In the May SS for $\mathrm{T(1)}$ at $p=2$, we have the following differentials
    \begin{align*}
        d_1(h_{3,0})&=h_{1,0}h_{2,1}\\
        d_1(h_{4,0})&=h_{1,0}h_{3,1}+h_{2,0}h_{2,2}\\
        d_2(h_{3,0}^2)&=h_{1,0}^2h_{2,2}
    \end{align*}
\end{proposition}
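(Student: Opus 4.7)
The plan is to compute the $d_1$ differentials directly from the Milnor coproduct on $\mathcal{A}_*$ reduced modulo the ideal $(\xi_1^2)$, and to obtain the higher differential on $h_{3,0}^2$ by finding an explicit cobar cocycle lift whose boundary isolates the target class on the $E_1$-page.

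For $d_1(h_{3,0})$, the Milnor coproduct
\[\Delta \xi_3 = \xi_3 \otimes 1 + \xi_2^2 \otimes \xi_1 + \xi_1^4 \otimes \xi_2 + 1 \otimes \xi_3\]
projects in $\mathcal{A}_*/(\xi_1^2)$ to $\xi_3 \otimes 1 + \xi_2^2 \otimes \xi_1 + 1 \otimes \xi_3$, since $\xi_1^4 = 0$. The cobar differential thus gives $d[\xi_3] = [\xi_2^2 \,|\, \xi_1]$, which on the associated graded $E_1$-page of the May spectral sequence reads $d_1(h_{3,0}) = h_{1,0} h_{2,1}$. The computation for $d_1(h_{4,0})$ is analogous: the summand $\xi_1^8 \otimes \xi_3$ in $\Delta \xi_4$ vanishes in the quotient, so $d[\xi_4] = [\xi_3^2 \,|\, \xi_1] + [\xi_2^4 \,|\, \xi_2]$, yielding $d_1(h_{4,0}) = h_{1,0} h_{3,1} + h_{2,0} h_{2,2}$.

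For the differential on $h_{3,0}^2$, the Leibniz rule gives $d_1(h_{3,0}^2) = 2 h_{3,0} \cdot d_1(h_{3,0}) = 0$ at $p = 2$, so $h_{3,0}^2$ survives $d_1$. I would then modify the naive cobar representative by adding correction cochains of lower May filtration. Concretely, setting
\[c = [\xi_3 \,|\, \xi_3] + [\xi_2^2 \xi_3 \,|\, \xi_1] + [\xi_2^2 \,|\, \xi_1 \xi_3]\]
and expanding $dc$ using the reduced coproducts $\Delta'(\xi_2^2 \xi_3) = \xi_2^4 \otimes \xi_1 + \xi_2^2 \otimes \xi_3 + \xi_3 \otimes \xi_2^2 + \xi_2^2 \otimes \xi_1 \xi_2^2$ and $\Delta'(\xi_1 \xi_3) = \xi_1 \xi_2^2 \otimes \xi_1 + \xi_1 \otimes \xi_3 + \xi_3 \otimes \xi_1$ in the quotient, all of the cross-terms $[\xi_2^2|\xi_1|\xi_3]$, $[\xi_3|\xi_2^2|\xi_1]$, $[\xi_2^2|\xi_3|\xi_1]$, and $[\xi_2^2|\xi_1\xi_2^2|\xi_1]$ cancel in pairs mod $2$, leaving $dc = [\xi_2^4 \,|\, \xi_1 \,|\, \xi_1]$, which represents $h_{1,0}^2 h_{2,2}$ on $E_1$. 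This yields the claimed differential. As an alternative, the same conclusion follows from Nakamura's lemma applied to the $d_1$-relation for $h_{3,0}$, transporting the squaring operation through the spectral sequence.

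The main obstacle is identifying the correct correction cochains to $[\xi_3|\xi_3]$: one must find exactly the cochains whose cobar boundaries produce the same cross-terms that appear in $d[\xi_3|\xi_3]$, so that the mod $2$ cancellations leave only the residual term $[\xi_2^4|\xi_1|\xi_1]$ in the expected low May filtration. This is guided by the form of the Milnor coproducts on the products $\xi_2^2 \xi_3$ and $\xi_1 \xi_3$, and careful bookkeeping confirms the cancellations and filtration degrees throughout.
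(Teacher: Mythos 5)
Your proposal is correct and follows essentially the same route as the paper, which obtains the $d_1$'s from the Milnor coproduct reduced modulo $\xi_1^2$ and the differential on $h_{3,0}^2$ via Nakamura's lemma (which here gives $\mathrm{Sq}^1(h_{1,0}h_{2,1}) = h_{1,0}^2h_{2,2} + h_{1,1}h_{2,1}^2 = h_{1,0}^2h_{2,2}$ since $h_{1,1}=0$ in this quotient). Your explicit corrected cocycle $c=[\xi_3|\xi_3]+[\xi_2^2\xi_3|\xi_1]+[\xi_2^2|\xi_1\xi_3]$ does satisfy $dc=[\xi_2^4|\xi_1|\xi_1]$ with the correction terms in strictly lower May filtration, so it is a valid self-contained substitute for the Nakamura citation.
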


\begin{sseqdata}[ name = ASSX(2), Adams grading, classes = {fill, show name=below},
grid = go, xrange ={0}{14},yrange={0}{8},xscale=0.7,yscale=0.7,x tick step =2, y tick step =2,run off differentials = {->},struct lines = black ]
\class(0,0)
\class[name=v_1](2,1)
\structline[red](0,0)(2,1)
\DoUntilOutOfBoundsThenNMore{2}{
    \class(\lastx+2,\lasty+1)
    \structline[red]
}
\class(0,1)
\DoUntilOutOfBoundsThenNMore{2}{
    \class(\lastx+2,\lasty+1)
    \structline[red]
}
\class(0,2)
\DoUntilOutOfBoundsThenNMore{2}{
    \class(\lastx+2,\lasty+1)
    \structline[red]
}
\class(0,3)
\DoUntilOutOfBoundsThenNMore{2}{
    \class(\lastx+2,\lasty+1)
    \structline[red]
}
\class(0,4)
\DoUntilOutOfBoundsThenNMore{2}{
    \class(\lastx+2,\lasty+1)
    \structline[red]
}
\class(0,5)
\DoUntilOutOfBoundsThenNMore{2}{
    \class(\lastx+2,\lasty+1)
    \structline[red]
}
\class(0,6)
\DoUntilOutOfBoundsThenNMore{2}{
    \class(\lastx+2,\lasty+1)
    \structline[red]
}
\class(0,7)
\DoUntilOutOfBoundsThenNMore{2}{
    \class(\lastx+2,\lasty+1)
    \structline[red]
}
\class(0,8)
\DoUntilOutOfBoundsThenNMore{2}{
    \class(\lastx+2,\lasty+1)
    \structline[red]
}
\class(0,9)
\structline(0,0)(0,8)
\structline(2,1)(2,8)
\structline(4,2)(4,8)
\structline(6,3)(6,8)
\structline(8,4)(8,8)
\structline(10,5)(10,8)
\structline(12,6)(12,8)
\structline(14,7)(14,8)
\structline(0,8)(0,9)
\structline(2,8)(2,9)
\structline(4,8)(4,9)
\structline(6,8)(6,9)
\structline(8,8)(8,9)
\structline(10,8)(10,9)
\structline(12,8)(12,9)
\structline(14,8)(14,9)

\class[name=\chi_2](5,1)
\DoUntilOutOfBoundsThenNMore{2}{
    \class(\lastx+2,\lasty+1)
    \structline[red]
}

\class[name=\chi_2^2](10,2)
\DoUntilOutOfBoundsThenNMore{2}{
    \class(\lastx+2,\lasty+1)
    \structline[red]
}
\class[name=h_{2,2}](11,1)
\DoUntilOutOfBoundsThenNMore{2}{
    \class(\lastx+2,\lasty+1)
    \structline[red]
}
\class[name=h_{3,1}](13,1)
\DoUntilOutOfBoundsThenNMore{2}{
    \class(\lastx+2,\lasty+1)
    \structline[red]
}
\structline(13,1)(13,2)

\class(11,2)
\DoUntilOutOfBoundsThenNMore{2}{
    \class(\lastx+2,\lasty+1)
    \structline[red]
}

\structline(11,1)(11,2)
\structline(13,2)(13,3)
\structline(15,2)(15,4)

\end{sseqdata}

\begin{figure}[!htbp]
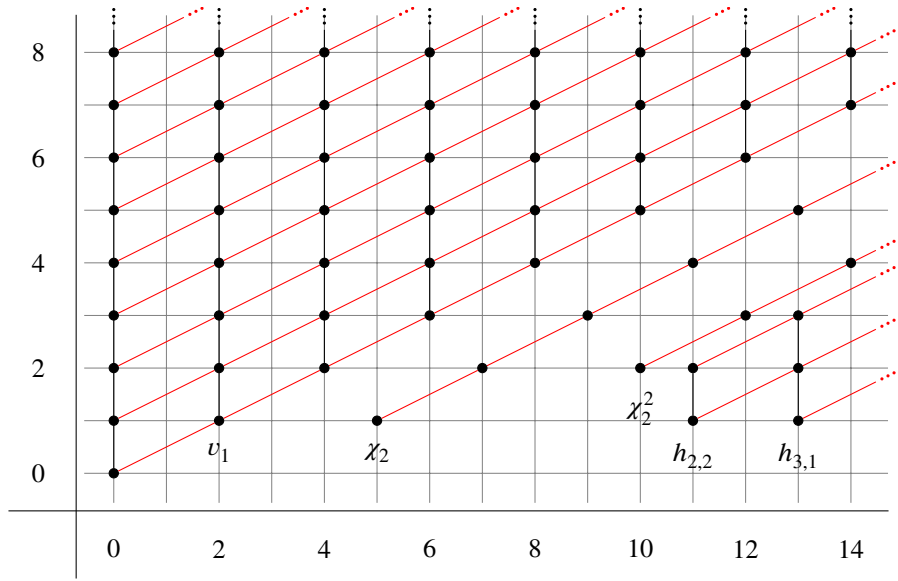

\centering
\printpage[name = ASSX(2)]
\caption{The $E_2$-page of the Adams SS for $\mathrm{T(1)}$ at $p=2$}
\label{Adams}
\end{figure}

The May SS for $\mathrm{T(n)}$ in general exhibits a similar pattern. The following comes by a degree check on the generators of the $E_1$-page, and the differential follows from the coproduct formula.

\begin{proposition}\label{prop:MaycalculationT(n)}
    The inclusion
    \[\F_2[h_{i,0},h_{n+1,1}\st i\le n+2]\to E_1\mathrm{-May}(\mathrm{T(n)})\]
    is an isomorphism in stems $t-s\le 2^{n+2}-2=\mathrm{stem}(h_{n+2,0})$, at $p=2$. There is a differential $d_1(h_{n+2,0})=h_{1,0}h_{n+1,1}$.
    
    The inclusion
    \[E(h_{n+1,0})\otimes\F_p[a_i\st i\le n+2]\to E_1\mathrm{-May}(\mathrm{T(n)})\]
    is an isomorphism in stems $t-s\le 2p^{n+1}-2=\mathrm{stem}(a_{n+1})$, for $p>2$. There is a differential $d_1(a_{n+1})=h_{1,0}h_{n+1,0}$.
\end{proposition}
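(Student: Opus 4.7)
The plan is to prove this via two essentially independent steps: a dimension count on the $E_1$-page of the May spectral sequence, and a direct calculation of the first May differential from the coproduct on $\mathcal{A}_*$, truncated to the quotient Hopf algebra describing $H_*\mathrm{T(n)}$.

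For the first claim at $p=2$, I would enumerate the generators of $E_1\text{-May}(\mathrm{T(n)})=\F_2[h_{i,j}\st j=0\text{ if }i\le n]$ and compute the stem of each: $h_{i,j}=[\xi_i^{2^j}]$ sits in bidegree $(1,2^j(2^i-1))$, so $\operatorname{stem}(h_{i,j})=2^{i+j}-2^j-1$. The generators landing in stems $\le 2^{n+2}-2$ are precisely those with $2^{i+j}-2^j<2^{n+2}$. Splitting into cases $j=0$ and $j\ge 1$ (with $i>n$ forced by the presentation), one finds the only solutions are $h_{1,0},\ldots,h_{n+2,0}$ and $h_{n+1,1}$. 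Since these are polynomially independent in $E_1$ and each additional generator $h_{i,j}$ of $E_1$ lies in a strictly higher stem, any monomial involving one of them exceeds $2^{n+2}-2$, which gives the asserted isomorphism on stems $\le 2^{n+2}-2$. The analogous odd-primary bookkeeping uses that $\operatorname{stem}(h_{i,j})=2p^{i+j}-2p^j-1$, $\operatorname{stem}(a_i)=2p^i-2$, and $\operatorname{stem}(b_{i,j})=2p^{j+1}(p^i-1)-2$; the last of these is always strictly larger than $2p^{n+1}-2$ when $i>n$, so no $b_{i,j}$ enters the range, and only the listed $h_{n+1,0}$ and $a_i$ for $i\le n+1$ survive in-range.

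For the differentials, I would read them off the cobar differential applied to $[\xi_{n+2}]$ (resp.\ $[\tau_{n+1}]$), which is nothing but the reduced coproduct computed in the quotient Hopf algebra $\mathcal{A}_*/(\xi_1^2,\ldots,\xi_n^2)$ (resp.\ $\mathcal{A}_*/(\xi_1,\ldots,\xi_n)$). From Milnor's formula
\[
\Delta\xi_k=\sum_{a+b=k}\xi_a^{2^b}\otimes\xi_b,\qquad
\Delta\tau_k=\tau_k\otimes 1+\sum_{a+b=k}\xi_a^{p^b}\otimes\tau_b,
\]
one computes termwise. At $p=2$, $\delta[\xi_{n+2}]=\sum_{0<a<n+2}[\xi_a^{2^{n+2-a}}|\xi_{n+2-a}]$; the class $[\xi_a^{2^{n+2-a}}]$ vanishes in the quotient whenever $a\le n$, since $\xi_a^{2^j}=0$ there for $j\ge 1$, leaving only the term $a=n+1$, which yields $h_{n+1,1}h_{1,0}$. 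At odd primes the same mechanism, applied to $\delta[\tau_{n+1}]$ and killing every $\xi_a$ with $a\le n$, leaves a single surviving cobar term in the quotient, giving the stated differential. One must then verify this really represents a $d_1$ of the May spectral sequence, which is immediate from the definition of the May filtration: the cobar elements $[\xi_a^{2^{b}}|\xi_b]$ and $[\xi_a^{p^b}|\tau_b]$ sit in May filtration exactly one lower than the filtration of the source $[\xi_{n+2}]$ or $[\tau_{n+1}]$.

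The main obstacle is the bookkeeping: one must carefully track which terms in the coproduct survive the passage to the quotient Hopf algebra in each filtration range, and at odd primes one must also check that no lower-filtration-correction terms appear (which follows from the fact that the reduced coproduct of $\xi_i$ and $\tau_i$ has no additional pieces beyond those written above). Aside from that, the argument is essentially a degree count plus a direct application of Milnor's diagonal.
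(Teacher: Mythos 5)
Your argument is correct and is exactly the paper's (one-sentence) justification fleshed out: a degree/stem count on the generators of the May $E_1$-page, plus the reduced Milnor coproduct computed in the quotient Hopf algebra, together with the observation that the surviving cobar term drops May filtration by exactly one, so it is a $d_1$. One detail to record at odd primes: the unique surviving term of $\delta[\tau_{n+1}]$ in $\mathcal{A}_*/(\xi_1,\ldots,\xi_n)$ is $[\xi_{n+1}\vert\tau_0]=a_0h_{n+1,0}$, which lies in the correct (odd) internal degree $2p^{n+1}-1$, so you should state the target of the differential in that form rather than deferring to ``the stated differential'' --- the expression $h_{1,0}h_{n+1,0}$ in the statement sits in even internal degree and is evidently a slip for $a_0h_{n+1,0}$.
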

\begin{proof}
    We give the proof for $p=2$, the odd primary case being analogous. The inclusion in question is the canonical inclusion
    \[\F_2[h_{i,0},h_{n+1,1}\st i\le n+2]\to\F_2[h_{i,j}\st j=0\text{ if }i\le n]\]
    In particular, the polynomial generators not in the image of this inclusion consist of $h_{n+1,j}$ where $j>1$, $h_{n+2,j}$ where $j>0$, and $h_{n+k,j}$ for $k>2$ and for all $j$. The $t-s$ degree of each of these generators exceeds that of $h_{n+2,0}$, so these generators do not contribute to the range in question.

    The May differential $d_1(h_{n+2,0})=h_{1,0}h_{n+1,1}$ follows from the coproduct formula
    \[\Delta(\xi_{n+2})=\xi_{n+2}\otimes 1+1\otimes\xi_{n+2}+\xi_{n+1}^2\otimes\xi_1+\sum\limits_{j=2}^{n+1}\xi_{n+2-j}^{2^j}\otimes \xi_{j}\]
    so that, in the cobar complex, $d([\xi_{n+2}])=[\xi_{n+1}^2|\xi_1]$ modulo higher May filtration terms.
\end{proof}

\begin{corollary}\label{cor:firstnonzeroT(n)}
    The first nonzero odd homotopy group of $\mathrm{T(n)}$ is
    \[\pi_{2p^{n+1}-3}\mathrm{T(n)}=\Z/p\]
    A generator is detected by $h_{n+1,1}$ at $p=2$ and $h_{n+1,0}$ at $p>2$. The same is true for $\mathrm{X(m)}_{(p)}$ whenever $p^n\le m<p^{n+1}$.
\end{corollary}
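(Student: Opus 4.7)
The plan is to compute the $E_2$-page of the Adams spectral sequence for $T(n)$ in odd stems through $2p^{n+1}-3$ via the May SS of Proposition \ref{prop:MaycalculationT(n)}, and then to produce a nonzero element in this stem using the Beardsley cell structure of Construction \ref{const:T(n)cellstructure}.

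At $p=2$, Proposition \ref{prop:MaycalculationT(n)} presents the May $E_1$-page in the relevant range as the polynomial algebra on the generators $h_{i,0}$ (even stem $2^i-2$) for $i\le n+2$ together with $h_{n+1,1}$ (odd stem $2^{n+2}-3$). Consequently the only odd-stem classes in this range form the $h_{1,0}$-tower $\{h_{1,0}^k h_{n+1,1}\}_{k\ge 0}$, all sitting in the single stem $2^{n+2}-3$ since $h_{1,0}$ has stem zero; at odd $p$ the analogous tower is $\{a_0^k h_{n+1,0}\}_{k\ge 0}$ in stem $2p^{n+1}-3$. The May differential $d_1(h_{n+2,0})=h_{1,0}h_{n+1,1}$ (respectively $d_1(a_{n+1})=a_0 h_{n+1,0}$) combined with the Leibniz rule applied to $h_{1,0}^k h_{n+2,0}$ (respectively $a_0^k a_{n+1}$) then kills every class in the tower except the bottom $h_{n+1,\ast}$. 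This shows that the Adams $E_2$-page in odd stems $\le 2p^{n+1}-3$ contains at most the single class $h_{n+1,\ast}$ in bidegree $(2p^{n+1}-3,1)$, giving the vanishing of $\pi_k T(n)$ for odd $k<2p^{n+1}-3$ and the upper bound $|\pi_{2p^{n+1}-3}T(n)|\le p$.

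For the lower bound, Construction \ref{const:T(n)cellstructure} exhibits $T(n+1)$ as built from $T(n)$ by successively attaching copies of $\Sigma^{k(2p^{n+1}-2)}T(n)$, beginning with the unit $T(n)\to T(n+1)$. The boundary in the cofiber sequence $T(n)\to T(n+1)\to X_{(1)}$, restricted to the bottom cell of $X_{(1)}$, yields a class $\alpha\in\pi_{2p^{n+1}-3}T(n)$. This class must be nonzero: if $\alpha=0$, then $t_{n+1}\in BP_{2p^{n+1}-2}T(n+1)$ would lift to $\pi_{2p^{n+1}-2}T(n+1)$, and its mod-$p$ Hurewicz image would be an $\mathcal{A}_*$-primitive element of $H_*T(n+1)$ in degree $2p^{n+1}-2$; but by Proposition \ref{prop:HT(n)} the new generator of $H_*T(n+1)$ over $H_*T(n)$ sits inside $\mathcal{A}_*$ as $\zeta_{n+1}^2$ at $p=2$ (resp.\ $\zeta_{n+1}$ at odd $p$), and the standard dual Steenrod diagonal formulas show these are not primitive for $n\ge 1$. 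Hence $\alpha\ne 0$, and since $h_{n+1,\ast}$ is the only Adams $E_2$ candidate in this bidegree, it must be a permanent cycle detecting $\alpha$, yielding $\pi_{2p^{n+1}-3}T(n)=\Z/p$ with the claimed generator.

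The extension to $X(m)_{(p)}$ for $p^n\le m<p^{n+1}$ is then immediate from Proposition \ref{prop:T(n)splitting}: the decomposition $X(m)_{(p)}\simeq T(n)[x_i:i\ne p^k-1,\,i<m]$ has all additional $T(n)$-module generators in even degree, so every summand is a shift of $T(n)$ by an even amount, and the vanishing of odd $\pi_k T(n)$ for $k<2p^{n+1}-3$ forces only the bottom summand to contribute to $\pi_{2p^{n+1}-3}X(m)_{(p)}$. The main obstacle is the nontriviality of the attaching map $\alpha$, for which the cleanest route is the $\mathcal{A}_*$-primitivity argument above; an alternative is the direct $BP$-theoretic calculation of Beardsley \cite{beardsley}.
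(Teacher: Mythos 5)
Your upper-bound half is exactly the paper's argument: Proposition \ref{prop:MaycalculationT(n)} shows the only odd-stem May $E_1$ classes in stems $\le 2p^{n+1}-2$ are the $h_0$-tower on $h_{n+1,1}$ (resp.\ $h_{n+1,0}$), and the $d_1$ on $h_{n+2,0}$ (resp.\ $a_{n+1}$) together with the Leibniz rule kills everything above the bottom class, giving the vanishing of the lower odd homotopy and $|\pi_{2p^{n+1}-3}\mathrm{T(n)}|\le p$; the extension to $\mathrm{X(m)}_{(p)}$ via Proposition \ref{prop:T(n)splitting} is also the same. Where you genuinely diverge is the nontriviality. The paper stays inside the spectral sequence: since $h_0\cdot h_{n+1,\ast}=0$ while the adjacent even stem is $h_0$-torsion free, the Leibniz rule forbids $h_{n+1,\ast}$ from supporting any May or Adams differential, so it is a nonzero permanent cycle. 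You instead construct the attaching-map class $\alpha$ from Construction \ref{const:T(n)cellstructure} and rule out $\alpha=0$ by a Hurewicz/primitivity argument; this is closer in spirit to the paper's later Proposition \ref{prop:chi} (whose proof, note, quotes this corollary rather than reproving it), and it has the merit of exhibiting a geometric generator, at the cost of being longer than the paper's purely internal argument.

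Two repairs are needed in your nontriviality step. First, the mod-$p$ Hurewicz image of your lift is only congruent to $\zeta_{n+1}^2$ (resp.\ $\zeta_{n+1}$) modulo the image of $H_*\mathrm{T(n)}$ in degree $2p^{n+1}-2$, so non-primitivity of the bare generator, which is all you invoke, does not literally suffice: you must exclude primitives of the form $\zeta_{n+1}+(\text{polynomial in }\zeta_1,\ldots,\zeta_n)$. The clean fix is that $H_*\mathrm{T(n+1)}$ has \emph{no} comodule primitives in positive degree: by Proposition \ref{prop:HT(n)} and change of rings, $\Ext^{0,t}_{\mathcal{A}_*}(\F_p,H_*\mathrm{T(n+1)})=0$ for $t>0$; equivalently, since the coaction on $H_*\mathrm{T(n+1)}\subset\mathcal{A}_*$ is restricted from the coproduct, the $x\otimes1$ component of $\psi(x)$ forced by the counit axiom cannot be cancelled for any nonzero $x$ of positive degree. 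Second, your restriction to $n\ge1$ leaves the $n=0$ case of the corollary (nontriviality of $\eta$, resp.\ $\alpha_1$) unhandled; with the fix above the identical argument covers $n=0$ as well, or one may simply cite the classical fact.
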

\begin{proof}
    By Proposition \ref{prop:MaycalculationT(n)}, the $2p^{n+1}-3$ stem of the $E_2$-page of the May SS for $\mathrm{T(n)}$ has a single nonzero bidegree $(2p^{n+1}-3,1)$ generated by $h_{n+1,1}$ when $p=2$ and by $h_{n+1,0}$ when $p>2$. The $2p^{n+1}-4$ stem is $h_{1,0}$-torsion free and $h_{1,0}h_{n+1,1}=0$ (resp. $h_{1,0}h_{n+1,0}=0$ at $p>2$), so the generator of the $2p^{n+1}-3$ stem cannot support a differential. The May SS thus collapses on $E_2$ in this stem, and the same argument shows the generator cannot support an Adams differential.

    The statement about $\mathrm{X(m)}$ then follows immediately from the splitting of Proposition \ref{prop:T(n)splitting}.
\end{proof}

In fact, the attaching maps $\sigma_k$ discussed in the previous section identify for us a canonical generator of $\pi_{2p^{m+1}-3}\mathrm{T(n)}$. Since $\mathrm{T(n)}$ is a weak $\mathrm{X(p^{n+1}-1)}$-module, one has a splitting
\[\mathrm{T(n)}\otimes\Sigma^{-2}\CP^{p^{n+1}-1}\simeq \mathrm{T(n)}\{b_0,\ldots,b_{p^{n+1}-1}\}\]
Since the odd homotopy groups of $\mathrm{T(n)}$ vanish in degrees less than $2p^{n+1}-3$, the attaching map
\[\sigma_{p^{n+1}-1}:\mathbb{S}^{2p^{n+1}-3}\to \mathrm{T(n)}\otimes\Sigma^{-2}\CP^{p^{n+1}-1}\]
has nonzero component at $b_i$ if and only if $i=0$. This component defines an element of $\pi_{2p^{n+1}-3}\mathrm{T(n)}$ which we call $\chi_{n+1}$.

\begin{proposition}\label{prop:chi}
    The element $\chi_{n+1}$ is a generator of $\pi_{2p^{n+1}-3}\mathrm{T(n)}=\Z/p$.
\end{proposition}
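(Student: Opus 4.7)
By Corollary \ref{cor:firstnonzeroT(n)} we already have $\pi_{2p^{n+1}-3}\mathrm{T(n)}\cong \Z/p$, so it will suffice to show $\chi_{n+1}\neq 0$. The plan is to argue by contradiction, leveraging the $\mathcal{A}_*$-coaction on mod $p$ homology.

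Assuming $\chi_{n+1}=0$, the first step is to show that the whole map $\mathrm{T(n)}\otimes \sigma_{p^{n+1}-1}$ is null. Under the splitting $\mathrm{T(n)}\otimes \Sigma^{-2}\CP^{p^{n+1}-1}\simeq \bigoplus_{i=0}^{p^{n+1}-2}\Sigma^{2i}\mathrm{T(n)}$, the $b_i$-components for $i\ge 1$ land in odd-stem groups $\pi_{2p^{n+1}-3-2i}\mathrm{T(n)}$, which vanish by the range established in Corollary \ref{cor:firstnonzeroT(n)}. Combining $\mathrm{T(n)}\otimes \sigma_{p^{n+1}-1}=0$ with the fact that $\mathrm{T(n)}$ is a weak $\mathrm{X(p^{n+1}-1)}$-module (by Proposition \ref{prop:T(n)splitting}) and with Proposition \ref{prop:weakX(n)} would upgrade $\mathrm{T(n)}$ to a weak $\mathrm{X(p^{n+1})}$-module, and in particular extend the unit to a map $\eta\colon \Sigma^{-2}\CP^{p^{n+1}}\to \mathrm{T(n)}$.

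The contradiction will then come from comparing $\mathcal{A}_*$-coactions of $\beta:=\eta_*(b_{p^{n+1}-1})\in H_{2p^{n+1}-2}(\mathrm{T(n)};\F_p)$. Using the Milnor basis action on $H^*\CP^\infty$, the basis element dual to $\zeta_{n+1}$ at $p>2$ (resp.\ $\zeta_{n+1}^2$ at $p=2$) sends $u\in H^2\CP^\infty$ to $u^{p^{n+1}}$, and dualizing shows that $\psi(b_{p^{n+1}-1})$ contains the summand $\zeta_{n+1}\otimes b_0$ (resp.\ $\zeta_{n+1}^2\otimes b_0$). By naturality, $\psi(\beta)$ must contain $\zeta_{n+1}\otimes 1$ (resp.\ $\zeta_{n+1}^2\otimes 1$). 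But by Proposition \ref{prop:HT(n)} the comodule $H_*\mathrm{T(n)}$ embeds in $\mathcal{A}_*$ as $\F_p[\zeta_1,\ldots,\zeta_n]$ (or $\F_2[\zeta_1^2,\ldots,\zeta_n^2]$) with coaction inherited from $\Delta$, and the Milnor coproduct formula $\Delta(\zeta_i)=\sum_{j\le i}\zeta_{i-j}^{p^j}\otimes \zeta_j$ shows that the left tensor factors remain in this subalgebra for $i\le n$; by multiplicativity the same holds for any element of the subalgebra, so $\zeta_{n+1}\otimes 1$ (resp.\ $\zeta_{n+1}^2\otimes 1$) cannot appear, delivering the contradiction.

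The main obstacle will be the prime-by-prime verification of the distinguished coaction summand. At odd primes $\zeta_{n+1}$ appears directly, while at $p=2$ the class $\zeta_{n+1}$ has odd degree and therefore cannot pair with $b_0$ on degree grounds, so one must instead isolate the contribution of $\zeta_{n+1}^2$. This is consistent with — and in fact explains — the detection of the generator by $h_{n+1,1}=[\zeta_{n+1}^2]$ at $p=2$ versus $h_{n+1,0}=[\zeta_{n+1}]$ at odd primes noted in Corollary \ref{cor:firstnonzeroT(n)}.
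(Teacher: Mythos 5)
Your argument is correct, but its second half takes a genuinely different route from the paper's. The reduction is the same: assuming $\chi_{n+1}=0$, both you and the paper use the vanishing of the odd homotopy of $\mathrm{T(n)}$ below degree $2p^{n+1}-3$ to see that $\mathrm{T(n)}\otimes\sigma_{p^{n+1}-1}$ is null, and then Proposition \ref{prop:weakX(n)} to make $\mathrm{T(n)}$ a weak $\mathrm{X(p^{n+1})}$-module. From there the paper stays entirely in homotopy: a weak $\mathrm{X(p^{n+1})}$-module is a weak $\mathrm{T(n+1)}$-module, the map $\mathrm{T(n+1)}\to\mathrm{BP}$ is $(2p^{n+2}-3)$-connected and $\pi_*(\mathrm{T(n)}\otimes\mathrm{BP})$ is even, so $\pi_{2p^{n+1}-3}(\mathrm{T(n)}\otimes\mathrm{T(n+1)})=0$, contradicting that $\pi_{2p^{n+1}-3}\mathrm{T(n)}=\Z/p$ would be a retract of it. You instead extend the unit over $\Sigma^{-2}\CP^{p^{n+1}}$ and run a coaction argument in mod $p$ homology: the $b_0$-coefficient of $\psi(b_{p^{n+1}-1})$ is, up to antipode and sign conventions, $\zeta_{n+1}$ (resp.\ $\zeta_{n+1}^2$ at $p=2$), and by naturality plus the counit identity this coefficient must equal the image of $\eta_*(b_{p^{n+1}-1})$ under the injection of Proposition \ref{prop:HT(n)}, whose image is $\F_p[\zeta_1,\ldots,\zeta_n]$ (resp.\ $\F_2[\zeta_1^2,\ldots,\zeta_n^2]$), giving the contradiction. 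The two points you should spell out are that the terms of $\psi(b_{p^{n+1}-1})$ with right factor $b_j$, $j\ge1$, cannot interfere with the $b_0$-term for degree reasons, and that the offending class stays outside the subalgebra under any choice of $\xi$ versus $\zeta$ convention (the subalgebra is closed under the conjugation, and in the relevant degree all of its elements are decomposable, resp.\ lie in $\F_2[\xi_1^2,\ldots,\xi_n^2]$). This is exactly the coaction technique the paper deploys later in Lemma \ref{lemma:coaction} and Proposition \ref{prop:noweakmodules} to show finite spectra have infinite defect, so your route is consistent with the paper's toolkit; what it buys is an explanation of why the generator is detected by $h_{n+1,1}$ (resp.\ $h_{n+1,0}$), while the paper's argument is shorter and sidesteps all dual Steenrod algebra bookkeeping by trading it for the evenness of $\mathrm{BP}\otimes\mathrm{T(n)}$.
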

\begin{proof}
    The above discussion shows that $\chi_{n+1}=0$ if and only if $\mathrm{T(n)}\otimes\sigma_{p^{n+1}-1}$ is null. If this is the case, then $\mathrm{T(n)}$ is a weak $\mathrm{X(p^{n+1})}$-module by Proposition \ref{prop:weakX(n)}, and therefore a weak $\mathrm{T(n+1)}$-module. However, the map $\mathrm{T(n+1)}\to \mathrm{BP}$ is $(2p^{n+2}-3)$-connected, hence the same is true for the map 
    \[\mathrm{T(n)}\otimes \mathrm{T(n+1)}\to \mathrm{T(n)}\otimes \mathrm{BP}\]
    The homotopy groups of $\mathrm{T(n)}\otimes \mathrm{BP}$ are concentrated in even degrees, hence we have $\pi_{2p^{n+1}-3}(\mathrm{T(n)}\otimes \mathrm{T(n+1)})=0$, contradicting that $\pi_{2p^{n+1}-3}\mathrm{T(n)}$ is a retract.
\end{proof}

\begin{remark}
    The source $h_{n+2,0}$ (resp. $a_{n+1}$ at $p>2$) of the May $d_1$ of Proposition \ref{prop:MaycalculationT(n)} detects the class 
    \[v_{n+1}\in \pi_{2(p^{n+1}-1)}\mathrm{T(n+1)}\cong \pi_{2(p^{n+1}-1)}\mathrm{BP}\]
    This implies in fact that $v_{n+1}\in \pi_{2(p^{n+1}-1)}(\mathrm{T(n+1)}/p)$ lifts to a class in 
    \[\pi_{2(p^{n+1}-1)}(\mathrm{T(n)}/p)\]
    whose Bockstein is $\chi_{n+1}$. This gives a Toda bracket description in $\mathrm{T(n+1)}$
    \[v_{n+1}\in\langle p,\chi_{n+1},1\rangle\]
    as one can show using, for example \cite[Lemma 4.6.1]{meierthesis}.
\end{remark}

The element $\chi_{n+1}$ gives a convenient criterion to compute the chromatic defect of a homotopy associative ring spectrum $E$, namely an explicit sequence of obstructions in $\mathrm{T(n)}$ homology to $\Phi_p(E)$ being $\le n$. We continue to work in the $p$-local setting.

\begin{proposition}\label{prop:defectringspectrum}
    Let $E$ be a $p$-local homotopy associative ring spectrum. Then $\Phi_p(E)\le n$ if and only if
    \begin{equation}\label{chizero}\chi_{m+1}=0\in\pi_{2p^{m+1}-3}(E\otimes \mathrm{T(m)})\end{equation}
    for all $m\ge n$.
\end{proposition}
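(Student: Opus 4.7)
The forward direction is immediate: if $\Phi_p(E) \le n$ then $E \otimes \mathrm{T(n)}$ is complex-orientable, and applying Remark \ref{rmk:defectupward} to the ring map $E \otimes \mathrm{T(n)} \to E \otimes \mathrm{T(m)}$ gives that $E \otimes \mathrm{T(m)}$ is complex-orientable for every $m \ge n$. In particular $(E \otimes \mathrm{T(m)}) \otimes \sigma_{p^{m+1}-1}$ is null, and under the splitting of $(E \otimes \mathrm{T(m)}) \otimes \Sigma^{-2}\CP^{p^{m+1}-1}$ coming from the weak module structure, its $b_0$-component is precisely the image of $\chi_{m+1}$, which must therefore vanish.

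For the reverse direction, I would first isolate a key lemma generalizing Proposition \ref{prop:chi}: for any homotopy associative ring spectrum $A$ equipped with a ring map $\mathrm{T(m)} \to A$, the spectrum $A$ is a weak $\mathrm{T(m+1)}$-module if and only if the image of $\chi_{m+1}$ in $\pi_*A$ vanishes. The proof repeats the analysis from Proposition \ref{prop:chi} at the level of $\mathrm{T(m)}$: by Corollary \ref{cor:firstnonzeroT(n)}, the odd homotopy of $\mathrm{T(m)}$ vanishes below stem $2p^{m+1}-3$, which forces $\mathrm{T(m)} \otimes \sigma_k$ to be null for $k \le p^{m+1}-2$ and causes $\mathrm{T(m)} \otimes \sigma_{p^{m+1}-1}$ to factor as $\chi_{m+1}$ hitting only the $b_0$-cell. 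Applying the relative smash $A \otimes_{\mathrm{T(m)}} (-)$ transfers this description verbatim to $A$, with $\chi_{m+1}$ replaced by its image in $\pi_*A$.

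Next I would apply this key lemma to $A = E \otimes \mathrm{T(n)} \otimes \mathrm{T(m)}$ for each $m \ge n$. The canonical ring map $\mathrm{T(m)} \to E \otimes \mathrm{T(n)} \otimes \mathrm{T(m)}$ factors through the split inclusion $E \otimes \mathrm{T(m)} \hookrightarrow E \otimes \mathrm{T(n)} \otimes \mathrm{T(m)}$ induced by $\eta_{\mathrm{T(n)}}$, so the image of $\chi_{m+1}$ in $\pi_*(E \otimes \mathrm{T(n)} \otimes \mathrm{T(m)})$ vanishes whenever it vanishes in $\pi_*(E \otimes \mathrm{T(m)})$; the hypothesis thus gives that $E \otimes \mathrm{T(n)} \otimes \mathrm{T(m)}$ is a weak $\mathrm{T(m+1)}$-module. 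A short meta-lemma---if $B$ is a weak $\mathrm{T(m)}$-module and $B \otimes \mathrm{T(m)}$ is a weak $\mathrm{T(m+1)}$-module, then $B$ is itself a weak $\mathrm{T(m+1)}$-module, with retraction built as $B \otimes \mathrm{T(m+1)} \to B \otimes \mathrm{T(m)} \otimes \mathrm{T(m+1)} \to B \otimes \mathrm{T(m)} \to B$---combined with induction on $m \ge n$ then shows that $E \otimes \mathrm{T(n)}$ itself is a weak $\mathrm{T(m)}$-module for every $m$. This forces all $\sigma_k$ to be null on $E \otimes \mathrm{T(n)}$, hence complex-orientability and $\Phi_p(E) \le n$.

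The hardest step will be the key lemma. The proof of Proposition \ref{prop:chi} was tailored to $\mathrm{T(m)}$ itself, where the needed odd-degree vanishing held by construction; the challenge is to extend the argument to an arbitrary $\mathrm{T(m)}$-algebra $A$ whose odd homotopy may be unconstrained. The resolution, as indicated above, is to perform the analysis at the level of the $\mathrm{T(m)}$-linear map $\mathrm{T(m)} \otimes \sigma_k$---where the hypothesis on $\mathrm{T(m)}$'s odd homotopy does all the work---and then transport to $A$ via the relative smash, using that $A$ is a genuine $\mathrm{T(m)}$-module via its given ring map. Once the key lemma is in hand, both the application to $E \otimes \mathrm{T(n)} \otimes \mathrm{T(m)}$ and the meta-lemma are essentially formal.
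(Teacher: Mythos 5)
Your proposal is correct and follows essentially the same route as the paper: the same identification that the only possibly nonzero component of $(E\otimes \mathrm{T(m)})\otimes\sigma_{p^{m+1}-1}$ is multiplication by the image of $\chi_{m+1}$ (using the vanishing of the low-degree odd homotopy of $\mathrm{T(m)}$ and the ring structure), and the same inductive bootstrapping to conclude that $E\otimes\mathrm{T(n)}$ is a weak $\mathrm{T(m)}$-module for all $m$, which the paper compresses into ``by induction'' and you spell out via the meta-lemma. One small caveat: a homotopy ring map $\mathrm{T(m)}\to A$ only makes $A$ a weak/homotopy $\mathrm{T(m)}$-module, so the ``genuine module'' and relative smash $A\otimes_{\mathrm{T(m)}}(-)$ language in your key lemma is not literally available at that generality, but this is harmless — the lemma is proved exactly as in the paper using the splitting of $A\otimes\Sigma^{-2}\CP^{p^{m+1}-1}$ and the multiplication on $A$, and in your application $A=E\otimes\mathrm{T(n)}\otimes\mathrm{T(m)}$ is an honest $\mathrm{T(m)}$-module anyway.
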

\begin{proof}
    By Proposition \ref{prop:weakX(n)} and the discussion preceding Proposition \ref{prop:chi}, $E\otimes \mathrm{T(m)}$ is a weak $\mathrm{T(m+1)}$-module if and only if 
    \[E\otimes \mathrm{T(m)}\otimes\sigma_{p^{m+1}-1}=0\]
    whose only potentially nonzero component is $E\otimes \Sigma^{2p^{m+1}-3}\mathrm{T(m)}\xrightarrow{E\otimes\chi_{m+1}} E\otimes \mathrm{T(m)}$. Since $E\otimes \mathrm{T(m)}$ is a homotopy associative ring spectrum, this is null if and only if \ref{chizero} holds. By induction, if \ref{chizero} holds for all $m\ge n$, then $E\otimes \mathrm{T(m)}$ is a weak $\mathrm{T(m)}$-module for all $m\ge n$, hence complex-orientable.
\end{proof}

\subsection{The chromatic defect of finite spectra} We begin this section with a toy example, on which we will elaborate to show that the chromatic defect of any finite spectrum is infinite. In this example, we will show that the finite spectrum $C(\eta)$ is not complex-orientable; i.e. that $\Phi(C(\eta))>1$. The fact that $C(\eta)$ has infinite chromatic defect is the claim that $\Phi(C(\eta))>n$ for all $n$, which follows by a completely analogous argument in the category of $\mathrm{X(n)}$-modules; see Corollary \ref{cor:finitespectradefect}.

We work 2-locally throughout this example. We have $\mathrm{X(1)}=\mathbb{S}$, and $\mathrm{T(0)}=\mathrm{X(1)}_{(p)}=\mathbb{S}_{(p)}$. The attaching map $\sigma_1:\mathbb{S}^1\to \mathbb{S}$ is by definition the Hopf map $\eta$, which corresponds to the element $\chi_1$, when $p=2$. The quotient $\mathbb{S}/\eta=C(\eta)$ does not admit a unital multiplication. Indeed, if it did, the endomorphism
\[\eta:\Sigma C(\eta)\to C(\eta)\]
must be null. Taking the cofiber, this would give an equivalence
\[C(\eta)\otimes C(\eta)\simeq C(\eta)\oplus\Sigma^2C(\eta)\]
but the mod 2 cohomology of the left hand side admits a nontrivial $\mathrm{Sq}^4$ action, while the right hand side does not.

We may instead embed the quotient $C(\eta)$ into a ring spectrum by forming the $\mathbb{E}_1$-quotient of $\mathbb{S}$ by $\eta$ as the pushout
\[
\begin{tikzcd}
    \mathrm{Free}_{\mathbb{E}_1}(\mathbb{S}^1)\arrow[r,"\overline{\eta}"]\arrow[d,"\overline{0}"']&\mathbb{S}\arrow[d]\\
    \mathbb{S}\arrow[r]&\mathbb{S}/\!\!/\eta
\end{tikzcd}
\]
in the category of $\mathbb{E}_1$-rings, where $\overline{0}$ and $\overline{\eta}$ denote the adjoint maps with respect to the free-forget adjunction. This quotient is in fact $\mathrm{T(1)}$.

\begin{proposition}\label{prop:alpha1}
    There is an equivalence of $\mathbb{E}_1$-rings
    \[\mathbb{S}/\!\!/\eta\to \mathrm{T(1)}\]
    $2$-locally, and an equivalence of $\mathbb{E}_1$-rings
    \[\mathbb{S}/\!\!/\alpha\to \mathrm{T(1)}\]
    $p$-locally for odd primes $p$.
\end{proposition}
\begin{proof}
    We give the proof for $p=2$, the odd primary case being analogous. Choosing a nullhomotopy of $\eta$ in $\pi_*\mathrm{T(1)}$ determines a map
    $\mathbb{S}/\!\!/\eta\to \mathrm{T(1)}$ by universal property, and since both sides are finite type, we may check that is an isomorphism in mod 2 cohomology. Since $\eta$ is zero in mod 2 homology, when we tensor this pushout diagram up to $\F_2$, we have a pushout
\[
\begin{tikzcd}
    \mathrm{Free}_{\mathbb{E}_1-\F_2}(\mathbb{S}^1)\arrow[rr,"\mathrm{Free}_{\mathbb{E}_1-\F_2}(0)"]\arrow[d,"\mathrm{Free}_{\mathbb{E}_1-\F_2}(0)"']&&\mathrm{Free}_{\mathbb{E}_1-\F_2}(*)\arrow[d]\\
    \mathrm{Free}_{\mathbb{E}_1-\F_2}(*)\arrow[rr]&&\mathbb{S}/\!\!/\eta\otimes\F_2
\end{tikzcd}
\]
Since the Free functor preserves pushouts, one has
\[\pi_*(\mathbb{S}/\!\!/\eta\otimes\F_2)=\pi_*(\mathrm{Free}_{\mathbb{E}_1-\F_2}(\mathbb{S}^2))=\F_2[x_2]\]
where $|x_2|=2$. 

We claim now that in mod $2$ cohomology, $\mathrm{Sq}^2$ of the nonzero class in degree $0$ is equal to the nonzero class in degree $2$ for both $H^*(\mathbb{S}/\!\!/\eta)$ and $H^*(\mathrm{T(1)})$. With this in place, by linearity over the Steenrod algebra, the map $\mathbb{S}/\!\!/\eta\to \mathrm{T(1)}$ must induce an isomorphism on $H^{\le 2}(-;\F_2)$ and therefore also on $H_{\le 2}(-;\F_2)$. Since the map induced on homology is a ring map, it is therefore an isomorphism.

To establish the claim about $\mathrm{Sq}^2$, note that for a ring spectrum $R$, there is an extension of the unit map $\mathbb{S}\to R$ over $C(\eta)$. If $R$ is connective and  $H^0(R;\F_2)=\F_2$, then the map $C(\eta)\to R$ must induce an isomorphism on $H^0(-;\F_2)$ since it extends the unit. But then since $\mathrm{Sq}^2$ is nonzero on $H^0(C(\eta);\F_2)$ it must be nonzero on $H^0(R;\F_2)$. Note now that $\mathbb{S}/\!\!/\eta$ and $\mathrm{T(1)}$ satisfy the conditions on $R$.
\end{proof}

\begin{remark}
    It follows that if $E$ is any $p$-local spectrum, then $E$ admits the structure of a $\mathrm{T(1)}$-module if and only if $E\otimes\eta$ is null for $p=2$ and if and only if $E\otimes\alpha_1$ is null if $p>2$. Indeed, if $E\otimes \eta$ (resp. $E\otimes\alpha_1$ for $p>2)$ is null, then the $\mathbb{E}_1$-ring map $\mathbb{S}\to \mathrm{End}(E)$ factors canonically through an $\mathbb{E}_1$-ring map $\mathrm{T(1)}\to \mathrm{End}(E)$ by  universal property, giving $E$ a $\mathrm{T(1)}$-module structure. Conversely, the map $\eta$ (resp. $\alpha_1$ for $p>2$) acts by zero on any $\mathrm{T(1)}$-module.
\end{remark}

We finish the toy example by noting the following: if $C(\eta)$ were complex-orientable, then the map $\eta$ must act by zero on $C(\eta)$. The above proposition would then provide a ring map
\[f:\mathrm{T(1)}\to\mathrm{End}(C(\eta))\]
This map $f$ would send $1\mapsto 1$ in mod $2$ homology. However, in $H_*\mathrm{T(1)}=\F_2[x_2]$, there is a coaction formula
\[\psi(x_2^{2^n})=\zeta_1^{2^{n+1}}\otimes 1+1\otimes x_2^{2^n}\]
Since $H_*(f)$ is a map of comodule algebras, this implies that $x_2^{2^n}$ must be sent to a nonzero element for all $n$, contradicting the boundedness of $H_*\mathrm{End}(C(\eta))$.

We adapt this argument now to a general finite spectrum and to the more general notion of finite chromatic defect. This hinges on a theorem of Beardsley, which may be shown by adapting Proposition \ref{prop:alpha1} to the category of $\mathrm{X(p^n)}_{(p)}$-modules, with $\chi_{n+1}$ in place of $\eta$ \cite{beardsley}.

\begin{theorem}[Beardsley]\label{thm:beardsley}
    Let $p^n\le m<p^{n+1}$. If $m<p^{n+1}-1$, then 
    \[\mathrm{X(m+1)}_{(p)}\simeq \mathrm{X(m)}_{(p)}[b_{m}]\]
    is the free $\mathbb{E}_1$-$\mathrm{X(m)}_{(p)}$-algebra on a class in degree $2m$.
    
    If $m=p^{n+1}-1$, $\mathrm{X(m+1)}_{(p)}$ is the free $\mathbb{E}_1$-$\mathrm{X(m)}_{(p)}$-algebra with a nullhomotopy of $\chi_{n+1}$. That is, there is a pushout
\[
\begin{tikzcd}
    \mathrm{Free}_{\mathbb{E}_1-\mathrm{X(p^{n+1}-1)}_{(p)}}(\mathbb{S}^{2p^{n+1}-3})\arrow[r,"\overline{\chi_{n+1}}"]\arrow[d,"\overline{0}"']&\mathrm{X(p^{n+1}-1)}_{(p)}\arrow[d]\\
    \mathrm{X(p^{n+1}-1)}_{(p)}\arrow[r]&\mathrm{X(p^{n+1})}
\end{tikzcd}
\]
    in $\mathbb{E}_1$-$\mathrm{X(p^{n+1}-1)}_{(p)}$-algebras.
\end{theorem}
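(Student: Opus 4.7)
The plan is to descend the Thom-spectrum construction along the principal fibration
\[\Omega \mathrm{SU}(m) \to \Omega \mathrm{SU}(m+1) \to \Omega S^{2m+1}\]
obtained by looping the standard fiber sequence $\mathrm{SU}(m) \to \mathrm{SU}(m+1) \to S^{2m+1}$. Using the $\mathbb{E}_1$-multiplicative structure on the Thom construction, in the sense of Ando--Blumberg--Gepner--Hopkins--Rezk, one expects to obtain an $\mathbb{E}_1$-Thom datum
\[\phi : \Omega S^{2m+1} \to B\mathrm{GL}_1(\mathrm{X(m)}_{(p)})\]
whose Thom spectrum realizes $\mathrm{X(m+1)}_{(p)}$ as an $\mathbb{E}_1$-$\mathrm{X(m)}_{(p)}$-algebra.

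With $\phi$ in hand, the James splitting identifies $\Omega S^{2m+1}$ with the free $\mathbb{E}_1$-space on $S^{2m}$, so $\phi$ is determined up to $\mathbb{E}_1$-homotopy by its restriction $\phi_0 : S^{2m} \to B\mathrm{GL}_1(\mathrm{X(m)}_{(p)})$, which is classified by an element of $\pi_{2m-1}\mathrm{X(m)}_{(p)}$. By Corollary \ref{cor:firstnonzeroT(n)}, this group vanishes whenever $2m-1 < 2p^{n+1}-3$, that is, whenever $m < p^{n+1}-1$; and when $m = p^{n+1}-1$ it is exactly $\Z/p\{\chi_{n+1}\}$.

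In the first case $\phi_0$, and hence $\phi$, is null, so its Thom spectrum is
\[\mathrm{X(m)}_{(p)} \otimes \Sigma^\infty_+ \Omega S^{2m+1} \simeq \mathrm{Free}_{\mathbb{E}_1 - \mathrm{X(m)}_{(p)}}(\mathbb{S}^{2m}) = \mathrm{X(m)}_{(p)}[b_m],\]
which is the first conclusion. In the second case, $\phi_0$ corresponds to some $c\chi_{n+1}$ with $c \in \Z/p$; the scalar $c$ must be a unit, since if $\phi_0$ were null the same computation would split $\mathrm{X(p^{n+1})}_{(p)}$ off as a free $\mathbb{E}_1$-algebra on a class in degree $2(p^{n+1}-1)$, contradicting Proposition \ref{prop:chi} (or, equivalently, the known rank of $H_*(\mathrm{X(p^{n+1})};\F_p)$ in that range). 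Absorbing this unit into the generator, the Thom spectrum of $\phi$ becomes the $\mathbb{E}_1$-$\mathrm{X(p^{n+1}-1)}_{(p)}$-algebra universal for a nullhomotopy of $\chi_{n+1}$, which is precisely the pushout displayed in the statement.

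The main obstacle is the descent step itself: producing the $\mathbb{E}_1$-Thom datum $\phi$ from the loop fibration. This requires refining the fibration to a map of $\mathbb{E}_1$-spaces and checking that the relative Thom construction carries this refinement to the expected $\mathbb{E}_1$-$\mathrm{X(m)}_{(p)}$-algebra structure, which is a nontrivial piece of operadic bookkeeping. Once $\phi$ exists, the remainder of the proof reduces to the low-dimensional homotopy calculation of Corollary \ref{cor:firstnonzeroT(n)} together with a rank or Hurewicz check to pin down $\phi_0$ in the nontrivial case.
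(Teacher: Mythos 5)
Your outline is essentially the strategy of Beardsley's cited paper rather than the route this paper sketches: here the intended argument is to adapt the explicit computation showing $\mathbb{S}/\!\!/\eta\simeq \mathrm{T(1)}$, i.e.\ to form the $\mathbb{E}_1$-$\mathrm{X(p^{n+1}-1)}_{(p)}$-algebra pushout killing $\chi_{n+1}$, map it to $\mathrm{X(p^{n+1})}$ by the universal property, and check that this map is an equivalence by a homology comparison (free functors preserve pushouts, the relative dual Steenrod algebra, a Hurewicz argument), with the case $m<p^{n+1}-1$ handled the same way. You instead extract the multiplicative structure from the Thom construction itself via an $\mathbb{E}_1$-map $\phi:\Omega S^{2m+1}\to \mathrm{BGL}_1(\mathrm{X(m)}_{(p)})$ and reduce everything, via the James splitting, to the single homotopy-group computation of Corollary \ref{cor:firstnonzeroT(n)}; that is a legitimate and arguably cleaner route, but the two inputs you defer are not ``operadic bookkeeping'': the existence of the relative Thom datum $\phi$ whose Thom spectrum is $\mathrm{X(m+1)}_{(p)}$ as an $\mathbb{E}_1$-$\mathrm{X(m)}_{(p)}$-algebra is exactly the content of Beardsley's relative Thom spectrum theorem, and the identification of the Thom spectrum of the extension of $\phi_0$ with the versal $\mathbb{E}_1$-algebra admitting a nullhomotopy of the characteristic element is a further substantive universal property in the ABGHR framework. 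As written your proposal is therefore a correct reduction of the theorem to these two results, not a self-contained proof.

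There is also one local error: the parenthetical claim that $\phi_0=0$ could be ruled out by ``the known rank of $H_*(\mathrm{X(p^{n+1})};\F_p)$'' fails, since if $\phi_0$ were null the Thom spectrum would be $\mathrm{X(p^{n+1}-1)}_{(p)}[b_{p^{n+1}-1}]$, whose homology is again polynomial on generators in degrees $2,4,\ldots,2(p^{n+1}-1)$ and so agrees with $H_*\mathrm{X(p^{n+1})}$ in every degree. The correct argument is on homotopy: such a splitting would exhibit $\mathrm{X(p^{n+1}-1)}_{(p)}$ as a retract of $\mathrm{X(p^{n+1})}_{(p)}$, forcing $\pi_{2p^{n+1}-3}\mathrm{X(p^{n+1})}_{(p)}$ to contain $\Z/p$ by Corollary \ref{cor:firstnonzeroT(n)}, whereas Corollary \ref{cor:firstnonzeroT(n)} applied with $n+1$ in place of $n$ (equivalently, the evenness argument in the proof of Proposition \ref{prop:chi}) shows that group vanishes; since that computation comes from the May spectral sequence it is independent of the present theorem, so there is no circularity. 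With that repair, absorbing the resulting unit $c\in(\Z/p)^{\times}$ into the generator is fine and the remaining degree bookkeeping in your proposal is correct.
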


Essentially, Beardsley's theorem says if one attaches an $\mathbb{E}_1$-$\mathrm{T(n)}$-cell to $\mathrm{T(n)}$ to kill $\chi_{n+1}$, one gets $\mathrm{T(n+1)}$. However, since $\mathrm{T(n)}$ is not known to be an $\mathbb{E}_2$-ring, it doesn't make sense to speak of $\mathbb{E}_1$-$\mathrm{T(n)}$-algebras, so we must state things in terms of the $\mathbb{E}_2$-rings $\mathrm{X(m)}_{(p)}$.

    

The discussion following Proposition \ref{prop:alpha1} may be repeated in the category of $\mathrm{X(p^n)}_{(p)}$-modules, using the relative dual Steenrod algebra. We will only need the following piece of information.

\begin{lemma}\label{lemma:coaction}
    Let $p^n\le m<p^{n+1}$. The map
    \[\mathcal{A}_*=\pi_*(\F_p\otimes\F_p)\to\pi_*(\F_p\otimes_{\mathrm{X(m)}}\F_p)=:\mathcal{A}_*^{\mathrm{X(m)}}\]
    sends $\zeta_{n+1}^{p^k}$ to a nonzero coalgebra primitive for all $k$, when $p>2$. When $p=2$, the map sends $\zeta_{n+1}^{2^k}$ to a nonzero coalgebra primitive for all $k>0$.
\end{lemma}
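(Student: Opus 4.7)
The plan is to analyze $\mathcal{A}_*^{\mathrm{X(m)}}$ via the bar spectral sequence arising from $\F_p \otimes_{\mathrm{X(m)}} \F_p \simeq |B_\bullet(\F_p, \mathrm{X(m)}, \F_p)|$. Applying Künneth over $\F_p$, one has $\pi_* B_s \cong \mathcal{A}_* \otimes_{\F_p} H_*(\mathrm{X(m)})^{\otimes s}$, and because the two endpoint face maps use the $\mathrm{X(m)}$-action on $\F_p$---which factors through the augmentation $\mathrm{X(m)} \to \F_p$---the bar differential acts only on the $H_*\mathrm{X(m)}$-factors, treating $\mathcal{A}_*$ as a passive scalar. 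This gives
\[
E_2 = \mathcal{A}_* \otimes_{\F_p} \Tor^{H_*\mathrm{X(m)}}(\F_p, \F_p),
\]
and the map from the lemma is precisely the filtration-zero edge map $\mathcal{A}_* = E_2^{0,*} \to \mathcal{A}_*^{\mathrm{X(m)}}$.

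The main task is to show that $\zeta_{n+1}^{p^k}$ (resp.\ $\zeta_{n+1}^{2^{k+1}}$ at $p=2$) survives to $E_\infty^{0,*}$, i.e.\ is not hit by any incoming differential $d_r\colon E_r^{r,*} \to E_r^{0,*}$. I plan to argue this by observing that every such $d_r$ is built from iterated Massey products of the polynomial generators $b_i$ of $H_*\mathrm{X(m)}$, so its image is contained in the ideal $I \subseteq \mathcal{A}_*$ generated by the images of $b_1, \ldots, b_{m-1}$; this Massey-product bound is the main technical obstacle. Granting it, the degree bound $|b_i| = 2i \le 2(m-1) < 2(p^{n+1}-1) = |\zeta_{n+1}|$ forces the image of $H_*\mathrm{X(m)}$ in $\mathcal{A}_*$ to lie in the polynomial subalgebra $\F_p[\zeta_1, \ldots, \zeta_n]$ at odd $p$ (resp.\ $\F_2[\zeta_1^2, \ldots, \zeta_n^2]$ at $p=2$). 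Combining this with the fact that $\zeta_k$ (resp.\ $\zeta_k^2$) for $k \le n$ is the image of $b_{p^k-1}$ modulo decomposables, a quick induction yields $I = (\zeta_1, \ldots, \zeta_n)$ (resp.\ $(\zeta_1^2, \ldots, \zeta_n^2)$). The quotient $\mathcal{A}_*/I$ is $\F_p[\zeta_{n+1}, \ldots] \otimes E(\tau_0, \tau_1, \ldots)$ at odd $p$ and $E(\zeta_1, \ldots, \zeta_n) \otimes \F_2[\zeta_{n+1}, \ldots]$ at $p=2$; in either case the relevant power of $\zeta_{n+1}$ is manifestly nonzero.

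Primitivity will then follow from Hopf-algebra naturality of the edge map. Milnor's coproduct combined with the characteristic-$p$ Frobenius gives
\[
\psi(\zeta_{n+1}^{p^k}) = \sum_{i+j=n+1} \zeta_i^{p^{j+k}} \otimes \zeta_j^{p^k},
\]
and for each cross term ($1 \le i \le n$) the factor $\zeta_i^{p^{j+k}}$ lies in $I$---at odd $p$ directly from $\zeta_i \in I$, and at $p=2$ via $\zeta_i^{2^{j+k+1}} = (\zeta_i^2)^{2^{j+k}} \in (\zeta_i^2)$, using that $j + k + 1 \ge 1$. Hence each cross term maps to zero under $\mathcal{A}_* \otimes \mathcal{A}_* \to \mathcal{A}_*^{\mathrm{X(m)}} \otimes \mathcal{A}_*^{\mathrm{X(m)}}$, leaving exactly the primitive $\zeta_{n+1}^{p^k} \otimes 1 + 1 \otimes \zeta_{n+1}^{p^k}$, as required.
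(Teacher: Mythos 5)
Your identification of the $E_2$-page is incorrect, and the error sits exactly in the step you treat as routine: the claim that, because the $\mathrm{X(m)}$-action on $\F_p$ factors through $\mathrm{X(m)}\to\F_p$, both endpoint face maps leave the $\mathcal{A}_*$-factor untouched. After the K\"unneth identification $\pi_*B_s\cong H_*\mathrm{X(m)}^{\otimes s}\otimes\mathcal{A}_*$, the $\mathcal{A}_*$-factor is assembled from the two \emph{outer} copies of $\F_p$, and the face map merging the outermost $\mathrm{X(m)}$-factor into the far copy of $\F_p$ is induced on homotopy by the homology map $\sigma\colon H_*\mathrm{X(m)}\to\mathcal{A}_*$ (the Thom class) followed by multiplication in $\mathcal{A}_*$; it is not the augmentation. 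Hence the correct $E_2$-term is $\Tor^{H_*\mathrm{X(m)}}(\mathcal{A}_*,\F_p)$, with $\mathcal{A}_*$ an $H_*\mathrm{X(m)}$-module via $\sigma$, and its filtration-zero line is already the quotient $\mathcal{A}_*/I$ with $I$ generated by the images of $b_1,\dots,b_{m-1}$ --- not $\mathcal{A}_*$. Your formula fails concretely for $\mathrm{X(2)}$ at $p=2$: there $H_*\mathrm{X(2)}=\F_2[b_1]$, so your $E_2=\mathcal{A}_*\otimes E(\sigma b_1)$ is concentrated in filtrations $0$ and $1$ and admits no differentials into filtration zero at all, predicting that $\zeta_1^2$ survives to $\pi_2(\F_2\otimes_{\mathrm{X(2)}}\F_2)$; in fact $\pi_*(\F_2\otimes_{\mathrm{X(2)}}\F_2)\cong\mathcal{A}_*/(\zeta_1^2)=E(\zeta_1)\otimes\F_2[\zeta_2,\zeta_3,\dots]$, which vanishes in degree $2$. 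So the problem is not only the unproved ``Massey-product bound'' you flag as the main obstacle: with your $E_2$ there are cases with no possible incoming differentials in which classes of $\mathcal{A}_*$ nevertheless must die, so the framework cannot be patched by proving that bound.

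The back end of your argument is sound and matches the paper's endgame: the degree estimate placing the image of $H_*\mathrm{X(m)}$ inside $\F_p[\zeta_1,\dots,\zeta_n]$ (squares at $p=2$), the induction identifying $I$ with $(\zeta_1,\dots,\zeta_n)$, resp.\ $(\zeta_1^2,\dots,\zeta_n^2)$, the nonvanishing of the relevant power of $\zeta_{n+1}$ in $\mathcal{A}_*/I$, and the coproduct computation showing all cross terms lie in $I\otimes\mathcal{A}_*+\mathcal{A}_*\otimes I$. What is missing is the bridge from $\pi_*(\F_p\otimes_{\mathrm{X(m)}}\F_p)$ to that quotient. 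The paper builds it by running the corrected version of your spectral sequence: it identifies $\pi_*(\F_p\otimes_{\mathrm{X(m)}}\F_p)$ with $\mathcal{A}_*\otimes_{H_*\mathrm{X(m)}}\F_p$ via a K\"unneth spectral sequence whose collapse is forced by flatness (Milnor--Moore makes $\mathcal{A}_*$ free over the image of $H_*\mathrm{T(n)}$, combined with the $\mathrm{T(n)}$-module splitting of $\mathrm{X(m)}$), and then reads off nonvanishing and primitivity in the resulting quotient of $\mathcal{A}_*$. To repair your write-up you would replace the ``passive scalar'' claim by this module structure and then supply the collapse (or at least control of differentials into filtration zero); that is where the real work of the lemma lives.
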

\begin{proof}
    We have an isomorphism
    \[\pi_*(\F_p\otimes_{\mathrm{X(m)}}\F_p)\cong\pi_*((\F_p\otimes\F_p)\otimes_{\F_p\otimes \mathrm{X(m)}}\F_p)\cong\mathcal{A}_*\otimes_{H_*\mathrm{X(m)}}\F_p\]
    Indeed, the map $H_*\mathrm{T(n)}\to\mathcal{A}_*$ is identified with the flat inclusion
    \[\F_2[\zeta_1^2,\ldots,\zeta_n^2]\to\F_2[\zeta_1,\zeta_2,\ldots]\]
    Moreover, $H_*\mathrm{T(n)}$ is flat over $H_*\mathrm{X(m)}$ as it is a retract of a free module. This implies $\mathcal{A}_*$ is flat over $H_*\mathrm{X(m)}$, and the above isomorphism follows from collapse of the corresponding Kunneth spectral sequence.
    
    The splitting $H_*\mathrm{X(m)}\cong H_*\mathrm{T(n)}[x_i:i\neq p^k-1,i<m]$ implies that
\[\mathcal{A}_*\otimes_{H_*\mathrm{X(m)}}\F_p\cong(\mathcal{A}_*\otimes_{H_*\mathrm{T(n)}}\F_p)[x_i:i\neq p^k-1,i<m]\]
It follows from Proposition \ref{prop:HT(n)} that 
\[\mathcal{A}_*\otimes_{H_*\mathrm{T(n)}}\F_p\cong \begin{cases}E(\xi_1,\ldots,\xi_n)\otimes P(\xi_{n+1},\xi_{n+2},\ldots)&p=2\\E(\tau_1,\tau_2,\ldots)\otimes P(\xi_{n+1},\xi_{n+2},\ldots)&p>2\end{cases}\]
from which the claim now follows.
\end{proof}

\begin{proposition}\label{prop:noweakmodules}
    Suppose $p^n\le m<p^{n+1}$, and $p^{n+1}\le m+k\le \infty$. There are no nontrivial compact $\mathrm{X(m)}_{(p)}$-modules that are weak $\mathrm{X(m+k)}_{(p)}$-modules.
\end{proposition}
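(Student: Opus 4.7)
The plan is to generalize the argument sketched just before Theorem \ref{thm:beardsley}, which handled the case $m=1$, $k=\infty$ at $p=2$. The strategy is to use Beardsley's theorem to promote the weak module structure to an $\mathbb{E}_1$-algebra map into an endomorphism object, then apply Lemma \ref{lemma:coaction} to derive a contradiction with compactness.

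First, I would reduce to the case $m+k=p^{n+1}$: since $p^{n+1}\le m+k$ gives a unit-preserving ring map $\mathrm{X(p^{n+1})}_{(p)}\to \mathrm{X(m+k)}_{(p)}$, any weak $\mathrm{X(m+k)}_{(p)}$-module is automatically a weak $\mathrm{X(p^{n+1})}_{(p)}$-module. Assume for contradiction that $M\ne 0$ is a compact $\mathrm{X(m)}_{(p)}$-module with such a weak module structure. Using Beardsley's theorem iteratively, I would construct an $\mathbb{E}_1$-$\mathrm{X(m)}_{(p)}$-algebra map $\mathrm{X(p^{n+1})}_{(p)}\to F_{\mathrm{X(m)}_{(p)}}(M,M)$: for the polynomial extensions $\mathrm{X(j)}_{(p)}\to \mathrm{X(j+1)}_{(p)}$ with $m\le j<p^{n+1}-1$ one may send each generator $b_j$ to $0$; the final cell attachment $\mathrm{X(p^{n+1}-1)}_{(p)}\to \mathrm{X(p^{n+1})}_{(p)}$ requires a nullhomotopy of $\chi_{n+1}$ in $\pi_*F_{\mathrm{X(m)}_{(p)}}(M,M)$, which is supplied by the weak $\mathrm{X(p^{n+1})}_{(p)}$-module structure together with the vanishing of $\chi_{n+1}$ in $\pi_*\mathrm{X(p^{n+1})}_{(p)}$.

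Next, applying $\pi_*(\F_p\otimes_{\mathrm{X(m)}}-)$ yields a unit-preserving map of $\mathcal{A}_*^{\mathrm{X(m)}}$-comodule algebras from $\pi_*(\F_p\otimes_{\mathrm{X(m)}}\mathrm{X(p^{n+1})}_{(p)})$ to $\pi_*(\F_p\otimes_{\mathrm{X(m)}}F_{\mathrm{X(m)}_{(p)}}(M,M))$. The target is a finite-dimensional graded $\F_p$-vector space, since $F_{\mathrm{X(m)}_{(p)}}(M,M)$ is compact as an $\mathrm{X(m)}_{(p)}$-module. In the source, the Thom class $b_{p^{n+1}-1}$ maps to $\zeta_{n+1}$ (or $\zeta_{n+1}^2$ at $p=2$) under the augmentation to $\mathcal{A}_*^{\mathrm{X(m)}}$, and by Lemma \ref{lemma:coaction} its powers $b_{p^{n+1}-1}^{p^k}$ (resp.\ $2^k$-th powers for $k\ge 1$ at $p=2$) are coalgebra primitives in unbounded degree. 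The same coaction-and-unit argument as in the classical case with $\zeta_1^{2^n}$ then forces these elements to map to nonzero classes in the target for all $k$, contradicting finite-dimensionality.

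The main obstacle is the compatibility issue: the weak $\mathrm{X(p^{n+1})}_{(p)}$-module structure on $M$ is given in the category of spectra, while Beardsley's theorem must be applied in the category of $\mathbb{E}_1$-$\mathrm{X(m)}_{(p)}$-algebras, so one must argue that the weak module structure can be chosen or promoted to be $\mathrm{X(m)}_{(p)}$-linear, thereby supplying the nullhomotopy of $\chi_{n+1}$ in the required category.
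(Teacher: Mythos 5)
Your proposal follows essentially the same route as the paper's proof: Beardsley's theorem (Theorem \ref{thm:beardsley}) is used to produce an $\mathbb{E}_1$-$\mathrm{X(m)}_{(p)}$-algebra map $\mathrm{X(p^{n+1})}_{(p)}\to \mathrm{End}_{\mathrm{X(m)}}(M)$, using that $\chi_{n+1}=0\in\pi_*\mathrm{X(m+k)}_{(p)}$, and then relative $\F_p$-homology over $\mathrm{X(m)}$, Lemma \ref{lemma:coaction}, and the primitivity-plus-unit argument contradict the boundedness forced by compactness. The ``compatibility obstacle'' you flag is exactly the point the paper also dispatches in one line (the obstruction is the image of $\chi_{n+1}$ under the unit $\mathrm{X(m)}\to\mathrm{End}_{\mathrm{X(m)}}(M)$, killed by the weak module structure), and your use of powers of the Thom class in place of the paper's composite $H_*\mathrm{T(n+1)}\to H_*^{\mathrm{X(m)}}(\mathrm{End}_{\mathrm{X(m)}}(M))$ is only a cosmetic difference.
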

\begin{proof}
    In what follows, we implicitly work $p$-locally and drop this from the notation. If $F$ is a compact $\mathrm{\mathrm{X(m)}}$-module that is a weak $\mathrm{X(m+k)}$-module, then by Theorem \ref{thm:beardsley} there is an $\mathbb{E}_1$-$\mathrm{\mathrm{X(m)}}$-algebra map
    \[f:\mathrm{X(p^{n+1})}\to \mathrm{End}_{\mathrm{X(m)}}(F)\]
    as $0=\chi_{n+1}\in \pi_*\mathrm{X(m+k)}$. 

    The relative homology
    \[H^{\mathrm{X(m)}}_*(\mathrm{End}_{\mathrm{X(m)}}(F))=\pi_*(\F_p\otimes_{\mathrm{X(m)}}\mathrm{End}_{\mathrm{X(m)}}(F))\]
    is a comodule over the relative dual Steenrod algebra $\mathcal{A}_*^{\mathrm{X(m)}}$. These relative homology groups are bounded above as $\mathrm{End}_{\mathrm{X(m)}}(F)$ is compact, and $H^{\mathrm{X(m)}}_*(\mathrm{X(m)})$ is bounded above. 
    
    Via the map of Hopf algebras $\mathcal{A}_*\to \mathcal{A}_*^{\mathrm{X(m)}}$, any $\mathcal{A}_*$-comodule may be regarded as an $\mathcal{A}_*^{\mathrm{X(m)}}$-comodule, and the composite
    \[H_*\mathrm{T(n+1)}\to H_*(\mathrm{X(p^{n+1})})\to H_*^{\mathrm{X(m)}}(\mathrm{X(p^{n+1})})\xrightarrow{f_*}H_*^{\mathrm{X(m)}}(\mathrm{End}_{\mathrm{X(m)}}(F))\]
    is a map of $\mathcal{A}_*^{\mathrm{X(m)}}$-comodules sending $1\mapsto 1$. By Lemma \ref{lemma:coaction}, one has the coaction formula
    \[\psi(\zeta_{n+1}^{p^k})=1\otimes \zeta_{n+1}^{p^k}+\zeta_{n+1}^{p^k}\otimes 1\]
    in the $\mathcal{A}_*^{\mathrm{X(m)}}$-comodule $H_*\mathrm{T(n+1)}$ at odd primes, and similarly at $p=2$ with $\zeta_{n+1}^2$ in place of $\zeta_{n+1}$.
    
    The above composite must therefore send $\zeta_{n+1}^{p^k}$ (resp. $\zeta_{n+1}^{2^{k+1}}$ at $p=2$) to a nonzero element for all $k\ge0$. This gives a contradiction if $1\neq0\in \pi_0(\mathrm{End}_{\mathrm{X(m)}}(F))$ by boundedness.
\end{proof}

\begin{corollary}\label{cor:finitespectradefect}
     If $F$ is a nontrivial finite spectrum, then $\Phi(F)=\infty$. Working $p$-locally, if $F$ is a nontrivial finite $p$-local spectrum, then $\Phi_p(F)=\infty$.
\end{corollary}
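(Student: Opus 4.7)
The plan is to reduce everything to the $p$-local statement and then apply Proposition \ref{prop:noweakmodules} directly. The integral and $p$-local statements are equivalent via the identity $\Phi_p(E) = \lfloor \log_p \Phi(E) \rfloor$ noted after Proposition \ref{prop:T(n)splitting}: if $\Phi(F) = \infty$ then each $\Phi_p(F) = \infty$, and conversely if some $\Phi_p(F) = \infty$ then so is $\Phi(F)$. So it suffices to show that for every nontrivial finite $p$-local $F$ one has $\Phi_p(F) = \infty$.

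Suppose for contradiction that $F$ is a nontrivial finite $p$-local spectrum with $\Phi_p(F) \le n < \infty$. By hypothesis $F \otimes \mathrm{T(n)}$ is complex-orientable, hence a weak $\mathrm{MU}$-module by Theorem \ref{thm:weakMU}. I would then transfer this to $F \otimes \mathrm{X(p^n)}_{(p)}$: the splitting of $\mathrm{T(n)}$-modules in Proposition \ref{prop:T(n)splitting} exhibits $F \otimes \mathrm{X(p^n)}_{(p)}$ as a wedge of suspensions of $F \otimes \mathrm{T(n)}$, and complex-orientability in the sense of Definition \ref{def:comporient} is closed under retracts and under tensoring with an arbitrary spectrum, so $F \otimes \mathrm{X(p^n)}_{(p)}$ is complex-orientable as well. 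By Theorem \ref{thm:weakMU} again, $F \otimes \mathrm{X(p^n)}_{(p)}$ is a weak $\mathrm{MU}$-module, and the ring map $\mathrm{X(m+k)}_{(p)} \to \mathrm{MU}_{(p)}$ makes every weak $\mathrm{MU}$-module automatically a weak $\mathrm{X(m+k)}_{(p)}$-module.

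Now I would verify that $M := F \otimes \mathrm{X(p^n)}_{(p)}$ meets the hypotheses of Proposition \ref{prop:noweakmodules} with $m = p^n$ and $k = p^{n+1} - p^n$, i.e.\ $m+k = p^{n+1}$. Compactness of $M$ as an $\mathrm{X(p^n)}_{(p)}$-module is immediate since $F$ is finite, and nontriviality follows from the Künneth formula in mod $p$ homology:
\[
H_*(M;\F_p) \cong H_*(F;\F_p)\otimes H_*(\mathrm{X(p^n)};\F_p),
\]
which is nonzero because both tensor factors are nonzero. Applying Proposition \ref{prop:noweakmodules} with these choices of $m$ and $k$ then produces a contradiction, forcing $\Phi_p(F) = \infty$.

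The only real content is the already-established Proposition \ref{prop:noweakmodules}; the rest is bookkeeping. If there is any subtlety it is in making sure one upgrades from $F \otimes \mathrm{T(n)}$ being complex-orientable to $F \otimes \mathrm{X(p^n)}_{(p)}$ being complex-orientable, since Proposition \ref{prop:noweakmodules} is stated for compact modules over $\mathrm{X(p^n)}_{(p)}$ rather than its idempotent summand $\mathrm{T(n)}$. This is handled by the combination of Proposition \ref{prop:T(n)splitting} and the closure of complex-orientability under retracts, and there is no further obstacle.
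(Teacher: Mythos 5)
Your argument is correct and takes essentially the same route as the paper: reduce to the $p$-local case and apply Proposition \ref{prop:noweakmodules} to the nontrivial compact $\mathrm{X(p^n)}_{(p)}$-module $F\otimes \mathrm{X(p^n)}_{(p)}$, which would be a weak $\mathrm{MU}$-module if it were complex-orientable. The only cosmetic differences are that the paper invokes the $m+k=\infty$ case of that proposition directly (weak $\mathrm{MU}$-module) instead of passing to $\mathrm{X(p^{n+1})}_{(p)}$, and leaves implicit the nontriviality check and the transfer from $F\otimes\mathrm{T(n)}$ to $F\otimes\mathrm{X(p^n)}_{(p)}$ (which, as you note, is immediate from the splitting and the fact that a direct sum of null maps is null).
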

\begin{proof}
    If $F$ is a nontrivial finite spectrum, then $F_{(p)}$ is nontrivial for some prime $p$, so the latter claim implies the former. The $\mathrm{X(m)}_{(p)}$-module $F\otimes \mathrm{X(m)}_{(p)}$ is compact, so if $F\otimes \mathrm{X(m)}_{(p)}$ is complex-orientable, it is a weak $\mathrm{MU}$-module, contradicting Proposition \ref{prop:noweakmodules} if $m<\infty$.
\end{proof}

\subsection{\texorpdfstring{$\mathcal{P}(n)$}{P(n)}-free complexes} An important fact in stable homotopy is the existence of type $n+1$ finite complexes. To construct a type $n+1$ finite complex, it suffices to construct a finite complex $F$ such that $H^*(F;\F_p)$ is free over the subalgebra $\mathcal{A}(n)$ of the Steenrod algebra $\mathcal{A}$. The Steenrod algebra has an even variant $\mathcal{P}$ and corresponding subalgebras $\mathcal{P}(n)$. We mimic the $\mathcal{A}(n)$ case to construct even cell complexes whose cohomologies are free over $\mathcal{P}(n)$. We discuss also the structure of $H^*\mathrm{T(n)}$ as a module over $\mathcal{A}(n)$; this will allow us to more closely relate chromatic defect with Wood-types. 

\begin{definition}
We define the following Hopf algebras.
\begin{itemize}
    \item Let $\mathcal{A}(n)$ be the subalgebra of the Steenrod algebra $\mathcal{A}$ generated by $\{\mathrm{Sq}^{2^i}: 0\le i\le n\}$ for $p=2$ and $\{\beta,\mathrm{P}^{p^i}\st 0\le i\le n-1\}$ for $p>2$.
    \item Let $\mathcal{P}=\mathcal{A}/\langle \mathrm{Sq}^1\rangle$ where $\langle \mathrm{Sq}^1\rangle$ is the 2-sided ideal generated by $\mathrm{Sq}^1$ for $p=2$, and let $\mathcal{P}$ be the subalgebra of $\mathcal{A}$ generated by the $\mathrm{P}^i$'s for $p>2$.
    \item Let $\mathcal{P}(n)$ be the subalgebra of $\mathcal{P}$ generated by $\{\mathrm{Sq}^{2^i}: 1\le i\le n+1\}$ for $p=2$ and $\{\mathrm{P}^{p^i}\st 0\le i\le n\}$ for $p>2$.
\end{itemize}

 \end{definition}

The algebras $\mathcal{A}$ and $\mathcal{P}$ are both Hopf algebras and $\mathcal{A}(n)$ and $\mathcal{P}(n)$ are subHopf algebras. The description of their duals is standard, and we recall this below. As is standard with the Steenrod algebra and its dual, we use the notation $\mathcal{A}(n)_*$ and $\mathcal{P}(n)_*$ to denote the dual of $\mathcal{A}(n)$ and $\mathcal{P}(n)$ respectively.

\begin{proposition}\label{prop:A(n)duals}
    The inclusion $\mathcal{A}(n)\to \mathcal{A}$ is dual to the quotient
    \begin{align*}
        &\F_2[\xi_1,\xi_2,\ldots]\to\F_2[\xi_1,\ldots,\xi_{n+1}]/(\xi_1^{2^{n+1}},\xi_2^{2^{n}},\ldots,\xi_{n+1}^2)&p=2\\
        &\F_p[\xi_1,\xi_2,\ldots]\otimes E(\tau_0,\tau_1,\ldots)\to\F_p[\xi_1,\ldots,\xi_{n}]/(\xi_1^{p^{n}},\xi_2^{p^{n-1}},\ldots,\xi_{n}^p)\otimes E(\tau_0,\ldots,\tau_n)&p>2
    \end{align*}
    The inclusion $\mathcal{P}(n)\to \mathcal{P}$ is dual to the quotient
    \begin{align*}
        &\F_2[\xi_1^2,\xi_2^2,\ldots]\to\F_2[\xi_1^2,\ldots,\xi_{n+1}^2]/({(\xi_1^2)}^{2^{n+1}},{(\xi_2^2)}^{2^{n}},\ldots,{(\xi_{n+1}^2)}^2)&p=2\\
        &\F_p[\xi_1,\xi_2,\ldots]\to\F_p[\xi_1,\ldots,\xi_{n+1}]/(\xi_1^{p^{n+1}},\xi_2^{p^{n}},\ldots,\xi_{n+1}^p)&p>2
    \end{align*}
\end{proposition}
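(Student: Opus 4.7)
The plan is a standard dualization argument via the Milnor basis. Given a finite-dimensional subspace $V\subset W$ spanned by a subset $S$ of a basis $B$ of $W$, the dual surjection $W^*\twoheadrightarrow V^*$ has kernel spanned by the dual basis vectors indexed by $B\setminus S$. Applied to the inclusions $\mathcal{A}(n)\subset \mathcal{A}$ and $\mathcal{P}(n)\subset \mathcal{P}$, the proposition reduces to identifying the Milnor basis of the smaller algebra as an explicit subset of the Milnor basis of the ambient algebra, and then observing that the corresponding spanning set on the dual side coincides with the ideal displayed in the statement.

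For the $\mathcal{A}(n)$ case, I would appeal to Milnor's classical description: at $p=2$, the Milnor basis of $\mathcal{A}(n)$ consists of those $\mathrm{Sq}^R$ with $r_i<2^{n+2-i}$ for $1\le i\le n+1$ and $r_i=0$ for $i>n+1$; at odd $p$, it consists of those $Q^E P^R$ with $\epsilon_i\in\{0,1\}$ for $0\le i\le n$, $r_i<p^{n+1-i}$ for $1\le i\le n$, and $r_i=0$ for $i>n$. This is proven by verifying (via the Milnor product formula) that the span of the candidate basis is closed under multiplication and contains the algebra generators $\mathrm{Sq}^{2^i}$ (respectively $\beta$ and $P^{p^i}$), then matching against the known total dimension of $\mathcal{A}(n)$. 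Dualizing, the kernel of $\mathcal{A}_*\twoheadrightarrow \mathcal{A}(n)_*$ is spanned by the $\xi^R\tau^E$ whose exponents violate these bounds, and one checks directly that this spanning set is precisely the ideal displayed in the statement.

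For the $\mathcal{P}(n)$ case, I would first identify $\mathcal{P}_*$ inside $\mathcal{A}_*$: at $p=2$, the subalgebra $\F_2[\xi_1^2,\xi_2^2,\ldots]$ is Hopf-closed (one checks directly that $\psi(\xi_n^2)$ lies in the squared subalgebra), and it is dual to the quotient $\mathcal{A}/\langle \mathrm{Sq}^1\rangle$ since $\mathrm{Sq}^1$ is dual to $\xi_1$ and hence the two-sided ideal it generates annihilates precisely the squared monomials. At odd $p$, $\mathcal{P}_*=\F_p[\xi_1,\xi_2,\ldots]$ arises as the quotient of $\mathcal{A}_*$ by the exterior generators $\tau_i$, dually matching the subalgebra inclusion $\mathcal{P}\hookrightarrow \mathcal{A}$. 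Once $\mathcal{P}_*$ is so identified, the same Milnor-basis bookkeeping applied now to the generators $\mathrm{Sq}^{2^i}$ for $1\le i\le n+1$ (respectively $P^{p^i}$ for $0\le i\le n$) yields the stated quotient. At $p=2$ one may alternatively exploit the isomorphism $\mathcal{P}(n)\cong \mathcal{A}(n+1)/\langle \mathrm{Sq}^1\rangle$ to reduce directly to the $\mathcal{A}$ case.

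The main obstacle is verifying the Milnor basis descriptions of $\mathcal{A}(n)$ and $\mathcal{P}(n)$, which requires careful tracking of exponent bounds under the Milnor product formula along with a dimension count. Everything else is formal linear algebra. Because these descriptions are classical and well documented (e.g.\ in Milnor's original paper on the Steenrod algebra and in Margolis's book), I would expect to cite them rather than reprove them in detail.
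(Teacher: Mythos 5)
The paper offers no proof of this proposition at all: it is introduced with ``The description of their duals is standard, and we recall this below,'' so there is nothing to compare against except the implicit appeal to the classical literature. Your outline is exactly that classical argument (Milnor-basis/profile-function description of $\mathcal{A}(n)$ and $\mathcal{P}(n)$, then formal dualization of a basis-aligned inclusion), and it is sound; citing Milnor and Margolis for the profile descriptions, as you propose, is precisely what the paper is implicitly doing. One step in your sketch is stated more glibly than it can be proved: the identification $\mathcal{P}_*=\F_2[\xi_1^2,\xi_2^2,\ldots]$ at $p=2$ does not follow just from ``$\mathrm{Sq}^1$ is dual to $\xi_1$, hence the two-sided ideal it generates annihilates precisely the squared monomials.'' The easy direction is that monomials in the $\xi_i^2$ are annihilated (all middle factors of the iterated coproduct are squares); the reverse inclusion is the real content, and the duality of generators alone is not enough -- for instance $\xi_2$ is killed by left multiples of $\mathrm{Sq}^1$-type elements in the sense that $\mathcal{A}/\!/E(\mathrm{Sq}^1)$ is dual to the strictly larger algebra $\F_2[\xi_1^2,\xi_2,\xi_3,\ldots]$ (the homology of $H\Z$), so one must genuinely use that the ideal is two-sided and control its size, e.g.\ by observing that $\langle\mathrm{Sq}^1\rangle$ contains every Milnor primitive $Q_i$ (via $Q_{i+1}=[Q_i,\mathrm{Sq}^{2^{i+1}}]$) and then running a Milnor--Moore/dimension count, or by invoking the classification of quotient Hopf algebras of $\mathcal{A}$. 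With that point either argued or cited (it too is classical), your route is complete and is the standard one.
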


\begin{remark}
    When $p=2$, the Hopf algebra $\mathcal{P}(n)$ is sometimes referred to as $D\mathcal{A}(n)$ -- or the ``double'' of $\mathcal{A}(n)$ -- as there is a degree-doubling isomorphism of Hopf algebras $\mathcal{A}\cong \mathcal{P}$ that restricts to a degree-doubling isomorphism of Hopf algebras $\mathcal{A}(n)\cong \mathcal{P}(n)$.
\end{remark}

The key fact that allows one to construct $\mathcal{A}(n)$- and $\mathcal{P}(n)$-free complexes is the following theorem of Adams--Margolis \cite{adamsmargolis} at $p=2$ and Miller--Wilkerson \cite{millerwilkerson} at $p>2$ (see also \cite[Section 6.2]{ravorange}). We let $P_t^s$ be the element of $\mathcal{A}$ dual to $\xi_t^{p^s}\in \mathcal{A}_*$ with respect to the monomial basis, and for $p>2$ we let $Q_t$ denote the element dual to $\tau_t$.

\begin{theorem}\label{thm:freemodulessteenrod}
    Let $\mathcal{B}$ be a sub Hopf algebra of the Steenrod algebra $\mathcal{A}$. At $p=2$, a $\mathcal{B}$-module $M$ is free if and only if the Margolis homologies $H_*(M;P_t^s)=0$ for all $P_t^s\in\mathcal{B}$, and for $p>2$, $M$ is free if and only if $H_*(M;P_t^s)=H_*(M;Q_t)=0$ for all $P_t^s,Q_t\in\mathcal{B}$.
\end{theorem}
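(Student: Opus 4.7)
The plan is to follow the classical Adams--Margolis strategy, extended to odd primes by Miller--Wilkerson. The necessity direction is the easier one: each $x \in \{P_t^s, Q_t\} \cap \mathcal{B}$ is a square-zero primitive that generates an exterior sub-Hopf algebra $E(x) \subseteq \mathcal{B}$. By the Milnor--Moore freeness theorem for Hopf subalgebras, $\mathcal{B}$ is free as a right $E(x)$-module, so any free $\mathcal{B}$-module is free as an $E(x)$-module; since $H_*(M;x) = \Tor_*^{E(x)}(\F_p, M)$ for square-zero $x$, this forces the Margolis homologies to vanish.

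For sufficiency, I would proceed in three reductions. First, reduce to $\dim_{\F_p}\mathcal{B} < \infty$: every sub-Hopf algebra of $\mathcal{A}$ is the filtered colimit of its finite-dimensional sub-Hopf algebras (via profile functions), and both freeness and vanishing of Margolis homology commute with this colimit. Second, reduce to \emph{elementary} sub-Hopf algebras, meaning those of the form $E(x_1, \ldots, x_r)$ generated by pairwise commuting square-zero primitives among the $P_t^s$ and $Q_t$; the key claim is that a module $M$ over a finite sub-Hopf algebra $\mathcal{B}$ is $\mathcal{B}$-free if and only if it is $\mathcal{E}$-free for every elementary $\mathcal{E} \subseteq \mathcal{B}$. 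The ``only if'' direction is Milnor--Moore again; the ``if'' direction is proved by induction on $\dim_{\F_p}\mathcal{B}$ using a Hopf algebra extension $\mathcal{B}' \hookrightarrow \mathcal{B} \twoheadrightarrow \mathcal{B}''$ provided by the profile function structure, with input the Cartan--Eilenberg/change-of-rings spectral sequence computing $\Ext_{\mathcal{B}}^*(\F_p, M)$.

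Third, handle the base case in which $\mathcal{B} = \mathcal{E} = E(x_1,\ldots,x_r)$ is itself elementary, by induction on $r$. The case $r = 1$ is immediate since $H_*(M; x) = \Tor_*^{E(x)}(\F_p, M)$, whose vanishing directly forces $E(x)$-freeness of $M$. The inductive step uses the Hopf extension $E(x_r) \hookrightarrow \mathcal{E} \twoheadrightarrow E(x_1, \ldots, x_{r-1})$: vanishing of $H_*(M; x_r)$ yields $E(x_r)$-freeness, so one may form the quotient $M/x_rM$, check that the hypotheses descend to it as a module over $E(x_1,\ldots,x_{r-1})$, and conclude by induction.

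The main obstacle is the middle reduction, from general $\mathcal{B}$ to elementary sub-Hopf algebras. This detection principle fails for arbitrary finite-dimensional cocommutative Hopf algebras and relies crucially on special features of sub-Hopf algebras of $\mathcal{A}$, most importantly that $\mathcal{B}$ is free over each of its elementary sub-Hopf algebras. Managing the profile-function bookkeeping to run the induction on $\dim \mathcal{B}$ and to choose the extensions $\mathcal{B}' \hookrightarrow \mathcal{B} \twoheadrightarrow \mathcal{B}''$ compatibly with the family of elementary sub-Hopf algebras is the technically demanding part, and is where the odd-primary case (which must accommodate both the polynomial $P_t^s$ and exterior $Q_t$ generators) requires the genuine strengthening due to Miller--Wilkerson.
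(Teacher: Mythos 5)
A preliminary remark: the paper does not prove this statement at all — it is quoted from Adams--Margolis (at $p=2$) and Miller--Wilkerson (at $p>2$) — so your sketch has to be measured against those original proofs, and as it stands it has concrete problems. In the necessity direction, your argument rests on the claim that each $P_t^s$ is a square-zero \emph{primitive}, so that $E(P_t^s)$ is a sub-Hopf algebra and Milnor--Moore applies. But $P_t^s$ is primitive only for $s=0$ (these are the Milnor primitives); for $s\ge 1$ it is not — for instance at $p=2$ one has $\Delta(P_2^1)=P_2^1\otimes 1+Q_1\otimes Q_1+1\otimes P_2^1$ — so $E(P_t^s)$ is not closed under the coproduct and Milnor--Moore does not apply to it. The fact you actually need, that $\mathcal{B}$ is free over the subalgebra generated by $P_t^s$ whenever $P_t^s\in\mathcal{B}$ (equivalently $H_*(\mathcal{B};P_t^s)=0$), is true but requires a real argument: either a Milnor-basis pairing computation, or Milnor--Moore applied to the sub-Hopf algebra \emph{generated} by $P_t^s$ together with an explicit check that this (exterior at $p=2$, truncated polynomial of height $p$ at odd primes) algebra is free over the subalgebra on $P_t^s$. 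The same slip infects your description of the elementary sub-Hopf algebras: at $p=2$ they contain non-primitive $P_t^s$'s, and at odd primes the $P_t^s$'s generate truncated polynomial, not exterior, algebras, so your base case (iii), which treats only exterior algebras on one-dimensional square-zero classes, does not cover the odd-primary $P_t^s$-part at all (one needs both Margolis homologies $\ker x/\mathrm{image}\,x^{p-1}$ and $\ker x^{p-1}/\mathrm{image}\,x$ there).

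The more serious issue is that your middle reduction — ``$M$ is $\mathcal{B}$-free if and only if its restriction to every elementary sub-Hopf algebra of $\mathcal{B}$ is free, proved by induction on $\dim_{\F_p}\mathcal{B}$ via Hopf extensions and Cartan--Eilenberg spectral sequences'' — is not proved but asserted, and given the easy exterior/truncated-polynomial base case it is essentially equivalent to the theorem you are trying to prove; this detection principle \emph{is} the content of Adams--Margolis and Miller--Wilkerson, so flagging the profile-function bookkeeping as ``technically demanding'' leaves the core of the proof missing. Two further points need attention. First, your colimit reduction is only half formal: vanishing of Margolis homology passes to the union of finite sub-Hopf algebras trivially, but ``$M$ free over every finite $\mathcal{B}_i$ implies $M$ free over $\mathcal{B}=\bigcup_i\mathcal{B}_i$'' is not automatic and needs a minimal-generators/Nakayama-style induction on degree. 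Second, the theorem (and all of these arguments, in particular the periodicity-of-Margolis-homology step you implicitly use when descending hypotheses to $M/x_rM$) requires a bounded-below hypothesis on $M$: the doubly infinite ``lightning flash'' module over $E(Q_0,Q_1)$ has vanishing Margolis homologies but is not free. You should state this hypothesis and track exactly where it enters.
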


The theorem applies verbatim to $\mathcal{B}=\mathcal{A}(n)$ for any prime $p$ and to $\mathcal{B}=\mathcal{P}(n)$ for $p>2$. For $p=2$, the theorem also applies to $\mathcal{B}=\mathcal{P}(n)\subset \mathcal{P}$ because the degree-doubling isomorphism of Hopf algebras $\mathcal{A}\cong\mathcal{P}$ sends $P_t^s$ to $P_t^{s+1}$. As outlined by Ravenel in \cite{ravorange}, Jeff Smith used idempotents in the group algebra of the symmetric group to construct finite $\mathcal{A}(n)$-free complexes by way of Theorem \ref{thm:freemodulessteenrod}, and we adapt this argument to $\mathcal{P}(n)$.

\begin{proposition}\label{prop:steenrodaction}
    Let $M$ be an $\mathcal{A}$-module on which $P_t^0$ acts nontrivially for $1\le t\le n$. Then there is an $N>>0$ and an idempotent $e\in\Z_{(p)}[\Sigma_N]$ such that 
    \[H_*(eM^{\otimes N};P_t^s)=0\]
    for $s+t\le n+1$ and $(s,t)\neq (0,n+1)$ when $p=2$, and for $s+t\le n$ when $p>2$.
\end{proposition}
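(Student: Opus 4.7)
The plan is to follow Jeff Smith's construction of symmetric group idempotents, as presented in \cite[Appendix C.3]{ravorange}. The idea is to build the desired $e$ as a product of primitive idempotents of $\Z_{(p)}[\Sigma_N]$, one for each $(s,t)$ in the allowed range, exploiting the interaction between the Cartan formulas in the Steenrod algebra and the natural $\Sigma_N$-action on $M^{\otimes N}$.

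First I would recall the relevant structure of the Milnor primitives $P_t^s$ as differentials (since $(P_t^s)^p=0$) and the Cartan-type expansion for their action on tensor products. The essential feature is that the action of $P_t^s$ on $M^{\otimes p^s}$ decomposes as a sum of tensor-factor operations involving $P_t^0$, with the combinatorics of the sum controlled by a specific $\Sigma_{p^s}$-action. Using the hypothesis that $P_t^0$ acts nontrivially on $M$ for $1 \le t \le n$, one can identify elements on which $P_t^s$ acts in a prescribed ``symmetrized'' manner, which translates to a vanishing criterion for $H_*(-;P_t^s)$ in terms of the image of certain idempotents.

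Next, for each $(s,t)$ in the allowed range, I would construct a primitive idempotent $e_{s,t} \in \Z_{(p)}[\Sigma_N]$ whose image $e_{s,t} M^{\otimes N}$ has vanishing $P_t^s$-Margolis homology. These are the standard Smith idempotents indexed by Young tableaux whose shape encodes the requirement that $e_{s,t} M^{\otimes N}$ splits as a sum of free cyclic $P_t^s$-complexes. Since there are only finitely many pairs $(s,t)$ in the range $s+t \le n+1$ with $(s,t)\neq(0,n+1)$ at $p=2$ (respectively $s+t \le n$ at $p>2$), for $N$ large enough all the $e_{s,t}$ exist simultaneously in $\Z_{(p)}[\Sigma_N]$. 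They commute as elements of a group algebra, so $e := \prod_{(s,t)} e_{s,t}$ is itself an idempotent, and $eM^{\otimes N}$ simultaneously kills each $P_t^s$-Margolis homology.

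The hard part will be verifying that the $e_{s,t}$ can be chosen so that the product $e$ has nontrivial image on $M^{\otimes N}$, i.e.\ that the successive projections do not collapse $M^{\otimes N}$ to zero. This is exactly where the nontriviality hypothesis on $P_t^0$ is essential: for each $(s,t)$, it ensures that $e_{s,t} M^{\otimes N}$ is a proper nonzero summand, and the combinatorics of the Young tableaux indexing the Smith idempotents allows them to be multiplied without collapsing the image. The exclusion of $(s,t)=(0,n+1)$ at $p=2$ reflects precisely that our hypothesis does not extend to $t=n+1$, so we cannot control $P_{n+1}^0$-Margolis homology in the image. Careful bookkeeping of tableau shapes, together with the Cartan expansions from the first step, then yields the required simultaneous vanishing.
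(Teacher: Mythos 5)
First, a point of comparison: the paper does not actually prove this proposition — it is quoted from Smith's construction as presented in \cite[Appendix C.3]{ravorange} — so your sketch is being measured against that argument, and in outline you are indeed following the same route (Young-diagram idempotents acting on tensor powers). However, there is a genuine gap at the load-bearing step of your plan: you propose to build $e$ as the product $\prod_{(s,t)} e_{s,t}$ of primitive idempotents, ``one for each $(s,t)$,'' on the grounds that ``they commute as elements of a group algebra.'' This is false: $\Z_{(p)}[\Sigma_N]$ is noncommutative, distinct primitive idempotents need not commute, and if the $e_{s,t}$ lie in different isotypic (block) two-sided ideals their product is zero in either order. So the product is in general neither an idempotent nor nonzero, and even when it happens to be idempotent you have not shown that its image retains the $P_t^s$-Margolis vanishing achieved by each individual $e_{s,t}$. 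Smith's construction avoids this entirely: one either chooses a \emph{single} idempotent attached to one Young diagram whose shape is large enough (relative to the finitely many graded dimensions in play) to kill all of the relevant Margolis homology groups simultaneously, or one proceeds one Margolis operator at a time, replacing $M$ by $eM^{\otimes N}$ and using the K\"unneth isomorphism $H_*(M\otimes M';P_t^s)\cong H_*(M;P_t^s)\otimes H_*(M';P_t^s)$ to see that vanishing already achieved persists under further tensor powers and passage to summands. Your sketch also never states the quantitative key lemma — the criterion relating the number of rows/columns of the diagram to the vanishing of $e_\lambda V^{\otimes N}$ in terms of the (graded) dimension of $V$ — and that criterion is the actual content of the argument.

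Two further points. Your ``essential feature'' that $P_t^s$ on $M^{\otimes p^s}$ ``decomposes as a sum of tensor-factor operations involving $P_t^0$'' is imprecise as stated: the iterated coproduct of $P_t^s$ involves the Milnor basis elements dual to \emph{all} powers $\xi_t^c$, $0\le c\le p^s$, not just $P_t^0$ and the identity. What is true, and what the argument exploits, is that the pure term $P_t^0\otimes\cdots\otimes P_t^0$ occurs with coefficient $1$, and the remaining terms must be controlled (e.g.\ by a filtration); your sketch does not do this. Finally, you correctly identify the nontriviality of $eM^{\otimes N}$ as the crux and as the place where the hypothesis on $P_t^0$ enters (note that without it the statement is vacuous, since $e=0$ works), but your treatment of it is an assertion — ``the combinatorics of the Young tableaux \ldots allows them to be multiplied without collapsing the image'' — rather than an argument, and it is moreover entangled with the invalid product-of-idempotents step. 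As it stands the proposal points at the right source but does not contain a proof.
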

\begin{proof}
    By \cite[Appendix C.3.1]{ravorange}, each $P_s^t$ and $Q_t$ generates a subalgebra of $\mathcal{A}$ of the form $E(x)=\F_p[x]/x^2$ or $T(x)=\F_p[x]/x^{p^n}$. If $M$ is a free module over $E(x)$ or $T(x)$, the corresponding Margolis homology $H_*(M;x)$ vanishes. The claim now follows immediately as in the proof of \cite[Appendix C.3.2]{ravorange}, replacing $H^*(X)$ throughout with $M$.
\end{proof}

Taking $M=H^*\CP^{p^{n+1}}$, it follows from the Kunneth formula that a retract of a smash power of $\CP^{p^{n+1}}$ is free over $\mathcal{P}(n)$.

\begin{theorem}\label{thm:P(n)freecomplexes}
There is a finite complex $F$ such that $F$ is a retract of $(\CP^{p^{n+1}})^{\otimes N}$ for some $N$, and $H^*(F;\F_p)$ is a free $\mathcal{P}(n)$-module.     
\end{theorem}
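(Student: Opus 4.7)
The plan is to adapt Jeff Smith's idempotent construction, invoked in the cited $\mathcal{A}(n)$-free case, to produce a $\mathcal{P}(n)$-free retract of a smash power of $\CP^{p^{n+1}}$.

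First I will set $M := H^*(\CP^{p^{n+1}}; \F_p) \cong \F_p[x]/x^{p^{n+1}+1}$ with $|x| = 2$; this is an even $\mathcal{A}$-module and, at $p = 2$, a $\mathcal{P}$-module via the quotient $\mathcal{A} \twoheadrightarrow \mathcal{P}$. The next task is to verify the hypothesis of Proposition~\ref{prop:steenrodaction} for $M$: the Milnor primitives $P_t^0$ act nontrivially on $M$ for $1 \le t \le n$ (interpreting $P_t^s$ at $p = 2$ via the degree-doubling isomorphism $\mathcal{A} \cong \mathcal{P}$ so that it lands in even degrees). The truncation bound $p^{n+1}$ is chosen exactly large enough that the Wu formula $\mathcal{P}^i(x^k) = \binom{k}{i}x^{k+i(p-1)}$ (and its $p = 2$ analogue) produces a nonzero target within $M$ for the relevant range of $t$.

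Next, Proposition~\ref{prop:steenrodaction} supplies $N \gg 0$ and an idempotent $e \in \Z_{(p)}[\Sigma_N]$, where $\Sigma_N$ acts on $M^{\otimes N}$ by permutation of tensor factors, such that $H_*(eM^{\otimes N}; P_t^s) = 0$ for all $P_t^s$ in the stated range. Comparing this range with the description of $\mathcal{P}(n)_*$ in Proposition~\ref{prop:A(n)duals} shows it is exactly the collection of Milnor primitives generating $\mathcal{P}(n)$, so Theorem~\ref{thm:freemodulessteenrod} applied with $\mathcal{B} = \mathcal{P}(n)$ implies $eM^{\otimes N}$ is free as a $\mathcal{P}(n)$-module.

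Finally, I will realize the splitting at the spectrum level: the permutation action of $\Sigma_N$ on $(\CP^{p^{n+1}})^{\otimes N}$ lifts $e$ to a homotopy idempotent, and splitting this in the idempotent-complete $p$-local stable homotopy category produces a finite retract $F$ with $H^*(F; \F_p) \cong eM^{\otimes N}$, as required.

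The main obstacle is encapsulated in Proposition~\ref{prop:steenrodaction}, used here as a black box resting on the modular representation theory of $\Sigma_N$. Beyond that, the most delicate point is bookkeeping the reindexing between Milnor primitives in $\mathcal{A}$ and $\mathcal{P}$ at $p = 2$, so that the proposition's vanishing range aligns with the generators of $\mathcal{P}(n)$ rather than those of $\mathcal{A}(n)$.
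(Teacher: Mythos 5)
Your odd-primary argument is essentially the paper's: verify that the $P_t^0$ act nontrivially on $H^*(\CP^{p^{n+1}};\F_p)$, feed this into Proposition~\ref{prop:steenrodaction}, conclude $\mathcal{P}(n)$-freeness of the split summand via Theorem~\ref{thm:freemodulessteenrod}, and realize the idempotent on the smash power. (One indexing quibble even there: you need nontriviality of $P_t^0$ for $1\le t\le n+1$, i.e.\ you must apply Proposition~\ref{prop:steenrodaction} with its index shifted by one, since with the hypothesis only for $t\le n$ the stated vanishing range at $p>2$ is $s+t\le n$, which is the set of Milnor primitives in $\mathcal{P}(n-1)$, not $\mathcal{P}(n)$.)

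The genuine gap is at $p=2$. There the Milnor primitives $P_t^0\in\mathcal{A}$, dual to $\xi_t$, are odd-degree operations, so they act by \emph{zero} on the evenly concentrated module $H^*(\CP^{2^{n+1}};\F_2)$: the hypothesis of Proposition~\ref{prop:steenrodaction} simply fails for your chosen $M$, and ``interpreting $P_t^s$ via the degree-doubling isomorphism'' is not something the proposition, used as a black box about $\mathcal{A}$-modules and the actual elements $P_t^0$, can absorb. Relatedly, the vanishing range the proposition outputs does not align with the freeness criterion for $\mathcal{P}(n)\subset\mathcal{P}$ at $p=2$: since the doubling isomorphism sends $P_t^s$ to $P_t^{s+1}$, $\mathcal{P}(n)$-freeness requires vanishing of Margolis homology for the operators $P_t^s$ with $s\ge 1$ and $s+t\le n+2$ (e.g.\ $P_1^{n+1}$), which is not the set ``$s+t\le n+1$, $(s,t)\neq(0,n+1)$'' you compare against, and Theorem~\ref{thm:freemodulessteenrod} itself only applies to $\mathcal{P}(n)$ at $p=2$ after this reindexing, since $\mathcal{P}(n)$ is a subalgebra of the quotient $\mathcal{P}$, not of $\mathcal{A}$. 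The paper's route around exactly this problem is to run Smith's construction on $H^*(\mathbb{RP}^{2^{n+1}};\F_2)$, where the $P_t^0$ do act nontrivially for $t\le n+1$, obtaining an idempotent $e$ with $e\,H^*(\mathbb{RP}^{2^{n+1}})^{\otimes N}$ free over $\mathcal{A}(n)$, and then to use the symmetric-monoidal degree-doubling equivalence $\mathrm{Mod}(\mathcal{A})\simeq\mathrm{Mod}(\mathcal{P})$ --- which carries $H^*(\mathbb{RP}^{2^{n+1}})$ to $H^*(\CP^{2^{n+1}})$ and $\mathcal{A}(n)$ to $\mathcal{P}(n)$ --- to conclude that the \emph{same} idempotent in $\Z_{(2)}[\Sigma_N]$ splits a $\mathcal{P}(n)$-free summand off $(\CP^{2^{n+1}})^{\otimes N}$. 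Your proof needs this detour (or an independent rerun of Smith's argument for the doubled operators) to be complete at $p=2$.
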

\begin{proof}
    For $p>2$, $P_t^0$ acts nontrivially on $H^*(\CP^{p^{n+1}};\F_p)$ for $t\le n+1$, hence by combining Proposition \ref{prop:steenrodaction} and Theorem \ref{thm:freemodulessteenrod}, there exists a retract $F$ of $(\CP^{p^{n+1}})^{\otimes N}$ for some $N$ such that $H^*(F;\F_p)$ is free over $\mathcal{P}(n)$.

    For $p=2$, $P_t^0$ acts nontrivially on $H^*(\mathbb{RP}^{2^{n+1}};\F_2)$ for $t\le n+1$, and hence there exists a retract $F$ of $(\mathbb{RP}^{2^{n+1}})^{\otimes N}$ for some $N$ such that $H^*(F;\F_2)$ is free over $\mathcal{A}(n)$. Now, there is a degree-doubling symmetric-monoidal equivalence of categories $\mathrm{Mod}(\mathcal{A})\simeq\mathrm{Mod}(\mathcal{P})$ which sends $H^*(\mathbb{RP}^{2^{n+1}};\F_2)$ to $H^*(\mathbb{CP}^{2^{n+1}};\F_2)$. Since the degree-doubling isomorphism $\mathcal{A}\cong\mathcal{P}$ sends $\mathcal{A}(n)$ to $\mathcal{P}(n)$, it follows that using the same idempotent to split $(\mathbb{CP}^{2^{n+1}})^{\otimes N}$ gives a $\mathcal{P}(n)$-free summand.
\end{proof}

\begin{remark}
This theorem constructs \emph{some} complex $F$ whose cohomology is free over $\mathcal{P}(n)$, but it can be difficult to work with $F$ in a direct manner. However, in some cases one can do better via a direct construction. The $2$-cell complex $\mathbb{S}/\eta$ at $p=2$ and $\mathbb{S}/\alpha_1$ at $p>2$ has cohomology free of rank 1 over $\mathcal{P}(0)$. At $p=2$, one may construct a complex called $D\mathcal{A}(1)$ which is free of rank 1 over $\mathcal{P}(1)$, see \cite{akhiltmf}.
\end{remark}

These complexes may be used to construct Wood equivalences for certain fp spectra in a wide generality, which we will study in Section \ref{sec:4}. The main fact we use is that, as an $\mathcal{A}(n)_*$-comodule, $\mathcal{P}(n-1)_*$ is coinduced from an exterior algebra. In the following, recall that $\mathcal{E}(n)_*$ denotes the quotient Hopf algebra of the dual Steenrod algebra given by $E(\xi_1,\ldots,\xi_{n+1})$ when $p=2$ and $E(\tau_0,\ldots,\tau_{n})$ when $p>2$.

\begin{proposition}\label{prop:A(n)toE(n)cor}
    Suppose that $M$ is an $\mathcal{A}_*$-comodule concentrated in even degrees and that $M$ is cofree as a $\mathcal{P}(n-1)_*$-comodule, i.e. that there is an isomorphism of $\mathcal{P}(n-1)_*$-comodules $M\cong \mathcal{P}(n-1)_*\otimes V$ for some $\F_p$-vector space $V$. Then there is an isomorphism of $\mathcal{A}(n)_*$-comodules
    \[M\cong \mathcal{A}(n)_*\Boxover{\mathcal{E}(n)_*}V\]
\end{proposition}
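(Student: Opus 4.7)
The strategy is to show that under the evenness hypothesis, the $\mathcal{A}(n)_*$-coaction on $M$ factors through the sub-Hopf-algebra $\mathcal{P}(n-1)_* \subset \mathcal{A}(n)_*$, and then to compare this factored coaction with the explicit description of the cotensor. I would begin by unpacking the target. Viewing $V$ as a trivial $\mathcal{E}(n)_*$-comodule via the counit, the cotensor $\mathcal{A}(n)_* \Boxover{\mathcal{E}(n)_*} V$ is, by definition of the equalizer, isomorphic as a graded vector space to $\mathcal{A}(n)_*^{co\mathcal{E}(n)_*} \otimes V = \mathcal{P}(n-1)_* \otimes V$, since the $\mathcal{E}(n)_*$-primitives of $\mathcal{A}(n)_*$ are exactly $\mathcal{P}(n-1)_*$ in the short exact sequence $\mathcal{P}(n-1)_* \hookrightarrow \mathcal{A}(n)_* \twoheadrightarrow \mathcal{E}(n)_*$. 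Because $\mathcal{P}(n-1)_*$ is closed under the coproduct of $\mathcal{A}(n)_*$, the residual $\mathcal{A}(n)_*$-coaction on this cotensor factors through $\mathcal{P}(n-1)_*$ and reduces to the standard cofree $\mathcal{P}(n-1)_*$-coaction on $V$.

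The heart of the argument is showing that the $\mathcal{A}(n)_*$-coaction on $M$ itself factors through $\mathcal{P}(n-1)_* \otimes M$. The cleanest route is to dualize and work in the Steenrod algebra $\mathcal{A}(n)$: the sub-Hopf-algebra $\mathcal{E}(n) \subset \mathcal{A}(n)$ is the exterior algebra on the Milnor primitives $Q_0, \ldots, Q_n$, each of odd degree ($2^{i+1}-1$ at $p=2$ and $2p^i - 1$ at $p>2$). Since $M$ is concentrated in even degrees, each $Q_i$ acts by zero on $M$ for parity reasons, and since the $Q_i$'s generate $\mathcal{E}(n)$ as an algebra, the entire augmentation ideal $\mathcal{E}(n)^{>0}$ annihilates $M$. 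Consequently the left ideal $\mathcal{A}(n)\cdot\mathcal{E}(n)^{>0}$ annihilates $M$, so the $\mathcal{A}(n)$-action factors through $\mathcal{A}(n)/(\mathcal{A}(n)\cdot\mathcal{E}(n)^{>0}) \cong \mathcal{P}(n-1)$. Dualizing, the $\mathcal{A}(n)_*$-coaction on $M$ factors through $\mathcal{P}(n-1)_* \otimes M$. Alternatively, one can verify this directly on the comodule side by induction on degree: the counital part of $\bar\psi(m) \in \mathcal{E}(n)_* \otimes M$ involves only even elements $e_\beta \in \mathcal{E}(n)_*$ of positive degree, and applying coassociativity and projecting onto the component of $\mathcal{E}(n)_*^{(1)} \otimes \mathcal{E}(n)_*^{(\text{odd})} \otimes M$ forces every $m_\beta$ to vanish, using injectivity of the map that reads off single exterior factors.

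Once the factorization through $\mathcal{P}(n-1)_*$ is established, the hypothesis $M \cong \mathcal{P}(n-1)_* \otimes V$ as $\mathcal{P}(n-1)_*$-comodules identifies $M$ with the cotensor description from the first paragraph, giving the asserted $\mathcal{A}(n)_*$-comodule isomorphism $M \cong \mathcal{A}(n)_* \Boxover{\mathcal{E}(n)_*} V$. The main obstacle is the identification $\mathcal{A}(n)/(\mathcal{A}(n)\cdot\mathcal{E}(n)^{>0}) \cong \mathcal{P}(n-1)$, which requires $\mathcal{E}(n)$ to be a normal sub-Hopf-algebra of $\mathcal{A}(n)$; this is a standard but nontrivial input from Steenrod algebra theory, and careful bookkeeping is needed to ensure that the cofree $\mathcal{P}(n-1)_*$-structure supplied by the hypothesis matches the one inherited via the factorization.
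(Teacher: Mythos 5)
Your proof is correct and follows essentially the same route as the paper: the evenness of $M$ forces the coaction to factor through $\mathcal{P}(n-1)_*$ (which you justify via the odd-degree Milnor primitives $Q_i$, a step the paper compresses into the remark that an even $\mathcal{A}(n)_*$-comodule is the same data as an even $\mathcal{P}(n-1)_*$-comodule), and the identification $\mathcal{P}(n-1)_*\cong\mathcal{A}(n)_*\Boxover{\mathcal{E}(n)_*}\F_p$ coming from the Milnor--Moore theorem then converts the cofreeness hypothesis into the asserted cotensor description. No gaps; you have simply made explicit the inputs the paper cites in one line.
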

\begin{proof}
    Note that an $\mathcal{A}(n)_*$-comodule concentrated in even degrees is the same data as a $\mathcal{P}(n-1)_*$-comodule concentrated in even degrees. The inclusion of $\mathcal{A}(n)_*$-comodules $\mathcal{P}(n-1)_*\to \mathcal{A}(n)_*$ factors through an isomorphism
    \[\mathcal{P}(n-1)_*\to\mathcal{A}(n)_*\Boxover{\mathcal{E}(n)_*}\F_p\]
    of $\mathcal{A}(n)_*$-comodules, as follows from Proposition \ref{prop:A(n)duals}. One now uses the isomorphism
    \[(\mathcal{A}(n)_*\Boxover{\mathcal{E}(n)_*}\F_p)\otimes V\cong \mathcal{A}(n)_*\Boxover{\mathcal{E}(n)_*}V\]
\end{proof}

Note that the $\mathcal{P}(n-1)$-free complexes constructed in Theorem \ref{thm:P(n)freecomplexes} are in fact $\mathrm{T(n)}$-projectives, since $\CP^{p^{n}}$ is $\mathrm{T(n)}$-free. This relationship is reflected on homology as follows. When $H_*\mathrm{T(n)}$ is restricted to $\mathcal{A}(n)_*$, since it is concentrated in even degrees, it is equivalently a $\mathcal{P}(n-1)_*$-comodule. Just like our $\mathcal{P}(n-1)$-free complexes, $H_*\mathrm{T(n)}$ is cofree as a $\mathcal{P}(n-1)_*$-comodule. 

\begin{proposition}\label{prop:homologyT(n)overA(n)}
    There is an isomorphism of $\mathcal{A}(n)_*$-comodules
    \[H_*\mathrm{T(n)}\cong \mathcal{P}(n-1)_*\otimes\F_p[t_1^{2^n},t_2^{2^{n-1}},\ldots,t_n^2]\cong \mathcal{A}(n)_*\Boxover{\mathcal{E}(n)_*}\F_p[t_1^{2^n},t_2^{2^{n-1}},\ldots,t_n^2]\]
    where both appearances of $\F_p[t_1^{2^n},t_2^{2^{n-1}},\ldots,t_n^2]$ have trivial coactions.
\end{proposition}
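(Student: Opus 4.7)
The first isomorphism comes from analyzing the $\mathcal{A}(n)_*$-coaction on $H_*\mathrm{T}(n)$, and the second is then formal from Proposition \ref{prop:A(n)toE(n)cor}.

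First I would recall from Proposition \ref{prop:HT(n)} that $H_*\mathrm{T}(n)$ embeds in $\mathcal{A}_*$ as the polynomial subalgebra on the generators $\zeta_i^2$ (for $p = 2$) or $\zeta_i$ (for $p > 2$), $1 \le i \le n$, with $\mathcal{A}_*$-coaction the restriction of the Milnor comultiplication. Composing with the Hopf algebra quotient $\mathcal{A}_* \twoheadrightarrow \mathcal{A}(n)_*$ — whose kernel, by Proposition \ref{prop:A(n)duals}, is generated by $\xi_i^{p^{n-i+2}}$ and by $\xi_j, \tau_j$ for $j > n+1$ — yields the $\mathcal{A}(n)_*$-coaction. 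The key computation is that the polynomial subalgebra
\[
V := \F_p[t_1^{2^n}, t_2^{2^{n-1}}, \ldots, t_n^2] \subset H_*\mathrm{T}(n)
\]
is a trivial sub-$\mathcal{A}(n)_*$-comodule. By the comodule-algebra structure of $H_*\mathrm{T}(n)$, this reduces to checking primitivity of each polynomial generator, which follows from a direct degree count: in the Milnor coproduct of $t_i^{2^{n-i+1}}$, every summand other than the primitive $1 \otimes t_i^{2^{n-i+1}}$ contains a factor $\xi_a^{p^b}$ with $a + b \ge n + 2$, and such a factor vanishes in $\mathcal{A}(n)_*$ by the relations above.

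Next I would use that $\mathcal{P}(n-1)_*$ is a sub-Hopf-algebra of $\mathcal{A}(n)_*$ (visible from the explicit presentations in Proposition \ref{prop:A(n)duals}) to identify the natural $\F_p$-linear section $\mathcal{P}(n-1)_* \hookrightarrow H_*\mathrm{T}(n)$ on monomial bases as an $\mathcal{A}(n)_*$-comodule map; both coactions agree because both are restrictions of the Milnor coproduct reduced modulo the relevant ideals. Combining with the trivial $\mathcal{A}(n)_*$-comodule $V$ via the comodule-algebra structure of $H_*\mathrm{T}(n)$, the multiplication map
\[
\mathcal{P}(n-1)_* \otimes V \longrightarrow H_*\mathrm{T}(n)
\]
is a map of $\mathcal{A}(n)_*$-comodules and an isomorphism of graded $\F_p$-vector spaces, since algebraically $H_*\mathrm{T}(n)$ decomposes as the truncated polynomial ring $\mathcal{P}(n-1)_*$ tensored with the residual polynomial ring $V$ on the high powers. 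This gives the first isomorphism.

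For the second isomorphism, $H_*\mathrm{T}(n)$ regarded as an $\mathcal{A}(n)_*$-comodule is concentrated in even degrees (its generators have even degree), and by the first isomorphism it is cofree over $\mathcal{P}(n-1)_*$ with cobasis $V$ having trivial coaction. Proposition \ref{prop:A(n)toE(n)cor} then directly yields $H_*\mathrm{T}(n) \cong \mathcal{A}(n)_* \Boxover{\mathcal{E}(n)_*} V$ as $\mathcal{A}(n)_*$-comodules. The principal obstacle is the bookkeeping in the first step: carrying out the degree count uniformly across both parities of $p$ to see that the high-power generators $t_i^{2^{n-i+1}}$ are primitive in the $\mathcal{A}(n)_*$-coaction, and then that the sub-Hopf-algebra inclusion $\mathcal{P}(n-1)_* \subset \mathcal{A}(n)_*$ really does promote the tensor factorization of $H_*\mathrm{T}(n)$ to one of comodules and not merely of vector spaces. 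Everything else is formal from the comodule-algebra property and the cited propositions.
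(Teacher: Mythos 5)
Your first step is fine: the generators $t_i^{p^{n-i+1}}$ of $V$ do become primitive over $\mathcal{A}(n)_*$, since the non-identity terms of their coproducts involve exactly the powers $\xi_a^{p^{n+2-a}}$ (at $p=2$; $\xi_a^{p^{n+1-a}}$ at odd $p$, where also the exponents in $V$ should be powers of $p$) that are killed in $\mathcal{A}(n)_*$, so $V$ is indeed a trivial subcomodule; and the final appeal to Proposition \ref{prop:A(n)toE(n)cor} is fine. The gap is the middle step: the monomial-basis section $\mathcal{P}(n-1)_*\hookrightarrow H_*\mathrm{T(n)}$ is \emph{not} a map of $\mathcal{A}(n)_*$-comodules, so the multiplication map $\mathcal{P}(n-1)_*\otimes V\to H_*\mathrm{T(n)}$ is not comodule-linear. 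Your justification, that ``both coactions are restrictions of the Milnor coproduct reduced modulo the relevant ideals,'' conflates two different reductions: for $\mathcal{P}(n-1)_*\subset\mathcal{A}(n)_*$ the coproduct is truncated in the comodule factor as well, whereas in $H_*\mathrm{T(n)}$ it is not. Concretely, take $p=2$, $n=2$, and write the image of $H_*\mathrm{T(2)}$ in $\mathcal{A}_*$ as $\F_2[\xi_1^2,\xi_2^2]$, with coaction the restriction of the coproduct followed by reduction of the operations factor to $\mathcal{A}(2)_*=\F_2[\xi_1,\xi_2,\xi_3]/(\xi_1^8,\xi_2^4,\xi_3^2)$. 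The coaction of the truncated monomial $\xi_1^6\xi_2^2$ contains the term $\xi_1^4\otimes\xi_1^8$ (from $(1\otimes\xi_1^6)\cdot(\xi_1^4\otimes\xi_1^2)$, with coefficient exactly $1$), where $\xi_1^4\neq0$ in $\mathcal{A}(2)_*$ and $\xi_1^8\neq0$ in $H_*\mathrm{T(2)}$, but $\xi_1^8=0$ in $\mathcal{P}(1)_*$. So the span of truncated monomials is not even a subcomodule, and your map already fails to be a comodule map on $\mathcal{P}(n-1)_*\otimes 1$ for every $n\ge2$ (only $n=1$ is too small to see this).

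What is needed is precisely the paper's argument: the composite $H_*\mathrm{T(n)}\to\mathcal{A}_*\to\mathcal{A}(n)_*$ is a map of $\mathcal{A}(n)_*$-comodule algebras whose image is the sub-Hopf-algebra $\mathcal{P}(n-1)_*$, and the Milnor--Moore theorem (in its comodule-algebra form over a connected Hopf algebra) then guarantees that $H_*\mathrm{T(n)}$ is \emph{cofree}, i.e.\ abstractly isomorphic as a comodule to $\mathcal{P}(n-1)_*\otimes V$ with trivial coaction on $V$, the cobasis being identified with $\F_p[t_1^{p^n},\ldots,t_n^p]$ by a Poincar\'e series count. The point is that this splitting is not realized by multiplying your literal subcomodule $V$ against the naive monomial lift of $\mathcal{P}(n-1)_*$; if you want to keep an explicit construction, you must first produce a comodule-map splitting of the surjection $H_*\mathrm{T(n)}\twoheadrightarrow\mathcal{P}(n-1)_*$, and supplying that is exactly the content of the Milnor--Moore step, not something the monomial basis provides.
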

\begin{proof} As before, we regard all $\mathcal{A}(n)_*$-comodules concentrated in even degrees equivalently as $\mathcal{P}(n-1)_*$-comdules. The map of $\mathcal{A}(n)_*$-comodule algebras 
    \[H_*\mathrm{T(n)}\to H_*\F_p=\mathcal{A}_*\to \mathcal{A}(n)_*\]
    has image $\mathcal{P}(n-1)_*$, and thus factors through a surjection of $\mathcal{P}(n-1)_*$-comodules $H_*\mathrm{T(n)}\to \mathcal{P}(n-1)_*$. By the Milnor--Moore theorem (see e.g. \cite[A1.1.20]{ravgreen}), there is an isomorphism of $\mathcal{P}(n-1)_*$-comodules $H_*\mathrm{T(n)}\cong \mathcal{P}(n-1)_*\otimes (H_*\mathrm{T(n)}\Boxover{\mathcal{P}(n-1)_*}\F_p)$, where the latter has the extended comodule structure (i.e. $H_*\mathrm{T(n)}\Boxover{\mathcal{P}(n-1)_*}\F_p$ is simply regarded as a vector space). 
    
    One may now use Proposition \ref{prop:A(n)duals} to compute that
    \[H_*\mathrm{T(n)}\Boxover{\mathcal{P}(n-1)_*}\F_p\cong \F_p[t_1^{2^n},t_2^{2^{n-1}},\ldots,t_n^2]\]
    to obtain the first claimed isomorphism and directly apply Proposition \ref{prop:A(n)toE(n)cor} to obtain the second.
\end{proof}

\begin{remark}
The splitting of the proposition cannot be made into an isomorphism of $\mathcal{A}_*$-comodules because $H^*\mathrm{T(n)}$ supports arbitrarily many $\mathrm{Sq}^{2^i}$'s for $p=2$ and $P^{p^i}$'s for $p>2$.
\end{remark}

\section{Chromatic defect and Wood equivalences for fp spectra}\label{sec:4}
In this section, we will discuss the behavior of chromatic defect on a special class of spectra called fp spectra, which we now recall.

\begin{definition}(\cite[Proposition 3.2]{mr})\label{def:fpspectra}
    A $p$-complete, bounded below spectrum $E$ is said to be an fp spectrum if it satisfies one of the following equivalent conditions
    \begin{itemize}
        \item There exists a finite $p$-local spectrum $F$ such that $E\otimes F$ is $\pi$-finite. That is $|\pi_*(E\otimes F)|<\infty$.
        \item There exists a finite $p$-local spectrum $F$ such that $E\otimes F$ is a finite sum of shifts of $\F_p$.
        \item The exists a finite $\mathcal{A}(n)_*$-comodule $M$ and an isomorphism of $\mathcal{A}_*$-comodules $H_*(E;\F_p)\cong\mathcal{A}_*\Boxover{\mathcal{A}(n)_*}M$.
        \item $H^*(E;\F_p)$ is a finitely presented $\mathcal{A}$-module.
    \end{itemize}
\end{definition}

One often considers fp spectra, such as $\mathrm{ko}$ and $\mathrm{tmf}$ for example, because they are connective spectra with strong finiteness properties that make them amenable to computation. The isomorphisms in Definition \ref{def:fpspectra} give rise to change-of-rings isomorphisms for the Adams spectral sequence (ASS) that give one considerable computational control. The resulting computations are interesting in their own right (see \cite{brunerrognes} for example) and allow for naturality arguments to deduce behavior in the Adams spectral sequence for more complicated theories, such as the sphere. Moreover these theories often provide connective models for key finite-height chromatic spectra of interest, such as the $\mathrm{EO}_n(G)$'s, which are otherwise inaccessible from the point of view of the Adams spectral sequence because their mod $p$ homology vanishes.

It is difficult, however, to say things in general about the behavior Adams--Novikov spectral sequence of an fp spectrum, such as the existence of vanishing lines or change-of-rings isomorphisms. We discuss some results of this form in this section in certain nice cases of fp spectra, by giving obstructions in the Adams spectral sequence to an fp spectrum having finite chromatic defect and by using the $\mathcal{P}(n)$-free complexes of Theorem \ref{thm:P(n)freecomplexes} to give sufficient conditions for an fp spectrum to be Wood-type.

\subsection{Chromatic defect for fp spectra}
Proposition \ref{prop:homologyT(n)overA(n)} allows us to use a change-of-rings isomorphism to identify the homology of $E\otimes \mathrm{T(n)}$ when $E$ is an fp spectrum.

\begin{proposition}\label{prop:CORwithfp}
Suppose $E$ is an fp spectrum so that $H_*(E;\F_p)\cong\mathcal{A}_*\Boxover{\mathcal{A}(n)_*}M$. There is an isomorphism of $\mathcal{A}_*$-comodules
\[H_*(E\otimes \mathrm{T(n)})\cong (\mathcal{A}_*\Boxover{\mathcal{E}(n)_*}M)\otimes \F_p[t_1^{2^n},t_2^{2^{n-1}},\ldots,t_n^2] \]
where $\F_p[t_1^{2^n},t_2^{2^{n-1}},\ldots,t_n^2]$ has trivial coaction. In particular, the ASS of $E\otimes\mathrm{T(n)}$ has signature
\[E_2=\Ext_{\mathcal{E}(n)_*}(\F_p,M)\otimes\F_p[t_1^{2^n},t_2^{2^{n-1}},\ldots,t_n^2]\implies \pi_*(E\otimes\mathrm{T(n)})\]
\end{proposition}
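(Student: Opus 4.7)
The plan is to combine the Künneth theorem with two applications of the projection formula for cotensor products over a Hopf-algebra pair. By Künneth, $H_*(E \otimes \mathrm{T(n)}) \cong H_*E \otimes H_*\mathrm{T(n)}$ as $\mathcal{A}_*$-comodules with the diagonal coaction. Substituting $H_*E \cong \mathcal{A}_* \Boxover{\mathcal{A}(n)_*} M$ and applying the projection formula
\[(\mathcal{A}_* \Boxover{\mathcal{A}(n)_*} M) \otimes P \cong \mathcal{A}_* \Boxover{\mathcal{A}(n)_*} (M \otimes \Res_{\mathcal{A}(n)_*} P)\]
(e.g.\ \cite[Lemma A1.1.6]{ravgreen}) to $P = H_*\mathrm{T(n)}$, the problem reduces to identifying the $\mathcal{A}(n)_*$-comodule $M \otimes \Res_{\mathcal{A}(n)_*} H_*\mathrm{T(n)}$.

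By Proposition \ref{prop:homologyT(n)overA(n)}, this restriction is isomorphic to $\mathcal{A}(n)_* \Boxover{\mathcal{E}(n)_*} F$ with $F := \F_p[t_1^{2^n}, \ldots, t_n^2]$ trivially coacted. A second application of the projection formula, now for $\mathcal{E}(n)_* \hookrightarrow \mathcal{A}(n)_*$, together with the triviality of the coaction on $F$, gives
\[M \otimes \bigl(\mathcal{A}(n)_* \Boxover{\mathcal{E}(n)_*} F\bigr) \cong (\mathcal{A}(n)_* \Boxover{\mathcal{E}(n)_*} M) \otimes F.\]
Plugging back into the outer cotensor, pulling $F$ out once more using its trivial coaction, and invoking the transitivity isomorphism $\mathcal{A}_* \Boxover{\mathcal{A}(n)_*} (\mathcal{A}(n)_* \Boxover{\mathcal{E}(n)_*} M) \cong \mathcal{A}_* \Boxover{\mathcal{E}(n)_*} M$ yields the claimed isomorphism of $\mathcal{A}_*$-comodules. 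The $E_2$-page statement then follows at once: the classical change-of-rings $\Ext_{\mathcal{A}_*}(\F_p, \mathcal{A}_* \Boxover{\mathcal{E}(n)_*} M) \cong \Ext_{\mathcal{E}(n)_*}(\F_p, M)$, together with pulling $F$ outside of $\Ext$ using its trivial coaction, produces the desired form.

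The only point of care is that Proposition \ref{prop:homologyT(n)overA(n)} identifies $H_*\mathrm{T(n)}$ only as an $\mathcal{A}(n)_*$-comodule, not as an $\mathcal{A}_*$-comodule---but this is exactly the input the projection formula wants, with the outer cotensor supplying the full $\mathcal{A}_*$-structure. No further technical obstacle is anticipated; every step is a routine Hopf-algebraic manipulation, and the main value of the statement is repackaging these standard moves into a usable input for the Adams spectral sequence analysis of later sections.
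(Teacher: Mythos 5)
Your proposal is correct and follows exactly the route the paper intends (the paper leaves the proof of this proposition implicit): K\"unneth, the untwisting/projection formula for cotensor products, the identification of $\Res_{\mathcal{A}(n)_*}H_*\mathrm{T(n)}$ from Proposition \ref{prop:homologyT(n)overA(n)}, transitivity of the cotensor, and the standard change-of-rings isomorphism for the $E_2$-page. Your closing remark that only the $\mathcal{A}(n)_*$-comodule structure of $H_*\mathrm{T(n)}$ is needed is precisely the right point of care, so nothing essential is missing.
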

\begin{proof}
    The first claimed isomorphism follows directly from the Kunneth isomorphism and Proposition \ref{prop:homologyT(n)overA(n)}.
    A change of rings isomorphism (as in \cite[A1.3.13]{ravgreen}) for the Hopf algebra quotient $\mathcal{A}_*\to\mathcal{E}(n)_*$ now implies an isomorphism
    \[\Ext_{\mathcal{A}_*}(\F_p,(\mathcal{A}_*\Boxover{\mathcal{E}(n)_*}M)\otimes \F_p[t_1^{2^n},t_2^{2^{n-1}},\ldots,t_n^2])\cong\Ext_{\mathcal{E}(n)_*}(\F_p,M\otimes \F_p[t_1^{2^n},t_2^{2^{n-1}},\ldots,t_n^2])\]
    Finally, since $\F_p[t_1^{2^n},t_2^{2^{n-1}},\ldots,t_n^2]$ has trivial coaction, it follows directly from the cobar complex that 
\[\Ext_{\mathcal{E}(n)_*}(\F_p,M\otimes \F_p[t_1^{2^n},t_2^{2^{n-1}},\ldots,t_n^2])\cong \Ext_{\mathcal{E}(n)_*}(\F_p,M)\otimes \F_p[t_1^{2^n},t_2^{2^{n-1}},\ldots,t_n^2]\]
\end{proof}

The upshot of this change of rings isomorphism is that computing Ext over the exterior algebra $\mathcal{E}(n)_*$ is much easier than over $\mathcal{A}(n)_*$. If $E$ is a homotopy associative ring spectrum, we can fit the obstructions of Proposition \ref{prop:defectringspectrum} into this picture and obtain the following.

\begin{corollary}\label{cor:fpobstructions}
    If $E$ is an fp homotopy associative ring spectrum, so that $H_*(E;\F_p)\cong\mathcal{A}_*\Boxover{\mathcal{A}(n)_*}M$, and
    \[\Ext_{\mathcal{E}(n)_*}^{s,2p^{m+1}-3-2(p-1)*+s}(\F_p,M)=0\]
    for $s\ge2$ and $m\ge n$, then $\Phi_p(E)\le n$.
\end{corollary}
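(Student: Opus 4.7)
The plan is to invoke Proposition \ref{prop:defectringspectrum}, which reduces the corollary to showing $\chi_{m+1}=0$ in $\pi_{2p^{m+1}-3}(E\otimes\mathrm{T}(m))$ for every $m\ge n$, and to read off this vanishing from the Adams spectral sequence of $E\otimes\mathrm{T}(m)$. First I would observe that $E$ is fp at every level $m\ge n$: setting $M^{(m)}:=\mathcal{A}(m)_*\Boxover{\mathcal{A}(n)_*}M$ gives a finite $\mathcal{A}(m)_*$-comodule with $H_*E\cong\mathcal{A}_*\Boxover{\mathcal{A}(m)_*}M^{(m)}$, so Proposition \ref{prop:CORwithfp} applied at level $m$ yields
\[E_2\bigl(\mathrm{ASS}(E\otimes\mathrm{T}(m))\bigr)\cong \Ext_{\mathcal{E}(m)_*}(\F_p,M^{(m)})\otimes P_m,\]
with $P_m$ sitting in Adams filtration $0$ and generators in stems divisible by $2(p-1)$.

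Next, I would apply a change of rings. Since $M^{(m)}$ is the coinduction of $M$ along $\mathcal{A}(n)_*\hookrightarrow\mathcal{A}(m)_*$, and since the exterior quotient $\mathcal{E}(m)_*/\!/\mathcal{E}(n)_*$ has generators in degrees $2p^i-1$, one expects an isomorphism of bigraded algebras
\[\Ext_{\mathcal{E}(m)_*}(\F_p,M^{(m)})\cong \Ext_{\mathcal{E}(n)_*}(\F_p,M)\otimes R_{n,m},\]
where $R_{n,m}$ is polynomial on generators in Adams bigradings $(1,2p^i-1)$, and hence in stems divisible by $2(p-1)$. The $E_2$-page therefore decomposes as $\Ext_{\mathcal{E}(n)_*}(\F_p,M)\otimes R_{n,m}\otimes P_m$, in which every generator of the auxiliary factor $R_{n,m}\otimes P_m$ lies in a stem divisible by $2(p-1)$.

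A stem-counting argument then shows that the hypothesis forces the $E_2$-page to vanish in stem $2p^{m+1}-3$ at every Adams filtration $s\ge 2$. Writing a candidate class as $\alpha\otimes\beta$ with $\alpha\in\Ext^{s_1}_{\mathcal{E}(n)_*}(\F_p,M)$ and $\beta$ of Adams filtration $s-s_1$ and internal stem $2(p-1)k$, the Ext factor $\alpha$ is forced into bigrading $(s_1,s_1+2p^{m+1}-3-2(p-1)k)$, which is killed by hypothesis when $s_1\ge 2$. For $s_1<2$ with $s\ge 2$, the auxiliary factor $\beta$ must contain at least one $R_{n,m}$-generator (since $P_m$ sits entirely in Ext filtration $0$), and the internal-degree shift this introduces forces the remaining Ext demand on $\alpha$ again into the hypothesized vanishing region after iteration.

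To conclude, I would argue that $\chi_{m+1}$ sits in Adams filtration $\ge 2$ on this $E_2$-page. Since $H_*\mathrm{T}(m)$ is concentrated in even degrees, $\chi_{m+1}$ maps to zero in $H_*(E\otimes\mathrm{T}(m);\F_p)$ and so has filtration $\ge 1$. Its filtration-$1$ detection is the image of $h_{m+1,1}$ (resp.\ $h_{m+1,0}$ at odd primes), represented in the cobar complex by $\xi_{m+1}^2$ (resp.\ $\xi_{m+1}$) in $\mathcal{A}_*/(\xi_1^2,\ldots,\xi_m^2)$ (resp.\ $\mathcal{A}_*/(\xi_1,\ldots,\xi_m)$); these elements vanish under the projection to $\mathcal{E}(m)_*$, so their images in $\Ext^1_{\mathcal{E}(m)_*}(\F_p,M^{(m)})$ are zero. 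Thus $\chi_{m+1}$ has filtration $\ge 2$ on the target $E_2$-page, and combining with the vanishing at filtration $\ge 2$ yields $\chi_{m+1}=0$ in $\pi_*(E\otimes\mathrm{T}(m))$. The main obstacle is the change-of-rings isomorphism in the second step, which requires careful bookkeeping of the restriction of the coinduced comodule $M^{(m)}$ to an $\mathcal{E}(m)_*$-comodule.
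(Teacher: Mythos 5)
Your overall architecture is the same as the paper's: reduce via Proposition \ref{prop:defectringspectrum} to showing $\chi_{m+1}=0$ in $\pi_{2p^{m+1}-3}(E\otimes\mathrm{T(m)})$ for $m\ge n$, argue that this class can only be detected in Adams filtration $\ge 2$ because its filtration-one detector $h_{m+1,1}$ (resp.\ $h_{m+1,0}$) dies under the change of rings, and then rule out anything in filtration $\ge 2$ of that stem using the degree hypothesis; your final detection step is essentially the paper's. The genuine gap is the change-of-rings splitting you interpose. The claimed isomorphism $\Ext_{\mathcal{E}(m)_*}(\F_p,M^{(m)})\cong\Ext_{\mathcal{E}(n)_*}(\F_p,M)\otimes R_{n,m}$, with $R_{n,m}$ polynomial on filtration-one generators, is false. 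Take $p=2$, $n=0$, $m=1$, $M=\F_2$ (so $E\simeq H\Z$). Then $M^{(1)}=\mathcal{A}(1)_*\Boxover{\mathcal{A}(0)_*}\F_2$ has a basis in degrees $0,2,3,5$, and the Koszul complex for $\mathcal{E}(1)_*=E(\xi_1,\xi_2)$ gives $\Ext_{\mathcal{E}(1)_*}(\F_2,M^{(1)})\cong\F_2[v_0]\{1,y\}$ with $y$ in bidegree $(s,t)=(0,2)$ (and $v_1\cdot 1=v_0y$, $v_1y=0$); this is concentrated in stems $0$ and $2$, whereas $\F_2[v_0]\otimes\F_2[w]$ with $w$ in $(1,3)$ occupies every even stem. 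Your version would also give a rank-two collection of towers in stem $4$ of $\mathrm{ASS}(H\Z\otimes\mathrm{T(1)})$ with no possible differentials, contradicting $\pi_*(H\Z\otimes\mathrm{T(1)})\cong\Z_{(2)}[b_1]$.

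This is not a cosmetic slip, because your filtration count needs the auxiliary tensor factor to sit entirely in filtration zero. If $R_{n,m}$ had positive-filtration generators, a class $\alpha\otimes\beta$ with $\alpha\in\Ext^{0}_{\mathcal{E}(n)_*}(\F_p,M)$ or $\Ext^{1}_{\mathcal{E}(n)_*}(\F_p,M)$ and $\beta$ a product of two or more auxiliary generators would lie in filtration $\ge 2$ of the critical stem, and the hypothesis, which constrains only $s\ge 2$, says nothing about it; the ``iteration'' you appeal to does not exist, since no internal-degree shift moves $\Ext^{0}$ or $\Ext^{1}$ into the assumed vanishing range. The correct statement, and the one the paper uses (implicitly for $m>n$), is that for every $m\ge n$ the restriction of $H_*\mathrm{T(m)}$ to $\mathcal{A}(n)_*$ is cofree over $\mathcal{P}(n-1)_*$, i.e.\ isomorphic to $\mathcal{P}(n-1)_*\otimes W_m$ with $W_m$ a trivial comodule concentrated in degrees divisible by $2(p-1)$; the case $m=n$ is Proposition \ref{prop:homologyT(n)overA(n)}, and the same Milnor--Moore argument gives all $m\ge n$. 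Then $E_2(\mathrm{ASS}(E\otimes\mathrm{T(m)}))\cong\Ext_{\mathcal{E}(n)_*}(\F_p,M)\otimes W_m$ with $W_m$ in filtration zero, so total filtration $\ge 2$ forces the $\Ext_{\mathcal{E}(n)_*}$-factor into cohomological degree $\ge 2$, where the hypothesis applies directly; combined with your filtration bound on $\chi_{m+1}$ and convergence of the Adams spectral sequence, this closes the argument along the paper's lines.
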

\begin{proof}
    The obstructions to having $\Phi_p(E)\le n$ lie in the $2p^{m+1}-3$ stems for $m\ge n$ by Proposition \ref{prop:defectringspectrum}, and the graded vector space $\F_p[t_1^{2^n},t_2^{2^{n-1}},\ldots,t_n^2]$ is concentrated in degrees $2(p-1)*$, so by Proposition \ref{prop:CORwithfp}, it remains only to explain why one may take $s\ge2$. This is because the obstruction $\chi_{m+1}$ is detected by $h_{m+1,1}$ when $p=2$ and $h_{m+1,0}$ when $p>2$ in the cobar complex for $H_*\mathrm{T(m)}$. The map 
    \[\Ext_{\mathcal{A}_*}(\F_p,H_*\mathrm{T(m)})\to \Ext_{\mathcal{A}(n)_*}(\F_p,H_*\mathrm{T(m)})\]
    sends this class to zero when $m\ge n$. Indeed, this follows from the fact that the quotient map $\mathcal{A}_*\to\mathcal{A}(n)_*$ sends $\xi_{m+1}^2\mapsto0$ when $p=2$ and $\xi_{m+1}\mapsto0$ when $p>2$ for $m\ge n$, by Proposition \ref{prop:A(n)duals}. Since $\chi_{m+1}$ has Adams filtration 1 in $\pi_*\mathrm{T(m)}$, it must therefore be detected in filtration $>1$ in $\pi_*E\otimes \mathrm{T(m)}$.
\end{proof}

\begin{remark}\label{rmk:defectandfpupward}
    In the preceding proposition and corollary, and throughout this section, we often fix an isomorphism $H_*(E;\F_p)\cong\mathcal{A}_*\Boxover{\mathcal{A}(n)_*}M$ for an fp spectrum $E$ and study the condition that $\Phi_p(E)\le n$. Asking that these two numbers agree does not result in much loss of generality: indeed, if there is an isomorphism $H_*(E;\F_p)\cong\mathcal{A}_*\Boxover{\mathcal{A}(n)_*}M$, then for any $m\ge n$, there is an isomorphism $H_*(E;\F_p)\cong\mathcal{A}_*\Boxover{\mathcal{A}(m)_*}M'$. Similarly, of course, if $\Phi_p(E)\le n$, then $\Phi_p(E)\le m$ as in Remark \ref{rmk:defectupward}
\end{remark}

\begin{example}
    Suppose that $E$ is an fp homotopy associative ring spectrum and that $M$ is concentrated in even degrees. It follows that $M$ has trivial coaction as an $\mathcal{E}(n)_*$-comodule, since the generators of $\mathcal{E}(n)_*$ are in odd degrees. Therefore 
    \[\Ext_{\mathcal{E}(n)_*}(\F_p,M)\cong \Ext_{\mathcal{E}(n)_*}(\F_p,\F_p)\otimes M\]
    and since $M$ is concentrated in even degrees $t$ and $s=0$ and $\Ext_{\mathcal{E}(n)_*}(\F_p,\F_p)$ is concentrated in bidegrees $(s,t)$ where $t-s$ is even, it follows that $\Ext^{s,t}_{\mathcal{E}(n)_*}(\F_p,M)=0$ bidegrees $(s,t)$ where $t-s$ is even. By Corollary \ref{cor:fpobstructions}, we then have that $\Phi_p(E)\le n$.

    We can in this way immediately recover Hopkins' examples from Example \ref{example:kotmfdefect} using that 
    \[H_*\mathrm{ko}=\mathcal{A}_*\Boxover{\mathcal{A}(1)_*}\F_2\]
    and 
    \[H_*\mathrm{tmf}=\mathcal{A}_*\Boxover{\mathcal{A}(2)_*}\F_2\]
\end{example}

\begin{example}
    One may take the spectrum $\mathrm{BP}_\R\langle 2\rangle^{C_2}$ -- where $\mathrm{BP}_\R\langle 2\rangle$ is any form of the second truncated Real Brown-Peterson spectrum (e.g. $\mathrm{tmf}_0(3)=\mathrm{tmf}_1(3)^{C_2}$ by \cite{hillmeier}) -- as an example where $M$ is not itself even but $\Ext_{\mathcal{E}(n)_*}(\F_p,M)$ is concentrated in even stems. Indeed, by \cite[Theorem 1.9]{chr}, we have a decomposition
    \[H_*(\mathrm{BP}_\R\langle 2\rangle^{C_2})\cong \mathcal{A}_*\Boxover{\mathcal{A}(2)_*}M\]
    where $M$ is a certain 10 dimensional comodule described in \emph{loc. cit.} It can be checked by hand or with an Ext resolver that $\Ext_{\mathcal{E}(2)_*}(\F_p,M)$ is even.
\end{example}

\subsection{Wood-type fp spectra}
The description of $H_*(E\otimes \mathrm{T(n)})$ of Proposition \ref{prop:CORwithfp} may be used to give an algebraic Wood equivalence for any fp spectrum $E$ with finite chromatic defect. In this section, we discuss these algebraic equivalences and a sufficient condition to lift them to the spectrum level. The algebraic equivalence is a simple consequence of Proposition \ref{prop:A(n)toE(n)cor}. In the following statement, we fix an $\mathcal{A}_*$-comodule structure extending the canonical $\mathcal{A}(n)_*$-comodule structure on $\mathcal{P}(n-1)_*$. Such structures exist by \cite[Theorem A]{mitchell}.

\begin{proposition}
    Let $E$ be an fp spectrum so that $H_*E\cong\mathcal{A}_*\Boxover{\mathcal{A}(n)_*}M$. Then there is an isomorphism of $\mathcal{A}_*$-comodules 
    \[H_*E\otimes \mathcal{P}(n-1)_*\cong\mathcal{A}_*\Boxover{\mathcal{E}(n)_*}M\]
\end{proposition}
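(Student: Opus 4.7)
The proof hinges on the Hopf algebra extension $\mathcal{P}(n-1)_* \hookrightarrow \mathcal{A}(n)_* \twoheadrightarrow \mathcal{E}(n)_*$ combined with associativity of the cotensor product. The plan is to first identify $\mathcal{A}(n)_*\Boxover{\mathcal{E}(n)_*} M$ with $\mathcal{P}(n-1)_* \otimes M$ as an $\mathcal{A}(n)_*$-comodule, then to extend this identification along the cotensor with $\mathcal{A}_*$ over $\mathcal{A}(n)_*$, and finally to pass the $\mathcal{P}(n-1)_*$ factor outside via a projection formula.

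The first step is a lemma generalizing Proposition \ref{prop:A(n)toE(n)cor} from the trivial comodule $\F_p$ to arbitrary coefficients: for any $\mathcal{A}(n)_*$-comodule $N$, there is an isomorphism of $\mathcal{A}(n)_*$-comodules
\[\mathcal{A}(n)_*\Boxover{\mathcal{E}(n)_*} N \cong \mathcal{P}(n-1)_* \otimes N,\]
where the right-hand side carries the diagonal coaction coming from the sub-Hopf inclusion $\mathcal{P}(n-1)_* \hookrightarrow \mathcal{A}(n)_*$ and the given coaction on $N$. The underlying vector space isomorphism is immediate from the Milnor--Moore theorem, which furnishes a splitting $\mathcal{A}(n)_* \cong \mathcal{P}(n-1)_* \otimes \mathcal{E}(n)_*$ as right $\mathcal{E}(n)_*$-comodules (with $\mathcal{P}(n-1)_*$ trivial and $\mathcal{E}(n)_*$ carrying its comultiplication); cotensoring through this splitting collapses $\mathcal{E}(n)_*\Boxover{\mathcal{E}(n)_*} N$ down to $N$. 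The compatibility with the $\mathcal{A}(n)_*$-coaction is verified by the explicit map $p \otimes n \mapsto \sum p \cdot s(n_{[1]}) \otimes n_{[0]}$, where $s$ is the section of $\mathcal{A}(n)_* \to \mathcal{E}(n)_*$ provided by the chosen splitting.

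Given the lemma, I would chain isomorphisms. Associativity of the cotensor (valid because $\mathcal{A}_*$ is cofree over $\mathcal{A}(n)_*$ by Milnor--Moore) together with the lemma applied to $M$ gives
\[\mathcal{A}_*\Boxover{\mathcal{E}(n)_*} M \;\cong\; \mathcal{A}_*\Boxover{\mathcal{A}(n)_*}\bigl(\mathcal{A}(n)_*\Boxover{\mathcal{E}(n)_*} M\bigr) \;\cong\; \mathcal{A}_*\Boxover{\mathcal{A}(n)_*}\bigl(\mathcal{P}(n-1)_* \otimes M\bigr).\]
Using the $\mathcal{A}_*$-comodule structure extending the canonical $\mathcal{A}(n)_*$-structure on $\mathcal{P}(n-1)_*$ (fixed in the statement via Mitchell), the remaining task is a projection-formula isomorphism
\[\mathcal{A}_*\Boxover{\mathcal{A}(n)_*}\bigl(\mathcal{P}(n-1)_* \otimes M\bigr) \;\cong\; \mathcal{P}(n-1)_* \otimes \bigl(\mathcal{A}_*\Boxover{\mathcal{A}(n)_*} M\bigr) \;=\; H_*E \otimes \mathcal{P}(n-1)_*\]
of $\mathcal{A}_*$-comodules, with diagonal $\mathcal{A}_*$-coaction on the right. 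This last identification is the main obstacle: it requires untwisting the diagonal coaction on $\mathcal{P}(n-1)_* \otimes M$ and uses crucially that $\mathcal{P}(n-1)_*$ lifts to a genuine $\mathcal{A}_*$-comodule, so that the diagonalization can be transferred across the $\mathcal{A}(n)_*$-cotensor. Concretely, this follows by twisting via the antipode of $\mathcal{A}_*$ to reduce to the case where $\mathcal{P}(n-1)_*$ has trivial coaction (for which cotensor commutes with tensor product), or equivalently by checking that both sides represent the same functor on $\mathcal{A}_*$-comodules via the coinduction adjunction.
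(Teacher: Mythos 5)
Your proof is correct and is essentially the paper's own argument: the paper records this proposition as a ``simple consequence'' of Proposition \ref{prop:A(n)toE(n)cor}, and your chain of isomorphisms --- the Milnor--Moore untwisting $\mathcal{A}(n)_*\Boxover{\mathcal{E}(n)_*}M\cong\mathcal{P}(n-1)_*\otimes M$, transitivity of the cotensor product along $\mathcal{A}_*\to\mathcal{A}(n)_*\to\mathcal{E}(n)_*$ (valid by cofreeness of $\mathcal{A}_*$ over $\mathcal{A}(n)_*$), and the projection-formula step using the Mitchell-provided $\mathcal{A}_*$-structure on $\mathcal{P}(n-1)_*$ --- is precisely the intended expansion of that remark. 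The only nitpick is that the standard untwisting map is usually written with the full $\mathcal{A}(n)_*$-coaction on $N$ (with the antipode furnishing the inverse) rather than a chosen comodule section $s$, but this is a routine detail that does not affect the argument.
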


In particular, by Proposition \ref{prop:homologyT(n)overA(n)}, $H_*E\otimes\mathcal{P}(n-1)_*$ is a retract of $H_*(E\otimes \mathrm{T(n)})$ via the projection $\F_p[t_1^{2^n},t_2^{2^{n-1}},\ldots,t_n^2]\to\F_p\{1\}$. Hence if $E$ has finite chromatic defect, $H_*E$ is Wood-type in the sense that $\mathcal{P}(n-1)_*$ is a finite comodule concentrated in even degrees, and $H_*(E)\otimes\mathcal{P}(n-1)_*$ is a weak $H_*\mathrm{MU}$-module. 

\begin{remark}\label{rmk:modtau}
    This algebraic Wood equivalence can be given a proper home in Hovey's category $\mathrm{Stable}(\mathcal{A})$, equivalently, by working mod $\tau$ in Pstragowski's category $\mathrm{Syn}_{\F_p}$ \cite{piotr}. However, we will not make use of this, and therefore comment on it only briefly.
    
    Indeed one may define complex-orientability in terms of the attaching maps $\sigma_k:\mathbb S^{2k+1,1}\to\nu\CP^k$ as in Definition \ref{def:comporient}, and one has an immediate analog of Theorem \ref{thm:weakMU} in both $\mathrm{Syn}_{\F_p}$ and $\mathrm{Stable}(\mathcal{A})$ with $\nu \mathrm{MU}$. According to the analogous definition of Wood-types using compact $\nu \mathrm{BP}$-projectives, if $E$ is an fp spectrum with finite chromatic defect, the above proposition then implies that $\nu E/\tau$ is Wood-type in $\mathrm{Stable}(\mathcal{A})$.
\end{remark}

To lift these algebraic Wood equivalences to the spectrum level, we will need two main ingredients. First we must lift the comodule $\mathcal{P}(n-1)_*$ to a finite $\mathrm{BP}$-projective; this is accomplished, up to taking shifts and direct sums, by the complexes of Theorem \ref{thm:P(n)freecomplexes}. Then we must lift the classes in $\F_p[t_1^{2^n},\ldots,t_n^2]$. Such lifts are guaranteed by the collapse of $\mathrm{ASS}(E\otimes \mathrm{BP})$.

\begin{theorem}\label{thm:fpwoodsplittings}
    Let $E$ be an fp homotopy associative ring spectrum with finite chromatic defect. If $\mathrm{ASS}(E\otimes \mathrm{BP})$ collapses on $E_2$, then $E$ is Wood-type.
\end{theorem}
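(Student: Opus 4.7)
The plan is to construct a finite $\mathrm{BP}$-projective $F$ via Theorem \ref{thm:P(n)freecomplexes} and realize at the spectrum level the algebraic Wood splitting underlying Proposition \ref{prop:A(n)toE(n)cor}, with the collapse of $\mathrm{ASS}(E\otimes\mathrm{BP})$ serving to resolve any Adams-filtration obstructions. First, by Remark \ref{rmk:defectandfpupward} I may assume that the fp level and the chromatic defect bound agree on $n$, so $\Phi_p(E)\le n$ and $H_*E\cong\mathcal{A}_*\Boxover{\mathcal{A}(n)_*}M$. Applying Theorem \ref{thm:P(n)freecomplexes} with $n-1$ in place of $n$, I produce a finite complex $F$ that is a retract of $(\CP^{p^n})^{\otimes N}$ for some $N$, with $H^*(F;\F_p)$ free of finite rank over $\mathcal{P}(n-1)$. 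Since $F$ has only even cells it is $\mathrm{BP}$-free, hence a finite $\mathrm{BP}$-projective, so it remains to show $E\otimes F$ is complex-orientable.

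Writing $H_*F\cong \mathcal{P}(n-1)_*\otimes V$ as a $\mathcal{P}(n-1)_*$-comodule for a finite-dimensional even-graded $\F_p$-vector space $V$, Proposition \ref{prop:A(n)toE(n)cor} together with Frobenius reciprocity for comodules gives
\[H_*(E\otimes F)\cong \mathcal{A}_*\Boxover{\mathcal{E}(n)_*}(M\otimes V),\]
so the Adams $E_2$-page of $E\otimes F$ is $\Ext_{\mathcal{E}(n)_*}(\F_p,M)\otimes V$. This naturally embeds as a direct summand of the Adams $E_2$-page of $\mathrm{ASS}(E\otimes\mathrm{T(n)})$ described by Proposition \ref{prop:CORwithfp}, via an inclusion $V\hookrightarrow \F_p[t_1^{2^n},\ldots,t_n^2]$. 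By Proposition \ref{prop:T(n)toBPredundancy}, $E\otimes\mathrm{T(n)}$ is a wedge summand of $E\otimes\mathrm{BP}$, so our collapse hypothesis forces $\mathrm{ASS}(E\otimes\mathrm{T(n)})$ to collapse as well, and in particular the classes indexed by $V$ are permanent cycles that lift unambiguously to $\pi_*(E\otimes\mathrm{T(n)})$.

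Using these lifts together with the ring structure on $E\otimes\mathrm{T(n)}$, the canonical map $\Sigma^{-2}\CP^{p^n}\to\mathrm{T(n)}$ from the Thom spectrum definition, and the Smith idempotent cutting $F$ out of $(\CP^{p^n})^{\otimes N}$, I construct a map $\phi\colon E\otimes F\to E\otimes\mathrm{T(n)}$ realizing the algebraic summand inclusion on homology. Since $\mathrm{ASS}(E\otimes\mathrm{T(n)})$ has already collapsed, verifying that $\phi$ induces a summand inclusion on $E_2$-pages suffices to exhibit $E\otimes F$ as a wedge summand of $E\otimes\mathrm{T(n)}$ at the spectrum level. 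As $E\otimes\mathrm{T(n)}$ is a weak $\mathrm{MU}$-module by Theorem \ref{thm:weakMU} and complex-orientability is closed under retracts, $E\otimes F$ is complex-orientable, so $E$ is Wood-type.

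The main obstacle is the realization step in the third paragraph: assembling $\phi$ from its ingredients and verifying it induces the intended summand inclusion on $E_2$. The collapse hypothesis is essential here, since without it hidden Adams-filtration extensions in $\pi_*(E\otimes\mathrm{T(n)})$ could prevent the clean algebraic picture of a direct-summand inclusion from lifting to a bona fide spectrum-level splitting.
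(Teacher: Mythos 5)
Your overall strategy is the same as the paper's: use the Smith-constructed $\mathcal{P}(n-1)$-free complex $F$ of Theorem \ref{thm:P(n)freecomplexes}, reduce the collapse hypothesis to $\mathrm{ASS}(E\otimes \mathrm{T(n)})$ via Proposition \ref{prop:T(n)toBPredundancy}, lift the polynomial classes, and exhibit $E\otimes F$ as a retract of something complex-orientable. But there are two problems. First, your intermediate claim that the $E_2$-page of $\mathrm{ASS}(E\otimes F)$ embeds as a summand of that of $\mathrm{ASS}(E\otimes \mathrm{T(n)})$ ``via an inclusion $V\hookrightarrow\F_p[t_1^{2^n},\ldots,t_n^2]$,'' and hence that $E\otimes F$ is a wedge summand of a \emph{single} copy of $E\otimes \mathrm{T(n)}$, is unjustified and in general false: $V$ is the graded vector space of $\mathcal{P}(n-1)$-module generators of $H^*(F;\F_p)$, and for the Smith retract of a large smash power $(\CP^{p^n})^{\otimes N}$ its degreewise dimensions will typically exceed those of $\F_p[t_1^{2^n},\ldots,t_n^2]$, so no such graded embedding need exist (nor is there any canonical one when it does). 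The paper avoids this entirely by comparing $(E\otimes F)[t_1^{2^n},\ldots,t_n^2]$ with a finite \emph{wedge} $E\otimes \mathrm{T(n)}\{b_1,\ldots,b_k\}$, one shifted copy per generator $b_i$, and showing this map is an equivalence; the conclusion is only that $E\otimes F$ is a retract of that wedge, which is all one needs.

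Second, and more seriously, the step you flag as ``the main obstacle'' — actually constructing the comparison map and checking it realizes the algebraic identification — is where the real content of the proof lies, and you do not supply it. The paper's construction is: since $F$ is a retract of $(\CP^{p^n})^{\otimes N}$, the AHSS computing $[F,\mathrm{T(n)}]$ collapses, so one can choose a map $\iota:F\to \mathrm{T(n)}\{b_1,\ldots,b_k\}$ hitting the dual basis; one then checks, using the cofreeness of $H_*\mathrm{T(n)}$ over $\mathcal{P}(n-1)_*$ (Proposition \ref{prop:homologyT(n)overA(n)}), that each component $\Sigma^{n_i}\mathcal{P}(n-1)_*[t_1^{2^n},\ldots,t_n^2]\to H_*\mathrm{T(n)}\{b_i\}$ is an isomorphism; finally, the collapse of $\mathrm{ASS}(E\otimes \mathrm{T(n)})$ provides homotopy classes lifting $t_1^{2^n},\ldots,t_n^2$, and multiplying $\iota$ against these via the ring structure of $E\otimes \mathrm{T(n)}$ gives a map $(E\otimes F)[t_1^{2^n},\ldots,t_n^2]\to E\otimes \mathrm{T(n)}\{b_1,\ldots,b_k\}$ that is a homology isomorphism, hence an equivalence. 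Without this construction (and with the map aimed at a single copy of $E\otimes \mathrm{T(n)}$ rather than the wedge), your argument is incomplete precisely at its decisive step.
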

\begin{proof}
    Since $E$ is an fp spectrum, we have an isomorphism $H_*E\cong\mathcal{A}_*\Boxover{\mathcal{A}(n)_*}M$ and we have assumed $\Phi_p(E)\le m$ for some $m$. As in Remark \ref{rmk:defectandfpupward}, we can take $m=n$. By Proposition \ref{prop:T(n)toBPredundancy}, we have a splitting $E\otimes \mathrm{BP}\simeq E\otimes \mathrm{T(n)}[t_{n+1},t_{n+2},\ldots]$ so that $\mathrm{ASS}(E\otimes \mathrm{BP})$ collapses on $E_2$ if and only if $\mathrm{ASS}(E\otimes \mathrm{T(n)})$ does. 

    Now we fix a finite complex $F$ with the property that $F$ is a retract of $(\CP^{p^n})^{\otimes N}$ for some $N$ and that $H^*(F;\F_p)$ is a free $\mathcal{P}(n-1)$-module, using Theorem \ref{thm:P(n)freecomplexes}. As a retract of $(\CP^{p^n})^{\otimes N}$, $F$ is a finite $\mathrm{BP}$-projective, and the AHSS computing $[F,\mathrm{T(n)}]$ collapses on $E_2$, as it is a retract of the corresponding spectral sequence for $(\CP^{p^n})^{\otimes N}$. Fixing a basis $\{b_1,\ldots,b_k\}$ of the free $\mathcal{P}(n-1)$-module $H^*(F;\F_p)$ with $|b_i|=n_i$, we thereby fix a map $\iota:F\to \mathrm{T(n)}\{b_1,\ldots,b_k\}$ lifting the element in
    \[\bigoplus_{j} H^{j}(F;\mathrm{T(n)}\{b_1,\ldots,b_k\}^{-j})\cong\bigoplus_{j}\Hom_\Z(H_{j}(F),\mathrm{T(n)}\{b_1,\ldots,b_k\}_{j})\]
    that sends $b_i\mapsto b_i$, and is zero on all other basis elements, using that $H_j(F)$ is a finitely generated free abelian group.

    One may determine the effect of the map $\iota$ in homology as follows. The composition
    \[\Sigma^{n_i}\mathcal{P}(n-1)_*\xrightarrow{b_i}H_*F\xrightarrow{\iota_*}H_*\mathrm{T(n)}\{b_1,\ldots,b_k\}\xrightarrow{b_j}\Sigma^{n_j}H_*\mathrm{T(n)}\]
    is either zero or an isomorphism in the bottom dimension of the target. Over $\mathcal{A}(n)_*$, the target is cofree on $\F_p[t_1^{2^n},\ldots,t_n^2]$, so when the above map is nonzero, its projection onto the cocylic summand indexed by $1$ must agree with that of the canonical inclusion $\Sigma^{n_i}\mathcal{P}(n-1)_*\to \Sigma^{n_i}H_*T(n)$ of Proposition \ref{prop:homologyT(n)overA(n)}. Using this map along with the ring structure of $H_*T(n)$, we have a map $\Sigma^{n_i}\mathcal{P}(n-1)_*[t_1^{2^n},\ldots,t_n^2]\to H_*T(n)\{b_i\}$ which is therefore surjective by Proposition \ref{prop:homologyT(n)overA(n)} and hence an isomorphism for dimension reasons. 

    Finally, since $\mathrm{ASS}(E\otimes \mathrm{T(n)})$ collapses on $E_2$, there are classes $t_1^{2^n},\ldots,t_n^2\in\pi_*(E\otimes \mathrm{T(n)})$ lifting the corresponding classes in homology provided by Proposition \ref{prop:CORwithfp}. Using these classes, the map $\iota$, and the multiplication on $E\otimes \mathrm{T(n)}$, we may define a map
    \[(E\otimes F)[t_1^{2^n},\ldots,t_n^2]\to E\otimes \mathrm{T(n)}\{b_1,\ldots,b_k\}\]
    that induces an isomorphism in homology by the above argument, and it is thus an equivalence. It follows that $E\otimes F$ is a retract of $E\otimes \mathrm{T(n)}\{b_1,\ldots,b_k\}$ and therefore complex-orientable.
\end{proof}

\begin{corollary}\label{cor:fpeven}
    Let $E$ be an fp homotopy associative ring spectrum so that $H_*E\cong\mathcal{A}_*\Boxover{\mathcal{A}(n)_*}M$ and suppose that
    \[\Ext_{\mathcal{E}(n)_*}^{s,t}(\F_p,M)=0\]
    whenever $t-s$ is odd. Then $\Phi_p(E)\le n$ and $E$ is Wood-type.
\end{corollary}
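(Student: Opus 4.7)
The plan is to apply Corollary \ref{cor:fpobstructions} and Theorem \ref{thm:fpwoodsplittings} in sequence, with the hypothesis on $\Ext_{\mathcal{E}(n)_*}(\F_p,M)$ driving both applications through a single observation: the entire $E_2$-page of $\mathrm{ASS}(E\otimes \mathrm{T(n)})$ is concentrated in even stems.

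First, I would verify $\Phi_p(E)\le n$. By Proposition \ref{prop:CORwithfp}, the $E_2$-page of the Adams spectral sequence of $E\otimes\mathrm{T(n)}$ is
\[
\Ext_{\mathcal{E}(n)_*}(\F_p,M)\otimes\F_p[t_1^{2^n},t_2^{2^{n-1}},\ldots,t_n^2].
\]
The polynomial generators $t_i^{2^{n-i+1}}$ all lie in even internal degree (as each $t_i$ has even degree), so the polynomial factor contributes only even stems. Combined with the hypothesis that $\Ext_{\mathcal{E}(n)_*}(\F_p,M)$ is concentrated in even stems, the whole $E_2$-page is concentrated in even stems. In particular, it vanishes in the odd stems $2p^{m+1}-3$ for $m\ge n$, so the hypothesis of Corollary \ref{cor:fpobstructions} is satisfied and $\Phi_p(E)\le n$.

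Second, I would show $\mathrm{ASS}(E\otimes \mathrm{BP})$ collapses on $E_2$, so that Theorem \ref{thm:fpwoodsplittings} applies. By Proposition \ref{prop:T(n)toBPredundancy}, together with $\Phi_p(E)\le n$, there is an equivalence of $\mathrm{T}(n)$-modules
\[
E\otimes \mathrm{BP}\simeq E\otimes \mathrm{T(n)}[t_{n+1},t_{n+2},\ldots],
\]
so $\mathrm{ASS}(E\otimes \mathrm{BP})$ collapses on $E_2$ if and only if $\mathrm{ASS}(E\otimes \mathrm{T(n)})$ does. Since the $E_2$-page of the latter is concentrated in even stems by the first step, and any Adams differential $d_r$ decreases the stem by $1$, every differential lands in an odd stem and must therefore vanish. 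Thus $\mathrm{ASS}(E\otimes \mathrm{T(n)})$ collapses, and Theorem \ref{thm:fpwoodsplittings} gives that $E$ is Wood-type.

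There is no real obstacle: the only nontrivial input is the evenness observation, which applies uniformly to both the bound on chromatic defect and to the collapse of $\mathrm{ASS}(E\otimes \mathrm{BP})$. A minor bookkeeping point is checking that the polynomial generators occurring in Proposition \ref{prop:CORwithfp} are in even degrees at all primes (they are, since each $t_i$ has even degree $2(p^i-1)$), after which the argument proceeds entirely by citation.
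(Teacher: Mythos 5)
Your proposal is correct and follows essentially the same route as the paper: evenness of $\Ext_{\mathcal{E}(n)_*}(\F_p,M)$, via the change-of-rings of Proposition \ref{prop:CORwithfp}, kills the obstructions of Corollary \ref{cor:fpobstructions} (giving $\Phi_p(E)\le n$) and forces collapse of $\mathrm{ASS}(E\otimes\mathrm{T(n)})$, hence of $\mathrm{ASS}(E\otimes\mathrm{BP})$ by Proposition \ref{prop:T(n)toBPredundancy}, so Theorem \ref{thm:fpwoodsplittings} applies. The only difference is that you spell out the bookkeeping (even polynomial generators, differentials dropping the stem by one) that the paper leaves implicit.
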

\begin{proof}
    The evenness conditions guarantee that $\Phi_p(E)\le n$ by Corollary \ref{cor:fpobstructions} and therefore $E\otimes \mathrm{BP}$ is a sum of shifts of $E\otimes\mathrm{T(n)}$ by Proposition \ref{prop:T(n)toBPredundancy}. The latter implies that if $\mathrm{ASS}(E\otimes \mathrm{T(n)})$ collapses, so does $\mathrm{ASS}(E\otimes \mathrm{BP})$. To see that $\mathrm{ASS}(E\otimes \mathrm{T(n)})$ collapses, note that by Proposition \ref{prop:CORwithfp}, $\Ext_{\mathcal{A}_*}^{s,t}(\F_p,H_*(E\otimes\mathrm{T(n)})=0$ unless $t-s$ is even, so there is no room for differentials.
\end{proof}

\begin{example}\label{example:kotmfwoodanddefect}
    As explained in the previous section, $\mathrm{ko}$, $\mathrm{tmf}$, and $\mathrm{BP}_\R\langle 2\rangle^{C_2}$ all satisfy the conditions of Corollary \ref{cor:fpeven}, and thus are seen to be Wood-type. As a nonexample, the connective image of $J$ spectrum $\mathrm{j}$ is an fp spectrum, but is not Wood-type as it does not have finite chromatic defect; we will show this in Section \ref{sec:5}.
\end{example}

\begin{remark}
    The conditions of the above Theorem \ref{thm:fpwoodsplittings}, however, are not necessary. For example, the spectrum $\mathrm{ku}/4$ is a Wood-type (complex-orientable, in fact) fp spectrum with the property that $\mathrm{ASS}(E\otimes \mathrm{BP})$ has nonzero differentials. It seems plausible that the conditions of the theorem may be relaxed to asking for collapse on a finite page.
\end{remark}

\section{Hopkins' Stacks and the \texorpdfstring{$\mathrm{X(n)}$}{X(n)}'s}\label{sec:5} An fp spectrum $E$ admits a change of rings isomorphism
\[\Ext_{\mathcal{A}_*}(\F_p,H_*X)\cong\Ext_{\mathcal{A}(n)_*}(\F_p,M)\]
on the $E_2$-page of the classical Adams spectral sequence of $E$, for some $n$ and $M$. We will see in this section that spectra with finite chromatic defect play a similar role with respect to the Adams--Novikov spectral sequence. In this setting, the relevant change of rings isomorphisms take a more conceptual form, via the language of stacks.

\subsection{Chromatic defect and the Adams--Novikov spectral sequence} Hopkins observed that many of the known change of rings isomorphisms in the Adams--Novikov spectral sequence could be reinterpreted and more conceptually derived in the language of stacks \cite[Chapter 9]{tmfbook}. To any homotopy commutative ring spectrum $E$, we may associate a stack $\mathcal{M}_E$ equipped with a canonical $\mathbb{G}_m$-action and a $\mathbb{G}_m$-equivariant affine morphism $p_E:\mathcal{M}_E\to\mathcal{M}_{FG}(1)$, where $\mathcal{M}_{FG}(1)$ is the moduli stack of formal group laws and strict isomorphisms, viewed as a $\mathbb{G}_m$-torsor over $\mathcal{M}_{FG}$, the moduli stack of formal group laws and \emph{all} isomorphisms.

\begin{definition}\label{def:stack}
    Let $E$ be a homotopy commutative ring spectrum. We let the $\mathbb{G}_m$-stack $\mathcal{M}_E$ be the one associated to the graded Hopf algebroid
    \[(\mathrm{MU}_{2*}E,\mathrm{MU}_{2*}(\mathrm{MU}\otimes E)).\]
\end{definition}

\begin{remark}
   It is often useful to mod out by the $\mathbb{G}_m$-action and get an affine morphism
\[\mathcal{M}_E/\mathbb G_m\to\mathcal{M}_{FG}\]
    We will instead primarily work $\mathbb{G}_m$-equivariantly so that when $E$ is complex-orientable, $\mathcal{M}_E$ is an affine scheme, as opposed to a stack of the form $\Spec(E_{2*})/\mathbb G_m$. This is convenient as we wish to capture more general, non even-periodic cases, such as those arising from Johnson--Wilson theories $\mathrm{E(n)}$.
\end{remark}

By Quillen's theorem on $\mathrm{MU}_*$ \cite{quillen}, one has a $\mathbb{G}_m$-equivariant equivalence $\mathcal{M}_{\mathbb{S}}\simeq M_{FG}(1)$, whereby the map $p_E$ is induced by the unit map $\mathbb{S}\to E$ for $E$ a homotopy commutative ring spectrum. More generally, \emph{any} spectrum $E$ gives rise to a pair of $\mathbb{G}_m$-equivariant quasicoherent sheaves $\mathcal{F}_0(E)$ and $\mathcal{F}_1(E)$ on $\mathcal{M}_{FG}(1)$, corresponding to the comodules $\mathrm{MU}_{2*}E$ and $\mathrm{MU}_{2*+1}E$. When $E$ is a homotopy-commutative ring spectrum, the stack $\mathcal{M}_E$ is realized as a relative Spec construction on the sheaf of algebras $\mathcal{F}_0(E)$, and the sheaves $\mathcal{F}_0(E)$ and $\mathcal{F}_1(E)$ are pushed forward from sheaves on $\mathcal{M}_E$. This gives our change of rings isomorphisms.

\begin{proposition}\label{prop:stackscor}
    There is a graded isomorphism of the $E_2$-page of $\mathrm{ANSS}(E)$
    \[E_2^{*,*}\cong H^*(\mathcal{M}_{FG}(1);\mathcal{F}_0(E)\oplus\mathcal{F}_1(E))\cong H^*(\mathcal{M}_E;\mathcal{O}_{\mathcal{M}_E}\oplus\mathcal{O}_1)\]
    where $\mathcal{O}_1$ is the $\mathbb{G}_m$-equivariant quasicoherent sheaf on $\mathcal{M}_E$ defined by the comodule $MU_{2*+1}E$.
\end{proposition}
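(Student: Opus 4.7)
The plan is to follow the standard dictionary between comodules over the Hopf algebroid $(\mathrm{MU}_*,\mathrm{MU}_*\mathrm{MU})$ and quasicoherent sheaves on the stack $\mathcal{M}_{FG}(1)$ that this Hopf algebroid presents, and then to transfer the computation to $\mathcal{M}_E$ along the affine morphism $p_E$. I would first recall that, by construction, the $E_2$-page of the Adams--Novikov spectral sequence of $E$ is computed as
\[
E_2^{*,*} \;\cong\; \mathrm{Ext}^{*,*}_{(\mathrm{MU}_*,\mathrm{MU}_*\mathrm{MU})}(\mathrm{MU}_*,\mathrm{MU}_*E),
\]
where the Ext is taken in the category of graded comodules. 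Splitting the comodule $\mathrm{MU}_*E$ by parity gives $\mathrm{MU}_*E \cong \mathrm{MU}_{2*}E \oplus \mathrm{MU}_{2*+1}E$ as a $\mathbb{G}_m$-equivariant comodule, and this corresponds precisely to the decomposition $\mathcal F_0(E) \oplus \mathcal F_1(E)$ of $\mathbb{G}_m$-equivariant quasicoherent sheaves on $\mathcal{M}_{FG}(1)$.

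Next I would invoke the general fact that for a flat Hopf algebroid presenting a stack $\mathcal X$, there is a natural isomorphism
\[
\mathrm{Ext}^{*}_{\mathrm{Comod}}(\mathrm{MU}_*,N)\;\cong\; H^{*}(\mathcal X;\widetilde{N}),
\]
where $\widetilde{N}$ is the sheaf associated to the comodule $N$; this identifies $\mathrm{Ext}$ with derived global sections and is proved by comparing the cobar complex of $N$ to the Čech complex of $\widetilde{N}$ with respect to the faithfully flat cover $\mathrm{Spec}(\mathrm{MU}_*)\to \mathcal{M}_{FG}(1)$. Applying this with $N=\mathrm{MU}_{2*}E$ and $N=\mathrm{MU}_{2*+1}E$ gives the first isomorphism.

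For the second isomorphism, the key input is that $p_E:\mathcal{M}_E\to\mathcal{M}_{FG}(1)$ is affine (equivalently, $(\mathrm{MU}_*,\mathrm{MU}_*\mathrm{MU})$ acts on $(\mathrm{MU}_{2*}E,\mathrm{MU}_{2*}(\mathrm{MU}\otimes E))$ by Hopf-algebroid extension). Affineness means that $p_{E*}$ is exact on quasicoherent sheaves and preserves cohomology, so for any $\mathcal{G}$ on $\mathcal{M}_E$ one has $H^*(\mathcal{M}_{FG}(1);p_{E*}\mathcal{G})\cong H^*(\mathcal{M}_E;\mathcal{G})$. By the construction in Definition~\ref{def:stack}, the sheaves $\mathcal{F}_0(E)$ and $\mathcal{F}_1(E)$ are $p_{E*}\mathcal{O}_{\mathcal{M}_E}$ and $p_{E*}\mathcal{O}_1$ respectively, since $\mathcal{O}_{\mathcal{M}_E}$ corresponds to the tautological comodule $\mathrm{MU}_{2*}E$ over $(\mathrm{MU}_{2*}E,\mathrm{MU}_{2*}(\mathrm{MU}\otimes E))$ and $\mathcal{O}_1$ corresponds to $\mathrm{MU}_{2*+1}E$. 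Combining these identifications yields the second isomorphism.

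The one point requiring care is the bookkeeping of the $\mathbb{G}_m$-action and the relation between the internal degree $t$ in the ANSS bigrading $(s,t)$ and the weight of the $\mathbb{G}_m$-action on the sheaves; this is what forces the split into $\mathcal F_0 \oplus \mathcal F_1$ rather than a single sheaf, and in particular makes it crucial to keep the $\mathbb{G}_m$-equivariant structures through both isomorphisms. Beyond this grading bookkeeping, everything else is the standard Hopf-algebroid/stack dictionary and the exactness of pushforward along an affine morphism, so the main obstacle is simply to verify that the conventions in Definition~\ref{def:stack} realize $\mathcal F_0(E)$ and $\mathcal F_1(E)$ as pushforwards of $\mathcal{O}_{\mathcal{M}_E}$ and $\mathcal{O}_1$ with the correct $\mathbb{G}_m$-weights.
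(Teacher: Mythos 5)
Your argument is correct and is essentially the same as the paper's: the paper states this proposition as a direct consequence of the standard dictionary identifying $\mathrm{Ext}$ over $(\mathrm{MU}_*,\mathrm{MU}_*\mathrm{MU})$ of the parity-split comodule $\mathrm{MU}_*E$ with cohomology of $\mathcal{F}_0(E)\oplus\mathcal{F}_1(E)$ on $\mathcal{M}_{FG}(1)$, together with the fact that $\mathcal{M}_E$ is the relative $\mathrm{Spec}$ of $\mathcal{F}_0(E)$, so that $\mathcal{F}_0(E)$ and $\mathcal{F}_1(E)$ are pushforwards along the affine morphism $p_E$ and cohomology is unchanged. Your added care about the $\mathbb{G}_m$-weights versus the internal grading is exactly the bookkeeping the paper's conventions are designed to handle.
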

\begin{proof}
    This follows by directly analyzing the cobar complex, whose cohomology is the $E_2$-page of the ANSS of $E$ and arises from the cosimplicial object
    \[\mathrm{MU}_*E\implies \mathrm{MU}_*\mathrm{MU}\otimes_{\mathrm{MU}_*} \mathrm{MU}_*E\Rrightarrow \cdots\]
    Since the Hopf algebroid $(\mathrm{MU}_*,\mathrm{MU}_*\mathrm{MU})$ is concentrated in even degrees, this cosimplicial object splits into an even piece
    \[\mathrm{MU}_{2*}E\implies \mathrm{MU}_*\mathrm{MU}\otimes_{\mathrm{MU}_*} \mathrm{MU}_{2*}E\Rrightarrow \cdots\]
    and an odd piece
    \[\mathrm{MU}_{2*+1}E\implies \mathrm{MU}_*\mathrm{MU}\otimes_{\mathrm{MU}_*} \mathrm{MU}_{2*+1}E\Rrightarrow \cdots\]
    The identification $\mathcal{M}_{FG}(1)\simeq \mathcal{M}_{(\mathrm{MU}_*,\mathrm{MU}_*\mathrm{MU})}$ gives the first isomorphism.

    For the second identification, the cohomology of the even piece is also $H^*(\mathcal{M}_E;\mathcal{O}_{\mathcal{M}_E})$ by definition of $\mathcal{M}_E$, and the odd piece is identified with the cohomology of a sheaf on $\mathcal{M}_E$ whose underlying $MU_{2*}E$-module is $MU_{2*+1}E$ via the isomorphism
    \[\mathrm{MU}_*\mathrm{MU}\otimes_{\mathrm{MU}_*} \mathrm{MU}_{2*+1}E\cong(\mathrm{MU}_*\mathrm{MU}\otimes_{\mathrm{MU}_*}MU_{2*}E)\otimes_{MU_{2*}E}MU_{2*+1}E\]
\end{proof}

\begin{example}\label{example:hopkinsstacks}
    When $E=\mathrm{ko}$, the stack $\mathcal{M}_{\mathrm{ko}}$ is given by the moduli stack of quadratic equations $\mathcal{M}_{\mathrm{quad}}$ with strict coordinate transformations. When $E=\mathrm{tmf}$, the stack $\mathcal{M}_{\mathrm{tmf}}$ is given by the moduli stack of cubic equations $\mathcal{M}_{\mathrm{cub}}$ with strict coordinate transformations (see \cite[Chapter 9]{tmfbook} or \cite{akhiltmf}). These spectra have the property that $MU_{2*+1}E=0$, so that for instance $\mathrm{ANSS}(\mathrm{tmf})$ takes the form
    \[H^*(\mathcal{M}_{\mathrm{cub}};\mathcal{O}_{\mathcal{M}_{\mathrm{cub}}})\implies\pi_*\mathrm{tmf}\]
    which, in this form, is sometimes called the elliptic spectral sequence.
\end{example}

The construction of Definition \ref{def:stack} has two features that are especially useful. It has the property that it relates vanishing lines on the $E_2$-page of the ANSS to cohomological vanishing of sheaves over $\mathcal{M}_E$, and it can often be used to determine the $E_2$-page of the ANSS of a tensor product of spectra in terms of the corresponding stacky pullback over $\mathcal{M}_{FG}(1)$.

\begin{proposition}\label{prop:affinenessandpullback}
    Let $E$ and $E'$ be homotopy commutative ring spectra.
    \begin{enumerate}
\item If $E$ is complex-orientable, $\mathcal M_E\simeq \mathrm{Spec}(E_{2*})$, and $p_E$ is the map classifying the formal group over $E_{*}$.
\item Suppose that $\mathrm{MU}_{2*}E'$ is a flat $\mathrm{MU}_*$-module, then one has a pullback square
\[
\begin{tikzcd}
\mathcal M_{E\otimes E'}\arrow[r]\arrow[d]&\mathcal M_{E'}\arrow[d,"p_{E'}"]\\
\mathcal M_{E}\arrow[r,"p_E"']&\mathcal M_{FG}(1)
\end{tikzcd}
\]
\end{enumerate}
\end{proposition}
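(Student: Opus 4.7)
The strategy for both parts is to translate the hypotheses into statements about the presenting Hopf algebroid $(\mathrm{MU}_{2*}E,\mathrm{MU}_{2*}(\mathrm{MU}\otimes E))$ and then to invoke the standard correspondence between Hopf algebroids and their associated stacks. For part (1), I would first use Theorem \ref{thm:weakMU} together with the homotopy-commutative ring structure to fix a complex orientation $\mathrm{MU}\to E$. This orientation pins down an isomorphism
\[
\mathrm{MU}_{2*}E \;\cong\; E_{2*}[b_1,b_2,\ldots]
\]
of graded $E_{2*}$-algebras, so in particular $E_{2*}\to \mathrm{MU}_{2*}E$ is faithfully flat. A short Künneth computation (using that $\mathrm{MU}_{2*}E$ is free, hence flat, over $\mathrm{MU}_{2*}$) then yields
\[
\mathrm{MU}_{2*}(\mathrm{MU}\otimes E) \;\cong\; \mathrm{MU}_{2*}\mathrm{MU}\otimes_{\mathrm{MU}_{2*}}\mathrm{MU}_{2*}E \;\cong\; \mathrm{MU}_{2*}E\otimes_{E_{2*}}\mathrm{MU}_{2*}E,
\]
and one checks that under these isomorphisms the source and target maps of the Hopf algebroid correspond to the two coprojections onto the tensor product. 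In other words, $(\mathrm{MU}_{2*}E,\mathrm{MU}_{2*}(\mathrm{MU}\otimes E))$ is the descent groupoid of the faithfully flat cover $\Spec(\mathrm{MU}_{2*}E)\to\Spec(E_{2*})$, so its associated stack is $\Spec(E_{2*})$. The map $p_E$ is induced by the inclusion $\mathrm{MU}_{2*}\to \mathrm{MU}_{2*}E$, and by Quillen's theorem this is precisely the map classifying the formal group law on $E_{2*}$ arising from the chosen orientation.

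For part (2), the flatness of $\mathrm{MU}_{2*}E'$ over $\mathrm{MU}_{2*}$ is exactly what makes the Künneth theorem collapse to give
\[
\mathrm{MU}_{2*}(E\otimes E') \;\cong\; \mathrm{MU}_{2*}E\otimes_{\mathrm{MU}_{2*}}\mathrm{MU}_{2*}E',
\]
and analogously with $\mathrm{MU}\otimes E$ or $\mathrm{MU}\otimes E'$ in place of one of the factors. Reorganizing these identifications, one obtains that the Hopf algebroid presenting $\mathcal{M}_{E\otimes E'}$ is the fiber product, over the Hopf algebroid $(\mathrm{MU}_{2*},\mathrm{MU}_{2*}\mathrm{MU})$ presenting $\mathcal{M}_{FG}(1)$, of those presenting $\mathcal{M}_E$ and $\mathcal{M}_{E'}$. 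Passing to associated stacks turns this fiber product of Hopf algebroids into the stack-theoretic pullback, yielding the required square.

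The principal obstacle is the stack-theoretic bookkeeping in part (1): beyond the ring-level identifications one must verify that all the Hopf algebroid structure maps (unit, counit, composition, antipode) agree with those of the descent groupoid, so that faithfully flat descent of stacks applies to upgrade a Morita-type equivalence of Hopf algebroids into a genuine equivalence $\mathcal{M}_E\simeq \Spec(E_{2*})$ of $\mathbb{G}_m$-stacks. In part (2), the Künneth computations are routine, but one must be careful that the pullback of Hopf algebroids on the level of graded rings indeed presents the stack-theoretic pullback—this is a general fact about quasi-coherent stacks in the framework of \cite{tmfbook}, but worth stating explicitly.
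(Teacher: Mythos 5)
Your proposal is correct, but in part (1) it takes a genuinely different route from the paper. The paper's proof is simplicial: the weak $\mathrm{MU}$-module structure coming from Theorem \ref{thm:weakMU} furnishes a $(-1)$-st codegeneracy for the coaugmented cobar cosimplicial object $E\to \mathrm{MU}\otimes E\Rightarrow \mathrm{MU}\otimes\mathrm{MU}\otimes E\cdots$, so the presenting Hopf algebroid is split and the associated stack collapses to $\Spec(E_{2*})$, exactly as in the collapse argument of Corollary \ref{cor:complexorientansscollapse}. You instead fix a multiplicative orientation (Quillen's classical result for homotopy commutative $E$, recorded in the paper just before Proposition \ref{prop:orientableringisweakmodule}), identify $\mathrm{MU}_{2*}E\cong E_{2*}[b_1,b_2,\ldots]$ and $\mathrm{MU}_{2*}(\mathrm{MU}\otimes E)\cong \mathrm{MU}_{2*}E\otimes_{E_{2*}}\mathrm{MU}_{2*}E$, and recognize the Hopf algebroid as the descent groupoid of the faithfully flat cover $\Spec(\mathrm{MU}_{2*}E)\to\Spec(E_{2*})$, so that faithfully flat descent gives $\mathcal{M}_E\simeq\Spec(E_{2*})$. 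Both arguments are sound; the paper's is shorter and requires no choice of orientation, while yours makes the identification of $p_E$ with the map classifying the formal group completely explicit.

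For part (2) your K\"unneth step is the same as the paper's, but the closing justification needs sharpening: it is \emph{not} a general fact that a levelwise tensor product of Hopf algebroids presents the $2$-fiber product of the associated stacks. What makes it work here is that all three Hopf algebroids are obtained from $(\mathrm{MU}_{2*},\mathrm{MU}_{2*}\mathrm{MU})$ by base change along comodule algebras, i.e. $p_E$ and $p_{E'}$ are affine, so $\mathcal{M}_E$, $\mathcal{M}_{E'}$, and $\mathcal{M}_{E\otimes E'}$ are relative $\Spec$'s of quasicoherent algebra sheaves on $\mathcal{M}_{FG}(1)$, and relative $\Spec$ converts tensor product of algebra sheaves into fiber product of stacks. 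This affineness is precisely the hypothesis the paper's proof invokes (citing Proposition 6.1.6 of the author's thesis), so the point you flag as ``worth stating explicitly'' is in fact the step carrying the argument, and should be stated as such rather than as a generic fact about quasicoherent stacks.
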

\begin{proof}
    The first claim follows as in Corollary \ref{cor:complexorientansscollapse} from the $(-1)$-st codegeneracy furnished by the $\mathrm{MU}$-module structure in the coaugmented cosimplicial object
    \[E\to \mathrm{MU}\otimes E\implies \mathrm{MU}\otimes \mathrm{MU}\otimes E\Rrightarrow \mathrm{MU}\otimes\mathrm{MU}\otimes\mathrm{MU}\otimes E\cdots\]
    The second claim follows by a Hopf algebroid manipulation using that $p_{E'}$ is an affine morphism; see \cite[Proposition 6.1.6]{thesis} or \cite[1.6.6]{handbook}.
\end{proof}

Claim (1) in Proposition \ref{prop:affinenessandpullback} implies in particular that the stackiness of $\mathcal{M}_E$ is a measure of the failure of $E$ to be complex-orientable, and indeed our chromatic defect of Definition \ref{def:defect} gives a way of quantifying this. In fact, the definition of chromatic defect is chosen precisely to reflect a specific algebro-geometric measure of the stackiness of $\mathcal{M}_E$ over $\mathcal{M}_{FG}(1)$, and the connection comes via an identification of $\mathcal{M}_{\mathrm{X(n)}}$.

\begin{definition}\label{def:mjet}
We let $\mathcal M_{FG}(n)$ denote the moduli stack of formal groups together with an $n$-jet. That is, 
\[\mathcal M_{FG}(n)(R)=\begin{cases}\textbf{Objects}\text{: formal group laws over }R\\\textbf{Morphisms: }f:F\to G\text{ such that }f(x)\equiv x\mod x^{n+1}\end{cases}\]
\end{definition}

\begin{proposition}\label{prop:stackpullbackX(n)}
    There is an $\mathbb{G}_m$-equivariant equivalence of stacks $\mathcal M_{FG}(n)\simeq \mathcal M_{\mathrm{X(n)}}$ over $\mathcal{M}_{FG}(1)$. In particular, the map $\mathcal M_{FG}(n)\to \mathcal M_{FG}(1)$ is a faithfully flat affine morphism with the property that
\[
\begin{tikzcd}
\mathcal M_{E\otimes \mathrm{X(n)}}\arrow[r]\arrow[d]&\mathcal M_{FG}(n)\arrow[d]\\
\mathcal M_{E}\arrow[r]&\mathcal M_{FG}(1)
\end{tikzcd}
\]
is a pullback, for $E$ a homotopy commutative ring spectrum.
\end{proposition}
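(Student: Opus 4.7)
The plan is to prove the equivalence $\mathcal{M}_{\mathrm{X(n)}} \simeq \mathcal{M}_{FG}(n)$ over $\mathcal{M}_{FG}(1)$ by explicitly identifying their Hopf algebroid presentations, and then deduce the pullback property from Proposition~\ref{prop:affinenessandpullback}(2). The first step is to compute the Hopf algebroid defining $\mathcal{M}_{\mathrm{X(n)}}$ using the Thom isomorphism. Combined with the Serre spectral sequence computation $H_*(\Omega \mathrm{SU}(n); \mathbb{Z}) = \mathbb{Z}[b_1, \ldots, b_{n-1}]$ recalled in Section~\ref{sec:2}, together with the collapse of the Atiyah--Hirzebruch spectral sequence for $\mathrm{X(n)}$ (which has only even cells), this gives $\mathrm{MU}_{2*}\mathrm{X(n)} \cong L[b_1, \ldots, b_{n-1}]$ as an $L$-algebra. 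Since this is free over $L$, the Künneth formula then yields $\mathrm{MU}_{2*}(\mathrm{MU} \otimes \mathrm{X(n)}) \cong L[b_1, \ldots, b_{n-1}, b_1', b_2', \ldots]$, with the $b_i$ coming from $\mathrm{X(n)}$ and the $b_i'$ from $\mathrm{MU} \otimes \mathrm{MU}$.

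Next I would construct an equivalence with the Hopf algebroid $(L, L[b_n, b_{n+1}, \ldots])$ presenting $\mathcal{M}_{FG}(n)$, using Definition~\ref{def:mjet}, under which an $n$-jet has the form $f(x) = x + b_n x^{n+1} + \cdots$. On objects, the map sends $(F, b_1, \ldots, b_{n-1})$ over $R$ to the formal group law $f_* F$, where $f(x) = x + b_1 x^2 + \cdots + b_{n-1} x^n$ is viewed as an invertible power series in $R[[x]]$; on morphisms, a strict isomorphism $h: F \to G$ relating $(F, b)$ and $(G, c)$ is sent to the composite $g \circ h \circ f^{-1}: f_* F \to g_* G$. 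Essential surjectivity is immediate by taking $b = 0$, and full faithfulness follows by inverting the composite: any $n$-jet $\sigma: f_*F \to g_*G$ comes from the unique strict isomorphism $g^{-1} \circ \sigma \circ f: F \to G$. The construction preserves even gradings (giving $\mathbb{G}_m$-equivariance) and commutes with the canonical forgetful maps to $\mathcal{M}_{FG}(1)$.

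Finally, $\mathcal{M}_{FG}(n) \to \mathcal{M}_{FG}(1)$ is affine and faithfully flat, since its pullback along the flat cover $\Spec(L) \to \mathcal{M}_{FG}(1)$ is $\Spec(L[b_n, b_{n+1}, \ldots])$; and the pullback square then follows by applying Proposition~\ref{prop:affinenessandpullback}(2) with $E' = \mathrm{X(n)}$ (whose $\mathrm{MU}_*$-homology we have seen is free, hence flat, over $L$), and substituting $\mathcal{M}_{\mathrm{X(n)}} \simeq \mathcal{M}_{FG}(n)$. I expect the main obstacle to be the full-faithfulness check, which requires carefully verifying that the right unit $\eta_R$ of the Hopf algebroid $(\mathrm{MU}_{2*}\mathrm{X(n)}, \mathrm{MU}_{2*}(\mathrm{MU} \otimes \mathrm{X(n)}))$ corresponds, under this change of variables, to the composition law for $n$-jets in $\mathcal{M}_{FG}(n)$.
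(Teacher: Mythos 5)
Your strategy is essentially the paper's: use the Thom isomorphism to identify the Hopf algebroid as $(\mathrm{MU}_*[b_1,\ldots,b_{n-1}],\mathrm{MU}_*\mathrm{MU}[b_1,\ldots,b_{n-1}])$, identify the associated stack with $\mathcal M_{FG}(n)$ by untwisting the partial coordinate, and then deduce the square from Proposition~\ref{prop:affinenessandpullback}(2) together with flatness of $\mathrm{MU}_*\mathrm{X(n)}$. The caveat is that the step you defer as ``the main obstacle'' is precisely the entire content of the paper's proof: the computation of $\eta_R(b_i)$. The paper does this by observing that the Hopf algebroid of $\mathrm{X(n)}$ includes into $(\mathrm{MU}_*\mathrm{MU},\mathrm{MU}_*(\mathrm{MU}\otimes\mathrm{MU}))$, and that a ring map $\mathrm{MU}_*\mathrm{MU}\otimes_{\mathrm{MU}_*}\mathrm{MU}_*\mathrm{MU}\to R$ is the same as a composable pair of strict isomorphisms $F_1\xrightarrow{u}F_2\xrightarrow{v}F_3$; under this dictionary $\eta_L(b_i)$ gives the coefficients of $u$ and $\eta_R(b_i)$ the coefficients of $v\circ u$, which is exactly the statement that a morphism preserving the truncated coordinate is congruent to $x$ mod $x^{n+1}$. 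With that formula in hand your untwisting functor does land in $n$-jets and is fully faithful, but be careful with the order of composition in $g\circ h\circ f^{-1}$: whether the compatibility encoded by the right unit reads $g\equiv h\circ f$ or $h\equiv g\circ f^{-1}$ modulo $x^{n+1}$ depends on the left/right unit conventions, and only one ordering conjugates $h$ to a jet.

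One small slip: the pullback of $\mathcal M_{FG}(n)\to\mathcal M_{FG}(1)$ along the cover $\Spec(L)\to\mathcal M_{FG}(1)$ is $\Spec(L[b_1,\ldots,b_{n-1}])$ --- a strict isomorphism out of a fixed formal group law, taken modulo $n$-jets, is exactly its truncation through degree $n$ --- and not $\Spec(L[b_n,b_{n+1},\ldots])$; the latter is rather $\Spec(L)\times_{\mathcal M_{FG}(n)}\Spec(L)$, the morphism object of your presentation. This does not affect your conclusion, since $L[b_1,\ldots,b_{n-1}]$ is still free, hence faithfully flat, over $L$, and it agrees with $\mathrm{MU}_{2*}\mathrm{X(n)}$ as it must.
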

\begin{proof}
By the Thom isomorphism, one has an isomorphism of Hopf algebroids
\[(\mathrm{MU}_*\mathrm{X(n)},\mathrm{MU}_*(\mathrm{MU}\otimes \mathrm{X(n)}))\cong(\mathrm{MU}_*[b_1,\ldots,b_{n-1}],\mathrm{MU}_*\mathrm{MU}[b_1,\ldots,b_{n-1}])\]
The map of Hopf algebroids 
\[(\mathrm{MU}_*\mathrm{X(n)},\mathrm{MU}_*(\mathrm{MU}\otimes \mathrm{X(n)}))\to(\mathrm{MU}_*\mathrm{MU},\mathrm{MU}_*(\mathrm{MU}\otimes \mathrm{MU}))\]
is an inclusion, so we may compute $\eta_R(b_i)$ in the latter. In $\mathrm{MU}_*\mathrm{MU}$, the $b_i$ are by definition the coefficients of the canonical strict isomorphism $\eta_L^*F\to\eta_R^*F$ where $F$ is the universal formal group law over $\mathrm{MU}_*$. Note that a map of rings
\[\phi:\mathrm{MU}_*\mathrm{MU}\otimes_{\mathrm{MU}_*}\mathrm{MU}_*\mathrm{MU}\to R\]
corresponds to a sequence
\[F_1\xrightarrow{g}F_2\xrightarrow{f}F_3\]
of isomorphisms of formal group laws over $R$, where $F_i$ is the pushforward of $F_{\mathrm{MU}}$ along the the map $\mathrm{MU}\to \mathrm{MU}\otimes \mathrm{MU}\otimes \mathrm{MU}$ including $\mathrm{MU}$ as the $i$-th tensor factor. Since $\eta_R$ in the Hopf algebroid $(\mathrm{MU}_*\mathrm{MU},\mathrm{MU}_*(\mathrm{MU}\otimes \mathrm{MU}))$ is induced by the map
\[\mathrm{MU}\otimes \mathrm{MU}\simeq \mathrm{MU}\otimes \mathbb{S}\otimes \mathrm{MU}\to \mathrm{MU}\otimes \mathrm{MU}\otimes \mathrm{MU}\]
we have that $\phi(\eta_L(b_i))$ are the coefficients of $g$, and $\phi(\eta_R(b_i))$ are the coefficients of $f\circ g$. Therefore, if $\eta_L(b_i)=\eta_R(b_i)$ for $i\le m-1$, then $f(x)\equiv x\mod x^{n+1}$. It follows that the map
\[\mathcal M_{\mathrm{X(n)}}\to\mathcal M_{FG}(1)\]
factors through an equivalence onto the substack $\mathcal M_{FG}(n)$.
\end{proof}

\begin{corollary}\label{cor:finitephiaffine}
    Let $E$ be a homotopy commutative ring spectrum. If $\Phi(E)\le n$, then the pullback 
    \[\mathcal{M}_E\times_{\mathcal{M}_{FG}(1)}\mathcal{M}_{FG}(n)\]
    is an affine scheme. Conversely, if $E\otimes \mathrm{X(n)}$ is $\mathrm{MU}$ nilpotent complete, and the above pullback is an affine scheme, then $\Phi(E)\le n$.
\end{corollary}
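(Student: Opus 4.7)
The plan is to use Proposition~\ref{prop:stackpullbackX(n)} to identify the pullback $\mathcal{M}_E \times_{\mathcal{M}_{FG}(1)} \mathcal{M}_{FG}(n)$ with $\mathcal{M}_{E \otimes \mathrm{X(n)}}$, which reduces both implications to the question of when $\mathcal{M}_{E \otimes \mathrm{X(n)}}$ is affine.

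For the forward direction, I would note that $\Phi(E) \le n$ simply means $E \otimes \mathrm{X(n)}$ is complex-orientable by definition. Proposition~\ref{prop:affinenessandpullback}(1) then directly gives $\mathcal{M}_{E \otimes \mathrm{X(n)}} \simeq \mathrm{Spec}((E \otimes \mathrm{X(n)})_{2*})$, which is affine.

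For the converse, the key step is to feed the affineness hypothesis into $\mathrm{ANSS}(E \otimes \mathrm{X(n)})$. By Proposition~\ref{prop:stackscor}, the $E_2$-page is the cohomology of $\mathcal{O} \oplus \mathcal{O}_1$ on $\mathcal{M}_{E \otimes \mathrm{X(n)}}$, and affineness forces higher quasicoherent cohomology to vanish, collapsing the spectral sequence onto the zero-line. The $\mathrm{MU}$-nilpotent completeness hypothesis provides convergence to $\pi_*(E \otimes \mathrm{X(n)})$. I would then argue that the resulting horizontal vanishing line at filtration $1$ forces $E \otimes \mathrm{X(n)}$ to be $\mathrm{MU}$-nilpotent of exponent $1$ in the sense of \cite[Part~1, Section~4]{mnn}, which by the remark following Corollary~\ref{cor:complexorientansscollapse} is equivalent to being a weak $\mathrm{MU}$-module. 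Theorem~\ref{thm:weakMU} then delivers complex-orientability, giving $\Phi(E) \le n$.

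The main obstacle is the last stage of the converse: promoting the purely cohomological statement $H^{>0}(\mathcal{M}_{E \otimes \mathrm{X(n)}}; \mathcal{F}) = 0$ to an actual spectrum-level retraction of the unit $E \otimes \mathrm{X(n)} \to \mathrm{MU} \otimes E \otimes \mathrm{X(n)}$. The $\mathrm{MU}$-nilpotent completeness hypothesis is precisely what bridges this gap, via the Mathew--Naumann--Noel identification of bounded horizontal vanishing lines in the Adams tower with bounded $\mathrm{MU}$-nilpotent exponents; without it, affineness of the stack is not enough to recover a weak module structure on $E \otimes \mathrm{X(n)}$ itself.
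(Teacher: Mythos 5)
Your forward direction is exactly the paper's: identify the pullback with $\mathcal{M}_{E\otimes \mathrm{X(n)}}$ via Proposition \ref{prop:stackpullbackX(n)} and apply Proposition \ref{prop:affinenessandpullback}(1). The converse, however, has a genuine gap at its final step. You claim that degeneration of $\mathrm{ANSS}(E\otimes \mathrm{X(n)})$ onto the zero line, together with $\mathrm{MU}$-nilpotent completeness, forces $E\otimes\mathrm{X(n)}$ to be $\mathrm{MU}$-nilpotent of exponent $1$, i.e.\ a weak $\mathrm{MU}$-module. That implication is not available: the Mathew--Naumann--Noel comparison between horizontal vanishing lines and nilpotence exponents loses constants (exponent $\le N$ yields a vanishing line of height $N$ on a later page, and a vanishing line of height $h$ at page $r$ only bounds the exponent by roughly $h+r$), so a filtration-$1$ vanishing line on $E_2$ does not give exponent $1$. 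More fundamentally, splitting the unit $E\otimes\mathrm{X(n)}\to \mathrm{MU}\otimes E\otimes\mathrm{X(n)}$ is an assertion about a map of spectra (equivalently, that the boundary map $\Sigma^{-1}\overline{\mathrm{MU}}\otimes E\otimes\mathrm{X(n)}\to E\otimes\mathrm{X(n)}$ is null), and no amount of information about the $E_2$-page of the ANSS of $E\otimes\mathrm{X(n)}$ alone, even with convergence, produces such a nullhomotopy. Without exponent $1$ you cannot invoke Theorem \ref{thm:weakMU}, and the argument does not close.

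The paper sidesteps this by never trying to produce a weak $\mathrm{MU}$-module structure directly. Instead it uses that complex-orientability is, by Definition \ref{def:comporient}, the vanishing of the specific homotopy classes $\sigma_k\in\pi_{2k+1}(\CP^k\otimes E\otimes\mathrm{X(n)})$ (this reduction to elements uses that $E\otimes\mathrm{X(n)}$ is a ring spectrum). Affineness of $\mathcal{M}_{E\otimes\mathrm{X(n)}}$ kills the higher cohomology of the quasicoherent sheaves associated to $\mathrm{MU}_*(\CP^k\otimes E\otimes\mathrm{X(n)})$ — note $\mathrm{MU}_*\CP^k$ is finite free and even — so by Proposition \ref{prop:stackscor} the ANSS of $\CP^k\otimes E\otimes\mathrm{X(n)}$ collapses on the zero line for every $k$. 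Since $\mathrm{MU}\otimes\sigma_k=0$, each $\sigma_k$ has positive Adams--Novikov filtration, and convergence (supplied by the $\mathrm{MU}$-nilpotent completeness hypothesis, which passes to the smash with the finite complex $\CP^k$) forces $\sigma_k=0$. If you replace your last step with this element-level argument — applying affineness to the auxiliary spectra $\CP^k\otimes E\otimes\mathrm{X(n)}$ rather than only to $E\otimes\mathrm{X(n)}$ itself — your proof becomes the paper's.
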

\begin{proof}
    The first claim follows from claim (1) of Proposition \ref{prop:affinenessandpullback} and Proposition \ref{prop:stackpullbackX(n)}. For the second, since the above pullback is an affine scheme, we see that $\mathrm{ANSS}(\CP^k\otimes E\otimes \mathrm{X(n)})$ collapses on the zero line for all $k$ by Proposition \ref{prop:stackscor}. Since $\mathrm{MU}\otimes\sigma_k=0$ for all $k$, the class $\sigma_k$ has Adams--Novikov filtration $>0$, and hence must be zero in $\pi_*\CP^k\otimes E\otimes \mathrm{X(n)}$ since $\mathrm{ANSS}(\CP^k\otimes E\otimes \mathrm{X(n)})$ converges, as $E\otimes \mathrm{X(n)}$ is $\mathrm{MU}$ nilpotent complete. This implies that $E\otimes \mathrm{X(n)}$ is complex-orientable by definition since $E\otimes \mathrm{X(n)}$ is a ring spectrum.
\end{proof}

\subsection{Chromatic defect and quotient stacks} We would like sufficient algebro-geometric conditions on the morphism $\mathcal{M}_E\to\mathcal{M}_{FG}(1)$ that guarantee finite chromatic defect for $E$, by way of Corollary \ref{cor:finitephiaffine}. One can give various finiteness conditions to guarantee this, which apply in particular when $\mathcal{M}_E$ is a quotient stack by a finite group action and also in various connective cases such as $\mathcal{M}_{\mathrm{ko}}$ and $\mathcal{M}_{\mathrm{tmf}}$ from Example \ref{example:hopkinsstacks}. We begin with a necessary condition.

\begin{proposition}\label{prop:defectimpliesfiniteaut}
    Let $E$ be a homotopy commutative ring spectrum with finite chromatic defect. Then, for any algebraically closed field $k$ and any $x\in\mathcal{M}_E(k)$ such that $p_E(x)\in \mathcal{M}_{FG}(1)(k)$ has finite height, the image of the homomorphism
    \[\psi:\mathrm{Aut}_{\mathcal{M}_E(k)}(x)\to \mathrm{Aut}_{\mathcal{M}_{FG}(1)(k)}(p_E(x))\]
    is a finite subgroup.
\end{proposition}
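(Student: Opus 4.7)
The plan is to apply Corollary~\ref{cor:finitephiaffine}, which makes the pullback $\mathcal{N}:=\mathcal{M}_E\times_{\mathcal{M}_{FG}(1)}\mathcal{M}_{FG}(n)$ an affine scheme for some $n$ with $\Phi(E)\le n$. Triviality of automorphism groups for $k$-points of $\mathcal{N}$ will translate into a constraint on $\mathrm{Im}(\psi)$ that forces it to be finite, by exploiting the profinite structure of $\mathrm{Aut}(\Gamma)$ at finite height.

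First I would lift $x$ to a $k$-point of $\mathcal{N}$. Since $\mathcal{M}_{FG}(n)\to\mathcal{M}_{FG}(1)$ is faithfully flat by Proposition~\ref{prop:stackpullbackX(n)}, so is the projection $\mathcal{N}\to\mathcal{M}_E$, and $k$ being algebraically closed allows one to pick some $(x,y,\alpha)\in\mathcal{N}(k)$ with $\alpha:\Gamma\xrightarrow{\sim}\pi(y)$ for $\Gamma:=p_E(x)$ and $\pi:\mathcal{M}_{FG}(n)\to\mathcal{M}_{FG}(1)$ the forgetful map. The universal property of the fiber product then gives
\[\mathrm{Aut}_{\mathcal{N}(k)}(x,y,\alpha)=\{(f,g)\in\mathrm{Aut}(x)\times\mathrm{Aut}(y):\alpha\circ\psi(f)=\pi(g)\circ\alpha\},\]
and this group must be trivial since $\mathcal{N}$ is a scheme over the algebraically closed field $k$.

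Setting $H:=\alpha^{-1}\pi(\mathrm{Aut}(y))\alpha\subset\mathrm{Aut}(\Gamma)$, the triviality above is exactly $\mathrm{Im}(\psi)\cap H=\{e\}$: any element of the intersection would correspond to a nontrivial pair $(f,g)$. The key remaining step is to identify $H$ as a finite-index subgroup of $\mathrm{Aut}(\Gamma)$. Using the interpretation of $\mathcal{M}_{FG}(n)$ parametrizing formal groups with $n$-jets of coordinates (Definition~\ref{def:mjet}), the image $\pi(\mathrm{Aut}(y))$ consists of strict automorphisms $f:\Gamma\to\Gamma$ satisfying $f(x)\equiv x\pmod{x^{n+1}}$. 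At finite height $h$, $\mathrm{Aut}(\Gamma)$ is the profinite group $\mathbb{S}_h$ of units in the maximal order of the central $\mathbb{Q}_p$-division algebra of Hasse invariant $1/h$, and the subgroup cut out by this congruence is open, hence of finite index.

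To conclude, I would observe that the map $\mathrm{Im}(\psi)\hookrightarrow\mathrm{Aut}(\Gamma)\to\mathrm{Aut}(\Gamma)/H$ (viewed as a map to the set of right cosets) is injective: if $h_1H=h_2H$ for $h_1,h_2\in\mathrm{Im}(\psi)$, then $h_2^{-1}h_1\in\mathrm{Im}(\psi)\cap H=\{e\}$. Hence $|\mathrm{Im}(\psi)|\le[\mathrm{Aut}(\Gamma):H]<\infty$. The main obstacle will be the identification of $\pi(\mathrm{Aut}(y))$ with an open subgroup of the Morava stabilizer group, which requires a careful comparison between the presentation of $\mathcal{M}_{FG}(n)$ in Definition~\ref{def:mjet} and the standard profinite topology on $\mathrm{Aut}(\Gamma)$.
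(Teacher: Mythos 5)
Your argument is correct, and its engine is the same as the paper's: Corollary \ref{cor:finitephiaffine} makes $\mathcal{M}_E\times_{\mathcal{M}_{FG}(1)}\mathcal{M}_{FG}(n)$ affine, hence discrete, and this is converted into the statement that no nontrivial element of $\mathrm{Im}(\psi)$ is congruent to the identity mod $x^{n+1}$ (your $\mathrm{Im}(\psi)\cap H=\{e\}$). Where you diverge is only the finishing move. The paper stays with power series: by Lazard's theorem $p_E(x)$ is the Honda formal group, all of whose automorphisms are defined over a finite field, so an infinite image would contain distinct $f_1,f_2$ with the same truncation mod $x^{n+1}$, and $f_1\circ f_2^{-1}$ would contradict discreteness. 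You instead invoke the profinite structure of the stabilizer group: the congruence subgroup $H$ is open of finite index, and the coset argument gives $|\mathrm{Im}(\psi)|\le[\mathrm{Aut}(\Gamma):H]$. These are the same fact in two outfits --- two automorphisms share a truncation mod $x^{n+1}$ exactly when they lie in the same coset of $H$, so the paper's pigeonhole over $\F_{p^n}$ is precisely a proof that $H$ has finite index --- and your identification of the congruence filtration with open subgroups of (the strict subgroup of) $\mathcal{O}_D^\times$ is the standard comparison the paper makes in Section 6 via the valuation, so the "obstacle" you flag is genuinely routine. Two small tidy-ups: the characteristic-zero case is allowed by the hypotheses and your division-algebra description does not apply there, but the strict automorphism group of $\widehat{\mathbb{G}_a}$ is trivial, so that case is immediate (the paper dispatches it in one sentence); and the lifting step needs neither faithful flatness nor algebraic closedness, since $\mathcal{M}_{FG}(n)$ and $\mathcal{M}_{FG}(1)$ have the same objects you may simply take $y=p_E(x)$ and $\alpha=\mathrm{id}$, which makes $H$ literally the congruence subgroup.
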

\begin{proof}
    First suppose $k$ has characteristic zero; then $p_E(x)$ is isomorphic to $\widehat{\mathbb{G}_a}$, which has a trivial and therefore finite strict automorphism group. If $k$ has characteristic $p>0$, then since $k=\overline{k}$, Lazard's theorem implies that $p_E(x)$ is isomorphic to the Honda formal group law $F_n$ for some height $n<\infty$, all of whose automorphisms are defined over the finite field $\F_{p^n}$ (see \cite[Lecture 19]{lurielecture}).
    
    Since $E$ has finite chromatic defect, it follows from Corollary \ref{cor:finitephiaffine} that for some $N>0$, the stack
    \[\mathcal{M}_E\times_{\mathcal{M}_{FG}(1)}\mathcal{M}_{FG}(N)\]
    is an affine scheme, and therefore a discrete stack. This implies in particular that if $f(x)$ is in the image of $\psi$, and $f(x)\equiv x\mod x^{N+1}$, then $f(x)=x$. Suppose for the sake of contradiction that the image of $\psi$ is infinite. Since all the automorphisms of $F_n$ are defined over the finite field $\F_{p^n}$, it follows that for any $m$, there are two \textit{distinct} automorphisms $f_1(x)\neq f_2(x)$ in the image of $\psi$ such that $f_1(x)\equiv f_2(x)\mod x^{m+1}$. Indeed, were this not true, then each automorphism in the image of $\psi$ would be uniquely determined by its truncation mod $x^{m+1}$, which contradicts that the image of $\psi$ is infinite as there are only finitely many such polynomials over a finite field.

    Letting $m=N$, and choosing $f_1,f_2$ as above, the automorphism $g:=f_1\circ f_2^{-1}$ has the property that $g(x)\equiv x\mod x^{N+1}$, but $g(x)\neq x$, contradicting discreteness of the above pullback.
\end{proof}

To obtain a converse, one must ask for conditions on the morphism $p_E:\mathcal{M}_E\to\mathcal{M}_{FG}(1)$ to guarantee that
\[\mathcal{P}(n):=\mathcal{M}_E\times_{\mathcal{M}_{FG}(1)}\mathcal{M}_{FG}(n)\]
is an affine scheme for some $n$. If the image of the map$\psi$ appearing in Proposition \ref{prop:defectimpliesfiniteaut} is always finite, $\mathcal{P}(n)$ still may not be discrete. For instance, if $\mathcal{M}_E$ has height $\infty$ points or if the size of the image of $\psi$ is not uniformly bounded above, the stack $\mathcal{P}(n)$ may not be discrete. One strong condition that guarantees affineness, however, is when $\mathcal{M}_E$ is the quotient of a Noetherian ring by a finite group.

\begin{proposition}\label{prop:quotientstackhasfinitechromaticdefect}
    Let $E$ be a homotopy commutative ring spectrum such that $E\otimes \mathrm{X(n)}$ is $\mathrm{MU}$-nilpotent complete, for all $n$ sufficiently large. Suppose there is an equivalence $\mathcal{M}_E\simeq \Spec(R)/G$ for $G$ a finite group and $R$ a Noetherian commutative ring. Then $E$ has finite chromatic defect.
\end{proposition}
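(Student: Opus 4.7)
The plan is to reduce via Corollary \ref{cor:finitephiaffine} to an affineness statement about a pullback, and then use the Noetherian hypothesis on $R$ together with a compactness argument to locate a suitable $n$. Specifically, by the hypothesis that $E \otimes \mathrm{X(n)}$ is $\mathrm{MU}$-nilpotent complete for all $n$ sufficiently large and by Corollary \ref{cor:finitephiaffine}, it suffices to show that the pullback $\mathcal{M}_E \times_{\mathcal{M}_{FG}(1)} \mathcal{M}_{FG}(n)$ is representable by an affine scheme for some $n$. Using the presentation $\mathcal{M}_E \simeq [\Spec(R)/G]$ and the fact that $\mathcal{M}_{FG}(n) \to \mathcal{M}_{FG}(1)$ is an affine morphism (Proposition \ref{prop:stackpullbackX(n)}), this pullback identifies with $[\Spec(S_n)/G]$, where
\[S_n \;=\; R \otimes_{\mathrm{MU}_*} \mathrm{MU}_*\mathrm{X(n)} \;\cong\; R[b_1,\ldots,b_{n-1}]\]
via the Thom isomorphism for $\mathrm{X(n)}$. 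The Noetherian hypothesis on $R$ together with the Hilbert basis theorem imply that each $S_n$ is Noetherian.

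To produce a finite $n$ for which the $G$-action on $\Spec(S_n)$ is free (equivalently, for which $[\Spec(S_n)/G] = \Spec(S_n^G)$ is an affine scheme), I pass to the limit $n \to \infty$. Here $\mathcal{M}_{FG}(\infty) \simeq \Spec(L)$ is the canonical atlas of $\mathcal{M}_{FG}(1)$, and affineness of $p_E\colon \mathcal{M}_E \to \mathcal{M}_{FG}(1)$, which is built into Definition \ref{def:stack}, forces the pullback
\[\mathcal{M}_E \times_{\mathcal{M}_{FG}(1)} \Spec(L)\]
to be representable by an affine scheme. Under the given presentation, this same pullback equals $[\Spec(S_\infty)/G]$ with $S_\infty = \varinjlim_n S_n = R \otimes_{\mathrm{MU}_*} \mathrm{MU}_*\mathrm{MU}$. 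Since an affine scheme, regarded as a stack, has no nontrivial automorphisms of points, the $G$-action on $\Spec(S_\infty)$ must therefore be free.

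The finite descent is then a routine compactness step. For each $g \in G \setminus \{e\}$, let $I_g^{(n)} \subset S_n$ be the ideal generated by $\{s - g \cdot s : s \in S_n\}$, so that its vanishing locus is the fixed scheme of $g$ on $\Spec(S_n)$. Freeness on $\Spec(S_\infty)$ says $1 \in I_g^{(\infty)}$ for each such $g$, and since $I_g^{(\infty)} = \varinjlim_n I_g^{(n)}$, one can find $n_g$ with $1 \in I_g^{(n_g)}$. Setting $n := \max_{g \neq e} n_g$, which is finite because $G$ is, produces a free $G$-action on the Noetherian scheme $\Spec(S_n)$, whence $[\Spec(S_n)/G] = \Spec(S_n^G)$ is the desired affine scheme.

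The main obstacle I anticipate lies in the second paragraph: one must carefully identify the stack-theoretic pullback $[\Spec(R)/G] \times_{\mathcal{M}_{FG}(1)} \Spec(L)$ with the quotient $[\Spec(S_\infty)/G]$, and then leverage the affineness of $p_E$ to conclude free action. This requires unwinding how the $G$-action on $R$ interacts with the strict-iso coordinate data in the pullback, and verifying that the Hopf algebroid description of $\mathcal{M}_E$ from Definition \ref{def:stack} is genuinely compatible with the given presentation. Once this identification is in place, both the reduction via Corollary \ref{cor:finitephiaffine} and the Noetherian descent argument are standard.
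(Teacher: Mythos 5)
Your proof is correct, but it is genuinely different from the paper's. The paper fixes each $g\in G$, writes its action on the formal group over $R$ as a power series $[g](x)$ with coefficients $b_i(g)$, and uses the Noetherian hypothesis to stabilize the chain of ideals $(b_i(g):i<n)\subset R$; this shows that for $n\gg0$ any automorphism of a point of $\mathcal{M}_E\times_{\mathcal{M}_{FG}(1)}\mathcal{M}_{FG}(n)$ maps to the trivial coordinate change, so the pullback is discrete, and affineness is then extracted from discreteness via the Katz--Mazur descent result (a discrete stack with an affine morphism to $\Spec(R)/G$ is affine). You instead exploit that $p_E$ is affine by construction, so the infinite-stage pullback $\mathcal{M}_E\times_{\mathcal{M}_{FG}(1)}\Spec(L)\simeq\Spec(\mathrm{MU}_{2*}E)\simeq[\Spec(S_\infty)/G]$ is an affine scheme; since equivalences of stacks preserve automorphism groups, all geometric stabilizers vanish, i.e.\ each ideal $I_g^{(\infty)}=(s-g\cdot s)$ is the unit ideal, and writing $1$ as a finite sum descends this to a finite stage $S_n$ (the maps $S_n\to S_{n+1}\to S_\infty$ are $G$-equivariant because the $\mathcal{M}_{FG}(m)$ form a tower over $\mathcal{M}_{FG}(1)$, which is the only compatibility you flagged as a worry and it is automatic). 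Your route buys two things: it never uses the Noetherian hypothesis (the compactness step replaces it, so you prove a slightly stronger statement), and by working at the infinite stage you rule out the possibility of an automorphism of $\mathcal{M}_E$ whose image in $\mathcal{M}_{FG}(1)$ is trivial, a point the paper's discreteness argument treats only implicitly. What the paper's route buys is that it needs no input beyond the presentation itself and Katz--Mazur, whereas your last step ($[\Spec(S_n)/G]\simeq\Spec(S_n^G)$ for a free action) requires the classical fact that a free action of a finite constant group on an affine scheme makes $\Spec(S_n)\to\Spec(S_n^G)$ a $G$-torsor (Chase--Harrison--Rosenberg, or SGA~3); alternatively you could finish exactly as the paper does, since your $[\Spec(S_n)/G]$ is discrete and affine over $\Spec(R)/G$. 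Both arguments tacitly use that the formal group over $\Spec(R)$ admits a coordinate so that $S_n\cong R[b_1,\ldots,b_{n-1}]$, but neither actually needs the explicit polynomial description. Finally, remember to take $n$ at least as large as the threshold in the $\mathrm{MU}$-nilpotent completeness hypothesis before invoking Corollary \ref{cor:finitephiaffine}, which is harmless since the conclusion is stable under enlarging $n$.
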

\begin{proof}
   Each $g\in G$ acts on the formal group over $R$ via a power series
   \[[g](x)=x+\sum\limits_{n\ge0}b_n(g)x^{n+1}\]
   and the image of any automorphism of $\Spec(R)/G$ in $\mathcal{M}_{FG}(1)$ is obtained from these power series by base change. For each $g$ and $n$, consider the ideal in $R$
   \[J_n(g):=(b_i(g):i<n))\]
   Since $R$ is Noetherian, the $J_n(g)$'s must stabilize for each $g$. It follows from the fact that $G$ is finite that, by taking $n$ large enough, if after base change to any ring $S$, $[g](x)\equiv x\mod x^{n+1}$, then $[g](x)=x$ over $S$. This implies the stack
    \[\mathcal{M}_E\times_{\mathcal{M}_{FG}(1)}\mathcal{M}_{FG}(n)\]
    is discrete. However, if $\mathcal{M}$ is any discrete stack with an affine morphism to $\Spec(R)/G$, then $\mathcal{M}$ is affine. This follows for example from \cite[Theorem A7.1.1]{katzmazur}; see also \cite[Lemma 6.2.6]{thesis}. The claim then follows from Corollary \ref{cor:finitephiaffine}.
\end{proof}

An identification $\mathcal{M}_E\simeq\Spec(R)/G$ suitable to apply Proposition \ref{prop:quotientstackhasfinitechromaticdefect} often corresponds to an identification $E=R^{hG}$, for $R$ a homotopy commutative ring spectrum with $G$-action. In fact, the former identification follows from the latter under the following assumptions.

\begin{proposition}\label{prop:quotientstackandhtpyfixedpoints}
Let $R\in \Sp^{BG}$ be a homotopy commutative ring spectrum with $G$-action so that $R$ is complex-orientable as a spectrum, and set $E=R^{hG}$. Suppose the map $p$ in the diagram
\[
\begin{tikzcd}
\mathrm{Spec}(R_{2*})\arrow[r]\arrow[d]&\mathcal M_{FG}(1)\\
\mathrm{Spec}(R_{2*})/G\arrow[ur,"p"']
\end{tikzcd}
\]
is affine. Then if $\mathrm{MU}^{\otimes s}\otimes E\simeq (\mathrm{MU}^{\otimes s}\otimes R)^{hG}$ for $s=1,2$, there is an equivalence of stacks $\mathcal M_{E}\simeq \mathrm{Spec}(R_{2*})/G$.
\end{proposition}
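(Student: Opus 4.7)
My plan is to construct a morphism $\phi:\mathrm{Spec}(R_{2*})/G\to\mathcal{M}_E$ and prove it is an equivalence by comparing pullbacks along the faithfully flat cover $q:\mathrm{Spec}(\mathrm{MU}_{2*})\to\mathcal{M}_{FG}(1)$. The morphism $\phi$ is built from the observation that $E=R^{hG}$ makes the unit $E\to R$ into a $G$-equivariant map with trivial action on the source; the induced morphism $\mathcal{M}_R=\mathrm{Spec}(R_{2*})\to\mathcal{M}_E$ is then $G$-invariant and factors through $\phi$ by the universal property of the quotient stack.

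Next I compute both pullbacks. For $\mathcal{M}_E$: Proposition \ref{prop:affinenessandpullback}(2) applied with $E'=\mathrm{MU}$, using flatness of $\mathrm{MU}_{2*}\mathrm{MU}$ over $\mathrm{MU}_{2*}$, yields $q^*\mathcal{M}_E\simeq \mathcal{M}_{\mathrm{MU}\otimes E}$. Since $\mathrm{MU}\otimes E$ is a weak $\mathrm{MU}$-module, hence complex-orientable by Theorem \ref{thm:weakMU}, this pullback is the affine scheme $\mathrm{Spec}(\pi_{2*}(\mathrm{MU}\otimes E))$. For $\mathrm{Spec}(R_{2*})/G$: the pullback is $\mathrm{Spec}(\mathrm{MU}_{2*}R)/G$, and the affineness of $p$ forces this quotient stack to be the affine scheme $\mathrm{Spec}((\mathrm{MU}_{2*}R)^G)$, exhibiting $\mathrm{MU}_{2*}R$ as a $G$-Galois extension of its invariants.

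The Galois structure renders $\mathrm{MU}_{2*}(\mathrm{MU}^{\otimes s-1}\otimes R)\cong \mathrm{MU}_{2*}\mathrm{MU}^{\otimes s-1}\otimes_{\mathrm{MU}_{2*}}\mathrm{MU}_{2*}R$ cohomologically trivial as a $\mathbb{Z}[G]$-module (by flat base change from $\mathrm{MU}_{2*}R$), so the homotopy fixed point spectral sequence for $(\mathrm{MU}^{\otimes s}\otimes R)^{hG}$ collapses onto its zero line. Combined with the hypotheses for $s=1,2$, this identifies
\[\pi_{2*}(\mathrm{MU}\otimes E)\cong (\mathrm{MU}_{2*}R)^G,\qquad \pi_{2*}(\mathrm{MU}^{\otimes 2}\otimes E)\cong (\mathrm{MU}_{2*}\mathrm{MU}\otimes_{\mathrm{MU}_{2*}}\mathrm{MU}_{2*}R)^G,\]
which realize $q^*\phi$ and its pullback to the double fiber product $\mathrm{Spec}(\mathrm{MU}_{2*}\mathrm{MU})$ as the identity on presentations. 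By fpqc descent of isomorphisms, $\phi$ is itself an equivalence.

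The main obstacle will be the Galois step: extracting from the bare hypothesis that $p$ is affine both the representability of $\mathrm{Spec}(\mathrm{MU}_{2*}R)/G$ by $\mathrm{Spec}((\mathrm{MU}_{2*}R)^G)$ and the vanishing of higher $G$-cohomology needed for the HFPSS collapse. This can be handled via the same quotient-stack analysis used in the proof of Proposition \ref{prop:quotientstackhasfinitechromaticdefect}.
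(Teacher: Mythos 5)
Your outline is correct and rests on the same computational engine as the paper's proof, but you package it differently. The paper never constructs a comparison morphism: it pulls the standard cover $\mathrm{Spec}(L)\to\mathcal M_{FG}(1)$ back to $\mathcal N=\mathrm{Spec}(R_{2*})/G$, uses affineness of $p$ to see that the resulting cover and its double overlap are affine, identifies their rings of functions with $H^0(G;\mathrm{MU}_{2*}R)=\mathrm{MU}_{2*}E$ and $H^0(G;\mathrm{MU}_{2*}(\mathrm{MU}\otimes R))=\mathrm{MU}_{2*}(\mathrm{MU}\otimes E)$ via the collapsing homotopy fixed point spectral sequences (this is where the $s=1,2$ hypotheses enter), and concludes that $\mathcal N$ carries the defining presentation of $\mathcal M_E$. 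You instead build $\phi\colon \mathrm{Spec}(R_{2*})/G\to\mathcal M_E$ from the unit $E\to R$ and check it is an equivalence after faithfully flat base change along $q$, using Proposition \ref{prop:affinenessandpullback} and Theorem \ref{thm:weakMU} to see $q^*\mathcal M_E\simeq\mathrm{Spec}(\mathrm{MU}_{2*}E)$ and the Galois/torsor structure to see $q^*(\mathrm{Spec}(R_{2*})/G)\simeq\mathrm{Spec}((\mathrm{MU}_{2*}R)^G)$. What your route buys is that a single faithfully flat cover suffices for the descent of the equivalence, so your double-fiber-product check is redundant (though the $s=2$ hypothesis is still what identifies the cobar term if one argues as the paper does); what it costs is that you must actually construct $\phi$, and what the paper avoids by matching presentations you cannot get from mere $G$-invariance of $\mathcal M_R\to\mathcal M_E$: a map out of the quotient \emph{stack} requires equivariance data, i.e.\ $2$-isomorphisms $\theta_g$ between the canonical object of $\mathcal M_E(R_{2*})$ and its $g$-translates satisfying the cocycle condition. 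Here these can be taken to be identities because $g\circ\eta=\eta$ in $\mathrm{Ho}(\Sp)$ forces the induced maps on $\mathrm{MU}_{2*}$-level data to agree strictly, but you should say this rather than appeal to a universal property that does not apply to bare invariance.

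One more point needs a sentence: cohomological triviality of the \emph{even} homology $\mathrm{MU}_{2*}(\mathrm{MU}^{\otimes s-1}\otimes R)$ does not by itself collapse the homotopy fixed point spectral sequence, since $H^{>0}(G;\mathrm{MU}_{\mathrm{odd}}(\mathrm{MU}^{\otimes s-1}\otimes R))$ contributes to even total degrees and would obstruct the identification $\pi_{2*}(\mathrm{MU}^{\otimes s}\otimes E)\cong(\mathrm{MU}_{2*}(\mathrm{MU}^{\otimes s-1}\otimes R))^G$ (the proposition does not assume $R$ has even homotopy). This is repaired by the same argument you use: the odd part is a module over $\mathrm{MU}_{2*}(\mathrm{MU}^{\otimes s-1}\otimes R)$ with semilinear $G$-action, and base change along the faithfully flat Galois extension of its invariants makes it induced, so its higher $G$-cohomology vanishes as well; this is the role played in the paper's proof by the phrase ``change of rings isomorphism using affineness of the pullback.'' With these two points made explicit, your argument is complete.
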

\begin{proof}
Set $\mathcal N:=\mathrm{Spec}(R_{2*})/G$. By affineness of $p$, we have a diagram of pullback squares\\

\adjustbox{scale=0.89,center}{
\begin{tikzcd}
\mathrm{Spec}\bigg(\Gamma(\mathcal N\times_{\mathcal M_{FG}(1)}\mathrm{Spec}(L)\times_{\mathcal M_{FG}(1)}\mathrm{Spec}(L);\mathcal O)\bigg)\arrow[r]\arrow[d]&\mathrm{Spec}\bigg(\Gamma(\mathcal N\times_{\mathcal M_{FG}(1)}\mathrm{Spec}(L);\mathcal O)\bigg)\arrow[d]\\
\mathrm{Spec}\bigg(\Gamma(\mathcal N\times_{\mathcal M_{FG}(1)}\mathrm{Spec}(L);\mathcal O)\bigg)\arrow[r]\arrow[d]&\mathcal N\arrow[d,"p"]\\
\mathrm{Spec}(L)\arrow[r]&\mathcal M_{FG}(1)
\end{tikzcd}
}
and
\[\Gamma(\mathcal N\times_{\mathcal M_{FG}(1)}\mathrm{Spec}(L);\mathcal O)\cong \Gamma(\mathcal N;p^*\mathcal F_{\mathrm{MU}})\cong H^0(G;\mathrm{MU}_{2*}R)=\mathrm{MU}_{2*}E\]
The last identification follows from the HFPSS
\[H^*(G;\mathrm{MU}_*R)\implies \mathrm{MU}_*E\]
using the equivalence $\mathrm{MU}\otimes E\simeq (\mathrm{MU}\otimes R)^{hG}$. Indeed, the higher cohomology on this $E_2$-page vanishes by a change of rings isomorphism using affineness of the pullback $\mathcal N\times_{\mathcal M_{FG}(1)}\mathrm{Spec}(L)$. Similar considerations apply to the pullback in the upper left corner. 

The top right stack is thereby identified with $\mathcal M_{\mathrm{MU}\otimes E}$, and the top left stack with $\mathcal M_{\mathrm{MU}\otimes \mathrm{MU}\otimes E}$. This gives the identification
\[\mathcal N\simeq\mathcal M_{((\mathrm{MU}\otimes E)_{2*},(\mathrm{MU}\otimes \mathrm{MU}\otimes E)_{2*})}=\mathcal M_{E}\]
\end{proof}

As remarked in \cite[Section 6.2]{mathewmeier}, if $R$ is a Landweber exact $\mathbb E_\infty$-ring, then the map $p$ in Proposition \ref{prop:quotientstackandhtpyfixedpoints} is affine precisely when, for every field valued point $x$ of $\mathrm{Spec}(R_{2*})$, the stabilizer of $x$ in $G$ acts faithfully on $p(x)$. Moreover, the map $E\to R$ must then be a faithful Galois extension so that the condition $\mathrm{MU}^{\otimes s}\otimes E\simeq (\mathrm{MU}^{\otimes s}\otimes R)^{hG}$ holds automatically by \cite[Theorem 5.10]{mathewmeier}, and we have the following.

\begin{corollary}\label{cor:fixedpointspectrafinite}
    Let $R$ be an even-periodic, Landweber exact $\mathbb E_\infty$-ring with $G$-action for a finite group $G$ and $R_0$ Noetherian. Suppose for every field valued point $x:\Spec(k)\to \Spec(R_0)$, the stabilizer of $x$ in $G$ acts faithfully on the corresponding formal group over $k$. Then $E=R^{hG}$ has finite chromatic defect.
\end{corollary}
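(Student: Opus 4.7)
The strategy is to assemble the corollary by chaining together Propositions \ref{prop:quotientstackandhtpyfixedpoints} and \ref{prop:quotientstackhasfinitechromaticdefect}. First I would invoke the remark from \cite[Section 6.2]{mathewmeier} cited just before the statement: the faithful stabilizer hypothesis on field-valued points of $\Spec(R_0)$ is precisely what is needed to make the canonical map $p : \Spec(R_{2*})/G \to \mathcal{M}_{FG}(1)$ affine. Since $R$ is Landweber exact and $p$ is affine, \cite[Theorem 5.10]{mathewmeier} implies that $E \to R$ is a faithful $G$-Galois extension of $\mathbb{E}_\infty$-rings, so the descent identifications $\mathrm{MU}^{\otimes s} \otimes E \simeq (\mathrm{MU}^{\otimes s} \otimes R)^{hG}$ hold automatically for $s = 1, 2$.

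Next I would apply Proposition \ref{prop:quotientstackandhtpyfixedpoints} to conclude that $\mathcal{M}_E \simeq \Spec(R_{2*})/G$ as $\mathbb{G}_m$-stacks; note that $R$ is complex-orientable (being Landweber exact), so the hypotheses of that proposition are met. Since $R$ is even-periodic, $R_{2*} \cong R_0[u^{\pm 1}]$, which is Noetherian whenever $R_0$ is, so the Noetherian hypothesis of Proposition \ref{prop:quotientstackhasfinitechromaticdefect} is satisfied. This leaves only the $\mathrm{MU}$-nilpotent completeness hypothesis on $E \otimes \mathrm{X(n)}$. Since $R$ is complex-orientable, Theorem \ref{thm:weakMU} exhibits it as a weak $\mathrm{MU}$-module, hence $\mathrm{MU}$-nilpotent; as $E = R^{hG}$ is a finite homotopy limit of copies of $R$ (over the finite group $G$), it lies in the thick tensor-ideal generated by $\mathrm{MU}$, so $E$ and therefore every $E \otimes \mathrm{X(n)}$ is $\mathrm{MU}$-nilpotent and in particular $\mathrm{MU}$-nilpotent complete.

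Having verified all hypotheses, Proposition \ref{prop:quotientstackhasfinitechromaticdefect} immediately yields that $E$ has finite chromatic defect. There is no serious obstacle here beyond bookkeeping, since each nontrivial input (affineness of $p$, the Galois descent equivalences, $\mathrm{MU}$-nilpotence of $E$) comes from a result already recorded in the paper or the cited references; the content of the corollary is simply that the algebro-geometric hypothesis on stabilizers translates cleanly into the spectrum-level conclusion via Hopkins' stacks framework.
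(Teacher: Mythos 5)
Your overall route is exactly the paper's: the corollary is stated there with no separate proof precisely because it is the concatenation of the Mathew--Meier remark (faithful stabilizers give affineness of $p\colon\Spec(R_{2*})/G\to\mathcal{M}_{FG}(1)$, whence $E\to R$ is a faithful $G$-Galois extension and the identifications $\mathrm{MU}^{\otimes s}\otimes E\simeq(\mathrm{MU}^{\otimes s}\otimes R)^{hG}$ hold), Proposition \ref{prop:quotientstackandhtpyfixedpoints}, and Proposition \ref{prop:quotientstackhasfinitechromaticdefect}, together with the observation that $R_{2*}\cong R_0[u^{\pm1}]$ is Noetherian when $R_0$ is.

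The one step whose justification fails as written is your verification of the $\mathrm{MU}$-nilpotent completeness hypothesis of Proposition \ref{prop:quotientstackhasfinitechromaticdefect}. You claim $E=R^{hG}$ is a ``finite homotopy limit of copies of $R$''; it is not: homotopy fixed points are a limit over $BG$, an infinite diagram (equivalently, the totalization of an unbounded cosimplicial object), and thick tensor ideals are not closed under such limits, so $\mathrm{MU}$-nilpotence of $E$ does not follow from $\mathrm{MU}$-nilpotence of $R$ by this reasoning. The conclusion is nevertheless correct, and the repair uses an input you have already established: since $E\to R$ is a \emph{faithful} $G$-Galois extension with $G$ finite, it admits descent up to nilpotence (this is standard; cf.\ \cite{mathewmeier,mnn}), meaning the unit $E$ is a retract of a finite stage of the Amitsur tower for $E\to R$, so $E$ is $R$-nilpotent. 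Since $R$ is complex-orientable (Landweber exact), hence a weak $\mathrm{MU}$-module by Theorem \ref{thm:weakMU}, it follows that $E$, and therefore each $E\otimes\mathrm{X(n)}$, is $\mathrm{MU}$-nilpotent and in particular $\mathrm{MU}$-nilpotent complete. Faithfulness is essential here --- without it one only has the $BG$-shaped limit, which is exactly the situation your argument would need but cannot deliver --- so the fix is not cosmetic, even though it costs only a sentence.
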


For example, we recover immediately Corollary \ref{cor:EOnfinitedefect}. We finish by giving a more general converse to Proposition \ref{prop:defectimpliesfiniteaut}, for a class of stacks $\mathcal{M}_E$ that behave in many ways like a quotient by a finite group.

\begin{proposition}\label{prop:finiteautimpliesdefect}
    Let $E$ be a homotopy commutative ring spectrum such that $E\otimes \mathrm{X(n)}$ is $\mathrm{MU}$-nilpotent complete, for all $n$ sufficiently large. Suppose that there is a faithfully flat, finite morphism $q:\Spec(R)\to \mathcal{M}_E$ for some Noetherian commutative ring $R$. Then $E$ has finite chromatic defect. 
\end{proposition}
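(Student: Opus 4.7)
The plan is to apply Corollary \ref{cor:finitephiaffine}: using the $\mathrm{MU}$-nilpotent completeness hypothesis, it suffices to show that for some $n$ the pullback
\[\mathcal{P}(n) := \mathcal{M}_E \times_{\mathcal{M}_{FG}(1)} \mathcal{M}_{FG}(n)\]
is an affine scheme. The argument will closely parallel the proof of Proposition \ref{prop:quotientstackhasfinitechromaticdefect}, with the finite group $G$ replaced by the finite flat groupoid scheme coming from $q$.

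First I would unpack the hypothesis. Since $q$ is faithfully flat and finite, it presents $\mathcal{M}_E$ via the Hopf algebroid $(R, \Gamma)$, where $\Gamma := R \otimes_{\mathcal{M}_E} R$ is finite as an $R$-module via either structure map and hence Noetherian. Composing with $p_E$, each $T$-valued point of $\Spec(\Gamma)$ determines an isomorphism of formal group laws over $T$ of the form $f(x) = x + \sum_{i \geq 1} b_i x^{i+1}$, whose universal coefficients $b_i \in \Gamma$ are pulled back from $\mathrm{MU}_*\mathrm{MU}$.

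The key step is the Noetherian stabilization. The increasing chain of ideals $J_n := (b_1, \ldots, b_{n-1}) \subset \Gamma$ must stabilize, say at some index $N$. This forces $b_i \in J_N$ for every $i \geq 1$, so all $b_i$ vanish modulo $J_N$. Consequently, any morphism in $\mathcal{P}(N)(T)$---which by construction is a morphism in $\mathcal{M}_E(T)$ whose image in $\mathcal{M}_{FG}(1)(T)$ is $\equiv x \bmod x^{N+1}$---must be the identity: imposing $b_1 = \cdots = b_{N-1} = 0$ propagates to vanishing of all higher $b_i$. Hence the groupoid presenting $\mathcal{P}(N)$ has only trivial automorphisms, so $\mathcal{P}(N)$ is an algebraic space.

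To conclude, I would invoke descent. The scheme $\Spec(R) \times_{\mathcal{M}_{FG}(1)} \mathcal{M}_{FG}(N)$ is affine (by Proposition \ref{prop:stackpullbackX(n)}, the right factor is affine over the base), and its canonical morphism to $\mathcal{P}(N)$ is finite and faithfully flat. Since $\mathcal{P}(N)$ is also an algebraic space, Katz--Mazur Theorem A7.1.1---applied exactly as in the proof of Proposition \ref{prop:quotientstackhasfinitechromaticdefect}---implies that $\mathcal{P}(N)$ is affine. Corollary \ref{cor:finitephiaffine} then gives $\Phi(E) \leq N$. I expect the main technical obstacle to be the bookkeeping in the discreteness step, namely verifying that the morphism scheme in the groupoid presentation of $\mathcal{P}(N)$ really is recovered as the quotient of $\Gamma$ encoding the $N$-jet vanishing condition, so that Noetherian stabilization in $\Gamma$ actually trivializes that groupoid; modulo this identification, the argument is parallel to the finite-group case treated in Proposition \ref{prop:quotientstackhasfinitechromaticdefect}.
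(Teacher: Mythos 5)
Your proposal is correct and follows essentially the same route as the paper: present $\mathcal{M}_E$ by the finite flat groupoid $(\Spec(R),\Spec(\Gamma))$, use Noetherian stabilization of the chain $(b_1)\subset(b_1,b_2)\subset\cdots$ in $\Gamma$ to make $\mathcal{P}(N)$ discrete, then pull back the cover $q$ and descend affineness before applying Corollary \ref{cor:finitephiaffine}. The only discrepancy is the final citation: Katz--Mazur A7.1.1 is specific to finite group quotients (and is what the paper uses in Proposition \ref{prop:quotientstackhasfinitechromaticdefect}), whereas here the paper instead invokes a Chevalley-type lemma for algebraic spaces, which is exactly the statement you actually use (a finite, faithfully flat cover by an affine scheme forces the algebraic space $\mathcal{P}(N)$ to be affine).
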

\begin{proof}
    The argument for Proposition \ref{prop:quotientstackhasfinitechromaticdefect} needs only slight modification. An automorphism in $\mathcal{M}_E$ is given by a point of the affine scheme
    \[\Spec(T)=\Spec(R)\times_{\mathcal{M}_E}\Spec(R)\]
    which is finite over $\Spec(R)$, and therefore Noetherian. The morphism $\mathcal{M}_E\to\mathcal{M}_{FG}(1)$ determines a morphism 
    \[\Spec(T)\to \Spec(L[b_1,b_2,\ldots])=\Spec(L)\times_{\mathcal{M}_{FG}(1)}\Spec(L)\]
    and we may consider the chain of ideals
    \[(b_1)\subset (b_1,b_2)\subset (b_1,b_2,b_3)\subset\cdots\]
in $T$, which must stabilize. As before we conclude that 
    \[\mathcal{P}(n):=\mathcal{M}_E\times_{\mathcal{M}_{FG}(1)}\mathcal{M}_{FG}(n)\]
    is discrete for some $n$. Under the conditions above, it follows that $\mathcal{P}(n)$ is an Artin stack, and pulling back the cover $q$ to $\mathcal{P}(n)$, we may apply \cite[Lemma 69.17.3]{stacks} to see that $\mathcal{P}(n)$ is affine.
\end{proof}

\begin{example}\label{example:tmfkofinitestacks}
    The stacks associated to $\mathrm{ko}$ and $\mathrm{tmf}$ as in Example \ref{example:hopkinsstacks} satisfy the conditions of Proposition \ref{prop:finiteautimpliesdefect}. Indeed, one has the finite flat covers discussed by Hopkins in \cite[Chapter 9]{tmfbook}. For $\mathrm{ko}$ this cover is the morphism $\mathcal{M}_{\mathrm{ku}}\to\mathcal{M}_{\mathrm{ko}}$, and for $\mathrm{tmf}$ this is given 2-locally by the morphism $\mathcal{M}_{\mathrm{tmf}_1(3)}\to\mathcal{M}_{\mathrm{tmf}}$, 3-locally by $\mathcal{M}_{\mathrm{tmf}_1(2)}\to\mathcal{M}_{\mathrm{tmf}}$, and $\mathcal{M}_{\mathrm{tmf}}$ is itself affine $p$-locally for $p>3$. These finite covers predict the Wood equivalences of Example \ref{example:woodtmf}.
\end{example}

\subsection{\texorpdfstring{$\mathrm{K(n)}$}{K(n)}-local stacks and the image of \texorpdfstring{$J$}{J} spectrum} Yet another very useful aspect of Hopkins' stack construction of Definition \ref{def:stack} is that it allows one to access information about various chromatic localizations of a homotopy commutative ring spectrum $E$ in terms of the stack $\mathcal{M}_E$. We refer the reader to \cite[Section 1.6]{handbook} for an excellent discussion of this aspect. Behrens' framework in \emph{loc. cit.} can be used to compute chromatic defect $\mathrm{K(n)}$- or $\mathrm{E(n)}$-locally, and we use this to verify that $\Phi(\mathrm{j})=\infty$.

Behrens extends Hopkins' stack definition to include more general descent spectral sequences into this framework, such as that of $\mathrm{Tmf}$, which do not coincide with the corresponding ANSS. We will not need this, however, and all our stacks are in the ordinary sense of Definition \ref{def:stack}. Behrens gives the following pullback formula following Proposition \ref{prop:affinenessandpullback} (2) for $\mathrm{K(n)}$-localizations, where as before we work instead with the $\mathbb{G}_m$-torsors over his stacks.

\begin{proposition}\label{prop:behrenspullback}
    Let $(\mathcal{M}_{FG})^{[n]}_{(p)}(1)$ be the the formal neighborhood of the locus of formal group laws in characteristic $p$ of exact height $n$ in $\mathcal{M}_{FG}(1)$. Let $E$ be a homotopy commutative ring spectrum such that $\mathrm{MU}_*E$ is flat over $\mathrm{MU}_*$. Then there is a pullback of (formal) stacks
    \[
\begin{tikzcd}
\mathcal M_{L_{\mathrm{K(n)}}E}\arrow[r]\arrow[d]&\mathcal (\mathcal{M}_{FG})^{[n]}_{(p)}(1)\arrow[d]\\
\mathcal M_E\arrow[r]&\mathcal M_{FG}(1)
\end{tikzcd}
\]
\end{proposition}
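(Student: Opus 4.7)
The plan is to perform a Hopf algebroid manipulation paralleling the proof of Proposition \ref{prop:affinenessandpullback}(2), but carried out in the formal/$L$-complete setting appropriate to $\mathrm{K(n)}$-localization. First I would record an explicit Hopf algebroid presentation of the target formal stack: $(\mathcal{M}_{FG})^{[n]}_{(p)}(1)$ is the one associated to the graded Hopf algebroid
\[
\bigl((v_n^{-1}L_{(p)})^\wedge_{I_n},\ (v_n^{-1}W_{(p)})^\wedge_{I_n}\bigr),
\]
where $L=\mathrm{MU}_{2*}$, $W=\mathrm{MU}_{2*}\mathrm{MU}$, and $I_n=(p,v_1,\ldots,v_{n-1})$ is the standard height $n$ ideal. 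This encodes the formal neighborhood of the exact height $n$ locus in $\mathcal{M}_{FG}(1)$.

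Next, I would invoke the Hovey--Strickland style description of $\mathrm{MU}$-homology of $\mathrm{K(n)}$-localizations. The flatness hypothesis on $\mathrm{MU}_*E$ yields, in an appropriate $L$-complete sense, natural isomorphisms
\[
\mathrm{MU}_{2*}L_{\mathrm{K(n)}}E\cong (v_n^{-1}\mathrm{MU}_{2*}E)^\wedge_{I_n},
\]
\[
\mathrm{MU}_{2*}(\mathrm{MU}\otimes L_{\mathrm{K(n)}}E)\cong (v_n^{-1}W\otimes_L \mathrm{MU}_{2*}E)^\wedge_{I_n}.
\]
Combined with the presentation above, this identifies the Hopf algebroid governing $\mathcal{M}_{L_{\mathrm{K(n)}}E}$ with the base change of the Hopf algebroid for $(\mathcal{M}_{FG})^{[n]}_{(p)}(1)$ along $p_E$, i.e., with the fiber product of Hopf algebroids over the one for $\mathcal{M}_{FG}(1)$. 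Passing to associated formal stacks yields the claimed pullback square.

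The main obstacle will be making the $L$-complete base change rigorous: $\mathrm{MU}_*L_{\mathrm{K(n)}}E$ is typically not flat over $L$ in the naive sense, so the Hopf algebroid and the tensor products must be interpreted in the category of $L$-complete comodules, following the formalism of Hovey and Behrens. Once this category is in place, the flatness of $\mathrm{MU}_*E$ ensures that the $I_n$-adic completion commutes, up to the appropriate derived corrections which vanish, with the tensor products against $\mathrm{MU}_*E$ (via Artin--Rees type arguments for the finitely generated ideal $I_n$), and the remaining step is a bookkeeping check of the source, target, composition, and inversion structure maps of the Hopf algebroid.
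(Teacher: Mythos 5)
First, a point of comparison: the paper does not actually prove this proposition --- it is quoted from Behrens' framework \cite[Section 1.6]{handbook}, ``following Proposition \ref{prop:affinenessandpullback}(2)'', so there is no in-paper argument to measure your write-up against. Your outline is in the expected spirit of how such a statement is established: present the formal neighborhood $(\mathcal{M}_{FG})^{[n]}_{(p)}(1)$ by an $I_n$-completed, $v_n$-inverted Hopf algebroid and base change along $p_E$.

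However, your central step has a genuine gap. The isomorphism $\mathrm{MU}_{2*}L_{\mathrm{K(n)}}E\cong (v_n^{-1}\mathrm{MU}_{2*}E)^{\wedge}_{I_n}$ is not true for ordinary $\mathrm{MU}$-homology, which is what Definition \ref{def:stack} uses: $\mathrm{MU}\otimes L_{\mathrm{K(n)}}E$ is neither $\mathrm{K(n)}$-local nor $I_n$-complete. For instance, when $E$ is Landweber exact one gets $\mathrm{MU}_{2*}L_{\mathrm{K(n)}}E\cong \mathrm{MU}_{2*}\mathrm{MU}\otimes_{\mathrm{MU}_{2*}}\pi_{2*}L_{\mathrm{K(n)}}E$, an \emph{uncompleted} base change of a completed ring, and $\pi_*L_{\mathrm{K(n)}}E$ is itself computed by the derived functors $L_0$ (and possibly $L_1$) of $I_n$-completion applied to $v_n^{-1}\pi_*E$, not by naive completion; flatness of $\mathrm{MU}_*E$ does not make the $L_1$-term vanish for free. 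The completed formulas you write are correct for the $\mathrm{K(n)}$-local (Hovey--Strickland style) homology $\pi_*L_{\mathrm{K(n)}}(\mathrm{MU}\otimes E)$, and the substance of the proposition is precisely to reconcile the ordinary Hopf algebroid $(\mathrm{MU}_{2*}L_{\mathrm{K(n)}}E,\ \mathrm{MU}_{2*}(\mathrm{MU}\otimes L_{\mathrm{K(n)}}E))$ with the completed (or pro-) presentation of the pullback, i.e.\ to show that both present the same formal stack. Your appeal to Artin--Rees to ``commute completion with the tensor products'' does not touch this issue: the failure is not about interchanging completion with a finite operation, but about the fact that smashing with $\mathrm{MU}$ destroys completeness altogether. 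A correct argument must either work throughout with $L$-complete comodules/pro-Hopf algebroids and explain in what sense $\mathcal{M}_{L_{\mathrm{K(n)}}E}$, as defined via ordinary $\mathrm{MU}$-homology, carries that formal structure, or show directly that the uncompleted and completed presentations have equivalent associated formal stacks. (A smaller point: $v_n$ is invariant only modulo $I_n$, so ``inverting $v_n$ in $W_{(p)}$'' only makes unambiguous sense after completion, where the left- and right-unit localizations agree; as written, your presentation of $(\mathcal{M}_{FG})^{[n]}_{(p)}(1)$ silently elides this.)
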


In particular, if $E=\mathrm{X(n)}$, one may place $\mathcal{M}_{FG}(n)$ in the bottom left corner, and if $L_{\mathrm{K(n)}}E$ is complex-orientable, the pullback will be affine. This yields the following. 

\begin{theorem}\label{thm:K(n)localsphere}
    The $K(n)$-local sphere $L_{K(n)}\mathbb{S}$ has infinite chromatic defect.
\end{theorem}
\begin{proof}
    Suppose $L_{K(n)}\mathbb{S}$ has finite chromatic defect. Then there exists a homotopy ring map $MU\to (L_{K(n)}\mathbb{S})\otimes X(m)$ for some $m<\infty$, and by composing, a homotopy ring map
    \[MU\to (L_{K(n)}\mathbb{S})\otimes X(m)\to L_{K(n)}X(m)\]
     By Proposition \ref{prop:behrenspullback}, it follows that the pullback
    \[\mathcal{M}_{FG}(n)\times_{\mathcal{M}_{FG}(1)}(\mathcal{M}_{FG})^{[n]}_{(p)}(1)\]
    is affine and therefore discrete. As in the proof of Proposition \ref{prop:defectimpliesfiniteaut}, this implies that the automorphism group of the Honda formal group law $F_n$ must be finite, but the Morava stabilizer group $\mathbb{G}_n$ is infinite.
\end{proof}

We finish the section with a connective version of this result at height 1, regarding Mahowald's connective image of $J$ spectrum $\mathrm{j}$. Working $p$-locally, one has Adams operations $\psi^{p+1}$ acting on $\mathrm{ko}$ when $p=2$ and on the Adams summand $\ell$ when $p>2$. When $p=2$, the map $\psi^3-1:\mathrm{ko}\to \mathrm{ko}$ factors through the connective cover $\tau_{\ge 4}\mathrm{ko}$, and when $p>2$, $\psi^{p+1}-1:\ell\to\ell$ factors through $\tau_{\ge 2p-2}\ell$ (see for example \cite{davis}).

\begin{definition}
    When $p=2$, we define
    \[\mathrm{j}=\mathrm{fib}(\psi^3-1:\mathrm{ko}\to \tau_{\ge 4}\mathrm{ko})\]
    and when $p>2$
    \[\mathrm{j}=\mathrm{fib}(\psi^{p+1}-1:\ell\to \tau_{\ge 2p-2}\ell)\]
\end{definition}

The spectrum $\mathrm{j}$ is an fp spectrum in the sense of Definition \ref{def:fpspectra} and in fact admits an $\mathbb E_\infty$-structure. One of its claims to fame is that it is a connective model of the $K(1)$-local sphere; that is $L_{\mathrm{K(1)}}\mathrm{j}=L_{\mathrm{K(1)}}\mathbb{S}$ as $\mathbb E_\infty$-rings at all primes (see for example \cite[Lemma 2.2]{hms}).

\begin{theorem}\label{thm:defectj}
    The connective image of $J$ spectrum $\mathrm{j}$ does not have finite chromatic defect, at any prime $p$.
\end{theorem}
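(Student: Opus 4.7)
The plan is to use the identification $L_{\mathrm{K(1)}}\mathrm{j} \simeq L_{\mathrm{K(1)}}\mathbb S$ to show that $\mathcal{M}_\mathrm{j}$ admits a height 1 geometric point $x$ such that the image of $\Aut(x) \to \Aut(p_\mathrm{j}(x))$ in $\mathcal{M}_{FG}(1)$ is infinite, directly contradicting Proposition \ref{prop:defectimpliesfiniteaut}. The geometric input is that the strict Morava stabilizer group of the height 1 Honda formal group over $\bar{\mathbb F}_p$ is a pro-$p$-group isomorphic to $\mathbb Z_p$, and the $K(1)$-localization of $\mathrm{j}$ sees all of it.

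Concretely, I would first apply Proposition \ref{prop:behrenspullback} to $E = \mathbb S$, where flatness of $\mathrm{MU}_*\mathbb S$ is automatic, and combine it with Quillen's identification $\mathcal{M}_{\mathbb S} \simeq \mathcal{M}_{FG}(1)$ to get
\[\mathcal{M}_{L_{\mathrm{K(1)}}\mathbb S} \simeq (\mathcal{M}_{FG})^{[1]}_{(p)}(1).\]
The ring map $\mathrm{j} \to L_{\mathrm{K(1)}}\mathrm{j} \simeq L_{\mathrm{K(1)}}\mathbb S$ then induces, via functoriality of Hopkins' stack construction, a morphism $\phi : (\mathcal{M}_{FG})^{[1]}_{(p)}(1) \to \mathcal{M}_\mathrm{j}$ commuting with the projections to $\mathcal{M}_{FG}(1)$, so that $p_\mathrm{j} \circ \phi$ is the canonical inclusion $i : (\mathcal{M}_{FG})^{[1]}_{(p)}(1) \to \mathcal{M}_{FG}(1)$. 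I would then pick a geometric point $x_0 : \Spec(\bar{\mathbb F}_p) \to (\mathcal{M}_{FG})^{[1]}_{(p)}(1)$ classifying the Honda FGL and set $x := \phi(x_0)$. Since $i$ is étale on geometric points, the map $\Aut_{(\mathcal{M}_{FG})^{[1]}_{(p)}(1)}(x_0) \to \Aut_{\mathcal{M}_{FG}(1)}(p_\mathrm{j}(x))$ is the identity on the strict stabilizer $\cong \mathbb Z_p$, and functoriality of $\phi$ places this whole infinite group in the image of $\Aut_{\mathcal{M}_\mathrm{j}}(x) \to \Aut_{\mathcal{M}_{FG}(1)}(p_\mathrm{j}(x))$, yielding the desired contradiction.

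The main obstacle is making rigorous the induced morphism $\phi$ at the stacky level, particularly reconciling its $\mathbb G_m$-equivariant structure and carefully checking the hypotheses of Proposition \ref{prop:behrenspullback} — though since I only apply that pullback for $E = \mathbb S$, the flatness condition is automatic, and the remaining map $\mathcal{M}_{L_{\mathrm{K(1)}}\mathbb S} \to \mathcal{M}_\mathrm{j}$ is just the functorial consequence of the $\mathbb E_\infty$-ring map $\mathrm{j} \to L_{\mathrm{K(1)}}\mathrm{j}$. If that functoriality is subtle to articulate directly in stacky language, one may instead pass through the Devinatz--Hopkins description of $L_{\mathrm{K(1)}}\mathbb S$ as homotopy fixed points of height 1 Morava $E$-theory and work with the associated Hopf algebroids. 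Philosophically, the theorem records the fact that $\mathrm{j}$ retains the full infinite action of $\mathbb G_1$ through its $K(1)$-localization, which is precisely what Proposition \ref{prop:defectimpliesfiniteaut} forbids for spectra of finite chromatic defect.
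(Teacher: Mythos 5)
Your proof is correct, and it reaches the theorem by a genuinely different (though closely related) route than the paper. The paper never constructs a map into $\mathcal{M}_{\mathrm{j}}$: it assumes $\mathrm{j}\otimes\mathrm{X(n)}$ is complex-orientable, observes that then $L_{\mathrm{K(1)}}(\mathrm{j}\otimes\mathrm{X(n)})\simeq L_{\mathrm{K(1)}}\mathrm{X(n)}$ would be complex-orientable, applies Proposition \ref{prop:behrenspullback} to $E=\mathrm{X(n)}$ (whose $\mathrm{MU}$-homology is free, hence flat) to identify the relevant stack with $\mathcal{M}_{FG}(n)\times_{\mathcal{M}_{FG}(1)}(\mathcal{M}_{FG})^{[1]}_{(p)}(1)$, which would then be affine and hence discrete, and gets the contradiction from the infinitude of the strict automorphism group of $F_1$ by the same pigeonhole argument as in the proof of Proposition \ref{prop:defectimpliesfiniteaut}. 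You instead apply Proposition \ref{prop:behrenspullback} to $E=\mathbb{S}$ and use contravariant functoriality of Definition \ref{def:stack} along the ring map $\mathrm{j}\to L_{\mathrm{K(1)}}\mathrm{j}\simeq L_{\mathrm{K(1)}}\mathbb{S}$ to transport the Honda point and its infinite strict stabilizer into $\mathcal{M}_{\mathrm{j}}$, then quote Proposition \ref{prop:defectimpliesfiniteaut} as a black box. This works: the functoriality you flag as the main obstacle is immediate from Definition \ref{def:stack} (a homotopy commutative ring map induces a map of the defining Hopf algebroids, compatibly with the projections to $\mathcal{M}_{FG}(1)$), and your point $x_0$ factors through the reduced height-$1$ locus, where the automorphism group in the formal neighborhood really is the full strict stabilizer. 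The trade-off: the paper's version needs nothing about $\mathcal{M}_{L_{\mathrm{K(1)}}\mathbb{S}}$ or maps into $\mathcal{M}_{\mathrm{j}}$, only that complex-orientability survives smashing and $\mathrm{K(1)}$-localization, while yours makes the ``$\mathrm{j}$ sees all of $\mathbb{G}_1$'' heuristic literal and reuses Proposition \ref{prop:defectimpliesfiniteaut} verbatim. Two small points to fix: ``since $i$ is \'etale on geometric points'' is not the right justification --- what you actually use is that automorphisms of the Honda point in $(\mathcal{M}_{FG})^{[1]}_{(p)}(1)$ coincide with its strict automorphisms in $\mathcal{M}_{FG}(1)$ --- and at $p=2$ the strict stabilizer is $\Z_2^{\times}\cong\Z/2\times\Z_2$ rather than $\Z_p$, but it is infinite at every prime, which is all you need.
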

\begin{proof}
    If $\mathrm{j}\otimes \mathrm{X(n)}$ were complex-orientable for some $n$, then so would be $L_{K(1)}\mathrm{j}=L_{K(1)}\mathbb{S}$, contradicting Theorem \ref{thm:K(n)localsphere}.
\end{proof}

Immediately from Corollary \ref{cor:woodimpliesfinitedefect}, we have the following.

\begin{corollary}\label{cor:jnotwoodtype}
    The connective image of $J$ spectrum $\mathrm{j}$ is not Wood-type. That is, there is no finite $\mathrm{BP}$-projective $F$ such that $\mathrm{j}\otimes F$ is complex-orientable.
\end{corollary}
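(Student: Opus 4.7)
The plan is to deduce the corollary in one stroke by combining the two previous results, without introducing any new ingredients. The plain statement ``$E$ is Wood-type $\Rightarrow$ $E$ has finite chromatic defect'' is Corollary \ref{cor:woodimpliesfinitedefect}, so contrapositively a spectrum of infinite chromatic defect cannot be Wood-type. Since Theorem \ref{thm:defectj} asserts $\Phi(\mathrm{j})=\infty$ at every prime, the corollary follows immediately.

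To spell this out concretely, I would argue by contradiction. Suppose there were a finite $\mathrm{BP}$-projective $F$ such that $\mathrm{j}\otimes F$ is complex-orientable. By Lemma \ref{lemma:BPfree} we may assume $F$ is a finite $\mathrm{BP}$-free, and by the proposition preceding Corollary \ref{cor:woodimpliesfinitedefect} there exists some $n<\infty$ with $F$ a finite $\mathrm{T(n)}$-free. Tensoring the hypothetical Wood splitting with $\mathrm{T(n)}$, the spectrum $\mathrm{j}\otimes \mathrm{T(n)}$ is a retract of the complex-orientable spectrum $\mathrm{j}\otimes F\otimes\mathrm{T(n)}$, and Definition \ref{def:comporient} is closed under retracts, so $\mathrm{j}\otimes \mathrm{T(n)}$ would be complex-orientable. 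This contradicts the conclusion of Theorem \ref{thm:defectj} (equivalently, $\Phi_p(\mathrm{j})=\infty$ at each prime).

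There is no genuine obstacle here, as all of the work has already been done in Theorem \ref{thm:defectj}, whose proof combines Behrens' pullback formula (Proposition \ref{prop:behrenspullback}) with the Devinatz--Hopkins identification $L_{\mathrm{K(1)}}\mathrm{j}\simeq L_{\mathrm{K(1)}}\mathbb S$ to translate complex-orientability of $\mathrm{j}\otimes\mathrm{X(n)}$ into finiteness of the strict automorphism group of the height-one Honda formal group, and then contradicts this via the infiniteness of $\mathbb G_1$. For the purposes of the present corollary, this is treated as a black box.
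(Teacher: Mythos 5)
Your proposal is correct and matches the paper's own argument, which deduces the corollary immediately from Corollary \ref{cor:woodimpliesfinitedefect} together with Theorem \ref{thm:defectj}; your unpacking of the retract argument via Lemma \ref{lemma:BPfree} is just the content of \ref{cor:woodimpliesfinitedefect} spelled out. Nothing further is needed.
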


\section{Real-oriented theories and higher real \texorpdfstring{$K$}{K}-theories}\label{sec:6} We compute the chromatic defect of the Real Johnson-Wilson theories $\mathrm{ER(n)}$ and Goerss-Hopkins-Miller theories $\mathrm{EO}_n(G)$, using Corollary \ref{cor:fixedpointspectrafinite}. As in Proposition \ref{prop:quotientstackandhtpyfixedpoints}, this comes down to understanding the quotient stacks $\Spec(\mathrm{E(n)}_*)/C_2$ and $\Spec((\mathrm{E}_n)_*)/G$ and in particular showing that the corresponding morphisms to $\mathcal{M}_{FG}(1)$ are affine.

\subsection{The Real Johnson-Wilson theories} The fixed points $\mathrm{ER(n)}=(\mathrm{E(n)})^{hC_2}$ with respect to the complex conjugation action give higher height versions of periodic real $K$-theory $\mathrm{KO}=\mathrm{ER(1)}$. At higher heights, however, the $C_2$-action on $\mathrm{E(n)}$ is not known to give an equivariant ring spectrum, and hence the fixed point spectrum $\mathrm{ER(n)}$ is not known to be a ring spectrum. Kitchloo--Lorman--Wilson show that $\mathrm{ER(n)}$ has a homotopy commutative ring structure, up to phantom maps \cite{kitchloo}, but no more than this is known at the moment. We will show that $\mathrm{ER(n)}\otimes\mathrm{X(2^n)}$ has homotopy groups concentrated in even degrees; its complex-orientability follows from Proposition \ref{prop:chi} if it is a homotopy associative ring spectrum, so we will instead verify a ring structure on $\mathrm{ER(n)}\otimes\mathrm{X(2^n)}$.

The lack of ring structure also makes it difficult to apply the methods of Mathew--Meier \cite{mathewmeier}, say, to obtain equivalences such as
\[\mathrm{MU}^{\otimes s}\otimes \mathrm{ER(n)}\simeq (\mathrm{MU}^{\otimes s}\otimes \mathrm{E(n)})^{hC_2}\]
as in Proposition \ref{prop:quotientstackandhtpyfixedpoints}. We can use a trick from genuine equivariant homotopy to get around this. 

\begin{lemma}\label{lemma:cofree}
Let $E$ be a Borel-complete genuine $C_p$-spectrum, $X$ a spectrum, and let $i_*:\Sp\to\Sp^{C_p}$ denote the unique symmetric monoidal colimit preserving functor. Then the natural map
\[E^{hC_p}\otimes X\to (E\otimes i_*X)^{hC_p}\] 
is an equivalence if and only if $E\otimes i_*X$ is also Borel complete. This is always true if $E$ is a module over a $C_p$-ring spectrum $R$ such that $\Phi^{C_p}(R)\simeq*$.
\end{lemma}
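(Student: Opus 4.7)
The plan is to factor the natural comparison map through the canonical genuine-to-homotopy fixed points comparison. First, since $(-)^{hC_p} = F(EC_{p+},-)^{C_p}$ and $E$ is assumed Borel complete, the canonical map $E^{C_p} \to E^{hC_p}$ is an equivalence. Next, I would invoke the projection formula for the symmetric monoidal adjunction $(i_*,(-)^{C_p})$, which gives a natural equivalence $E^{C_p} \otimes X \xrightarrow{\sim} (E \otimes i_* X)^{C_p}$. Putting these together, the natural map factors as
\[E^{hC_p} \otimes X \;\simeq\; E^{C_p} \otimes X \;\simeq\; (E \otimes i_* X)^{C_p} \;\longrightarrow\; (E \otimes i_* X)^{hC_p},\]
so it is an equivalence if and only if the final map is. By the definition of Borel completeness applied to $E \otimes i_* X$, that last comparison is an equivalence precisely when $E \otimes i_* X$ is Borel complete, which gives the ``if and only if.''

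For the final statement, the plan is to derive Borel completeness of $E \otimes i_* X$ from the hypothesis $\Phi^{C_p}(R)\simeq *$. Since $\Phi^{C_p}\colon \Sp^{C_p}\to \Sp$ is symmetric monoidal and colimit-preserving and $i_*$ is the unique such functor out of $\Sp$, the composite $\Phi^{C_p}\circ i_*$ must be the identity on $\Sp$; in particular $\Phi^{C_p}(i_* X)\simeq X$ for every spectrum $X$. Combined with symmetric monoidality of $\Phi^{C_p}$, the $R$-module hypothesis then gives $\Phi^{C_p}(E)\simeq \Phi^{C_p}(E\otimes i_*X)\simeq *$. I would next observe that the $R$-action on the Borel complete $E$ factors through the Borel completion $BR$ (using that $F(E,E)$ is itself Borel complete whenever $E$ is, since $F(E,-)$ preserves the limit $F(EC_{p+},-)$), so one may replace $R$ by $BR$ and assume $R$ is Borel complete. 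For Borel complete spectra the standard identification $\Phi^{C_p}(-)\simeq (i^*(-))^{tC_p}$ then yields $(i^*R)^{tC_p}\simeq *$. Since $(i^*E \otimes X)^{tC_p}$ is a module over the vanishing ring spectrum $(i^*R)^{tC_p}$, it vanishes for every $X$; this vanishing of the underlying Tate is exactly the condition that $E\otimes i_*X$ be Borel complete.

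The main technical ingredient is the projection formula for the adjunction $(i_*,(-)^{C_p})$, which is a standard consequence of $i_*$ being a symmetric monoidal left adjoint, together with the identification of geometric fixed points with the underlying Tate spectrum on the Borel complete subcategory. The main obstacle will be propagating the $R$-module structure through these identifications to conclude vanishing of $(i^*E \otimes X)^{tC_p}$ for arbitrary, non-dualizable $X$, which is what forces the reduction to a Borel complete $R$.
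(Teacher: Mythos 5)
Your overall strategy coincides with the paper's: factor the comparison map through genuine fixed points using $E^{C_p}\simeq E^{hC_p}$ together with the projection formula $(E\otimes i_*X)^{C_p}\simeq E^{C_p}\otimes X$, and then kill the obstruction to Borel completeness of $E\otimes i_*X$ using the $R$-module structure. One caveat on the first half: the projection formula is \emph{not} a formal consequence of $i_*$ being a symmetric monoidal left adjoint (for a general adjunction it only holds against dualizable $X$); here it holds because the genuine fixed point functor $(-)^{C_p}$ preserves colimits, so both sides are colimit-preserving functors of $X$ that agree on the sphere. That colimit-preservation is precisely the input the paper cites, so your factorization is fine once you justify the formula this way.

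The real issue is your concluding step: vanishing of $(E\otimes i_*X)^{tC_p}$ is \emph{not} ``exactly'' Borel completeness --- the cofiber of $EC_{p+}\to S^0$ has contractible underlying spectrum, hence vanishing Tate construction, but is not Borel complete. For a $C_p$-spectrum $Y$, Borel completeness is equivalent to the comparison $\Phi^{C_p}(Y)\to Y^{tC_p}$ being an equivalence, via the isotropy separation (Tate) square; so you need the vanishing of $\Phi^{C_p}(E\otimes i_*X)$ as well, which you did establish earlier in the paragraph --- you just need to invoke the square rather than the false equivalence to close the argument. Relatedly, your detour through the Borel completion of $R$ needs one more line: after replacing $R$ by its completion you must still know that its geometric fixed points vanish, which follows only because the unital ring map from $\Phi^{C_p}(R)\simeq *$ forces the target to vanish. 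The paper's proof avoids the detour entirely: the canonical lax monoidal comparison gives a ring map $\Phi^{C_p}(R)\to R^{tC_p}$, so both $\Phi^{C_p}(E\otimes i_*X)$ and $(E\otimes i_*X)^{tC_p}$ are modules over the zero ring $\Phi^{C_p}(R)$ and vanish at once, and Borel completeness follows from the Tate square.
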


\begin{proof}
This is immediate from the fact that $E^{C_p}\simeq E^{hC_p}$ and $(-)^{C_p}$ commutes with colimits. For the second claim, note that $\Phi^{C_p}(E\otimes i_*X)$ and $(E\otimes i_*X)^{tC_p}$ are both modules over $\Phi^{C_p}(R)$.
\end{proof}

We now state the crucial algebro-geometric input necessary to compute $\Phi(\mathrm{ER(n)})$.

\begin{proposition}\label{prop:ERnmultiplication}
    Suppose that the pullback stack
    \[\Spec(\mathrm{E(n)}_*)/C_2\times_{\mathcal{M}_{FG}(1)}\mathcal{M}_{FG}(m)\]
    is an affine scheme. Then $\mathrm{ER(n)}\otimes \mathrm{X(m')}$ admits a Landweber exact homotopy commutative ring structure for all $m'\ge m$, and in particular $\Phi(\mathrm{ER(n)})\le m$.
\end{proposition}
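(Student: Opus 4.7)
The plan is to combine Lemma \ref{lemma:cofree} with the Landweber exact functor theorem. I would first argue that $\mathrm{E}_\R(n)$ is Borel complete (which is standard for Lubin--Tate-like theories with finite subgroup actions, as the Tate construction vanishes), and observe that $\mathrm{E}_\R(n) \otimes i_* Y$ remains Borel complete for any spectrum $Y$, since $\widetilde{EC_2} \otimes \mathrm{E}_\R(n) \otimes i_* Y \simeq (\widetilde{EC_2} \otimes \mathrm{E}_\R(n)) \otimes i_* Y \simeq *$. Lemma \ref{lemma:cofree} then yields a natural equivalence
\[\mathrm{ER(n)} \otimes Y \simeq (\mathrm{E}_\R(n) \otimes i_* Y)^{hC_2}\]
for every spectrum $Y$.

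Applying this with $Y = \mathrm{MU}^{\otimes s} \otimes \mathrm{X(m')}$ for $s = 0, 1, 2$, the resulting homotopy fixed point spectral sequences have $E_2$-pages of the form
\[H^*\bigl(C_2;\; \mathrm{E(n)}_* \otimes_{\mathrm{MU}_*} \mathrm{MU}_*\mathrm{MU}^{\otimes s} \otimes_{\mathrm{MU}_*} \mathrm{MU}_*\mathrm{X(m')}\bigr),\]
with $C_2$ acting by complex conjugation on the $\mathrm{E(n)}$-factor only. Using Propositions \ref{prop:affinenessandpullback} and \ref{prop:stackpullbackX(n)}, these identify with the cohomology of the structure sheaf on the quotient stacks obtained as iterated base changes of $\Spec(\mathrm{E(n)}_*)/C_2 \times_{\mathcal{M}_{FG}(1)} \mathcal{M}_{FG}(m')$ over $\mathcal{M}_{FG}(1)$. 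The hypothesis, together with flat base change along $\mathcal{M}_{FG}(m') \to \mathcal{M}_{FG}(m)$ for $m' \geq m$ and the flatness of $\mathcal{M}_{FG}(m')$ over $\mathcal{M}_{FG}(1)$, guarantees that each of these pullback stacks is affine, so every such HFPSS collapses on the 0-line. This yields an explicit presentation of the $\mathrm{MU}$-homology Hopf algebroid of $\mathrm{ER(n)} \otimes \mathrm{X(m')}$ as the coordinate ring of the corresponding affine scheme.

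Letting $A' := \pi_*(\mathrm{ER(n)} \otimes \mathrm{X(m')})$, I would then verify that $A'$ is Landweber exact over $\mathrm{MU}_*$ by faithfully flat descent along the finite cover $\Spec(B) \to \Spec(A')$ arising from the $C_2$-quotient presentation, where $B := \mathrm{E(n)}_* \otimes_{\mathrm{MU}_*} \mathrm{MU}_*\mathrm{X(m')}$ is itself Landweber exact. The Landweber exact functor theorem then provides a homotopy commutative ring spectrum $E_{A'}$ whose $\mathrm{MU}$-homology Hopf algebroid coincides with the one computed above, and the rigidity of Landweber exact spectra (the fact that a spectrum whose $\mathrm{MU}$-homology Hopf algebroid matches a Landweber exact one is canonically equivalent to the associated representing spectrum) produces an equivalence $\mathrm{ER(n)} \otimes \mathrm{X(m')} \simeq E_{A'}$, endowing the left-hand side with its Landweber exact homotopy commutative ring structure. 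The bound $\Phi(\mathrm{ER(n)}) \leq m$ then follows from Corollary \ref{cor:finitephiaffine}, since $\mathcal{M}_{\mathrm{ER(n)} \otimes \mathrm{X(m')}} \simeq \Spec(A')$ is affine. I expect the main obstacle to be this final identification step: since $\mathrm{ER(n)}$ is not a priori a ring spectrum, there is no natural ring map to begin from, so the equivalence must be produced purely from agreement of $\mathrm{MU}$-homology Hopf algebroids, requiring care with the functoriality of the Landweber exact construction.
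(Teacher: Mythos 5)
Your overall strategy is the same as the paper's: use Lemma \ref{lemma:cofree} to write $\mathrm{ER(n)}\otimes Y\simeq(\mathrm{E}_{\R}(n)\otimes i_*Y)^{hC_2}$, use affineness of the pullback to collapse the resulting homotopy fixed point spectral sequences on the zero line, and then identify $\mathrm{ER(n)}\otimes\mathrm{X(m')}$ with the Landweber exact spectrum attached to the (flat) affine pullback. The genuine gap is in your last step. You compute only the $\mathrm{MU}$-homology Hopf algebroid of $\mathrm{ER(n)}\otimes\mathrm{X(m')}$ and then invoke a ``rigidity of Landweber exact spectra'' principle, asserting that any spectrum whose $\mathrm{MU}$-homology Hopf algebroid agrees with that of a Landweber exact spectrum is canonically equivalent to it. No such statement is available as a black box: an abstract isomorphism of comodule algebras does not produce a map of spectra, and since $\mathrm{ER(n)}$ is not known to be a ring there is no unit or ring map to seed such a comparison (you flag this yourself, but it is exactly the point that needs an argument). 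The paper closes this gap by making the collapse argument \emph{natural in a test spectrum} $Z$: for every $Z$ the HFPSS $H^*(C_2;(\mathrm{E(n)}\otimes\mathrm{X(m)})_*Z)\Rightarrow(\mathrm{ER(n)}\otimes\mathrm{X(m)})_*(Z)$ is concentrated on the zero line, and since $\Spec((\mathrm{E(n)}\otimes\mathrm{X(m)})_*)\to\Spec(R_*)$ is a $C_2$-torsor and $\mathrm{E(n)}\otimes\mathrm{X(m)}$ is Landweber exact, the zero line is naturally $\mathrm{MU}_*(Z)\otimes_{\mathrm{MU}_*}R_*$. This yields an isomorphism of \emph{homology theories}, and only then does uniqueness of the representing (Landweber exact) spectrum give the equivalence and the ring structure. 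So the missing idea is to run your $s=0,1,2$ computation with an arbitrary $Z$ smashed in and compare homology theories, not Hopf algebroids. (Your descent argument for Landweber exactness of $A'$ is fine, though the paper gets it more directly from flatness of $\Spec(R_*)\to\mathcal{M}_{FG}(1)$ as a composite of flat maps.)

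Two smaller points. First, your justification of Borel completeness of $\mathrm{E}_{\R}(n)\otimes i_*Y$ from $\widetilde{EC_2}\otimes\mathrm{E}_{\R}(n)\otimes i_*Y\simeq *$ alone is not valid: vanishing of the $\widetilde{EC_2}$-smash kills the geometric fixed points but not the Tate construction (e.g.\ $\Sigma^\infty EC_{2+}$ has vanishing $\widetilde{EC_2}$-smash but is not cofree), so ``free'' does not imply ``cofree.'' The correct repair is the second clause of Lemma \ref{lemma:cofree}: $\mathrm{E}_{\R}(n)\otimes i_*Y$ is a module over the genuine ring $\mathrm{E}_{\R}(n)$ with $\Phi^{C_2}(\mathrm{E}_{\R}(n))\simeq *$, so both its geometric fixed points and its Tate spectrum vanish, which is what gives Borel completeness. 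Second, Corollary \ref{cor:finitephiaffine} is stated for homotopy commutative ring spectra $E$ and is not directly applicable to $E=\mathrm{ER(n)}$; but this is harmless, since once $\mathrm{ER(n)}\otimes\mathrm{X(m')}$ is identified with a Landweber exact homotopy commutative ring spectrum it is complex-orientable, which already gives $\Phi(\mathrm{ER(n)})\le m$.
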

\begin{proof}
    Let $\Spec(R_*)$ denote the affine scheme given by the above pullback; then the map $\Spec(R_*)\to\mathcal{M}_{FG}(1)$ is flat. Indeed this follows from the fact that $\mathcal{M}_{FG}(n)\to \mathcal{M}_{FG}(1)$ is flat and $\Spec(\mathrm{E(n)}_*)/C_2\to \mathcal{M}_{FG}(1)$ is flat. The former is Proposition \ref{prop:stackpullbackX(n)} and the latter follows from the facts that $\Spec(\mathrm{E(n)}_*)\to\Spec(\mathrm{E(n)}_*)/C_2$ is faithfully flat and $\Spec(\mathrm{E(n)}_*)\to \mathcal{M}_{FG}(1)$ is flat as $E(n)$ is Landweber exact.

    There is therefore a Landweber exact homotopy commutative ring spectrum $R$ whose formal group induces the above map $\Spec(R_*)\to\mathcal{M}_{FG}(1)$. We can identify the spectra $R$ and $\mathrm{ER(n)}\otimes\mathrm{X(m)}$ by producing an isomorphism of homology theories
    \[\mathrm{MU}_*(Z)\otimes_{\mathrm{MU}_*}R_*\to (\mathrm{ER(n)}\otimes\mathrm{X(m)})_*(Z)\]
    Since
    \[\Spec(\mathrm{E(n)}_*)/C_2\times_{\mathcal{M}_{FG}(1)}\mathcal{M}_{FG}(m)\]
    is an affine scheme, the homotopy fixed point spectral sequence
    \[H^*(C_2;(\mathrm{E(n)}\otimes\mathrm{X(m)})_*Z)\implies (\mathrm{ER(n)}\otimes\mathrm{X(m)})_*(Z)\]
    is concentrated on the zero-line, where we have used Lemma \ref{lemma:cofree} to identify the target of the spectral sequence. The natural map 
    \[(\mathrm{ER(n)}\otimes\mathrm{X(m)})_*(Z)\to ((\mathrm{E(n)}\otimes\mathrm{X(m)})_*(Z))^{C_2}\]
    is then an isomorphism. Note now that the map
    \[\Spec((\mathrm{E(n)}\otimes\mathrm{X(m)})_*)\to\Spec(R_*)\]
    is a $C_2$-torsor, as it is the base change of the $C_2$-torsor $\Spec(\mathrm{E(n)}_*)\to\Spec(\mathrm{E(n)}_*)/C_2$ along the map $\mathcal{M}_{FG}(m)\to\mathcal{M}_{FG}(1)$. This gives a natural identification
    \[((\mathrm{E(n)}\otimes\mathrm{X(m)})_*(Z))^{C_2}\cong (\mathrm{MU}_*Z\otimes_{\mathrm{MU}_*}(\mathrm{E(n)}\otimes\mathrm{X(m)})_*)^{C_2}\cong \mathrm{MU}_*Z\otimes_{\mathrm{MU}_*}R_*\]
    using in the first identification that $\mathrm{E(n)}\otimes\mathrm{X(m)}$ is Landweber exact.

    Since $\mathcal{M}_{FG}(m')\to\mathcal{M}_{FG}(m)$ is an affine morphism for $m'\ge m$, if the pullback in the statement of the proposition is an affine scheme, then so is 
    \[\Spec(\mathrm{E(n)}_*)/C_2\times_{\mathcal{M}_{FG}(1)}\mathcal{M}_{FG}(m')\]
    and the same argument now applies to $\mathrm{ER(n)}\otimes \mathrm{X(m')}$.
\end{proof}

Under the assumptions of Proposition \ref{prop:ERnmultiplication}, we can let $m=\infty$, and we deduce that $\mathrm{ER(n)}\otimes \mathrm{MU}$ is a Landweber exact homotopy commutative ring spectrum, and hence we can associate a stack to $\mathrm{ER(n)}$ exactly as in Definition \ref{def:stack}, we set
\[\mathcal{M}_{\mathrm{ER(n)}}:=\mathcal{M}_{(\mathrm{MU}_{2*}\mathrm{ER(n)},\mathrm{MU}_{2*}\mathrm{MU}\otimes_{\mathrm{MU}_{2*}}\mathrm{MU}_{2*}\mathrm{ER(n)})}\]
All the usual properties for $\mathcal{M}_E$ hold as well for $\mathcal{M}_{\mathrm{ER(n)}}$, such as Proposition \ref{prop:affinenessandpullback}, since these properties only ever make use of the ring structure on $\mathrm{MU}_{2*}E$.

\begin{proposition}\label{prop:stacker(n)}
    Assuming that, for some $m$, 
    \[\Spec(\mathrm{E(n)}_*)/C_2\times_{\mathcal{M}_{FG}(1)}\mathcal{M}_{FG}(m)\]
    is an affine scheme, there is an equivalence of $\mathbb{G}_m$-stacks
    \[\mathcal{M}_{\mathrm{ER(n)}}\simeq\Spec(\mathrm{E(n)}_*)/C_2\]
    over $\mathcal{M}_{FG}(1)$.
\end{proposition}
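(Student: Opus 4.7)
The strategy is to imitate the argument of Proposition \ref{prop:quotientstackandhtpyfixedpoints}, with $R = \mathrm{E(n)}$ and $G = C_2$, after first upgrading the affineness hypothesis so that all required ingredients are available. The preceding discussion in the paper already observes that $\mathrm{ER(n)} \otimes \mathrm{MU}$ may be given the structure of a Landweber exact homotopy commutative ring spectrum, so that $\mathcal{M}_{\mathrm{ER(n)}}$ is well-defined; my plan is to make that precise and then run the Hopf algebroid identification.

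First, I would promote the hypothesis to the statement that $p : \mathcal{N} := \Spec(\mathrm{E(n)}_*)/C_2 \to \mathcal{M}_{FG}(1)$ is an affine morphism. By Proposition \ref{prop:stackpullbackX(n)}, the map $\mathcal{M}_{FG}(m) \to \mathcal{M}_{FG}(1)$ is faithfully flat and affine, and the hypothesis says that the base change of $p$ along this cover is an affine scheme, hence the base-changed morphism (to $\mathcal{M}_{FG}(m)$) is affine. Since affineness of morphisms satisfies fpqc descent, $p$ itself is affine. In particular the pullbacks
\[\Spec(T_1) := \mathcal{N} \times_{\mathcal{M}_{FG}(1)} \Spec(L), \qquad \Spec(T_2) := \mathcal{N} \times_{\mathcal{M}_{FG}(1)} \Spec(L) \times_{\mathcal{M}_{FG}(1)} \Spec(L)\]
are affine schemes.

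Second, I would apply the argument of Proposition \ref{prop:ERnmultiplication} with $\mathrm{MU}$ in place of $\mathrm{X(m)}$. Using Lemma \ref{lemma:cofree} — with the input that $\mathrm{E(n)}$ is Borel-complete (it is a module over the genuine $C_2$-spectrum $\mathrm{E}_{\mathbb{R}}(n)$ whose relevant Tate-type obstruction vanishes, which is the standard Hu--Kriz input used implicitly in Proposition \ref{prop:ERnmultiplication}) — I obtain equivalences
\[\mathrm{MU}^{\otimes s} \otimes \mathrm{ER(n)} \simeq (\mathrm{MU}^{\otimes s} \otimes \mathrm{E(n)})^{hC_2}\]
for $s = 1, 2$. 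The homotopy fixed point spectral sequences for these are concentrated on the zero-line by affineness of $\Spec(T_1)$ and $\Spec(T_2)$ respectively, giving natural identifications
\[\mathrm{MU}_{2*}\mathrm{ER(n)} \cong T_1, \qquad \mathrm{MU}_{2*}(\mathrm{MU} \otimes \mathrm{ER(n)}) \cong T_2,\]
where the right-hand sides are endowed with the ring/Hopf algebroid structure inherited from $\mathcal{N}$.

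Finally, these identifications match the presentation of the Hopf algebroid $(\mathrm{MU}_{2*}\mathrm{ER(n)}, \mathrm{MU}_{2*}\mathrm{MU} \otimes_{\mathrm{MU}_{2*}} \mathrm{MU}_{2*}\mathrm{ER(n)})$ defining $\mathcal{M}_{\mathrm{ER(n)}}$ with the presentation $(T_1, T_2)$ of $\mathcal{N}$ coming from the faithfully flat affine cover $\Spec(L) \to \mathcal{M}_{FG}(1)$ pulled back along $p$. This yields the claimed equivalence of $\mathbb{G}_m$-stacks $\mathcal{M}_{\mathrm{ER(n)}} \simeq \Spec(\mathrm{E(n)}_*)/C_2$ over $\mathcal{M}_{FG}(1)$, with the stated compatibilities. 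The main obstacle is the verification of the Borel-completeness hypothesis needed to invoke Lemma \ref{lemma:cofree} with $X = \mathrm{MU}^{\otimes s}$ — everything else is a now-standard manipulation of Hopf algebroids and flat descent once that input is in hand.
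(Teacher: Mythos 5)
Your argument is correct and is essentially the paper's proof: the paper likewise deduces affineness of $p:\Spec(\mathrm{E(n)}_*)/C_2\to\mathcal{M}_{FG}(1)$ by descent along the faithfully flat morphism $\mathcal{M}_{FG}(m)\to\mathcal{M}_{FG}(1)$ and then invokes Lemma \ref{lemma:cofree} to verify the hypotheses of Proposition \ref{prop:quotientstackandhtpyfixedpoints}. The only difference is that you re-run the Hopf algebroid identification from the proof of Proposition \ref{prop:quotientstackandhtpyfixedpoints} (and spell out the Borel-completeness input for $\mathrm{E}_{\mathbb R}(n)$) instead of simply citing it.
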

\begin{proof}
    Since the morphism $\mathcal{M}_{FG}(m)\to \mathcal{M}_{FG}(1)$ is faithfully flat, the assumptions imply that the map $p:\Spec(\mathrm{E(n)}_*)/C_2\to\mathcal{M}_{FG}(1)$ is affine. This, along with Lemma \ref{lemma:cofree}, imply that the conditions of Proposition \ref{prop:quotientstackandhtpyfixedpoints} are satisfied.
\end{proof}

\begin{remark}
    Hopkins gives a description of $\mathcal{M}_{\mathrm{KO}}$ as the moduli stack of nonsingular quadratic equations and strict coordinate transformations, as in Example \ref{example:hopkinsstacks}. In fact, this description falls immediately out of Proposition \ref{prop:stacker(n)} using that $\mathrm{KO}\simeq\mathrm{ER(1)}$. It is possible to make similar identifications to give a modular description of $\mathcal{M}_{\mathrm{ER(n)}}$ for all heights $n$, which we give in \cite{thesis} and intend to return to in future work. 
\end{remark}

We turn now to proving that the pullback in Proposition \ref{prop:ERnmultiplication} is indeed affine.

\begin{theorem}\label{thm:defectern}
    For all $m\ge2^n$, $\mathrm{ER(n)}\otimes\mathrm{X(m)}$ is a Landweber exact homotopy commutative ring spectrum. Moreover, $\Phi(\mathrm{ER(n)})=2^n$.
\end{theorem}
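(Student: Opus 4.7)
The proof splits into two parts: an upper bound producing a Landweber exact homotopy commutative ring structure on $\mathrm{ER(n)}\otimes\mathrm{X(m)}$ for $m\geq 2^n$ (which implies $\Phi(\mathrm{ER(n)})\leq 2^n$), and a lower bound $\Phi(\mathrm{ER(n)})\geq 2^n$. By Proposition \ref{prop:ERnmultiplication}, the upper bound reduces to showing that the pullback stack
\[\Spec(\mathrm{E(n)}_*)/C_2\times_{\mathcal{M}_{FG}(1)}\mathcal{M}_{FG}(m)\]
is an affine scheme at $m=2^n$; in fact I will control its affineness at every $m$ uniformly.

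The nontrivial element of $C_2$ acts on the universal formal group law $F$ over $\mathrm{E(n)}_*$ by inversion $[-1]_F$. Following the Noetherian argument of Proposition \ref{prop:quotientstackhasfinitechromaticdefect}, the above pullback is affine precisely when the ideal $J_m\subseteq\mathrm{E(n)}_*$ generated by the coefficients of $[-1]_F(x)-x$ through degree $m$ is the unit ideal. Using the Hazewinkel logarithm $\log_F(x)=\sum \ell_i x^{2^i}$, where $2\ell_i\equiv v_i$ modulo $(v_1,\ldots,v_{i-1})$, a short expansion of $[-1]_F(x)=\exp_F(-\log_F(x))$ shows that modulo $(2,v_1,\ldots,v_{k-1})$ one has $[-1]_F(x)-x\equiv v_k x^{2^k}+O(x^{2^k+1})$. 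Consequently $J_{2^k}\supseteq(2,v_1,\ldots,v_k)$, and since $v_n$ is a unit in $\mathrm{E(n)}_*$ we conclude $J_{2^n}=\mathrm{E(n)}_*$. Invoking Proposition \ref{prop:ERnmultiplication} then delivers the Landweber exact ring structure for $m\geq 2^n$ and the inequality $\Phi(\mathrm{ER(n)})\leq 2^n$.

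For the lower bound, the ring structure on $\mathrm{ER(n)}\otimes\mathrm{MU}$ just produced makes Proposition \ref{prop:stacker(n)} applicable and yields $\mathcal{M}_{\mathrm{ER(n)}}\simeq\Spec(\mathrm{E(n)}_*)/C_2$. Suppose toward a contradiction that $\Phi(\mathrm{ER(n)})\leq m$ for some $m<2^n$. Then $\mathrm{ER(n)}\otimes\mathrm{X(m)}$ is a weak $\mathrm{MU}$-module by Theorem \ref{thm:weakMU}, and Propositions \ref{prop:affinenessandpullback} and \ref{prop:stackpullbackX(n)} force the pullback stack above to be affine. However the same calculation shows $J_m\subseteq(2,v_1,\ldots,v_{n-1})$ for $m<2^n$; so at the geometric point of $\mathrm{E(n)}_*$ valued in $\overline{\mathbb{F}_2}$ given by $v_1=\cdots=v_{n-1}=0$ and $v_n=1$, the power series $[-1]_F(x)-x$ is divisible by $x^{2^n}$, and the nontrivial $C_2$-automorphism descends to a nontrivial stabilizer in the pullback. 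This contradicts affineness and yields $\Phi(\mathrm{ER(n)})\geq 2^n$.

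The main technical step is the leading-term computation---that the coefficient of $x^{2^k}$ in $[-1]_F(x)-x$ is a unit times $v_k$ modulo $(2,v_1,\ldots,v_{k-1})$. I intend to verify this by specializing to the height-$k$ locus, where $\log_F(x)=x+(v_k/2)x^{2^k}+O(x^{2^{k+1}})$; a computation of $\exp_F$ to matching order gives $[-1]_F(x)=-x-v_k x^{2^k}+O(x^{2^k+1})$, which is the required reduction after working modulo $2$. A secondary issue is that $\mathrm{ER(n)}\otimes\mathrm{X(m)}$ is not known to carry a ring structure for $m<2^n$; this is resolved by defining the stack $\mathcal{M}_{\mathrm{ER(n)}\otimes\mathrm{X(m)}}$ via the Hopf algebroid built from the ring structure on $\mathrm{MU}\otimes\mathrm{ER(n)}$, using flatness of $\mathrm{MU}_{2*}\mathrm{X(m)}$ over $\mathrm{MU}_*$, so the pullback identification used for the lower bound remains valid.
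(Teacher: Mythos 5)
Your overall architecture matches the paper's: the upper bound is funneled through Proposition \ref{prop:ERnmultiplication} by showing the pullback $\Spec(\mathrm{E(n)}_*)/C_2\times_{\mathcal{M}_{FG}(1)}\mathcal{M}_{FG}(2^n)$ is discrete (hence affine), and the lower bound uses the stack identification of Proposition \ref{prop:stacker(n)} together with a nontrivial $C_2$-stabilizer at the point killing $(2,v_1,\ldots,v_{n-1})$, with the non-ring issue for $\mathrm{ER(n)}\otimes\mathrm{X(m)}$, $m<2^n$, handled exactly as in the paper via the ring structure on $\mathrm{MU}\otimes\mathrm{ER(n)}$. Where you genuinely differ is the discreteness step: you compute the leading term of $[-1]_F(x)-x$, namely $v_kx^{2^k}$ modulo $(2,v_1,\ldots,v_{k-1})$, and conclude that your ideal $J_{2^n}$ contains $(2,v_1,\ldots,v_n)$ and hence is the unit ideal; the paper avoids any coefficient computation by first deducing from $\gamma$-fixedness and invertibility of $v_n$ that $2=0$ in $R$, and then contradicting height $\le n$ via $0=F_P(x,[-1]_{F_P}(x))\equiv F_P(x,x)=[2]_{F_P}(x)\bmod x^{2^n+1}$ (it only cites \cite[Proposition 3.5]{BHSZ} for the lower-bound congruence, which your calculation reproves). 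Your route is valid and has the virtue of giving both bounds from one computation, but two points should be made explicit. First, the conjugation action is by the \emph{strict} isomorphism $-[-1]_F(x)$, not $[-1]_F(x)$, and a nontrivial automorphism in the quotient stack also requires the point to be fixed by $\gamma$, which (since $\gamma(v_n)=-v_n$ and $v_n$ is a unit) forces $2=0$ in $R$; your criterion works only because $J_m$ happens to contain the degree-one coefficient $-2$, so that the sign discrepancy and the fixed-point condition are silently absorbed — this should be spelled out, and the reference for ``affine precisely when $J_m$ is the unit ideal'' is really the discreteness-plus-Katz--Mazur step in the proof of Proposition \ref{prop:quotientstackhasfinitechromaticdefect}, not its Noetherian stabilization argument. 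Second, your verification of the leading-term claim via $\log_F$ must be performed over the torsion-free quotient $\mathrm{E(n)}_*/(v_1,\ldots,v_{k-1})$ (where the logarithm exists rationally and the lower $\ell_i$ vanish), and only then reduced mod $2$; phrased that way the computation $[-1]_F(x)\equiv -x-v_kx^{2^k}+O(x^{2^k+1})$ is correct, or one can shortcut it via $[-1]_F(x)-x=-[2]_F(x)\cdot(\mathrm{unit})$ in characteristic $2$ together with the standard congruence $[2]_F(x)\equiv v_kx^{2^k}$ mod $(2,v_1,\ldots,v_{k-1},x^{2^k+1})$.
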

\begin{proof}
    We begin by showing that $\Phi(\mathrm{ER(n)})\le 2^n$; we need to show that the pullback 
    \[\mathcal{P}:=\Spec(\mathrm{E(n)}_*)/C_2\times_{\mathcal{M}_{FG}(1)}\mathcal{M}_{FG}(2^n)\]
     is an affine scheme. As in the proof of Proposition \ref{prop:quotientstackhasfinitechromaticdefect} following \cite[Theorem A7.1.1]{katzmazur}, since $\Spec(\mathrm{E(n)}_*)/C_2$ is a quotient stack, it suffices to show $\mathcal P$ is discrete. An object $P\in \mathcal P(R)$ has a nontrivial automorphism if and only if the nontrivial element $\gamma\in C_2$ acts on the formal group $F_P$ over $R$ by a power series $f(x)$ with the property that $f(x)\equiv x\mod x^{2^n+1}$.

     The element $\gamma$, however, acts by formal inversion on $F_P$. More specifically, the complex-conjugation action map $\gamma:\mathrm{MU}\to \mathrm{MU}$ induces the map on homotopy groups classifying the conjugate of the universal formal group law $F_{univ}$ by $-[-1]_{F_{univ}}(x)$ (see \cite[Example 11.19]{hhr}). Thus if $P$ has a nontrivial automorphism, then
\[[-1]_{F_P}(x)\equiv x\mod x^{2^n+1}\]
     Now since $v_n$ is a unit in $R$, and $\gamma^*F_P=F_P$, it follows from the fact that $\gamma(v_n)=-v_n$ that $2v_n=0\in R$, so $R$ is an $\F_2$-algebra, and in particular $-[-1]_{F_P}(x)=[-1]_{F_P}(x)$. This implies that
\[0=F_P(x,[-1]_{F_P}(x))\equiv F_P(x,x)\mod x^{2^n+1}\]
but the right hand side is $[2]_{F_P}(x)$, and $F_P$ has height $\le n$, a contradiction for a nonzero ring $R$.

To see that $\Phi(\mathrm{ER(n)})\ge 2^n$, suppose to the contrary that $\mathrm{ER(n)}\otimes \mathrm{X(2^n-1)}$ is complex orientable. By Proposition \ref{prop:stacker(n)} it follows that there is a pullback square
\[
\begin{tikzcd}
\mathcal M_{\mathrm{ER(n)}\otimes \mathrm{X(2^n-1)}}\arrow[r]\arrow[d]&\mathcal M_{FG}(2^n-1)\arrow[d]\\
\mathrm{Spec}(\mathrm{E(n)}_*)/C_2\arrow[r]&\mathcal M_{FG}(1)
\end{tikzcd}
\]
If the pullback $\mathcal M_{\mathrm{ER(n)}\otimes \mathrm{X(2^n-1)}}$ were affine, it would be discrete. This is a contradiction because we can consider the point of the pullback at the ring $\F_2[v_n^{\pm}]$ determined by the ring map $\mathrm{E(n)}_*\to\F_2[v_n^{\pm}]$ which kills $(2,v_1,\ldots,v_{n-1})$. This has a nontrivial automorphism because 
\[[-1]_{F_{univ}}(x)\equiv x\mod (2,v_1,\ldots,v_{n-1},v_nx^{2^n})\]
(see \cite[Proposition 3.5]{BHSZ})
\end{proof}

\subsection{Higher real \texorpdfstring{$K$}{K}-theories} We turn now to the determination of the chromatic defect of Goerss--Hopkins--Miller higher real $K$-theories. As before we fix a height $n$ formal group $\Gamma$ over a perfect field $k$ of characteristic $p$ and a finite subgroup $G\subset\mathbb G_n$ of the corresponding Morava stabilizer group, and we let $\mathrm{EO}_n(G)$ denote the fixed points of $\mathrm{E}(k,\Gamma)$ with respect to $G$. This follows a similar path to the previous subsection: we identify $\mathcal{M}_{\mathrm{EO}_n(G)}$ with a quotient stack and then use a hands-on stacks argument to determine the minimum $m$ such that the stack becomes affine after pulling back to $\mathcal{M}_{FG}(m)$. We have already made the first observation in the previous section, which we recall here. 

\begin{proposition}\label{prop:stackeo_n}
    There is a $\mathbb{G}_m$-equivariant equivalence of stacks
    \[\mathcal{M}_{\mathrm{EO}_n(G)}\simeq \Spec((\mathrm{E}_n)_*)/G\]
    over $\mathcal{M}_{FG}(1)$. 
\end{proposition}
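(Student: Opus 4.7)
The plan is to recognize this proposition as essentially a repackaging of Corollary \ref{cor:fixedpointspectrafinite}, whose proof is a direct application of Proposition \ref{prop:quotientstackandhtpyfixedpoints} to $R=\mathrm{E}(k,\Gamma)$ with its $G$-action. First I would unwind the hypotheses: $R=\mathrm{E}_n$ is even-periodic, Landweber exact, complex-orientable, and carries an $\mathbb{E}_\infty$-ring $G$-action by the Goerss--Hopkins--Miller theorem, while $\mathrm{EO}_n(G)=R^{hG}$ by definition. Proposition \ref{prop:quotientstackandhtpyfixedpoints} will then give the desired equivalence as soon as one verifies (a) the morphism $p:\Spec((\mathrm{E}_n)_*)/G\to\mathcal{M}_{FG}(1)$ is affine, and (b) the smash-product identifications $\mathrm{MU}^{\otimes s}\otimes\mathrm{EO}_n(G)\simeq(\mathrm{MU}^{\otimes s}\otimes\mathrm{E}_n)^{hG}$ for $s=1,2$.

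For (a), I would invoke the Mathew--Meier criterion recalled right after Proposition \ref{prop:quotientstackandhtpyfixedpoints}: since $\mathrm{E}_n$ is Landweber exact, $p$ is affine exactly when, for every field-valued point $x$ of $\Spec((\mathrm{E}_n)_*)$, the stabilizer $G_x\subset G$ acts faithfully on the formal group classified by $x$. This is the classical content of the Lubin--Tate picture: the $G$-action on $(\mathrm{E}_n)_*$ was built to extend the tautological embedding $G\hookrightarrow\mathbb{G}_n\to\Aut(\Gamma)$, so at the closed point of $\Spec((\mathrm{E}_n)_*)$ (where the formal group is $\Gamma$ itself) the stabilizer is all of $G$ and the action is faithful by construction. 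At a general field-valued point $x$, one argues as in the proof of Proposition \ref{prop:quotientstackhasfinitechromaticdefect}: for $g\in G_x$, the power series $[g](t)-t$ has coefficients $b_i(g)\in(\mathrm{E}_n)_*$ whose vanishing at $x$ forces them to lie in the vanishing ideal of $x$; if all such $b_i(g)$ are in this ideal, the Noetherian-stabilization argument shows $[g](t)=t$ already on some universal closed subscheme containing both $x$ and the closed point, yielding $g=e$. Once (a) is in hand, \cite[Theorem 5.10]{mathewmeier} gives that $\mathrm{EO}_n(G)\to\mathrm{E}_n$ is a faithful $G$-Galois extension, and (b) is then automatic.

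With (a) and (b) verified, Proposition \ref{prop:quotientstackandhtpyfixedpoints} delivers the $\mathbb{G}_m$-equivariant equivalence $\mathcal{M}_{\mathrm{EO}_n(G)}\simeq\Spec((\mathrm{E}_n)_*)/G$ over $\mathcal{M}_{FG}(1)$. The only nontrivial ingredient is the faithfulness check (a) at non-closed field-valued points, where a priori an element of the stabilizer might truncate to the identity on a specialization without being trivial on $\Gamma$; the ascending-chain argument on the ideals generated by the $b_i(g)$ is precisely what rules this out, and this is the step I would expect to write out most carefully. Everything else is a formal consequence of the general machinery already assembled in Section~\ref{sec:5}.
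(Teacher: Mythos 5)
Your overall route is the same as the paper's: the paper likewise deduces the equivalence from Proposition \ref{prop:quotientstackandhtpyfixedpoints}, obtaining affineness of $p:\Spec((\mathrm{E}_n)_*)/G\to\mathcal{M}_{FG}(1)$ and the identifications $\mathrm{MU}^{\otimes s}\otimes \mathrm{EO}_n(G)\simeq(\mathrm{MU}^{\otimes s}\otimes\mathrm{E}_n)^{hG}$ from the fact that $\mathrm{EO}_n(G)\to\mathrm{E}_n$ is a $G$-Galois extension, and for both points it simply appeals to \cite[Theorem 5.10]{mathewmeier}. Where you diverge is in trying to establish the pointwise faithfulness criterion by hand, and that sub-argument has a genuine gap. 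The ``Noetherian-stabilization'' mechanism you import from Proposition \ref{prop:quotientstackhasfinitechromaticdefect} is the wrong tool: stabilization of the ideals $(b_i(g): i<n)$ only shows that a congruence $[g](x)\equiv x \bmod x^{N+1}$ to sufficiently high order forces $[g](x)=x$; it says nothing about what happens when all the $b_i(g)$ vanish at a single field-valued point. What you actually need is that for $\pi(g)\neq e$ the common vanishing locus of the $b_i(g)$ is empty, and the correct reason is the one the paper itself uses later in the proof of Theorem \ref{thm:defecteon}: since $(\mathrm{E}_n)_0=W(k)[[u_1,\ldots,u_{n-1}]]$ is a complete local ring and $\pi(g)$ acts nontrivially on $\Gamma$ over the residue field $k$, some coefficient $t_i(g)$ is nonzero modulo $\mathfrak{m}$, hence a unit, hence lies in no prime. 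Your specialization-to-the-closed-point instinct is sound --- every prime of a local ring is contained in $\mathfrak{m}$ --- but it is locality, not the ascending chain condition, that does the work.

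A second gap is the conclusion ``yielding $g=e$'': since $G\subset\mathbb{G}_n=\mathrm{Gal}(k/\F_p)\rtimes\Aut(\Gamma)$ may contain elements with nontrivial Galois part and $\pi(g)=e$, triviality of the action on the formal group does not give $g=e$. One must check separately that such elements never fix a geometric point of $\Spec((\mathrm{E}_n)_*)$ (the Galois action on $W(k)$, hence on its image in any residue field, is faithful), so they contribute no automorphisms to the quotient stack. All of this, including the Galois subtlety, is exactly what \cite[Theorem 5.10]{mathewmeier} packages, which is why the paper's proof is essentially a citation followed by Proposition \ref{prop:quotientstackandhtpyfixedpoints}; if you want a self-contained argument, replace the ACC step with the unit-coefficient argument and treat the Galois case separately.
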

\begin{proof}
    As in \cite[Theorem 5.10]{mathewmeier}, the stabilizer of any point $x\in \Spec((\mathrm{E}_n)_*)/G$ acts faithfully on the formal group since $G$ is a subgroup of the Morava stabilizer group, thus the morphism $\Spec((\mathrm{E}_n)_*)/G\to \mathcal{M}_{FG}(1)$ is affine, and $\mathrm{EO}_n(G)\to \mathrm{E}_n$ is a $G$-Galois extension. The latter implies that $(\mathrm{MU}^{\otimes s}\otimes \mathrm{E}_n)^{hG}\simeq \mathrm{MU}^{\otimes s}\otimes \mathrm{EO}_n(G)$, and Proposition \ref{prop:quotientstackandhtpyfixedpoints} applies.
\end{proof}

We will use the valuation on $\mathrm{End}(\Gamma)$ to calculate the chromatic defect of $\mathrm{EO}_n(G)$ in the following way. Fix a $p$-typical universal deformation $\tilde{\Gamma}$ of $\Gamma$, and an automorphism $g\in\mathrm{End}(\Gamma)^\times$ of $\Gamma$. By universal property, there is an induced isomorphism
\[[g]:\tilde{\Gamma}\to [g]^*\tilde{\Gamma}\]
of formal group laws over $W(k)[[u_1,\ldots,u_{n-1}]]$. Since we have chosen a $p$-typical coordinate, we may write
\[[g](x)=x+_{g^*\tilde \Gamma}\sum\limits_{i\ge 1} {^{g^*\tilde \Gamma}}t_i(g)x^{p^i}\]
and thus 
\[[g-1](x)=\sum\limits_{i\ge 1} {^{g^*\tilde \Gamma}} t_i(g)x^{p^i}\]
Working mod the maximal ideal $\mathfrak  m$ in $W(k)[[u_1,\ldots,u_{n-1}]]$ we have an expression
\[[g-1](x)=\sum\limits_{i\ge 1} {^\Gamma t_i(g)x^{p^i}}\]
so that $n\cdot\nu(g-1)$ is the minimum $i$ such that $t_i(g)\not\equiv 0$ mod $\mathfrak m$. Since $W(k)[[u_1,\ldots,u_{n-1}]]$ is a complete local ring, this is equivalent to asking that $t_i(g)$ be a unit. In the statement below, we let $\pi:\mathbb G_n\to \mathrm{Aut}(\Gamma)$ denote the projection map, where we identify the underlying set of the semidirect product as a cartesian product.

\begin{theorem}\label{thm:defecteon}
    Let $N(G):=n\cdot\max\{\nu(\pi(g)-1)\st e\neq \pi(g)\}_{g\in G}$, where $e$ is the identity element of $G$. Then $\Phi(\mathrm{EO}_n(G))=p^{N(G)}$.
\end{theorem}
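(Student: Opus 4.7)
The plan is to follow the strategy of Theorem \ref{thm:defectern} closely. Using Proposition \ref{prop:stackeo_n}, we identify $\mathcal{M}_{\mathrm{EO}_n(G)}\simeq\Spec((\mathrm{E}_n)_*)/G$ as a $\mathbb{G}_m$-stack over $\mathcal{M}_{FG}(1)$, and by Corollary \ref{cor:finitephiaffine} the task reduces to computing the smallest $m$ for which
\[\mathcal{P}(m):=\Spec((\mathrm{E}_n)_*)/G\times_{\mathcal{M}_{FG}(1)}\mathcal{M}_{FG}(m)\]
is an affine scheme; the required $\mathrm{MU}$-nilpotent completeness of $\mathrm{EO}_n(G)\otimes\mathrm{X(m)}$ is immediate from $K(n)$-locality. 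Since $(\mathrm{E}_n)_*$ is Noetherian local and $G$ is finite, the Katz-Mazur argument used in Proposition \ref{prop:quotientstackhasfinitechromaticdefect} reduces affineness of $\mathcal{P}(m)$ to discreteness.

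An automorphism of an $R$-point of $\mathcal{P}(m)$ consists of an element $g\in G$ fixing a lift $\phi\colon(\mathrm{E}_n)_*\to R$ together with the requirement that the induced isomorphism $[g]$ on the formal group $\phi_*\tilde\Gamma$ satisfies $[g](x)\equiv x\pmod{x^{m+1}}$. Elements $g$ lying in the kernel of $\pi$ act nontrivially and faithfully on $W(k)\subset(\mathrm{E}_n)_*$, so cannot fix any lift into a nonzero ring and contribute nothing. For $g$ with $\pi(g)\ne e$, the key input is the $p$-typical expansion
\[[g-1](x)=\sum_{i\ge 1}{}^{g^*\tilde\Gamma}t_i(g)\,x^{p^i}\]
from the discussion preceding the theorem: by definition $n\nu(\pi(g)-1)$ is the smallest $i$ such that $t_i(g)$ is a unit in $(\mathrm{E}_n)_*$.

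The upper bound $\Phi(\mathrm{EO}_n(G))\le p^{N(G)}$ then follows because units persist under arbitrary ring maps to nonzero $R$, so $[g](x)\not\equiv x\pmod{x^{p^{n\nu(\pi(g)-1)}+1}}$ at any nonzero $R$-point, and in particular $[g](x)\not\equiv x\pmod{x^{p^{N(G)}+1}}$ as $n\nu(\pi(g)-1)\le N(G)$; hence $\mathcal{P}(p^{N(G)})$ is discrete. For the lower bound, I would pick $g_0\in G$ with $n\nu(\pi(g_0)-1)=N(G)$ and evaluate at the geometric point classifying the base change of $\Gamma$ to an algebraic closure of $k$. By the defining property of $\nu$, the coefficients $t_i(g_0)$ reduce to zero mod $\mathfrak{m}$ for $i<N(G)$, so $[g_0](x)\equiv x\pmod{x^{p^{N(G)}}}$ at that point, supplying a nontrivial automorphism in $\mathcal{P}(m)$ for every $m<p^{N(G)}$ and forcing $\Phi(\mathrm{EO}_n(G))\ge p^{N(G)}$.

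The main technical obstacle is keeping track of the Galois component of $\mathbb{G}_n=\mathrm{Gal}(k/\F_p)\rtimes\mathrm{Aut}(\Gamma)$ when $G$ is not contained in the $\mathrm{Aut}(\Gamma)$-factor. For the lower bound in particular, a max-achieving $g_0$ may carry a nontrivial Galois part that prevents it from fixing the closed point of $\Spec((\mathrm{E}_n)_*)$; I expect the fix is either to run the argument over $\bar k$ and absorb the Galois contribution into a suitable geometric point, or to replace $g_0$ by a power whose Galois component is trivial while preserving $\nu(\pi(g_0)-1)$ (using finiteness of $G$). Once this Galois bookkeeping is in place, the remainder is a direct algebraic parallel to Theorem \ref{thm:defectern}.
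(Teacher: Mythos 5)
Your proposal follows the paper's proof essentially verbatim: identify $\mathcal{M}_{\mathrm{EO}_n(G)}\simeq\Spec((\mathrm{E}_n)_*)/G$, reduce via Corollary \ref{cor:finitephiaffine} and the Katz--Mazur affineness criterion to discreteness of the pullback against $\mathcal{M}_{FG}(m)$, use that some $t_i(g)$ with $i\le N(G)$ is a unit in the complete local ring $(\mathrm{E}_n)_*$ to kill all automorphisms when $m=p^{N(G)}$, and exhibit a nontrivial automorphism over the special fiber (the paper uses $k[u^{\pm}]$ at $\Gamma$) to get the lower bound. The Galois-component bookkeeping you flag is left implicit in the paper's own (terse) argument, so your write-up is, if anything, slightly more explicit on that point.
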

\begin{proof}
    Our proof closely follows that of Theorem \ref{thm:defectern}, namely to see that $\Phi(\mathrm{EO}_n(G))\le p^{N(G)}$, we need to show that the stack
    \[\mathcal{P}:=\Spec((\mathrm{E}_n)_*)/G\times_{\mathcal{M}_{FG}(1)}\mathcal{M}_{FG}(p^{N(G)})\]
    is an affine scheme, for which it suffices to show in this case that it is discrete.  An object $P\in \mathcal P(R)$ has a nontrivial automorphism if and only if a nontrivial element $g\in G$ acts on the formal group $\Gamma_P$ on $R$ by a power series $[\pi(g)](x)$ with the property that $[\pi(g)](x)\equiv x\mod x^{p^{N(G)}+1}$. However, by definition of $N(G)$, we have that 
    \[[\pi(g)](x)=x+_{g^*\tilde \Gamma}\sum\limits_{i\ge 1} {^{g^*\tilde \Gamma}}t_i(g)x^{p^i}\]
    and $t_i(g)\not\equiv 0$ mod $\mathfrak m$ for some $i\le N(G)$, so that $t_i(G)$ is a unit in $(\mathrm{E}_n)_*$ and therefore in $R$, which gives a contradiction for a nonzero ring $R$.

    As before, to see that $\Phi(\mathrm{EO}_n(G))\ge p^{N(G)}$ it suffices to show that
    \[\Spec((\mathrm{E}_n)_*)/G\times_{\mathcal{M}_{FG}(1)}\mathcal{M}_{FG}(p^{N(G)}-1)\]
    is not discrete. A nontrivial automorphism in this stack is found over $k[u^{\pm}]$ at $\Gamma$ by taking $g$ so that $N(G)=n\nu(\pi(g)-1)$.
\end{proof}

Computing this valuation is a purely number-theoretic problem, and can be done with ease in many cases using general facts about valuations on division algebras, for which we refer the reader to \cite{serrelocal}. At a given height $n$ we let $F_n$ be the Honda formal group law over $\overline{\F_p}$. When $n=k(p-1)$, there is a tower of division algebras
\[\mathbb Q_p\subset\mathbb Q_p(\zeta_p)\subset \mathrm{End}(F_n)[1/p]\]
The element $\zeta_p\in \mathbb Q_p(\zeta_p)^\times$ has order $p$ and has positive valuation, therefore giving a copy of $C_p$ in $\mathrm{Aut}(F_n)$ and a $C_p$ action on $\mathrm{E}_n$.

\begin{corollary}\label{cor:eonexample1}
    We have $\Phi(\mathrm{EO}_{k(p-1)}(C_p))=p^k$.
\end{corollary}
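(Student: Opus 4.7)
The plan is to apply Theorem~\ref{thm:defecteon} directly, reducing the problem to the number-theoretic computation of $N(C_p) = n \cdot \max\{\nu(\pi(g)-1) : e \neq \pi(g)\}_{g \in C_p}$ where $n = k(p-1)$. So the task becomes computing $\nu(\zeta_p^j - 1)$ for $1 \le j \le p-1$ inside $\mathrm{End}(F_n)[1/p]$, using the tower $\mathbb{Q}_p \subset \mathbb{Q}_p(\zeta_p) \subset \mathrm{End}(F_n)[1/p]$ supplied in the excerpt.

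First I would use the fact that the valuation $\nu$ on $\mathrm{End}(F_n)[1/p]$ (normalized so that $\nu(p) = 1$) restricts to a valuation on the subfield $\mathbb{Q}_p(\zeta_p)$, by uniqueness of extensions of valuations to subfields of central division algebras over $\mathbb{Q}_p$. Since $\mathbb{Q}_p(\zeta_p)/\mathbb{Q}_p$ is totally ramified of degree $p-1$ with uniformizer $\zeta_p - 1$, and since $p = u \cdot (\zeta_p - 1)^{p-1}$ for some unit $u \in \mathbb{Z}_p[\zeta_p]^\times$, the restricted valuation satisfies $\nu(\zeta_p - 1) = 1/(p-1)$.

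Next, for any $1 \le j \le p-1$, I would use the factorization
\[
\zeta_p^j - 1 = (\zeta_p - 1)\bigl(1 + \zeta_p + \cdots + \zeta_p^{j-1}\bigr).
\]
The second factor is a cyclotomic unit in $\mathbb{Z}_p[\zeta_p]$ (as is standard: its product over $1 \le j \le p-1$ equals $p/(\zeta_p-1)^{p-1}$, which is a unit), so it has valuation zero. Hence $\nu(\zeta_p^j - 1) = 1/(p-1)$ for all nontrivial $j$.

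Putting this together, the maximum appearing in the definition of $N(C_p)$ is $1/(p-1)$, and therefore $N(C_p) = k(p-1) \cdot 1/(p-1) = k$, which gives $\Phi(\mathrm{EO}_{k(p-1)}(C_p)) = p^k$ by Theorem~\ref{thm:defecteon}. No step here is especially subtle; the only mild point is justifying that $\nu$ restricts correctly to $\mathbb{Q}_p(\zeta_p)$, which follows from the standard theory of valuations on finite-dimensional division algebras over local fields.
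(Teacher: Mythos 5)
Your proposal is correct and follows essentially the same route as the paper: apply Theorem \ref{thm:defecteon} and use total ramification of $\mathbb{Q}_p(\zeta_p)/\mathbb{Q}_p$ to get $\nu(\zeta_p^j-1)=\frac{1}{p-1}$ for all $0<j<p$, hence $N(C_p)=k$. The paper simply asserts that each $\zeta_p^j-1$ is a uniformizer, whereas you justify it via the cyclotomic-unit factorization; this is just a more explicit version of the same computation.
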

\begin{proof}
    For all $0<k<p$, $\zeta^k-1$ is a uniformizer of $\mathcal O_{\mathbb{Q}_p(\zeta_p)}$, and since $\mathbb{Q}_p(\zeta_p)/\mathbb{Q}_p$ is totally ramified, we therefore have $\nu(\zeta^k-1)=\frac{1}{p-1}$, so that $N(C_p)=k$.
\end{proof}

More generally, $D=\mathrm{End}(F_h)[1/p]$ is the central $\mathbb{Q}_p$-division algebra with Hasse invariant $1/h$, and Serre showed that a field extension $K/\mathbb{Q}_p$ is contained in $D$ if and only if $[K:\mathbb{Q}_p]$ divides $h$ \cite{serrelocal}. Letting $h=p^{n-1}(p-1)m$, we have that
\[[\mathbb{Q}_p(\zeta_{p^n}):\mathbb{Q}_p]=\phi(p^n)=p^{n-1}(p-1)\]
so a choice of embedding of $\mathbb{Q}_p(\zeta_{p^n})$ gives as before a $C_{p^n}$ action on $\mathrm{E}_n$.

\begin{corollary}\label{cor:eonexample2}
 We have $\Phi(\mathrm{EO}_{p^{n-1}(p-1)m}(C_{p^n}))=p^{p^{n-1}m}$.
 \end{corollary}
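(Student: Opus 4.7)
The plan is to apply Theorem \ref{thm:defecteon} directly and then perform the explicit cyclotomic computation of $N(C_{p^n})$. By Theorem \ref{thm:defecteon}, we are reduced to showing
\[
N(C_{p^n}) = p^{n-1}(p-1)m \cdot \max\{\nu(\zeta_{p^n}^k - 1) \st 0 < k < p^n\} = p^{n-1}m,
\]
where $\nu$ is the valuation on $D = \mathrm{End}(F_h)[1/p]$ (normalized so that $\nu(p)=1$, as in the discussion preceding Theorem \ref{thm:defecteon}) restricted to the chosen embedded subfield $\mathbb{Q}_p(\zeta_{p^n}) \subset D$. Note that by the Hasse invariant criterion recalled just before Corollary \ref{cor:eonexample2}, such an embedding exists precisely because $[\mathbb{Q}_p(\zeta_{p^n}):\mathbb{Q}_p] = \phi(p^n) = p^{n-1}(p-1)$ divides $h = p^{n-1}(p-1)m$.

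First I would invoke the standard fact that the cyclotomic extension $\mathbb{Q}_p(\zeta_{p^n})/\mathbb{Q}_p$ is totally ramified of degree $p^{n-1}(p-1)$ with uniformizer $\pi = 1 - \zeta_{p^n}$ (see, e.g., \cite{serrelocal}). Since $\nu$ on $D$ extends the $p$-adic valuation on $\mathbb{Q}_p$ with $\nu(p) = 1$, its unique extension to $\mathbb{Q}_p(\zeta_{p^n})$ assigns $\nu(1-\zeta_{p^n}) = 1/(p^{n-1}(p-1))$.

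Next I would compute $\nu(1 - \zeta_{p^n}^k)$ for an arbitrary $0 < k < p^n$. Writing $k = p^j k'$ with $0 \le j < n$ and $\gcd(k',p) = 1$, the element $\zeta_{p^n}^k$ is a primitive $p^{n-j}$-th root of unity, so it lies in the subextension $\mathbb{Q}_p(\zeta_{p^{n-j}})/\mathbb{Q}_p$, also totally ramified, of degree $p^{n-j-1}(p-1)$. Since $1 - \zeta_{p^{n-j}}$ is a uniformizer there, and $1 - \zeta_{p^{n-j}}^{k'}$ differs from $1 - \zeta_{p^{n-j}}$ by the unit $1 + \zeta_{p^{n-j}} + \cdots + \zeta_{p^{n-j}}^{k'-1}$ (which reduces to $k' \neq 0$ modulo the maximal ideal since $\gcd(k',p)=1$), we obtain
\[
\nu(1 - \zeta_{p^n}^k) = \frac{1}{p^{n-j-1}(p-1)}.
\]

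Finally, I would maximize over $0 \le j \le n-1$: the expression $1/(p^{n-j-1}(p-1))$ is largest at $j = n-1$, giving the value $1/(p-1)$, realized by $k = p^{n-1}$ (i.e.\ a primitive $p$-th root of unity). Multiplying through by the height $h = p^{n-1}(p-1)m$ yields $N(C_{p^n}) = p^{n-1}m$, and Theorem \ref{thm:defecteon} then gives $\Phi(\mathrm{EO}_{p^{n-1}(p-1)m}(C_{p^n})) = p^{p^{n-1}m}$. There is no serious obstacle here beyond keeping the normalization of $\nu$ straight against the ramification indices of the tower of cyclotomic subfields; the rest is a routine number-theoretic computation.
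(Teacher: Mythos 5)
Your proposal is correct and follows essentially the same route as the paper: apply Theorem \ref{thm:defecteon} and compute $N(C_{p^n})$ using the total ramification of $\mathbb{Q}_p(\zeta_{p^n})/\mathbb{Q}_p$, with the maximum valuation $\nu(\zeta_{p^n}^{p^{n-1}}-1)=1/(p-1)$ giving $N(C_{p^n})=p^{n-1}m$. Your treatment of arbitrary exponents $k=p^jk'$ is slightly more detailed than the paper's (which only records the values at the powers $\zeta_{p^n}^{p^j}$), but the argument is the same.
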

\begin{proof}
 Again $\mathbb{Q}_p(\zeta_{p^n})/\mathbb{Q}_p$ is a totally ramified extension. One thus computes the valuations
\[\nu(\zeta_{p^n}-1)=\frac{1}{p^{n-1}(p-1)},\ldots,\nu(\zeta_{p^n}^{p^{n-1}}-1)=\frac{1}{p-1}\]
so that $N(C_{p^n})=p^{n-1}m$.
\end{proof}

\begin{remark}
    When $p=2$, the $\mathrm{ER(n)}$'s $\mathrm{K(n)}$-localize to $\mathrm{EO}_n(C_2)$'s, and thus the agreement of the numbers in Theorem \ref{thm:defectern} and Corollary \ref{cor:eonexample1} does not come as a surprise. In fact, Beaudry--Hill--Shi--Zeng construct genuine $C_{2^n}$-spectra known as the $D^{-1}\mathrm{BP}^{((G))}\langle m\rangle$'s whose fixed points $\mathrm{K(n)}$-localize to the $\mathrm{EO}_{2^{n-1}m}(C_{2^n})$'s of Corollary \ref{cor:eonexample2} \cite{BHSZ}. In this way the fixed point spectra $(D^{-1}\mathrm{BP}^{((G))}\langle m\rangle)^G$ give generalizations of the $\mathrm{ER(n)}$'s that capture larger cyclic 2-groups in the Morava stabilizer group. It can be shown that their chromatic defect coincides with the number given in Corollary \ref{cor:eonexample1}; we would like to return to these and their associated stacks in a future work.
\end{remark}

\section{\texorpdfstring{$\Z$}{Z}-indexed Adams-Novikov spectral sequences}\label{sec:7}
In their study of fp spectra, Mahowald and Rezk observed that the ASS of an fp spectrum $E$ of type n could be extended to a full plane SS converging to the homotopy groups of the chromatic localization $L_n^fE$. The essential idea is that, choosing a type $n+1$ complex $F$ so that $E\otimes F$ is an $\F_p$-module as in Definition \ref{def:fpspectra}, one can form an Adams tower for $E$ using $F$ in place of $\F_p$. Since $F$ is finite, the entire tower may be dualized, thus forming a $\Z$-indexed Adams tower, resulting in a full plane spectral sequence.

Our definition of a Wood-type $E$ gives $E$ precisely the properties necessary to emulate this construction with the Adams--Novikov spectral sequence. Unlike the Adams case, the $\Z$-indexed ANSS will instead converge to $0$. We develop the basics of these $\Z$-ANSSs in this section, relate their $E_2$-pages to various forms of Tate cohomology, and then plug in our motivating example, $\mathrm{ko}$.

There is an obvious generalization of the notion of Wood-types and $\Z$-indexed Adams spectral sequences to a general ring spectrum $E$ in place of $\mathrm{BP}$ or $\F_p$, producing full plane spectral sequences which converge to $L_n^fE$, whenever $E$ is an $E$-Wood-type via a finite type $n+1$ complex $F$, but we do not pursue this here.

\subsection{The ANSS for a Wood-type} We recall a general framework for Adams towers and their associated spectral sequences developed by Haynes Miller, see \cite{miller} as well as \cite[Section 4]{coctalos}. For an $\mathbb{E}_0$-algebra $R$, Miller defines an $R$-injective exactly as we define a weak $R$-module in Definition \ref{def:weakmodule}. Using this, he defines $R$-injective resolutions and relates them to $R$-based ASSs, from which the following may be deduced.

\begin{proposition}
    Let $R$ be an $\mathbb{E}_0$-algebra and $E$ a spectrum, and suppose we are given a tower
    \[
    \begin{tikzcd}
        \cdots\arrow[r]&E_{2}\arrow[d,"i_2"]\arrow[r]&E_1\arrow[d,"i_1"]\arrow[r]&E\arrow[d,"i_0"]\\
        &C_2&C_1&C_0
    \end{tikzcd}
    \]
    where $E_j=\mathrm{fib}(i_{j-1})$, $R\otimes i_j$ is a split monomorphism, and $C_j$ is a weak $R$-module, for all $j$. Then the spectral sequence obtained by applying $\pi_*(-)$ to this tower coincides with the $R$-based ASS of $E$ from the $E_2$-page on.
\end{proposition}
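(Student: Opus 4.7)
The plan is to identify the two hypotheses as precisely Miller's axioms for an \emph{$R$-injective resolution} of $E$. The condition that each $C_j$ is a weak $R$-module (Definition \ref{def:weakmodule}) is exactly Miller's notion of $R$-injective, while the splitting of $R\otimes i_j$ encodes that the cofiber sequences $E_{j+1}\to E_j\to C_j$ become split short exact after applying $R\otimes(-)$, which is Miller's $R$-exactness condition. The given tower is then, by definition, an $R$-injective resolution, and the associated spectral sequence is a candidate for the $R$-based ASS.

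Next, I would compare the given tower with the canonical $R$-Adams resolution, built from $\bar R:=\mathrm{fib}(\eta_R)$ by setting $E_j^{\mathrm{can}}:=\bar R^{\otimes j}\otimes E$, $C_j^{\mathrm{can}}:=R\otimes E_j^{\mathrm{can}}$, and $i_j^{\mathrm{can}}:=\eta_R\otimes 1$. This tower manifestly satisfies the hypotheses and its associated spectral sequence is by definition the $R$-based ASS of $E$. I would then invoke Miller's lifting lemma for $R$-injective resolutions: by induction on $j$, using the weak $R$-module structure on each $C_j^{\mathrm{can}}$ together with the splitting of $R\otimes i_j$ (which allows any map from $E_j$ into a weak $R$-module to factor through $C_j$ up to homotopy), one constructs a comparison map of towers.

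Finally, one checks that any such comparison map induces an isomorphism of spectral sequences from the $E_2$-page on. The point is that from $E_2$ onward, both spectral sequences are computed as derived functors of $R_*E$ in the appropriate abelian category attached to $R$ (comodules over $R_*R$ when the requisite flatness holds, or the more general setting of \cite{miller} otherwise): applying $R_*(-)$ to either tower yields an injective resolution of $R_*E$ in this category, and the comparison map is a map of two such resolutions, hence induces an isomorphism on the associated $\mathrm{Ext}$-groups. Note that the $E_1$-pages need not agree, as these depend on the explicit choice of $C_j$.

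The main obstacle is making the lifting step of the second paragraph fully rigorous in the general $\mathbb{E}_0$-setting, where one cannot a priori assume flatness of $R_*R$ over $R_*$ or a multiplication on $R$. However, this is essentially the entire point of Miller's construction, and the argument is carried out explicitly in \cite[Section 4]{coctalos} and \cite{miller}. In practice, therefore, the proof amounts to carefully matching our hypotheses to Miller's framework and citing his comparison theorem.
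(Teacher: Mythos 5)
Your proposal is correct and follows essentially the same route as the paper, which likewise deduces the statement by identifying weak $R$-modules with Miller's $R$-injectives, recognizing the hypotheses as the axioms for an $R$-injective resolution, and invoking the comparison (chain homotopy equivalence) of such resolutions from \cite{miller} and \cite[Section 4]{coctalos} to get agreement with the canonical $R$-based Adams resolution from the $E_2$-page on. Your explicit factorization argument using the splitting of $R\otimes i_j$ and the weak module structure is exactly the mechanism underlying that comparison.
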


Suppose then that $F$ is an $\mathbb{E}_0$-algebra, and $E$ a spectrum, and form the $F$-based Adams tower of $E$:

\begin{equation}\label{equation:FASS}
    \begin{tikzcd}
        \cdots\arrow[r]&E\otimes \overline{F}\otimes\overline{F}\arrow[d,"i_2"]\arrow[r]&E\otimes \overline{F}\arrow[d,"i_1"]\arrow[r]&E\arrow[d,"i_0"]\\
        &E\otimes \overline{F}\otimes\overline{F}\otimes F&E\otimes \overline{F}\otimes F&E\otimes F
    \end{tikzcd}
\end{equation}
Suppose now that $R$ is a homotopy associative ring spectrum. If $E\otimes F$ is a weak $R$-module, and $F$ is an $R$-projective, it follows from the proposition that the associated spectral sequence recovers the $R$-based ASS of $E$ from the $E_2$-page on. Using that $F=\mathrm{T(n)}$ is a $R=\mathrm{BP}$-free, we have the following. 

\begin{corollary}
    Suppose $\Phi_p(E)\le n$, so that $E\otimes\mathrm{T(n)}$ is complex-orientable. Then the $\mathrm{T(n)}$-based $\mathrm{ASS}$ of $E$ coincides with the $\mathrm{ANSS}$ of $E$ from the $E_2$-page on.
\end{corollary}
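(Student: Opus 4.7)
The plan is to apply the preceding proposition with $R = \mathrm{BP}$ to the $\mathrm{T(n)}$-based Adams tower (\ref{equation:FASS}) of $E$, since the $\mathrm{BP}$-based ASS coincides with the ANSS. This amounts to verifying two conditions for the tower: that each cofiber term $C_j = E\otimes \overline{\mathrm{T(n)}}^{\otimes j}\otimes \mathrm{T(n)}$ is a weak $\mathrm{BP}$-module, and that each map $\mathrm{BP}\otimes i_j$ is a split monomorphism.

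For the first condition, the hypothesis $\Phi_p(E)\le n$ says $E\otimes \mathrm{T(n)}$ is complex-orientable, so by Theorem \ref{thm:weakMU} it is a weak $\mathrm{MU}$-module. Since $\mathrm{T(n)}$ is $p$-local, $E\otimes \mathrm{T(n)}$ is already $p$-local, and composing a weak $\mathrm{MU}$-module structure with the Quillen idempotent $\mathrm{MU}_{(p)}\to \mathrm{BP}$ and the canonical inclusion realizes $E\otimes \mathrm{T(n)}$ as a weak $\mathrm{BP}$-module. The class of weak $\mathrm{BP}$-modules is closed under tensoring with an arbitrary spectrum (retractions of the unit smash with anything), so $C_j \simeq (E\otimes \mathrm{T(n)})\otimes \overline{\mathrm{T(n)}}^{\otimes j}$ is a weak $\mathrm{BP}$-module for every $j$.

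For the second condition, each $i_j$ is obtained by tensoring the map $\overline{\mathrm{T(n)}}\to \mathbb S$ with fixed factors, so it suffices to see that $\mathrm{BP}\otimes \overline{\mathrm{T(n)}}\to \mathrm{BP}$ is a split monomorphism. By Proposition \ref{prop:T(n)toBPredundancy}, $\mathrm{BP}\simeq \mathrm{T(n)}[t_{n+1},t_{n+2},\ldots]$ as a $\mathrm{T(n)}$-module, so projection onto the summand indexed by $1$ gives a retract of $\mathrm{BP}\to \mathrm{BP}\otimes \mathrm{T(n)}$. This nullity of the boundary $\mathrm{BP}\otimes \mathrm{T(n)}\to \Sigma(\mathrm{BP}\otimes \overline{\mathrm{T(n)}})$ in the fiber sequence $\mathrm{BP}\otimes \overline{\mathrm{T(n)}}\to \mathrm{BP}\to \mathrm{BP}\otimes \mathrm{T(n)}$ splits $\mathrm{BP}\otimes \overline{\mathrm{T(n)}}\to \mathrm{BP}$ as required, and this splitting persists after smashing with $E\otimes \overline{\mathrm{T(n)}}^{\otimes j}$.

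With both conditions in hand, the cited proposition identifies the spectral sequence arising from (\ref{equation:FASS}) — namely the $\mathrm{T(n)}$-based ASS of $E$ — with the $\mathrm{BP}$-based ASS of $E$ from $E_2$ onward, which is the ANSS. The only mild subtlety, and the step worth stating carefully, is the passage from \emph{weak $\mathrm{MU}$-module} to \emph{weak $\mathrm{BP}$-module} via the Quillen idempotent, since Theorem \ref{thm:weakMU} is phrased in terms of $\mathrm{MU}$ rather than $\mathrm{BP}$; everything else is a direct manipulation of the free $\mathrm{T(n)}$-module structure on $\mathrm{BP}$.
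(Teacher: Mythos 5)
Your overall strategy is exactly the paper's: the corollary is meant to follow from the Miller-type proposition applied to the tower (\ref{equation:FASS}) with $F=\mathrm{T(n)}$ and $R=\mathrm{BP}$, and your verification of the first condition is fine (complex-orientability of $E\otimes\mathrm{T(n)}$ gives a weak $\mathrm{MU}$-module structure by Theorem \ref{thm:weakMU}, hence a weak $\mathrm{BP}$-module structure via the unital map $\mathrm{BP}\to\mathrm{MU}_{(p)}$, and weak modules are closed under tensoring, so every $C_j\simeq(E\otimes\mathrm{T(n)})\otimes\overline{\mathrm{T(n)}}^{\otimes j}$ qualifies).

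The verification of the split-monomorphism condition, however, does not hold up as written. First, $i_j$ is the vertical map $E\otimes\overline{\mathrm{T(n)}}^{\otimes j}\to E\otimes\overline{\mathrm{T(n)}}^{\otimes j}\otimes\mathrm{T(n)}$, induced by the unit $\mathbb S\to\mathrm{T(n)}$, not by $\overline{\mathrm{T(n)}}\to\mathbb S$ (that is the horizontal map of the tower). Consequently the condition you reduce to, that $\mathrm{BP}\otimes\overline{\mathrm{T(n)}}\to\mathrm{BP}$ be a split monomorphism, is not the relevant one and is in fact false: once $\mathrm{BP}\to\mathrm{BP}\otimes\mathrm{T(n)}$ is split, the fiber inclusion $\mathrm{BP}\otimes\overline{\mathrm{T(n)}}\to\mathrm{BP}$ is \emph{null}, and a null map from a nonzero spectrum is not split monic. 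Second, the input you cite for the splitting, namely that $\mathrm{BP}\simeq\mathrm{T(n)}[t_{n+1},t_{n+2},\ldots]$ as a $\mathrm{T(n)}$-module via Proposition \ref{prop:T(n)toBPredundancy}, is both a misapplication of that proposition (its hypothesis $\Phi_p(-)\le n$ fails for the sphere, which has infinite defect) and false as stated: the homotopy of a free $\mathrm{T(n)}$-module contains odd-degree $p$-torsion (e.g.\ $\chi_{n+1}$), whereas $\pi_*\mathrm{BP}$ is even and torsion-free. What you actually need, and what the paper means by ``$\mathrm{T(n)}$ is a $\mathrm{BP}$-free,'' is the opposite-direction statement: $\mathrm{BP}_*\mathrm{T(n)}\cong\mathrm{BP}_*[t_1,\ldots,t_n]$ is a free $\mathrm{BP}_*$-module on even generators containing $1$, so $\mathrm{BP}\otimes\mathrm{T(n)}$ splits as a wedge of even suspensions of $\mathrm{BP}$ with the unit $\mathrm{BP}\to\mathrm{BP}\otimes\mathrm{T(n)}$ the inclusion of the bottom summand; projection onto that summand is the retraction, and tensoring it with $E\otimes\overline{\mathrm{T(n)}}^{\otimes j}$ shows each $\mathrm{BP}\otimes i_j$ is split monic. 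You do state the correct retraction of $\mathrm{BP}\to\mathrm{BP}\otimes\mathrm{T(n)}$ in passing, so the argument is repairable by this one substitution, but the derivation you give for it is not valid.
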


We have the following further simplification when $E$ is Wood-type, setting now $F$ to be a finite $\mathrm{BP}$-projective and $R=\mathrm{BP}$. Here we shift $F$ so that its bottom cells are in degree zero and fix a cell $\mathbb{S}\to F$, thereby equipping $F$ with an $\mathbb{E}_0$-algebra structure.

\begin{corollary}\label{cor:woodtypeanss}
    Suppose that $E$ is Wood-type, and choose a finite $\mathrm{BP}$-projective $F$ so that $E\otimes F$ is complex-orientable. Then the $F$-based ASS of $E$ coincides with the ANSS of $E$ from the $E_2$-page on. 
\end{corollary}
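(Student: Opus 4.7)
The plan is to apply the framework Proposition recalled at the start of this section to the $F$-based Adams tower (\ref{equation:FASS}) of $E$, with $R=\mathrm{BP}$. The tower already has the required shape, so this reduces to checking two conditions: each cofiber $C_j = E\otimes\overline{F}^{\otimes j}\otimes F$ is a weak $\mathrm{BP}$-module, and each $\mathrm{BP}\otimes i_j$ is a split monomorphism.

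For the first condition, since $E\otimes F$ is complex-orientable by hypothesis, Theorem \ref{thm:weakMU} gives that $E\otimes F$ is a weak $\mathrm{MU}$-module, and therefore a weak $\mathrm{BP}$-module ($p$-locally, $\mathrm{BP}$ is a homotopy retract of $\mathrm{MU}$). Being a weak $\mathrm{BP}$-module is preserved under tensoring with any spectrum, so $C_j = (E\otimes F)\otimes\overline{F}^{\otimes j}$ inherits a weak $\mathrm{BP}$-module structure.

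For the second condition, $i_j$ is of the form $\mathrm{id}_{E\otimes\overline{F}^{\otimes j}}\otimes\eta$, where $\eta:\mathbb{S}\to F$ is the fixed unit cell, so it suffices to check that $\mathrm{BP}\otimes\eta:\mathrm{BP}\to\mathrm{BP}\otimes F$ is a split monomorphism. Since $F$ is a finite $\mathrm{BP}$-projective, Lemma \ref{lemma:BPfree} gives that $\mathrm{BP}\otimes F$ is a finite free $\mathrm{BP}$-module, and choosing a $\mathrm{BP}_*$-basis of $\mathrm{BP}_*F$ containing the class $\eta_*(1)\in\mathrm{BP}_0 F$ exhibits $\mathrm{BP}\otimes\eta$ as the inclusion of a summand.

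With both conditions verified, the framework Proposition identifies the associated spectral sequence of (\ref{equation:FASS}) with the $\mathrm{BP}$-based Adams spectral sequence of $E$—that is, the ANSS—from the $E_2$-page on. This argument closely parallels the proof of the preceding corollary for $F=\mathrm{T(n)}$, and I anticipate no real obstacle; the only mild subtlety is the choice of $\mathrm{BP}_*$-basis in the splitting argument, which is forced by the fact that the unit cell of a connective $\mathrm{BP}$-free complex represents a generator of a free summand of $\mathrm{BP}_0F$.
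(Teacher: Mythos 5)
Your argument is essentially the paper's own proof: the corollary is exactly the application of Miller's framework Proposition to the tower (\ref{equation:FASS}) with $R=\mathrm{BP}$, using complex-orientability of $E\otimes F$ (hence weak $\mathrm{MU}$-, hence weak $\mathrm{BP}$-module, preserved under smashing) for the cofibers, and $\mathrm{BP}$-freeness of $F$ via Lemma \ref{lemma:BPfree} together with the fixed bottom cell $\mathbb{S}\to F$ for the splitting of $\mathrm{BP}\otimes i_j$. Your explicit verification of the two hypotheses, including the remark that the unit cell generates a free summand of $\mathrm{BP}_*F$, just spells out what the paper leaves implicit, so there is nothing to correct.
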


\begin{example}
    Let $E=\mathrm{ko}$ and $F=C(\eta)$, i.e. the classical Wood equivalence. Corollary \ref{cor:woodtypeanss} gives an identification of $\mathrm{ANSS}(\mathrm{ko})$ with the $\eta$-Bockstein spectral sequence for $\mathrm{ko}$.

    Similar statements apply to the cases $E=\mathrm{tmf}$ with $F=D\mathcal{A}(1)$ at the prime 2 and $F=X_2$ at the prime 3 from Example \ref{example:woodtmf}. The identification of the $E_2$-page of the ANSS this provides corresponds to the stack-theoretic identifications of the descent SS for $\mathrm{tmf}$ as in \cite{bauer} using the $2$- and $3$-local covers of $\mathcal{M}_{ell}$ by $\mathcal{M}_1(3)$ and $\mathcal{M}_1(2)$ respectively.
 \end{example}

 \begin{remark}\label{rmk:relativeass}
     It is often the case, as in the example above, that for a Wood-type $\mathbb{E}_\infty$-ring spectrum $E$, the corresponding complex-oriented spectrum $T:=E\otimes F$ has the structure of an $E$-algebra, as in with $\mathrm{ko}\to\mathrm{ku}\simeq\mathrm{ko}\otimes C(\eta)$. In this case, $\mathrm{ANSS}(E)$ may be identified with relative Adams spectral sequence for the ring map $E\to T$ of \cite{bakerlazarev}. This has the advantage of endowing the spectral sequence of (\ref{equation:FASS}) with the structure of a spectral sequence of algebras from the $E_1$-page on.
 \end{remark}

\subsection{\texorpdfstring{$\Z$}{Z}-indexed ANSS and Tate cohomology} As above we fix a bottom cell $\mathbb{S}\to F$, and we let $\overline{F}:=\mathrm{fib}(\mathbb{S}\to F)$. The tower of (\ref{equation:FASS}) extends to the right whenever $F$ is dualizable:
\begin{equation}\label{equation:ZFASS}
    \begin{tikzcd}
        \cdots\arrow[r]&E\otimes \overline{F}\arrow[d,]\arrow[r]&E\arrow[d]\arrow[r]&E\otimes \mathbb D\overline{F}\arrow[d]\arrow[r]&\cdots\\
        &E\otimes \overline{F}\otimes F&E\otimes F&E\otimes \Sigma \mathbb DF
    \end{tikzcd}
\end{equation}

\begin{definition}
    If $E$ is Wood-type, fix a finite $\mathrm{BP}$-projective $F$ with $E\otimes F$ complex-orientable. The $\Z$-indexed Adams--Novikov spectral sequence of $E$ ($\Z\mathrm{-ANSS}(E)$) is the spectral sequence associated to the tower of (\ref{equation:ZFASS}).
\end{definition}

We will show now that the $\Z\mathrm{-ANSS}$ of a Wood-type $E$ almost completely determines the ANSS of $E$. For this, we will need some terminology introduced by Meier--Shi--Zeng \cite[Definition 2.5]{hfpsstate}.

\begin{definition}\label{def:isoofSSs}
    Let $f:\mathcal{E}\to\mathcal{E}'$ be a morphism of spectral sequences. We say $f$ is an \emph{isomorphism of spectral sequences in nonnegative filtrations} if the following statements hold:
    \begin{enumerate}
        \item On the $E_2$-page, $f$ induces an isomorphism in positive filtrations and an epimorphism in filtration zero.
        \item For all $r\ge 2$, every nonzero $d_r$-differential in $\mathcal{E}$ whose source is in nonnegative filtration is mapped via $f$ to a nonzero $d_r$-differential in $\mathcal{E}'$.
        \item For all $r\ge 2$, every nonzero $d_r$-differential in $\mathcal{E}'$ whose source is in nonnegative filtration is the image along $f$ of a nonzero $d_r$-differential in $\mathcal{E}$.
    \end{enumerate}
\end{definition}

In fact, conditions (2) and (3) of Definition \ref{def:isoofSSs} are redundant; they follow from condition (1). Indeed, this may be shown using an induction argument given in \cite[proof of Theorem 2.6]{hfpsstate}, which we now briefly recall (see also \cite[proof of Theorem 3.3]{vanishinglines}).

\begin{proposition}\label{prop:isoofSSs}
    Let $f:\mathcal{E}\to\mathcal{E}'$ be a morphism of spectral sequences such that, on the $E_2$-page, $f$ induces an isomorphism in positive filtrations and an epimorphism in filtration zero. Then $f$ is an \emph{isomorphism of spectral sequences} in nonnegative filtrations. 
\end{proposition}
\begin{proof}
     Conditions (2) and (3) of Definition \ref{def:isoofSSs} follow for $r=2$ immediately from condition (1) of Definition \ref{def:isoofSSs}, i.e. from the assumption of the proposition. We then let $r>2$ and suppose that conditions (2) and (3) of Definition \ref{def:isoofSSs} hold for all $r'<r$. 
     
     Suppose $d_r(x)=y$ is a nonzero differential in $\mathcal{E}$. We claim that $d_r(f(x))=f(y)$ is a nonzero differential in $\mathcal{E}'$. If not then $f(y)=0$ on the $E_r$-page. However, since $f(y)\neq0$ on the $E_2$-page, there must be a differential $d_{r'}(z)=f(y)$ in $\mathcal{E}'$ for $r'<r$. By induction using condition (3), this differential must lift to $\mathcal{E}$, contradicting that $y$ is nonzero on $E_r$. This establishes condition (2).

     Similarly, suppose $d_r(x)=y$ is a nonzero differential in $\mathcal{E}'$. Since $x$ admits a lift $\tilde{x}$ to $\mathcal{E}$ on the $E_2$-page, if the differential does not lift on the $E_r$-page, then there must be a nonzero differential $d_{r'}(\tilde{x})=z$ for $r'<r$. By induction using condition (2), it follows that $x$ supports a nonzero $d_{r'}$-differential in $\mathcal{E}'$, contradicting that $x$ survives to the $E_r$-page.
\end{proof}

\begin{theorem}\label{thm:ZANSS}
    The $\Z\mathrm{-ANSS}$ of a Wood-type $E$ has the following properties:
    \begin{enumerate}
        \item The $\Z\mathrm{-ANSS}$ is independent of the choice of finite $\mathrm{BP}$-projective $F$ from the $E_2$-page on.
        \item The natural map
    \[\mathrm{ANSS}(E)\to\Z\mathrm{-ANSS}(E)\]
    is an isomorphism of spectral sequences in nonnegative filtrations in the sense of Definition \ref{def:isoofSSs}. 
    \item The $\Z\mathrm{-ANSS}$ converges to zero.
    \end{enumerate}  
\end{theorem}
\begin{proof}
For (1), as in \cite[Lemma 4.8]{coctalos}, the $\mathrm{BP}$-injective resolutions 
    \[E\to E\otimes F_i\to\Sigma E\otimes\overline{F_i}\otimes F_i\to\cdots\]
    corresponding to the towers (\ref{equation:ZFASS}) for any pair of finite $\mathrm{BP}$-projectives $F_1,F_2$ with the property that $E\otimes F_i$ is $\mathrm{BP}$-injective are chain homotopy equivalent, since they are Adams resolutions. One may dualize the chain maps and chain homotopies to see that the resolutions of $E$
    \[\cdots\to \Sigma^{-1}E\otimes \mathbb D\overline{F_i}\otimes \mathbb DF_i\to E\otimes \mathbb DF_i\to E\]
    are chain homotopy equivalent. The two spectral sequences arise by splicing these resolutions together and thus must be isomorphic from the $E_2$-page on.

    For (2), note that condition (1) of Definition \ref{def:isoofSSs} follows by construction of the tower. The claim then follows immediately from Proposition \ref{prop:isoofSSs}.

    For (3), we first show that the colimit of the tower of (\ref{equation:ZFASS}) becomes zero after tensoring with $F$. Each of the downward maps in (\ref{equation:ZFASS}) admits a section after tensoring with $F$. Indeed, since $E\otimes F$ is complex-orientable, the map $E\otimes F\to E\otimes F\otimes F$ is a retract of the map $E\otimes F\otimes \mathrm{BP}\to E\otimes F\otimes F\otimes \mathrm{BP}$. Since $F$ is a $\mathrm{BP}$-projective, the map $F\otimes \mathrm{BP}\to F\otimes F\otimes \mathrm{BP}$ admits a section. It follows that all of the horizontal maps in (\ref{equation:ZFASS}) become zero after tensoring with $F$, as desired.
    
    Since $F$ is a type zero complex, a spectrum $X$ is zero if and only if $X\otimes F=0$, as follows from the thick subcategory theorem. Indeed the set of finite spectra $K$ such that $X\otimes K=0$ is a thick subcategory of finite spectra, and if it contains a type zero complex, it contains $\mathbb{S}$. It follows that the colimit of the tower of (\ref{equation:ZFASS}) is zero, so that the $\Z\mathrm{-ANSS}$ converges to zero.
\end{proof}

\begin{remark}
     Readers familiar with the homotopy fixed-point spectral sequence (HFPSS) and the Tate spectral sequence (TateSS) will notice similarities between these and the ANSS and $\Z$-ANSS of a Wood-type. Indeed, for a $G$-spectrum $E$, the canonical map $\mathrm{HFPSS}(E)\to\mathrm{TateSS}(E)$ is given on the $E_2$-page by the map
     \[H^*(G;\pi_*E)\to\widehat{H}^*(G;\pi_*E)\]
     from group cohomology to Tate cohomology. This map is an isomorphism in positive filtrations and a surjection in filtration zero, and hence the map $\mathrm{HFPSS}(E)\to\mathrm{TateSS}(E)$ is an isomorphism of spectral sequences in nonnegative filtrations in the sense of Definition \ref{def:isoofSSs}. 
     
     When $E^{tG}=0$, the Tate SS of $E$ converges to zero, as in condition (3) of Theorem \ref{thm:ZANSS}. This vanishing is often used to deduce vanishing lines and differentials in the HFPSS (see \cite{dannyvanishing} for example).
\end{remark}

We turn now to identifying the $E_2$ page of $\Z\mathrm{-ANSS}(E)$. In the following statements we will assume that $E$ is a Wood-type and choose a corresponding complex $F$ equipped with an $\mathbb{E}_1$-multiplication. Since the $\Z\mathrm{-ANSS}(E)$ does not depend on the choice of $F$, this results in no loss of generality since we may replace $F$ with its endomorphism ring $F\otimes \mathbb DF$. The $E_2$-page is computed via a certain localization in Hovey's category $\mathrm{Stable}(\mathrm{BP}_*\mathrm{BP})$-comodules, equivalently mod $\tau$ in Pstragowski's category $\mathrm{Syn}_{BP}$ \cite{piotr}. We will need to recall some terminology from \cite[Section 3.1]{mnn}.

\begin{definition}
    Let $\mathcal{C}$ be a presentably symmetric-monoidal stable $\infty$-category, and $A\in\mathrm{Alg}(\mathcal{C})$ a dualizable algebra. 
    \begin{itemize}
        \item Let $\mathcal{C}_{A\mathrm{-tors}}$ -- the category of $A$-torsion objects -- be the localizing subcategory of $\mathcal{C}$ generated by objects of the form $A\otimes X$, where $X$ is dualizable.
        \item Let $\mathcal{C}_{A^{-1}\mathrm{-local}}$ -- the category of $A^{-1}$-local objects -- be the subcategory of $\mathcal{C}$ consisting of objects $Y$ with the property that $\mathrm{Map}_{\mathcal{C}}(Z,Y)\simeq *$ for all $Z\in\mathcal{C}_{A\mathrm{-tors}}$.
    \end{itemize} 
\end{definition}

Any object $E\in\mathcal{C}$ fits into a natural cofiber sequence
    \[Z_{A^{-1}}E\to E\to L_{A^{-1}}E\]
    where $Z_{A^{-1}}E\in \mathcal{C}_{A\mathrm{-tors}}$ and $L_{A^{-1}}E\in \mathcal{C}_{A^{-1}\mathrm{-local}}$. Mathew, Naumann, and Noel give an explicit description of the localization functor $L_{A^{-1}}(-)$ as follows (see \cite[Proposition 3.5]{mnn}).

\begin{proposition}\label{prop:akhillocalization}
    Let $\mathrm{CB}(A)$ denote the cosimplicial object
\[A\implies A\otimes A\Rrightarrow A\otimes A\otimes A\cdots\]
    i.e. the cobar complex of $A$. Then for any $X\in\mathcal{C}$, the map $Z_{A^{-1}}E\to E$ is equivalent to the map
    \[|\mathbb D(\mathrm{CB}(A))\otimes E|\to E\]
    given by dualizing the cobar complex.
\end{proposition}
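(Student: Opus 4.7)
The plan is to verify the two defining properties of the localization fiber sequence $Z_{A^{-1}}E\to E\to L_{A^{-1}}E$: that $|\mathbb D(\mathrm{CB}(A))\otimes E|$ lies in $\mathcal{C}_{A\text{-tors}}$, and that the cofiber is $A^{-1}$-local. Since these two properties characterize the localization fiber sequence uniquely, this suffices.

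For the $A$-torsion claim, I use that $\mathcal{C}_{A\text{-tors}}$ is closed under colimits, so it suffices to check each simplicial degree $\mathbb D(A^{\otimes(n+1)})\otimes E\simeq(\mathbb D A)^{\otimes(n+1)}\otimes E$. Since $A$ is a dualizable algebra, $\mathbb D A$ carries a natural right $A$-module structure, and the unitality axiom exhibits $\mathbb D A$ as a retract of $\mathbb D A\otimes A$ via the splitting $1\otimes\eta_A$ followed by the action. Hence $\mathbb D A$ lies in the thick tensor ideal generated by $A$, and so does any tensor power of it with $E$.

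For the $A^{-1}$-local claim, dualizability of $A$ gives $\mathrm{Map}_{\mathcal{C}}(A\otimes X,C)\simeq \mathrm{Map}_{\mathcal{C}}(X,\mathbb D A\otimes C)$ for all dualizable $X$, so $C$ is $A^{-1}$-local if and only if $\mathbb D A\otimes C=0$. It thus suffices to show that tensoring the map $|\mathbb D(\mathrm{CB}(A))\otimes E|\to E$ with $\mathbb D A$ gives an equivalence. The key observation is that the augmented cosimplicial object $A\to A\otimes \mathrm{CB}(A)$ admits an extra codegeneracy given by $\mu_A:A\otimes A\to A$, so it is a split cosimplicial diagram with totalization $A$. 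Dualizing term-wise (valid as each term is dualizable) converts this into a split augmented simplicial diagram $\mathbb D A\otimes \mathbb D(\mathrm{CB}(A))\to \mathbb D A$ whose realization is $\mathbb D A$. Tensoring with $E$ and using that $\mathbb D A\otimes(-)\otimes E$ preserves colimits, one obtains $\mathbb D A\otimes|\mathbb D(\mathrm{CB}(A))\otimes E|\simeq \mathbb D A\otimes E$, and this equivalence is identified with the one induced by the map in question.

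The most delicate step is the passage from the extra codegeneracy on $A\otimes\mathrm{CB}(A)$ to an extra degeneracy on $\mathbb D A\otimes\mathbb D(\mathrm{CB}(A))$, which amounts to interpreting $\mathbb D A$ as a coalgebra whose coproduct is dual to $\mu_A$ and verifying that the cosimplicial identities dualize appropriately. This is straightforward to check in low simplicial degrees and follows in general from the standard principle that a split augmented (co)simplicial diagram computes its (co)limit.
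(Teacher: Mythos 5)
The paper does not actually prove this statement in the text --- it is quoted from Mathew--Naumann--Noel --- and your argument is essentially the standard one behind that citation: verify the two properties characterizing the fiber sequence $Z_{A^{-1}}E\to E\to L_{A^{-1}}E$, using that each term $\mathbb{D}(A^{\otimes(n+1)})\otimes E\simeq(\mathbb{D}A)^{\otimes(n+1)}\otimes E$ is torsion, and that the multiplication of $A$ splits the coaugmented Amitsur complex after tensoring with $A$, so that after dualizing and tensoring with $\mathbb{D}A$ the augmented simplicial object becomes split and hence a colimit diagram. Both halves are correct, including the observation that you only need the ``if'' direction of the locality criterion (so no generation hypothesis is needed there). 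One point you should make explicit: with the definition of $\mathcal{C}_{A\text{-tors}}$ used here (the localizing subcategory generated by $A\otimes X$ with $X$ dualizable), your step ``$\mathbb{D}A$ lies in the thick tensor ideal generated by $A$, and so does any tensor power of it with $E$'' tacitly assumes that the torsion objects form a tensor ideal, i.e.\ that $A\otimes Y$ is torsion for an \emph{arbitrary} $Y$ (here $Y=(\mathbb{D}A)^{\otimes(n+1)}\otimes E$ is not dualizable when $E$ is general); this holds whenever $\mathcal{C}$ is generated under colimits by dualizable objects, which is true in the applications in this paper (module categories, $\mathrm{Stable}(\mathrm{BP}_*\mathrm{BP})$ via the Landweber filtration), but it is not literally automatic from the stated definition and should be invoked or assumed. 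With that hypothesis made explicit, your proof is complete and matches the cited argument.
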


As above we fix an $\mathbb{E}_1$-ring $F$ such that $E\otimes F$ is complex-orientable, and we let 
\[\nu(-)/\tau:\Sp\to\mathrm{Stable}(\mathrm{BP}_*\mathrm{BP})\]
denote the functor sending a spectrum $E$ to the $\mathrm{BP}_*\mathrm{BP}$-comodule $\mathrm{BP}_*E$ regarded as a discrete object in $\mathrm{Stable}(\mathrm{BP}_*\mathrm{BP})$.

\begin{proposition}
    The $E_2$-page of the $\Z\mathrm{-ANSS}$ of a Wood-type $E$ is given by 
    \[E_2^{*,*}=\pi_{*,*}L_{(\nu F/\tau)^{-1}}(\nu E/\tau)\]
    in $\mathrm{Stable}(\mathrm{BP}_*\mathrm{BP})$.
\end{proposition}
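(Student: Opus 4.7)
The plan is to apply the synthetic functor $\nu(-)/\tau:\Sp\to\mathrm{Stable}(\mathrm{BP}_*\mathrm{BP})$ to the $\Z$-indexed tower \ref{equation:ZFASS} and identify the resulting algebraic tower with the input to the localization cofiber sequence of Proposition \ref{prop:akhillocalization}. Since $F$ is a finite $\mathrm{BP}$-projective equipped with an $\mathbb E_1$-multiplication, $\nu(-)/\tau$ is symmetric monoidal on $F$, $\overline F$, and their duals, and carries the fiber/cofiber sequences $\overline F\to\mathbb{S}\to F$ and $\mathbb DF\to\mathbb{S}\to\mathbb D\overline F$ to analogous triangles in $\mathrm{Stable}(\mathrm{BP}_*\mathrm{BP})$.

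Under $\nu(-)/\tau$, the positive half of the tower becomes the $\nu F/\tau$-based cobar resolution of $\nu E/\tau$, whose totalization is $\nu E/\tau$ itself. The negative half, by inspection of its cofibers $E\otimes\Sigma\mathbb DF$, $E\otimes\mathbb D\overline F\otimes\Sigma\mathbb DF$, and so on, is precisely the dualized cobar complex $\mathbb D(\mathrm{CB}(\nu F/\tau))\otimes\nu E/\tau$; its colimit, by Proposition \ref{prop:akhillocalization}, is $Z_{(\nu F/\tau)^{-1}}(\nu E/\tau)$. The connecting map produced by the splicing at filtration zero coincides with the canonical augmentation $Z_{(\nu F/\tau)^{-1}}(\nu E/\tau)\to\nu E/\tau$, since both are induced by the unit map $\mathbb{S}\to F$ (and its dual) and $\nu(-)/\tau$ preserves these.

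Thus, in $\mathrm{Stable}(\mathrm{BP}_*\mathrm{BP})$, the $\Z$-indexed tower realizes the localization cofiber sequence
\[Z_{(\nu F/\tau)^{-1}}(\nu E/\tau)\to\nu E/\tau\to L_{(\nu F/\tau)^{-1}}(\nu E/\tau).\]
The $E_1$-page of the associated spectral sequence is the bigraded homotopy of the cofibers of the tower, and its $E_2$-page is the cohomology of the resulting $\Z$-graded cochain complex of $\mathrm{BP}_*\mathrm{BP}$-comodules; under the above identification, this cohomology is precisely $\pi_{*,*}L_{(\nu F/\tau)^{-1}}(\nu E/\tau)$. The principal technical point will be verifying that the connecting map in the synthetic tower agrees with the canonical one in Proposition \ref{prop:akhillocalization}, which is a naturality statement that reduces to $\nu(-)/\tau$ preserving the relevant unit and counit maps in $\Sp$ and that the splice at filtration zero is functorially the same data as the augmentation $|\mathbb D(\mathrm{CB}(\nu F/\tau))\otimes\nu E/\tau|\to\nu E/\tau$.
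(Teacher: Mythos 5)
Your outline (positive half $=$ cobar resolution, negative half $=$ dualized cobar complex, Proposition \ref{prop:akhillocalization} to identify its colimit with $Z_{(\nu F/\tau)^{-1}}(\nu E/\tau)$) matches the skeleton of the paper's argument, but the concluding step is a genuine gap: the assertion that ``under the above identification, this cohomology is precisely $\pi_{*,*}L_{(\nu F/\tau)^{-1}}(\nu E/\tau)$'' is the proposition itself, not a consequence of the identifications you have made. Knowing that the positive half resolves $\nu E/\tau$ and that the negative half has colimit $Z_{(\nu F/\tau)^{-1}}(\nu E/\tau)$ does not formally imply that the cohomology of the spliced complex computes the homotopy of the cofiber $L_{(\nu F/\tau)^{-1}}(\nu E/\tau)$; the whole difficulty sits at the splice, i.e., in filtration zero. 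This is exactly what the paper (following Mathew) isolates as ``the essential point'': it identifies the zero line of $E_2$ with $\pi_{*,0}L_{(\nu F/\tau)^{-1}}(\nu E/\tau)$ by (a) using Corollary \ref{cor:woodtypeanss} to show the kernel of the differential out of filtration zero is $\Ext^{0,*}_{\mathrm{BP}_*\mathrm{BP}}(\mathrm{BP}_*E)\cong\pi_{*,0}(\nu E/\tau)$, and (b) a connectivity argument giving $\pi_{*,0}(|\mathbb D(\mathrm{CB}(\nu F/\tau))\otimes\nu E/\tau|)\cong\pi_{*,0}(\nu E/\tau\otimes\mathbb D(\nu F/\tau))$, so that the homotopy of the realization $Z_{(\nu F/\tau)^{-1}}(\nu E/\tau)$ in the relevant range is actually seen by the terms of your complex. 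Neither ingredient appears in your sketch, and the issue you single out as the ``principal technical point'' (compatibility of the splice map with the augmentation of Proposition \ref{prop:akhillocalization}) is a comparatively formal naturality check, not where the work is.

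A second gap is at the very first step. The proposition concerns the $E_2$-page of the topological tower (\ref{equation:ZFASS}), whose $E_1$-terms are honest homotopy groups of the cofibers. The functor $\nu(-)/\tau$, i.e.\ $\mathrm{BP}_*(-)$ viewed in $\mathrm{Stable}(\mathrm{BP}_*\mathrm{BP})$, is not exact, so ``applying it to the tower'' does not transport the spectral sequence; before any computation in $\mathrm{Stable}(\mathrm{BP}_*\mathrm{BP})$ can say anything about the topological $E_2$-page, you must identify $\pi_*$ of each cofiber with $\pi_{*,0}$ of the corresponding algebraic object. This is where the Wood-type hypothesis and the $\mathbb E_1$-structure on $F$ enter: every cofiber in the tower is a weak $\mathrm{BP}$-module ($E\otimes F$ is complex-orientable, hence so are $E\otimes\overline{F}^{\otimes s}\otimes F$, and with $F$ replaced by $F\otimes\mathbb DF$ also $E\otimes\mathbb DF$ and its relatives), so each has collapsing ANSS and $\pi_*\cong\Ext^{0,*}_{\mathrm{BP}_*\mathrm{BP}}(\mathrm{BP}_*(-))$. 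The paper performs this identification explicitly; your term-by-term monoidality remarks about $F$, $\overline F$ and their duals (whose $\mathrm{BP}$-homologies are indeed projective) address a different and easier point and do not substitute for it.
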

\begin{proof}
    In \cite[Theorem 2.4]{mathewtate}, Mathew gives a similar categorical description of Tate cohomology, and the proof carries over to our setting without much change. The essential point is to show that $\pi_{*,0}L_{(\nu F/\tau)^{-1}}(\nu E/\tau)$ coincides with the zero line on our $E_2$-page, which is the cohomology of the complex
    \[\pi_*(E\otimes \mathbb DF)\to\pi_*(E\otimes F)\to\pi_*(\Sigma E\otimes\overline{F}\otimes F)\]
    Since $E\otimes F$ and therefore also $E\otimes \mathbb DF$ are weak $\mathrm{BP}$-modules, this is isomorphic to
    \[\Ext_{\mathrm{BP}_*\mathrm{BP}}^{0,*}(\mathrm{BP}_*(E\otimes \mathbb DF))\to\Ext_{\mathrm{BP}_*\mathrm{BP}}^{0,*}(\mathrm{BP}_*(E\otimes F))\to\Ext_{\mathrm{BP}_*\mathrm{BP}}^{0,*}(\mathrm{BP}_*(\Sigma E\otimes\overline{F}\otimes F))\]
    which is isomorphic to 
    \[\pi_{*,0}(\nu E/\tau\otimes \mathbb D\nu F/\tau)\to \pi_{*,0}(\nu E/\tau\otimes \nu F/\tau)\to \pi_{*,0}(\Sigma\nu E/\tau\otimes \overline{\nu F/\tau}\otimes\nu F/\tau)\]
    The kernel of the second map, by Corollary \ref{cor:woodtypeanss}, is isomorphic to $\Ext_{\mathrm{BP}_*\mathrm{BP}}^{0,*}(\mathrm{BP}_*(E))\cong\pi_{*,0}(\nu E/\tau)$. Since $\nu E/\tau$ and $\nu F/\tau$ are connective objects in $\mathrm{Stable}(\mathrm{BP}_*\mathrm{BP})$, it follows that 
    \[\pi_{*,0}(|\mathbb D(\mathrm{CB}(\nu F/\tau))\otimes \nu E/\tau|)\cong\pi_{*,0}(\nu E/\tau\otimes \mathbb D\nu F/\tau) \]
    and now the result follows from Proposition \ref{prop:akhillocalization}.
\end{proof}

The description of the $E_2$-page resembles Tate cohomology, as the latter is given by $\pi_*L_{k[G]^{-1}}(-)$ in the stable module category associated to a finite group $G$. However, our description rings a bit hollow at this level of generality, as one would hope for a description in terms of $L_{\mathrm{BP}_*\mathrm{BP}^{-1}}(-)$. The analogous description does hold in the Mahowald--Rezk context replacing $\mathrm{BP}$ with $\F_p$ (see \cite[Section 2]{mr}), where one may use the fact that $\mathcal{A}_*$ is projective in the category of finitely presented $\mathcal{A}_*$-comodules, and we see no direct analog of this in our setting.

In practice, however, we often find ourselves in the situation of Remark \ref{rmk:relativeass}, namely when $E$ is an $\mathbb E_\infty$ ring spectrum and $E\otimes F$ is a finite Adams-flat $E$-algebra. In this setting, the $E_2$-page of the $\Z\mathrm{-ANSS}(E)$ can be viewed more aptly as an instance of Tate cohomology. The proof of the following is exactly as in the previous proposition.

\begin{proposition}
    Let $E$ be an $E_\infty$-ring and $T\in\mathrm{Alg}(E)$ with the property that $\Gamma:=\pi_*(T\otimes_ET)$ is flat over $A=\pi_*T$. For any $X\in\mathrm{Mod}(E)$, the spectral sequence associated to the $\Z$-indexed tower
    \[
    \begin{tikzcd}
        \cdots\arrow[r]&X\otimes_E\overline{T}\arrow[d,]\arrow[r]&X\arrow[d]\arrow[r]&X\otimes_ED\overline{T}\arrow[d]\arrow[r]&\cdots\\
        &X\otimes_E\overline{T}\otimes_ET&X\otimes_ET&X\otimes_E\Sigma DT
    \end{tikzcd}
\]
has $E_2$-page given by $\pi_{*,*}L_{\Gamma^{-1}}(\nu X/\tau)$ in $\mathrm{Stable}(A,\Gamma)$.
\end{proposition}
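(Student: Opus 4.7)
The plan is to imitate, essentially verbatim, the proof of the preceding proposition, now working in $\mathrm{Mod}(E)$ and its synthetic refinement $\mathrm{Stable}(A,\Gamma)$ in place of $\Sp$ and $\mathrm{Stable}(\mathrm{BP}_*\mathrm{BP})$. Under the flatness hypothesis on $\Gamma$ over $A$, the positive filtrations of the spectral sequence compute $\Ext^{s,t}_\Gamma(A,\pi_*(T\otimes_E X))$ via the $T$-Adams framework applied in $\mathrm{Mod}(E)$, and this is exactly $\pi_{*,s}(\nu X/\tau)$ for $s>0$. It coincides with $\pi_{*,s}L_{\Gamma^{-1}}(\nu X/\tau)$ in that range because the fiber $Z_{\Gamma^{-1}}(\nu X/\tau)$ is generated by objects of the form $\Gamma\otimes Y$ for dualizable $Y$ and is therefore supported in nonpositive synthetic weight.

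The substantive step is the zero-line. Reading off the tower, it is the cohomology of the $3$-term complex
\[
\pi_*(X\otimes_E\mathbb D T)\longrightarrow\pi_*(X\otimes_E T)\longrightarrow\pi_*(\Sigma X\otimes_E\overline{T}\otimes_E T).
\]
Since $T$ is a commutative $E$-algebra, each of these three spectra is a $T$-module in $\mathrm{Mod}(E)$ and in particular a weak $T$-module. The standard $\Ext^0$ identification for weak $T$-modules (available since $\Gamma$ is flat over $A$) then rewrites each term as $\Ext^{0,*}_\Gamma(A,\pi_*(T\otimes_E-))$, equivalently as $\pi_{*,0}(\nu(-)/\tau)$ in $\mathrm{Stable}(A,\Gamma)$.

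Finally, I would apply Proposition \ref{prop:akhillocalization} with the algebra $\nu T/\tau=\Gamma$ in $\mathrm{Stable}(A,\Gamma)$: by connectivity of $\nu X/\tau$ and $\nu T/\tau$, only the first three terms of the totalization $|\mathbb D(\mathrm{CB}(\nu T/\tau))\otimes\nu X/\tau|$ contribute to $\pi_{*,0}$, and these match the rewritten $3$-term complex. Hence the zero-line is $\pi_{*,0}L_{\Gamma^{-1}}(\nu X/\tau)$, and together with the positive-filtration comparison this yields $E_2^{*,*}=\pi_{*,*}L_{\Gamma^{-1}}(\nu X/\tau)$. The main obstacle, as in the previous proposition, is pinning down the connectivity/truncation ranges so that the dualized cobar really computes the zero-line on the nose, and checking naturality of the weak-$T$-module $\Ext^0$ identification so that the connecting maps of the $3$-term complex coincide with those of the dualized cobar after the identification.
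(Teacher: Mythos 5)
Your proposal is correct and follows essentially the same route as the paper, which proves this statement by declaring the argument to be exactly that of the preceding proposition: the zero-line via the weak $T$-module $\Ext^{0}$ identification for the three terms of the tower, Proposition \ref{prop:akhillocalization} applied to $\nu T/\tau=\Gamma$ together with a connectivity argument, and the flatness of $\Gamma$ over $A$ to handle positive filtrations. One small point: the hypothesis is only $T\in\mathrm{Alg}(E)$, not commutative, but this is harmless since the $T$-module structures you invoke on $X\otimes_E T$, $X\otimes_E\mathbb{D}T$, and $X\otimes_E\overline{T}\otimes_E T$ exist for an associative algebra.
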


Setting $T=E\otimes F$ and $X=E$ as in Example \ref{rmk:relativeass}, one recovers a description of the $E_2$-page of $\Z$-ANSS$(E)$ as $\pi_{*,*}L_{\Gamma^{-1}}\mathbb{1}$.

\subsection{The \texorpdfstring{$\Z$}{Z}-ANSS for \texorpdfstring{$\mathrm{ko}$}{ko}} We begin by calculating $\mathrm{ANSS}(\mathrm{ko})$, which by Corollary \ref{cor:woodtypeanss} can be identified with the $\eta$-Bockstein SS from the $E_2$-page on, equivalently, with the relative ASS for the map of commutative ring spectra $\mathrm{ko}\to \mathrm{ku}$, in the sense of \cite{bakerlazarev}. From this latter perspective, one sees that the spectral sequence has the structure of a spectral sequence of algebras, with associative $E_1$-page and commutative $E_r$-page for $r>1$. Since $\mathrm{ku}_*=\Z[u]$ with $|u|=2$, one has an additive isomorphism
\[E_1=\pi_*(\mathrm{ko}/\eta)[\eta]=\Z[u,\eta]\]
The class $\eta$ is a permanent cycle, and the differential $d_1$ is thus determined by the following.

\begin{proposition}\label{prop:d_1ANSS}
    In $\mathrm{ANSS}(\mathrm{ko})$, the $d_1$-differential satisfies
    \[d_1(u^n)=\begin{cases}2\eta u^{2k}&n=2k+1\\0&n=2k\end{cases}\]
\end{proposition}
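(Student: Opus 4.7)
The plan is to apply Corollary \ref{cor:woodtypeanss} with $E=\mathrm{ko}$ and $F=C(\eta)$, which by Wood's splitting $\mathrm{ko}\otimes C(\eta)\simeq\mathrm{ku}$ identifies $\mathrm{ANSS}(\mathrm{ko})$ from the $E_2$-page on with the $\eta$-Bockstein spectral sequence of the tower $\cdots\to\Sigma^2\mathrm{ko}\xrightarrow{\eta}\Sigma\mathrm{ko}\xrightarrow{\eta}\mathrm{ko}$. The first step is to read off from the exact couple that $d_1$ is given by the composite
\[\pi_n\mathrm{ku}\xrightarrow{\partial}\pi_{n-2}\mathrm{ko}\to\pi_{n-2}\mathrm{ku},\]
in which $\partial$ is the connecting map of the cofiber sequence $\Sigma\mathrm{ko}\xrightarrow{\eta}\mathrm{ko}\to\mathrm{ku}\xrightarrow{\partial}\Sigma^2\mathrm{ko}$ and the second arrow is induced by $\mathrm{ko}\to\mathrm{ku}$, with the target recorded in filtration $1$ and hence multiplied by the bookkeeping $\eta$.

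Since the displayed cofiber sequence is one of $\mathrm{ko}$-modules, $\partial$ is $\pi_*\mathrm{ko}$-linear, and the second step is to reduce the evaluation of $\partial(u^n)$ to four base cases using Bott periodicity. Specifically, the Bott class $\alpha\in\pi_8\mathrm{ko}$ lifts $u^4\in\pi_8\mathrm{ku}$, so writing $n=4m+j$ with $j\in\{0,1,2,3\}$ one has $u^n=\alpha^m\cdot u^j$ and therefore $\partial(u^n)=\alpha^m\,\partial(u^j)$. The base cases follow immediately from the long exact sequence of the cofiber together with the classical tables for $\pi_{\le 6}\mathrm{ko}$ and $\pi_{\le 6}\mathrm{ku}$: one obtains $\partial(1)=0$, $\partial(u)=2\in\pi_0\mathrm{ko}$, $\partial(u^2)=\eta^2\in\pi_2\mathrm{ko}$, and $\partial(u^3)=\zeta\in\pi_4\mathrm{ko}$, where $\zeta$ denotes the generator whose image in $\pi_4\mathrm{ku}$ is $2u^2$.

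The final step is to project these values along $\mathrm{ko}\to\mathrm{ku}$, using crucially that $\eta$ vanishes in $\mathrm{ku}$: this yields $\partial(u^{4m})\mapsto 0$, $\partial(u^{4m+1})=2\alpha^m\mapsto 2u^{4m}$, $\partial(u^{4m+2})=\alpha^m\eta^2\mapsto 0$, and $\partial(u^{4m+3})=\alpha^m\zeta\mapsto 2u^{4m+2}$. Multiplying through by the bookkeeping $\eta$ assembles into $d_1(u^{2k})=0$ and $d_1(u^{2k+1})=2\eta u^{2k}$, as required. The main technical point to verify carefully is the description of $d_1$ as $j\circ\partial$ above; once this identification is in hand, the rest is a short application of $\pi_*\mathrm{ko}$-linearity and the four base-case computations.
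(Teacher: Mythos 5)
Your proposal is correct, but it reaches the formula by a somewhat different route than the paper. The paper exploits the multiplicative structure of the spectral sequence (via the relative Adams perspective of Remark \ref{rmk:relativeass}): by the Leibniz rule it suffices to compute $d_1(u)=2\eta$ and $d_1(u^2)=0$, and both are extracted from the Wood long exact sequence using only $\pi_{\le 2}\mathrm{ko}$ (in particular, no Bott periodicity and no knowledge of $\pi_4$, $\pi_6$ or of the complexification values $\zeta\mapsto 2u^2$, $\alpha\mapsto u^4$ is needed). You instead avoid the ring structure on the spectral sequence entirely: you unroll the exact couple to write $d_1=j\circ\partial$ -- which is indeed the standard and correct description for the $\eta$-Bockstein tower -- and then use only the $\pi_*\mathrm{ko}$-module structure, which is legitimately available because the Wood cofiber sequence is $\mathrm{ko}\otimes(\mathbb S^1\xrightarrow{\eta}\mathbb S^0\to C(\eta))$, so $\partial$ is $\pi_*\mathrm{ko}$-linear. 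Your Bott-periodicity reduction to the four base cases $\partial(1)=0$, $\partial(u)=2$, $\partial(u^2)=\eta^2$, $\partial(u^3)=\zeta$, followed by projection along $\mathrm{ko}\to\mathrm{ku}$ (killing $\eta$-divisible classes), is sound and yields exactly the claimed values, up to the same sign ambiguities the paper also suppresses. What the paper's argument buys is economy of input (two base cases, low-degree homotopy only), at the cost of having to know the $E_1$-page is multiplicative with the Leibniz rule; what yours buys is that it needs no multiplicativity of the spectral sequence at all, only module-linearity of the boundary map, at the cost of invoking more of the classical homotopy of $\mathrm{ko}$ and $\mathrm{ku}$ and the standard complexification images. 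Both hinge on the same Wood long exact sequence, and there is no gap in your argument.
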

\begin{proof}
    By the Leibniz rule, it suffices to establish $d_1(u)=2\eta$ and $d_1(u^2)=0$. These both follow from the Wood sequence: we have an exact sequence
    \[\pi_2\mathrm{ku}\xrightarrow{\partial}\pi_0\mathrm{ko}\xrightarrow{\eta}\pi_1 \mathrm{ko}\to 0\]
    Since $\pi_0\mathrm{ko}=\Z$ is generated by trivial bundles and $\pi_1 \mathrm{ko}=\Z/2$ via the Mobius bundle, we see that $\partial(u)=2$, which implies that $d_1(u)=2\eta$. Since $\partial$ is thus a monomorphism on $\pi_2\mathrm{ku}$, the exact sequence
    \[0\to\pi_1\mathrm{ko}\xrightarrow{\eta}\pi_2\mathrm{ko}\to\pi_2\mathrm{ku}\xrightarrow{\partial}\pi_0\mathrm{ko}\]
    implies $\pi_2 \mathrm{ko}$ is generated by classes divisible by $\eta$, which implies that $d_1(u^2)=0$, since $d_1$ is given by the composite 
    \[\pi_{4}\mathrm{ku}\xrightarrow{\partial}\pi_{2}\mathrm{ko}\to\pi_{2}\mathrm{ku}\]
    and the latter map has kernel those elements divisible by $\eta$.
\end{proof}

\NewSseqGroup\etatowers {} {
    \foreach \n in {0,...,20} {
    \class(\n+1,\n+1)
    }
    \foreach \m in {0,...,19} {
    \structline[red](\m,\m)(\m+1,\m+1)
    }
}

\NewSseqGroup\etatowersblue {} {
    \foreach \n in {0,...,20} {
    \class[blue](\n+1,\n+1)
    }
    \foreach \m in {0,...,19} {
    \structline[black](\m,\m)(\m+1,\m+1)
    }
}
\NewSseqGroup\etatowersred {} {
    \foreach \n in {0,...,20} {
    \class[red](\n+1,\n+1)
    }
    \foreach \m in {0,...,19} {
    \structline[black](\m,\m)(\m+1,\m+1)
    }
}

\begin{sseqdata}[name = ansskoattwo, Adams grading, classes = {fill, show name=below},
grid = go, xrange ={0}{20},yrange={0}{8},xscale=0.35,yscale=0.7,x tick step =2, y tick step =2,run off differentials = {->},struct lines = red ]

\class[rectangle](0,0)\class[rectangle,"u^2" {right,xshift=-0.25cm,yshift=-0.18cm}](4,0)\class[rectangle](8,0)\class[rectangle](12,0)\class[rectangle](16,0)\class[rectangle](20,0)

\class["\eta" {right,xshift=-0.25cm,yshift=-0.18cm}](1,1)
\structline(0,0)(1,1)
\etatowers(1,1)
\etatowers(4,0)
\etatowers(8,0)
\etatowers(12,0)
\etatowers(16,0)
\etatowers(20,0)
\foreach \i in {0,...,10} {
    \d3(4+\i,\i)
    }

\foreach \i in {0,...,10} {
    \d3(12+\i,\i)
    }

\foreach \i in {0,...,10} {
    \d3(20+\i,\i)
    }
\end{sseqdata}

\begin{figure}[!htbp]
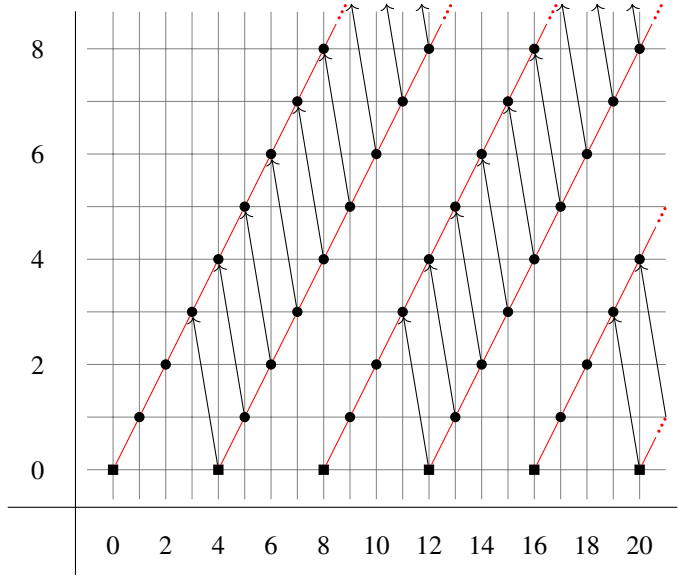

\centering
\printpage[name = ansskoattwo]
\caption{The ANSS for $\mathrm{ko}$.}
\label{koanss}
\end{figure}

\begin{sseqdata}[name = zansskoattwo, Adams grading, classes = {fill, show name=below},
grid = go, xrange ={-8}{20},yrange={-8}{8},xscale=0.2,yscale=0.35,x tick step =2, y tick step =2,run off differentials = {->},struct lines = black ]

\class(-9,-9)\class(-5,-9)\class(-1,-9)\class(3,-9)\class(7,-9)\class(11,-9)\class(15,-9)\class(19,-9)\class(-10,0)\class(21,0)\class(0,-10)\class(0,10)
\structline(0,-10)(0,10)
\structline(-10,0)(21,0)
\etatowers(-9,-9)
\etatowers(-5,-9)
\etatowers(-1,-9)
\etatowers(3,-9)
\etatowers(7,-9)
\etatowers(11,-9)
\etatowers(15,-9)
\etatowers(19,-9)

\foreach \i in {0,...,17} {
    \d3(-5+\i,-9+\i)
    }

\foreach \i in {0,...,17} {
    \d3(3+\i,-9+\i)
    }

\foreach \i in {0,...,17} {
    \d3(11+\i,-9+\i)
    }

\foreach \i in {0,...,17} {
    \d3(19+\i,-9+\i)
    }
\class(-6,-10)
\d3(-6,-10)
\class(-7,-11)
\d3(-7,-11)
\class(1,-11)
\d3(1,-11)
\class(2,-10)
\d3(2,-10)
\class(9,-11)
\d3(9,-11)
\class(10,-10)
\d3(10,-10)
\class(17,-11)
\d3(17,-11)
\class(18,-10)
\d3(18,-10)
    
\end{sseqdata}

\begin{figure}[!htbp]
\centering
\printpage[name = zansskoattwo]
\caption{The $\Z\mathrm{-ANSS}$ for $\mathrm{ko}$.}
\label{zkoanss}
\end{figure}

As a consequence of the Leibniz rule and the fact that $d_1(u^2)=0$, we see that in fact $u\eta=-\eta u$ on $E_1$. In any case it follows that $E_2=\Z[u^2,\eta]/(2\eta)$, as a ring. It follows from the Leibniz rule that either this spectral sequence collapses on $E_2$ or there is a differential $d_3(u^2)=\eta^3$. We will use the $\Z\mathrm{-ANSS}$ to show that this differential must happen. By construction $\Z\mathrm{-ANSS}(\mathrm{ko})$ satisfies
\[E_1=\Z[u,\eta^{\pm}]\]
as a module over $\mathrm{ANSS}(\mathrm{ko})$. It follows that $d_1$ is determined by the same formulae as in Proposition \ref{prop:d_1ANSS}, and we have the following.

\begin{proposition}
    The $E_2$-page of the $\Z\mathrm{-ANSS}(\mathrm{ko})$ is given as a module over that of $\mathrm{ANSS}(\mathrm{ko})$ by
    \[E_2=\F_2[u^2,\eta^{\pm}]\]
\end{proposition}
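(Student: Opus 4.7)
The plan is to compute the cohomology of the $E_1$-page $\Z[u,\eta^{\pm}]$ directly with respect to $d_1$. First I extend the $d_1$-differential from $\Z[u,\eta]$ to $\Z[u,\eta^{\pm}]$ using the Leibniz rule and the fact that $\Z\mathrm{-ANSS}(\mathrm{ko})$ is a module spectral sequence over $\mathrm{ANSS}(\mathrm{ko})$. Since $d_1(\eta) = 0$ and $\eta\cdot\eta^{-1} = 1$, the Leibniz rule forces $d_1(\eta^{-1}) = 0$ and hence $d_1(\eta^n) = 0$ for all $n\in\Z$. Combined with Proposition \ref{prop:d_1ANSS}, this gives $d_1(u^k\eta^s) = 0$ when $k$ is even and $d_1(u^k\eta^s) = \pm 2\,u^{k-1}\eta^{s+1}$ when $k$ is odd, for all $k\geq 0$ and $s\in\Z$, with signs that are irrelevant for the calculation.

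Next I fix a bidegree $(s,t) = (s,2s+2k)$ with $k\geq 0$; the corresponding $E_1$-group is freely generated over $\Z$ by the single class $u^k\eta^s$. When $k$ is odd, this class is sent isomorphically (after inverting 2 irrelevantly) onto $\pm 2\,u^{k-1}\eta^{s+1}$ in bidegree $(s+1,t)$, so it is not a cycle, and the $E_2$-term in bidegree $(s,t)$ vanishes. When $k$ is even, the class is a cycle, and the unique preimage in bidegree $(s-1,t)$ is $u^{k+1}\eta^{s-1}$, which satisfies $d_1(u^{k+1}\eta^{s-1}) = \pm 2\,u^k\eta^s$; thus the image of $d_1$ landing in bidegree $(s,t)$ is exactly $2\Z$, and the $E_2$-term is $\Z/2\Z \cong \F_2$ generated by $u^k\eta^s$. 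Writing $k = 2j$, this produces an $\F_2$-basis $\{u^{2j}\eta^s : j\geq 0,\ s\in\Z\}$ for the $E_2$-page, identifying it with $\F_2[u^2,\eta^{\pm}]$ as a module over the ANSS $E_2$-page $\Z[u^2,\eta]/(2\eta)$.

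There is no substantial obstacle here, since the proof reduces to a short, routine kernel-versus-image bookkeeping on a bigraded polynomial ring with an easily described derivation. The only care required is in extending the $d_1$-formula across the filtration-zero divide, which is justified automatically by the module structure over $\mathrm{ANSS}(\mathrm{ko})$, and in observing that signs play no role in determining the size of the cohomology groups.
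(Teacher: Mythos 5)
Your proposal is correct and follows essentially the same route as the paper, which simply notes that $d_1$ on $\Z[u,\eta^{\pm}]$ is forced by the module structure over $\mathrm{ANSS}(\mathrm{ko})$ and the formulae of Proposition \ref{prop:d_1ANSS}, and then takes homology; you have merely written out the routine kernel/image bookkeeping that the paper leaves implicit. The only point worth flagging is that your Leibniz argument for $d_1(\eta^{-1})=0$ tacitly uses $d_1(1)=0$ in the $\Z\mathrm{-ANSS}$, which is justified since $1$ comes from the unit in $\mathrm{ANSS}(\mathrm{ko})$ and differentials with source in nonnegative filtration correspond under the comparison map of Theorem \ref{thm:ZANSS}.
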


\begin{corollary}
    There is a differential $d_3(u^2)=\eta^3$ in both $\mathrm{ANSS}(\mathrm{ko})$ and $\Z\mathrm{-ANSS}(\mathrm{ko})$.
\end{corollary}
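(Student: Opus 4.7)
The plan is to deduce the differential by working in $\Z\mathrm{-ANSS}(\mathrm{ko})$ and exploiting its convergence to zero (Theorem~\ref{thm:ZANSS}(4)) together with the one-to-one correspondence of Theorem~\ref{thm:ZANSS}(3) identifying nonnegative-filtration differentials with those in $\mathrm{ANSS}(\mathrm{ko})$. Since $u^2$ sits in filtration zero, it suffices by (3) to establish $d_3(u^2) = \eta^3$ in the $\Z$-indexed sequence.

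First I would rule out higher differentials on $u^2$. The target of $d_r(u^2)$ lies in bidegree $(3,r)$, and in $E_2 = \F_2[u^2, \eta^{\pm}]$ this bidegree is spanned by $u^{-(r-3)/2}\eta^r$ when $r \equiv 3 \pmod 4$ and is zero otherwise. For $r > 3$ the target involves a negative power of $u$ and is thus absent from $E_2(\mathrm{ANSS}) = \Z[u^2, \eta]/(2\eta)$, whose bidegree $(3,r)$ is empty for $r > 3$. A nonzero $d_r(u^2)$ in $\Z\mathrm{-ANSS}$ with $r > 3$ would by Theorem~\ref{thm:ZANSS}(3) correspond to a nonzero ANSS differential with an empty target, a contradiction. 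Hence $d_r(u^2) = 0$ in $\Z\mathrm{-ANSS}$ for every $r > 3$, leaving $d_3(u^2) \in \{0, \eta^3\}$ as the only candidate.

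Next I would rule out $d_3(u^2) = 0$ via convergence to zero. The $\Z\mathrm{-ANSS}$ is multiplicative since it arises from the relative Adams spectral sequence for the ring map $\mathrm{ko} \to \mathrm{ku}$ (Remark~\ref{rmk:relativeass}). The class $\eta$ is a permanent cycle in $\mathrm{ANSS}(\mathrm{ko})$ by a bidegree check (the possible targets $E_r^{0,1+r}$ are all empty), hence also in $\Z\mathrm{-ANSS}(\mathrm{ko})$ by Theorem~\ref{thm:ZANSS}(3); differentiating $1 = \eta \cdot \eta^{-1}$ and using invertibility of $\eta$ shows $\eta^{-1}$ is permanent as well. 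If $d_3(u^2)$ vanished, then by the preceding step $u^2$ would be permanent too, and by the Leibniz rule every monomial in $\F_2[u^2, \eta^{\pm}]$ would be a permanent cycle, giving $E_\infty = E_2 \neq 0$ and contradicting Theorem~\ref{thm:ZANSS}(4). Therefore $d_3(u^2) = \eta^3$ in $\Z\mathrm{-ANSS}(\mathrm{ko})$, and Theorem~\ref{thm:ZANSS}(3) transports this back to $\mathrm{ANSS}(\mathrm{ko})$.

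The main obstacle is the first step: one needs the strong ``bijection of nonzero differentials'' reading of Theorem~\ref{thm:ZANSS}(3) so that $\Z\mathrm{-ANSS}$ differentials whose targets fall outside the image of $E_2(\mathrm{ANSS})$ are automatically forbidden. Without that, higher differentials such as $d_{11}(u^2) = u^{-4}\eta^{11}$ are a priori consistent with $d_3(u^2) = 0$, respect $d_r^2 = 0$, and on their own produce $E_\infty = 0$; it is only the combination of convergence together with the correspondence that pins down the $d_3$.
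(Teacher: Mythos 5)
Your argument is correct and is essentially the paper's: the dichotomy (collapse versus $d_3(u^2)=\eta^3$) is settled by the requirement that the $\Z\mathrm{-ANSS}$ converge to zero (Theorem \ref{thm:ZANSS}(4)), with the differential transported between the two spectral sequences along the comparison map, which is exactly how the paper argues (it phrases the transport as $\eta$-linearity of the $\Z\mathrm{-ANSS}$ over the ANSS rather than full multiplicativity, so you should not lean on Remark \ref{rmk:relativeass} alone for the $\Z$-indexed ring structure). One correction: the $E_2$-page is $\F_2[u^2,\eta^{\pm}]$ with only $\eta$ inverted, so classes like $u^{-(r-3)/2}\eta^{r}$ do not exist; the target bidegrees of $d_r(u^2)$ for $r>3$ are already zero, your ``main obstacle'' is moot, and the appeal to Theorem \ref{thm:ZANSS}(3) in that step, while harmless, is unnecessary --- a bidegree check (as in the paper's reduction to the dichotomy via the Leibniz rule) suffices.
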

\begin{proof}
    If the claimed differential does not happen in $\mathrm{ANSS}(\mathrm{ko})$, then the spectral sequence collapses on $E_2$. Indeed if $d_3(u^2)=0$, then $u^2$ is a permanent cycle for degree reasons, as is clear from Figure 
    \ref{koanss}. Since $\mathrm{ANSS}(\mathrm{ko})$ is a multiplicative spectral sequence with $E_2$ generated as an algebra by $u^2$ and $\eta$, if $u^2$ is a permanent cycle, then there are no nonzero differentials in the spectral sequence, since $\eta$ is also a permanent cycle.    
    
    By $\eta$-linearity it follows that $\Z\mathrm{-ANSS}(\mathrm{ko})$ collapses on $E_2$. This contradicts Theorem \ref{thm:ZANSS}, namely that the latter spectral sequence must converge to zero.
\end{proof}

\printbibliography
\end{document}